\newtheorem{theorem}{Theorem}[section]
\newtheorem*{theorem'}{Theorem }
\newtheorem{corollary}[theorem]{Corollary}
\newtheorem*{convention}{Convention}
\newtheorem{prop}[theorem]{Proposition}
\newtheorem{proposition}[theorem]{Proposition}
\newtheorem{cor}[theorem]{Corollary}
\newtheorem{remark}[theorem]{Remark}
\newtheorem{example}[theorem]{Example}
\newtheorem{lemma}[theorem]{Lemma}
\newtheorem{definition}[theorem]{Definition}
\newcommand{\1}{\mathds{1}}
\renewcommand{\)}{\right)}
\renewcommand{\^}[1]{\hat{#1}}
\renewcommand{\~}[1]{\overline{#1}}
\renewcommand{\geq}{\geqslant}
\renewcommand{\leq}{\leqslant}
\renewcommand{\>}{\right\rangle}
\newcommand{\8}{\infty}
\renewcommand{\a}{\alpha}
\newcommand{\Aut}{\text{Aut}}
\renewcommand{\b}{\beta}
\renewcommand{\Cap}[1]{\underset{#1}{\cap }}
\newcommand{\ch}[1]{\check{#1}}
\newcommand{\eye}{\sphericalangle}
\newcommand{\f}{\varphi}
\newcommand{\frakH}{\mathfrak{H}}
\newcommand{\g}{\gamma}
\newcommand{\G}{\Gamma}
\newcommand{\I}{\mathcal{I}}
\renewcommand{\int}{\varint}
\newcommand{\Isom}{\mathrm{Isom}}
\newcommand{\Lim}[1]{\underset{#1}{\lim}}
\newcommand{\N}{\mathbb{N}}
\newcommand{\Ps}{{\mathbb{P}}}
\newcommand{\EE}{\mathbb E}
\newcommand{\R}{\mathbb{R}}
\newcommand{\SL}{\textnormal{SL}}
\newcommand{\SO}{\textnormal{SO}}
\newcommand{\Sum}[1]{\underset{#1}{{\sum} }}
\newcommand{\Z}{\mathbb{Z}}
\DeclareMathOperator{\Prob}{Prob}
\DeclareMathOperator{\Haar}{Haar}
\newcommand{\bd}{\partial_\eye}
\newcommand{\bdh}{\partial_\infty^{\mathrm{horo}}}
\newcommand{\bdr}{\partial_r}
\newcommand{\eps}{\varepsilon}
\title{Random Walks and Boundaries of CAT(0) Cubical complexes}
\author{Talia Fern\'os, Jean L\'ecureux, Fr\'ed\'eric Math\'eus}
\thanks{T. Fern\'os was partially supported by NSF Grant number DMS-1312928, and UNCG New Faculty Summer Excellence Research Grant.\\
J. L\'ecureux was partially supported by Projet ANR-14-CE25-0004 GAMME}
\begin{document}

\begin{abstract}
We show under weak hypotheses that the pushforward $\{Z_no\}$ of a random-walk to a CAT(0) cube complex converges to a point on the boundary. We introduce the notion of squeezing points, which allows us to consider the convergence in either the Roller boundary or the visual boundary, with the appropriate hypotheses. This study allows us to show that any nonelementary action necessarily contains regular elements, that is, elements that act as rank-1 hyperbolic isometries in each irreducible factor of the essential core. 
\end{abstract}
\maketitle

\tableofcontents

\section{Introduction}

Let $\mu$ be a probability measure on a group $\G$. Pick elements $g_i$ independently and at random according to the law $\mu$. The random walk on $\G$ is defined as the sequence $Z_n=g_1g_2\dots g_n$. An important aspect of the study is to understand the asymptotic behavior of the random walk $Z_n$. 

A typical way of understanding how elements of a given group behave is to make the group act on a metric space $X$.   Fixing a base point $o\in X$, one can then study the sequence of points $\{Z_no\}$. If the space $X$ is sufficiently nice, one can hope for the convergence of this sequence of points in some geometric compactification of $X$. The first example of this is due to Furstenberg, where the space in question is the hyperbolic plane \cite{Furstenberg1963}. A powerful motivation for this kind of result is Oseledec' Theorem for random walks on subgroups of $\SL_n(\R)$ \cite{Oseledec}; it can be interpreted as a form of convergence of the random walk to a point in the (visual) boundary of the symmetric space $\SL_n(\R)/\SO_n(\R)$ \cite{KaimanovichOseledts}. 

These types of questions have been studied by many authors. Let us give a few results in this direction. 
   The typical setting in which these  results will hold is in the presence of negative curvature, or at least spaces with hyperbolic-like properties. The fundamental paper of Kaimanovich \cite{Kaimanovich} proves this convergence for hyperbolic groups, and in many situations when $X$ has some kind of negative curvature. Let us also mention the work of Kaimanovich and Masur, treating the case of the mapping class group of a surface acting on its Teichm\"uller space \cite{KaimanovichMasur}, and the work of Gautero and Math\'eus on groups acting on $\R$-trees \cite{GauteroMatheus}. More recently, a nice result of Maher and Tiozzo  \cite{MaherTiozzo} proves the convergence to the boundary for groups acting on  (not necessarily proper) hyperbolic spaces.  In the CAT(0) setting, there are also some partial results. Ballman  treats the case of groups acting on non-positively curved  rank-one manifolds \cite{Ballmann89}. For general CAT(0) spaces, Karlsson and Margulis \cite{KarlssonMargulis} prove  convergence to the visual boundary, but they assume that the random walk goes to infinitiy at positive speed, which can be difficult to check in general.\\

In this paper, we are interested in the case when $X$ is a CAT(0) cube complex. These complexes attracted a lot of attention recently as they play an essential role in  Agol's proof of the virtual Haken conjecture for 3-manifolds (an outstanding problem in the theory of 3-manifolds which relied essentially on the work of Wise) \cite{Agol}, \cite{WiseMalnormal}. There are many examples of CAT(0) cube complexes and groups acting on them. Apart from the fundamental groups of hyperbolic 3-manifolds, one can think of right-angled Artin groups, Coxeter groups, and small cancellation groups, among many others. Let us also emphasize that there are interesting examples of CAT(0) cube complexes which are not proper. For example, the Higman group $\langle a_i,i\in \Z/n\Z\mid a_ia_{i+1}a_i^{-1}=a_{i+1}^2 \rangle$ (with $n\geq 4$) acts (non-properly) on a CAT(0) square complex \cite{Martin}. Another example is given by diagram groups \cite{Farley} (the complex in this case may fail to be finite dimensional).

CAT(0) cube complexes admit two natural metrics which in turn give rise to the visual boundary and the Roller boundary. The boundary which will be the most relevant for our study is the Roller boundary (see \S\ref{SecRoller}) though we will also consider visual boundary (see \S\ref{RW and Visual}). 

\begin{theorem}
Let $X$ be a finite-dimensional CAT(0) cube complex with an essential and nonelementary action of the group $\G$. Furthermore, assume  that $\G$ stabilizes each irreducible factor of $X$. Then, for any admissible measure $\mu \in \Prob(\G)$ and every $o\in X$, almost surely one has that $Z_n o$ converges to some point in the Roller boundary. 
\end{theorem}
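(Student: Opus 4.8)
The plan is to follow the Furstenberg--Kaimanovich strategy for boundary convergence, with the pocset $\frakH$ of halfspaces and the hyperplane combinatorics of $X$ playing the role that Gromov products play in the hyperbolic setting. First I would replace $X$ by the combinatorial convex hull of an orbit $\G o$: it is a $\G$-invariant subcomplex, it contains the whole trajectory $\{Z_no\}$, and it has only countably many hyperplanes, so its Roller compactification $\~X$ is a compact metrizable subset of $\{0,1\}^{\frakH}$. By compactness there is a $\mu$-stationary $\nu\in\Prob(\~X)$, i.e.\ $\nu=\int_\G g_*\nu\,d\mu(g)$. I would then check that $\nu$ is non-atomic and concentrated on the Roller boundary $\~X\setminus X$: if $\nu$ had atoms, the finite set of atoms of maximal mass would, by the maximum principle applied to stationarity, be invariant under $\langle\supp\mu\rangle=\G$, contradicting nonelementarity (no finite $\G$-orbit in $\~X$); and since the vertex set $X$ is countable, non-atomicity already forces $\nu(X)=0$.

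Next, by the martingale convergence theorem (Furstenberg's lemma), for $\mathbb P$-almost every forward trajectory $\omega=(g_1,g_2,\dots)$ the translated measures $(Z_n)_*\nu$ converge weakly to a probability measure $\nu_\omega$ on $\~X$, with $\int\nu_\omega\,d\mathbb P(\omega)=\nu$ and $\nu_\omega=(g_1)_*\nu_{\sigma\omega}$ under the shift $\sigma$. The theorem then reduces to two claims: (a) $\nu_\omega=\delta_{\xi(\omega)}$ for some boundary point $\xi(\omega)$, almost surely, and (b) $Z_no\to\xi(\omega)$.

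Claim (a) is the heart of the matter. Since $\nu_\omega(\mathfrak h)=\lim_n\nu(Z_n^{-1}\mathfrak h)$ for each halfspace $\mathfrak h$, showing that $\nu_\omega$ is a point amounts to showing that the halfspaces of full $\nu_\omega$-measure form an ultrafilter selecting exactly one side of every hyperplane. I would obtain this by extracting from the trajectory a nested sequence of increasingly ``deep'' halfspaces: essentiality guarantees that each halfspace contains orbit points arbitrarily far from its bounding hyperplane, and nonelementarity supplies enough strongly separated hyperplanes; together these should force that the non-atomic measure $\nu$ cannot keep mass bounded away from $0$ and from $1$ on both sides of a hyperplane along the trajectory, so that $\nu_\omega(\mathfrak h)\in\{0,1\}$ for every $\mathfrak h$. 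Moreover $\omega\mapsto\xi(\omega)$ has law $\nu$, so, to prepare for (b), it suffices to know in addition that $\nu$-almost every boundary point is a \emph{squeezing} point --- again a consequence of essentiality and the abundance of separated hyperplanes.

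For (b), given a halfspace $\mathfrak b$ with $\xi(\omega)\in\mathfrak b$ one has $\nu(Z_n^{-1}\mathfrak b)\to1$, while $Z_no\in\mathfrak b$ iff $o\in Z_n^{-1}\mathfrak b$; the squeezing property of $\xi(\omega)$ is exactly the input that forces these full-$\nu$-measure translated halfspaces to eventually engulf the fixed basepoint $o$, so that the side of every hyperplane containing $Z_no$ is eventually the one prescribed by $\xi(\omega)$, i.e.\ $Z_no\to\xi(\omega)$ in $\~X$. In the reducible case $X=X_1\times\dots\times X_k$ with $\G$ stabilizing each $X_i$ one has $\~X=\~{X_1}\times\dots\times\~{X_k}$, and convergence in $\~X$ is coordinatewise convergence, so one runs the argument in each irreducible factor. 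I expect the main obstacle to be claim (a) together with its interface with (b) --- producing, along almost every trajectory, the nested family of deep halfspaces pinning down a single squeezing boundary point, and converting measure-concentration into genuine convergence of the sample points. Unlike the hyperbolic setting of Kaimanovich \cite{Kaimanovich} and of Maher--Tiozzo \cite{MaherTiozzo} there is no visual metric and no Gromov product, the Roller boundary is large and not proximal a priori, and one must control the interplay of $\nu$ with the hyperplane combinatorics purely combinatorially; the notion of squeezing point is precisely what bridges this gap and rules out the trajectory stabilizing every individual hyperplane while still drifting along a positive-dimensional face at infinity.
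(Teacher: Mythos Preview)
Your overall architecture --- stationary measure, martingale convergence to conditional measures $\nu_\omega$, show $\nu_\omega$ is a Dirac, then upgrade to pointwise convergence of $Z_no$ --- matches the paper's. But both of your hard steps (a) and (b) are left as hopes rather than arguments, and the paper fills them by a route you have not anticipated.

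For (a), you propose that non-atomicity together with ``enough strongly separated hyperplanes'' should force $\nu_\omega(\mathfrak h)\in\{0,1\}$ for every halfspace. The paper does not attempt any such direct $0$--$1$ law. Instead it imports from \cite{Fernos} a measurable $\G$-equivariant boundary map $\eta:B\to\partial X$ whose essential image lies in the set $\partial_r X$ of \emph{regular} points (Theorem~\ref{eta}), and then proves that \emph{any} equivariant map $B\to\Prob(\~X)$ must coincide with $b\mapsto\delta_{\eta(b)}$ (Proposition~\ref{UniqueDirac}). The uniqueness argument uses the double isometric ergodicity of $B_-\times B$ together with the fact that the median of three distinct points, two of which are regular, lands in $X$ (Lemma~\ref{medianinX}); no amount of staring at $\nu(Z_n^{-1}\mathfrak h)$ for a single trajectory will produce this. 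Since $\omega\mapsto\nu_\omega$ is one such equivariant map, $\nu_\omega=\delta_{\eta(\omega)}$ follows. Your sketch contains neither the boundary map nor the uniqueness mechanism.

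For (b), the implication ``$\nu(Z_n^{-1}\mathfrak b)\to 1$ hence $o\in Z_n^{-1}\mathfrak b$ eventually'' is a non sequitur as written: $\nu$ lives on the boundary, so large $\nu$-measure of a halfspace says nothing a priori about whether a fixed vertex of $X$ lies in it, and invoking ``squeezing'' does not repair this. The paper argues in the opposite direction (Lemma~\ref{cvgmes}): let $b'$ be \emph{any} Roller limit of $Z_no$; a combinatorial analysis of how translates of regular boundary points can accumulate (Lemmas~\ref{liminX} and~\ref{limitpoints}, Proposition~\ref{wlimsupp}) shows that the weak limit $\delta_{\eta(\omega)}$ must be supported on $\overline{[b']}$; and since $\eta(\omega)$ is regular, any descending chain of halfspaces containing it intersects down to a singleton (Lemma~\ref{singleton}), forcing $b'=\eta(\omega)$.

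Finally, note that the paper uses \emph{regular} points, not squeezing points, for Roller convergence; squeezing is a strictly stronger notion introduced only later (Section~\ref{Squeezing}) to pass from the Roller boundary to the \emph{visual} boundary, and plays no role in the present theorem.
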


A question which is related to the convergence to the boundary is the speed at which the random walk goes to infinity, called the \emph{drift}. This drift is defined as the limit $\lambda=\lim\frac{d(Z_n o,o)}{n}$ (see \S\ref{drift}). The random walk on a non-amenable group $\G$, endowed with some word metric, always has positive drift \cite{Guivarc'h1980}. For general actions however the positivity is not clear at all. In some cases, establishing the positivity of the drift helps to prove  convergence to the boundary, as in \cite{KarlssonMargulis}. In our case however, we deduce the positivity of the drift from the convergence, and prove the following (see Theorem \ref{theorem pos drift}):

\begin{theorem}
Let $X$ be a finite-dimensional CAT(0) cube complex with a non-elementary, essential action of the group $\G$. Assume $\mu\in \Prob(\G)$ is admissible and has finite first moment. Then almost surely we have $\lim\limits_{n\to \infty} \frac{d(Z_n o,o)}{n}>0$.\\
\end{theorem}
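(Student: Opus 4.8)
The plan is to deduce positivity of the drift from the convergence to the Roller boundary (the first theorem of the excerpt) together with the finite first moment hypothesis, via an entropy/stationary-measure argument of the type pioneered by Kaimanovich. First I would invoke the first theorem: almost surely $Z_n o$ converges to a point $\mathrm{bnd}(\omega)$ in the Roller boundary $\partial X$, and the distribution $\nu$ of this limit is a $\mu$-stationary probability measure on $\partial X$. The key geometric input is that the Roller boundary of a finite-dimensional CAT(0) cube complex carries a natural family of Busemann-type (or ``combinatorial horofunction'') cocycles $\beta_\xi(x,y)$ which are $1$-Lipschitz in the $\ell^1$-metric and satisfy a chain rule; nonelementarity of the action guarantees that this cocycle is genuinely unbounded along the walk. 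I would then consider the function $x \mapsto \int_{\partial X} \beta_\xi(o, x\cdot o)\, d\nu(\xi)$ and exploit stationarity.

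The core estimate is the standard one: writing the Furstenberg-type formula, one shows
\[
  \lambda \;=\; \lim_n \frac{d(o, Z_n o)}{n} \;\geq\; \mathbb{E}_\mu\!\left[ \int_{\partial X} \beta_\xi(o, g^{-1}o)\, d\nu(\xi) \right],
\]
and that the right-hand side is strictly positive. To get the lower bound one uses that $d(o, Z_n o) \geq \beta_{\mathrm{bnd}(\omega)}(o, Z_n o)$ (since the Busemann cocycle is $1$-Lipschitz), then decomposes $\beta_{\mathrm{bnd}(\omega)}(o, Z_n o)$ as a telescoping sum of increments $\beta_{\mathrm{bnd}(\omega)}(Z_k o, Z_{k+1} o)$; the finite first moment of $\mu$ makes this sum amenable to Birkhoff's ergodic theorem applied to the shift on the path space with the stationary measure, identifying the Cesàro limit of the increments with the integral above. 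Strict positivity then follows because if the integral were zero, $\nu$-almost every $\xi$ would be fixed (up to the cocycle) by $\mu$-almost every $g$, forcing $\nu$ to be supported on a $\G$-invariant subset of $\partial X$ of bounded type, which contradicts nonelementarity of the action (the action would then be elementary in the sense used in the paper). Here one should be careful that ``fixing a point of the Roller boundary'' or ``preserving a pair'' is exactly what nonelementary excludes, possibly after passing to the essential core and using that $\G$ stabilizes each irreducible factor.

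The main obstacle I expect is twofold. First, making the ergodic-theoretic step rigorous requires a genuine $1$-Lipschitz horofunction cocycle on the Roller boundary with an integrable ``increment'' under the first-moment hypothesis; one must check that $g \mapsto \int_{\partial X}\beta_\xi(o, g^{-1}o)\,d\nu(\xi)$ is $\mu$-integrable, which follows from $|\beta_\xi(o, g^{-1}o)| \leq d(o, g^{-1}o)$ and finiteness of the first moment, but one must also verify measurability and the cocycle identity carefully on the (possibly non-proper, infinite-dimensionally-boundaried) complex. Second, and more delicate, is ruling out the degenerate case: one needs that a $\mu$-stationary measure $\nu$ on $\partial X$ for a nonelementary essential action cannot make the drift-integral vanish. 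The cleanest route is to show that vanishing forces the barycenter/median of $\nu$ to be $\G$-invariant in a suitable sense — e.g. that $\nu$ concentrates on a single pair of ``antipodal'' boundary points or on a Euclidean factor — and then to contradict either nonelementarity or the assumption that $\G$ stabilizes (and acts essentially on) each irreducible factor. I would handle this by combining the structure theory of the Roller boundary (its product decomposition mirroring that of $X$, and the correspondence between invariant sub-measures and invariant halfspace-data) with the regular-element statement advertised in the abstract, which already packages the nonelementarity into a usable dynamical dichotomy.
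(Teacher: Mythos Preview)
Your framework is close to the paper's: both use the horofunction (Busemann) cocycle on the Roller boundary, write $h_\xi(Z_n o)$ as a telescoping sum via the cocycle identity, and apply Birkhoff's ergodic theorem on the skew product $(\omega,\xi)\mapsto (S\omega,\omega_0^{-1}\xi)$ to identify the limit with an integral. Two points of divergence are worth noting. First, the invariant measure for that skew product is $\Ps\times\check\nu$, not $\Ps\times\nu$; your formula with the forward stationary measure $\nu$ will not match the Birkhoff output. Second, the paper does more than the inequality $d(o,Z_no)\geq h_\xi(Z_no)$: it proves (Proposition~\ref{prop distance follows horofunction}, via the median formula for horofunctions and the convergence $Z_no\to\eta(\omega)$) that $|d(o,Z_no)-h_\xi(Z_no)|$ is \emph{bounded}, so that $\lambda$ actually equals the integral and, crucially, so that $h_\xi(Z_no)\to+\infty$ almost surely.

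The genuine gap is your strict-positivity step. From $\int\int h_\xi(g\,o)\,d\mu(g)\,d\check\nu(\xi)=0$ you cannot conclude that ``$\nu$-almost every $\xi$ is fixed up to the cocycle by $\mu$-almost every $g$'': the integrand takes both signs, and vanishing of the average says nothing pointwise. The paper avoids this entirely by invoking Atkinson's Lemma: since $h_\xi(Z_no)\to+\infty$ (this is exactly where the bounded-difference Proposition~\ref{prop distance follows horofunction} is used, not just the $1$-Lipschitz bound), the Birkhoff sums form a \emph{transient} cocycle over an ergodic system, and Atkinson's Lemma then forces the integral to be strictly positive. Your route via ``$\nu$ supported on an invariant set of bounded type'' would require an additional cohomological argument (showing the cocycle is an $L^1$-coboundary and deriving a fixed point) that you have not supplied. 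Finally, your fallback of invoking the regular-element theorem from the abstract is circular: in the paper, the existence of regular elements is proved \emph{after} and \emph{using} the convergence and drift results, not the other way around.
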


While the Roller boundary is the most useful for us, a CAT(0) cube complex is also a CAT(0) space, and therefore is endowed with another natural boundary: its visual boundary $\bd X$. From a measurable point of view, in many cases, there should be an isomorphism between the two boundaries. Indeed, under suitable assumptions, the Roller boundary as well as the visual boundary is the Furstenberg-Poisson boundary of $(\G,\mu)$ by \cite{Fernos} and by \cite{KarlssonMargulis}. 
However, there is in general no natural map which is everywhere defined between the two boundaries.  It is nevertheless possible to define some partial maps: for example, to a point $\eta$ in the Roller boundary, one can associate the set of possible limit points in the visual boundary of any sequence converging to $\eta$ (see Section \ref{Roller vs Visual} for more details).  It might happen that, for certain points of the Roller boundary, this set is reduced to a point. It turns out that we are able to prove that, for almost every limit point of the random walk, this is the case (see Section \ref{RW on visual} and  Proposition \ref{asspecial}).  

After proving the convergence of the random walk to the Roller boundary, it is natural to wonder what happens with the visual boundary. As mentionned above, Karlsson and Margulis proved convergence of the random walk to the visual boundary for groups acting on CAT(0) spaces under the assumption of finite first moment and positivity of the drift \cite{KarlssonMargulis}. By considering actions on CAT(0) cube complexes, we are able to remove the moment condition and prove the following: 

\begin{theorem}
Let $X$ be a finite-dimensional, irreducible, CAT(0) cube complex with a non-elementary, essential action of the group $\G$. Assume that $\G$ stabilizes each irreducible factor of $X$. Then, for any admissible $\mu \in \Prob(\G)$ and for every $o\in X$, almost surely the sequence $(Z_n o)$ converges to some point in the visual boundary.\\
\end{theorem}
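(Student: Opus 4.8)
The plan is to bootstrap from the Roller‑boundary convergence already established to the visual boundary, using the notion of squeezing points. Since $X$ is finite‑dimensional, irreducible (so that $\G$ trivially stabilizes the unique irreducible factor), and the action is essential and nonelementary, the first theorem of the introduction applies: for every $o\in X$, almost surely $Z_n o$ converges to a point $\eta$ of the Roller boundary $\partial X$, and the law of $\eta$ is the harmonic (hitting) measure $\nu\in\Prob(\partial X)$, which is $\mu$‑stationary.

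The bridge to the visual boundary $\bd X$ is the partial correspondence recalled in Section~\ref{Roller vs Visual}: to $\eta\in\partial X$ one associates the set $\Phi(\eta)\subseteq\bd X$ of all subsequential visual limits of sequences in $X$ that converge to $\eta$ in the Roller compactification. Say $\eta$ is \emph{squeezing} when $\Phi(\eta)$ is a singleton $\{\phi(\eta)\}$ and, moreover, every sequence converging to $\eta$ in the Roller topology actually converges to $\phi(\eta)$ in $\bd X$. Proposition~\ref{asspecial} asserts that $\nu$‑almost every $\eta$ is squeezing. Granting this, the theorem is immediate: almost surely $Z_n o\to\eta$ in the Roller compactification with $\eta$ squeezing, hence $Z_n o\to\phi(\eta)$ in $\bd X$. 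So the heart of the matter, and what I expect to be the main obstacle, is Proposition~\ref{asspecial} together with the squeezing‑point machinery of Section~\ref{Roller vs Visual}.

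For Proposition~\ref{asspecial} I would argue as follows. The set $S$ of squeezing points is a Borel, $\G$‑invariant subset of $\partial X$; since the $\G$‑action on $(\partial X,\nu)$ is ergodic (it is a $\mu$‑boundary), one has $\nu(S)\in\{0,1\}$, so it suffices to prove $\nu(S)>0$. The failure of squeezing at $\eta$ comes in two flavors: either an approaching geodesic is free to \emph{spread} over a nondegenerate arc of visual directions, which in an irreducible complex can occur only along an essentially lower‑dimensional, product‑like locus, or the halfspaces determined by $\eta$ fail to pin down the directions of the hyperplanes crossed by approaching geodesics. Irreducibility is used to remove the first, product obstruction; nonelementarity, through the already‑established properties of $\nu$ (in particular non‑atomicity and the fact that the random walk typically crosses configurations of pairwise strongly separated hyperplanes), forces $\nu$ to concentrate on configurations in which the crossed hyperplanes become more and more transverse and hence determine a single asymptotic direction. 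The genuinely delicate point is the uniformity built into the definition of squeezing — controlling \emph{all} sequences converging to $\eta$, not merely exhibiting one visual limit — which I would handle with quantitative strong‑separation and facing‑triple estimates rather than bare convergence; this is precisely where finite‑dimensionality enters.
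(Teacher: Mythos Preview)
Your overall architecture is exactly the paper's: Roller convergence (Theorem~\ref{cvg}), then Proposition~\ref{asspecial} (the limit is almost surely squeezing), then Lemma~\ref{lem:specialvisual} (squeezing forces convergence in $\bd X$). The gap is in your handling of Proposition~\ref{asspecial}. You have effectively replaced the paper's definition of a squeezing point---the existence of $x\in X$, $r>0$, and infinitely many super strongly separated pairs $h\subset k$ at distance $r$ separating $\eta$ from $x$---by the \emph{conclusion} of Lemma~\ref{lem:specialvisual}. The bounded distance $r$ is precisely what supplies the uniformity you flag as ``genuinely delicate'': via the bridge estimate (Proposition~\ref{BridgeGeod}), every CAT(0) geodesic from $x$ into $h$ must pass through a set of uniformly bounded diameter, and this is what pins down a single visual limit for \emph{all} approaching sequences. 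By taking the conclusion as the definition you have lost this mechanism, and your sketch (``quantitative strong-separation and facing-triple estimates'') does not indicate how to recover it.

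The paper's proof of Proposition~\ref{asspecial} is not a $0$--$1$ law followed by a positivity argument; it manufactures the required pairs directly. Irreducibility gives one pair of super strongly separated half-spaces, hence the $\Gamma$-invariant set $\mathcal S$ of all such pairs at that fixed distance $r$ is nonempty. Lemma~\ref{lem:Sinfinite} (from \cite{Fernos}, itself an application of double ergodicity of $B_-\times B$) then says that for $\check\nu\otimes\nu$-almost every $(\eta_-,\eta_+)$ there are infinitely many pairs from $\mathcal S$ separating $\eta_+$ from $\eta_-$. A further isometric-ergodicity step rules out these pairs sitting at bounded depth: if there were only finitely many minimal ones, one would obtain a $\Gamma$-equivariant map from $B_-\times B$ to a countable set, hence a finite $\Gamma$-invariant family of walls, contradicting nonelementarity. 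This yields an infinite descending chain of pairs in $\mathcal S$, which is exactly the (correct) definition of squeezing. The backward boundary $B_-$ and the product $B_-\times B$ are where the argument actually lives, and they do not appear in your outline.
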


Once we have proved the convergence to the boundary, we can better understand   the dynamics of the random walk $Z_n$. Say that a geodesic $\ell$ in $X$ is \emph{contracting} if the projection on $\ell$ of any ball disjoint from $\ell$ has uniformly bounded diameter. An isometry of $X$ is called \emph{contracting} if it is a hyperbolic isometry with a contracting axis. The fundamental paper of Caprace and Sageev \cite{CapraceSageev} proves that for irreducible complexes, any non-elementary action has contracting elements. We are able to prove that these elements occur with high probability in the random walk:

\begin{theorem}
Let $X$ be a finite-dimensional irreducible CAT(0) cube complex with an essential and non-elementary action  of the group $\G$. Then for any admissible $\mu\in \Prob(\G)$, we have that almost surely 
$$\lim_{n\to +\infty}\frac{1}{n} \vert \{k \leq n\mid Z_k \textrm{ is contracting }  \}\vert=1$$
\end{theorem}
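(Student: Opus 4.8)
The plan is to deduce this statement from the convergence to the Roller boundary (the first theorem in the introduction) together with the characterization of contracting elements in terms of the boundary dynamics. The key point is that, for an irreducible complex, an element $g\in\G$ is contracting precisely when it is a rank-one hyperbolic isometry, and this can be detected by looking at how $g$ moves points that are far apart in the Roller boundary: if $go$ is close to a boundary point $\xi^+$ and $g^{-1}o$ is close to a boundary point $\xi^-$, with $\xi^+$ and $\xi^-$ lying in "general position" (e.g. strongly separated, or forming a contracting pair in the sense of Caprace--Sageev), then $g$ is contracting. So the first step is to make precise a quantitative version of this: there is a condition $\mathcal{C}$, open in the appropriate sense on pairs of Roller boundary points, such that if $(g^{-1}o\text{-direction}, go\text{-direction})$ satisfies $\mathcal{C}$ with enough "margin," then $g$ is contracting.

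Next I would set up the standard two-sided / reflected random walk argument. Let $Z_n = g_1\cdots g_n$ be the forward walk and let $\check Z_n = g_0^{-1}g_{-1}^{-1}\cdots g_{-n+1}^{-1}$ (or the analogous reflected walk) be a backward walk driven by the reflected measure $\check\mu$; since $\mu$ is admissible, so is $\check\mu$, and the first theorem applies to both. Thus almost surely $Z_n o \to \xi^+ \in \bd_{\mathrm{Roller}} X$ and the backward walk converges to some $\xi^- \in \bd_{\mathrm{Roller}} X$. For a fixed $k$, the element $Z_k$ has the property that $Z_k^{-1}$ sends $o$ "towards" the backward-walk limit and $Z_k$ sends $o$ "towards" a point close to $\xi^+$; more precisely, conditioning on $Z_k = g$, the future increments $g_{k+1}, g_{k+2},\dots$ form an independent copy of the walk, so $g^{-1}Z_n o$ converges to an independent copy of $\xi^+$, while $g\cdot(\text{backward walk from }g)$ converges to $\xi^-$. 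The crucial independence input is that $\xi^-$ and $\xi^+$ are, almost surely, in general position — this should follow from non-elementarity of the action (the hitting measure on $\bd_{\mathrm{Roller}} X$ is non-atomic and is not supported on any invariant "$\mu$-boundary" degeneracy), so that the pair $(\xi^-,\xi^+)$ almost surely lies in the open set where condition $\mathcal{C}$ holds.

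The heart of the argument is then a Borel--Cantelli / ergodicity computation: I want to show that the proportion of $k\le n$ for which $Z_k$ is contracting tends to $1$. Equivalently, the proportion of "bad" indices $k$ — those for which $Z_k$ fails to be contracting — tends to $0$. By the above, $Z_k$ fails to be contracting only if either the forward walk seen from time $k$ fails to move decisively away from $\xi^-$ within a bounded window, or one of a uniformly small number of "near-degenerate configurations" occurs at time $k$. Using stationarity of the increment process and the convergence statements, the event "$Z_k$ is not contracting" is contained in an event depending only on a window of increments around time $k$ whose probability can be made arbitrarily small; then the subadditive/Kingman ergodic theorem (or a direct second-moment argument using that these windowed events are mixing) forces the Cesàro average of their indicators to converge to that small probability, and letting the window grow gives $0$. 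I expect the main obstacle to be making the "general position of $\xi^-$ and $\xi^+$" rigorous and quantitative in the Roller boundary — one must control not just that the limiting pair is non-degenerate but that the configuration at a typical finite time $k$ is non-degenerate with a margin that does not decay, which requires combining the non-atomicity of the hitting measure with the geometry of the median/combinatorial structure on $\bd_{\mathrm{Roller}} X$ (this is presumably where the "squeezing points" machinery and Proposition~\ref{asspecial} get used). The reduction from the contracting property to a boundary condition, and the ergodic-theoretic averaging, are comparatively routine once that quantitative non-degeneracy is in hand.
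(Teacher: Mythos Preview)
Your approach is genuinely different from the paper's and, as written, has a real gap. The paper does \emph{not} set up a two-sided walk or try to detect contracting elements via the joint position of $Z_k o$ and $Z_k^{-1}o$ near a pair of boundary points. Instead it works one half-space at a time. Fixing a half-space $s$ containing the limit $\eta=\lim Z_n o$, it shows (Lemma~\ref{skewerorflip}) a dichotomy: for all large $k$, either $Z_k s\subset s_2$ for some strongly separated $s_2\subset s_1\subset s$ (``$s$-skewering'', hence contracting by Caprace--Sageev's Lemma~\ref{skewer -> contracting}), or $Z_k s\supset s_2^*$ (``$s$-flipping''). In the flipping case, for $x\in s^*$ one has $Z_k^{-1}x\in s$. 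Now the only ergodic input is the empirical-measure convergence $\frac1n\sum_{k\le n}\delta_{Z_k^{-1}x}\to\check\nu$ (uniqueness of the stationary measure plus a Benoist--Quint corollary), which forces the frequency of $s$-skewering to be at least $\check\nu(s^*)$. Finally, since $\eta$ is regular, one can choose a descending strongly separated chain $(s_m)$ with $\bigcap s_m=\{\eta\}$, so $\check\nu(s_m^*)\to 1$, and the conclusion follows.

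The gap in your sketch is the step ``if $go$ is close to $\xi^+$ and $g^{-1}o$ is close to $\xi^-$ with the pair in general position, then $g$ is contracting.'' For a \emph{fixed} $k$, neither $Z_k o$ nor $Z_k^{-1}o$ is close to any boundary point; only the limits as $k\to\infty$ live on the boundary. You acknowledge this and propose a windowed/Borel--Cantelli fix, but that requires a quantitative finite-time criterion of the form ``if $Z_k$ sends a specific half-space deep into another specific half-space, then $Z_k$ is contracting'' --- which is exactly the Caprace--Sageev skewering lemma. Once you write down such a criterion, the natural object to track is not the pair $(Z_k^{-1}o,Z_k o)$ but whether $Z_k$ skewers a given $s$, and then the paper's dichotomy plus the empirical-measure argument is both simpler and sharper than a two-sided/windowed estimate. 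In particular, no Borel--Cantelli, no mixing, and no squeezing-point machinery are needed here; the whole proof is half a page once Lemmas~\ref{skewerorflip} and~\ref{lem:prop regular} are in place.
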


As an application, we can generalize one of the main results of \cite{CapraceSageev} in the case of reducible complexes, where there cannot be any contracting isometries. The best that one can hope is for  elements which act as contracting isometries in each irreducible factor (of the essential core). These are called \emph{regular elements}. Caprace and Sageev prove that such elements do exist, under the additional assumption that $\G$ is a lattice in $\Aut(X)$ \cite[Theorem D]{CapraceSageev} (see also \cite{CapraceZadnik} for the case of general CAT(0) spaces). 
Using the theorem above, we can get rid of this assumption:

\begin{theorem}
Let $X$ be a finite-dimensional  CAT(0) cube complex with an essential and non-elementary action of the group $\G$. Then there exists regular elements in $\G$.
\end{theorem}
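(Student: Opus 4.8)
The plan is to reduce to the irreducible, nonelementary case already handled by the theorem above on the asymptotic frequency of contracting elements along the random walk, and then to intersect finitely many almost-sure events.

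First I would pass to the essential core $\mathbf X$ of the action: this is a nonempty, finite-dimensional CAT(0) cube complex on which $\G$ acts essentially, nonempty precisely because a nonelementary action is fixed-point free. Decompose $\mathbf X = X_1\times\dots\times X_k$ into its irreducible factors, finitely many because $\mathbf X$ is finite-dimensional. The group $\G$ permutes the $X_i$, so a finite-index subgroup $\G_0\leq\G$ stabilizes each factor; enlarging the index if necessary, I may also assume $\G_0$ acts on each line factor by translations only. Since an element of $\G$ is, by definition, \emph{regular} exactly when it acts as a contracting isometry on each $X_i$, and $\G_0\leq\G$, it suffices to produce a regular element inside $\G_0$.

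The main obstacle is the geometric input that follows: one must check that the $\G_0$-action on each irreducible factor $X_i$ is still essential and, unless $X_i$ is a line, nonelementary, so that the theorem above can be applied to it. For this I would invoke the structure theory of Caprace--Sageev \cite{CapraceSageev}. The line case genuinely occurs (for instance $F_2\times\Z$ acting on $T\times\R$), but is harmless: since $\G_0$ acts on such a factor $X_i\cong\R$ by translations, an element of $\G_0$ acts there as a contracting isometry if and only if it acts as a nontrivial translation, and essentiality forces the translation-length map $\G_0\to\R$ to be a nonzero homomorphism; as a random walk on $\R$ spends asymptotically zero proportion of its time at the origin, almost surely a density-one set of times $j$ has $Z_j$ acting on $X_i$ as a nontrivial translation.

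Granting these reductions, the rest is routine. Fix any admissible $\mu\in\Prob(\G_0)$ and set $Z_n=g_1\cdots g_n$. For each factor $X_i$ which is not a line, the theorem above applies to the $\G_0$-action on $X_i$ and gives, almost surely,
$$\lim_{n\to\8}\frac1n\,\vert\{\,j\leq n : Z_j \text{ acts as a contracting isometry on }X_i\,\}\vert = 1,$$
while for each line factor the same density-one statement was established in the previous paragraph. As there are finitely many factors, almost surely all of these statements hold simultaneously, and a finite intersection of density-one subsets of $\N$ again has density one (its complement is a finite union of density-zero sets), hence is nonempty. Therefore, almost surely some time $j$ — in fact a density-one set of times — satisfies that $Z_j$ acts as a contracting isometry on every $X_i$; such a $Z_j$ is a regular element of $\G_0\subseteq\G$. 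Since a probability-one event is nonempty, $\G$ contains a regular element.
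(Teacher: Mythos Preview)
Your overall strategy---pass to a finite-index subgroup $\G_0$ preserving the factors, apply the irreducible theorem to each factor, and intersect the resulting density-one sets of times---is exactly the paper's approach (Theorem \ref{reg elements!!} and Corollary \ref{reg elements permuting}). The finite-intersection-of-density-one-sets step is precisely how the paper assembles the irreducible results.

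However, your line-factor digression is based on a misconception. Your example $F_2\times\Z$ acting on $T\times\R$ is \emph{elementary}: the two endpoints of the $\R$-factor form a $\G$-invariant pair in the visual boundary of $T\times\R$ (the visual boundary is the suspension $\Sigma(\partial T)$, and its two poles are fixed). More generally, if $X$ has a line factor, that factor is not permutable with any non-line factor, so $\G$ preserves the set of poles it contributes to $\bd X$; hence the action is elementary. Thus under the hypothesis that the $\G$-action on $X$ is non-elementary, line factors simply do not occur. The paper handles this cleanly via the lemma (quoted from \cite[Lemma 2.28]{CFI}) that the $\G_0$-action on \emph{each} irreducible factor is again non-elementary and essential, with no exception carved out for lines---which is consistent, since a non-elementary action on a line is impossible. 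So your ``main obstacle'' is already resolved by that lemma, and the separate random-walk-on-$\R$ argument (which would in any case need justification without a moment hypothesis) is unnecessary.

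Two minor points: passing to the essential core is a no-op since the action is assumed essential, and the relevant lemma on factors is the one from \cite{CFI} quoted in the paper rather than a direct statement from \cite{CapraceSageev}. Once you drop the line-factor detour and invoke that lemma, your proof coincides with the paper's.
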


In fact, not only do regular elements exist, but they will occur in the random walk with high probability (see Corollary \ref{RegularEverywhere!}).
The existence of such elements has some strong consequences about the asymptotic properties of the $\G$-orbits in $X$ \cite{Link}. \\

Our strategy of proof for all these theorems is inspired by some classical results: Kaimanovich \cite{Kaimanovich} for the convergence to the boundary and Guivarc'h and Raugi \cite{GR85} for the positivity of the drift. However, to be able to apply these strategies, we are forced to understand the dynamics on the boundary. An important tool for us is the notion of \emph{regular points} of the boundary (see \S\ref{sec:reg}). These special points were introduced in the paper \cite{Fernos} and exhibit strong contracting properties very useful to us.

 Another distinctive feature of our proof is that, in opposition for example to \cite{Kaimanovich}, we use the identification of the Furstenberg-Poisson boundary (proved in \cite{Fernos}) in order to prove the convergence to the boundary. More precisely, we use that there is a boundary map from the Furstenberg-Poisson boundary of $\G$ to the Roller boundary of $X$, and that the essential image of this map is contained in the set of regular points. Then the contracting properties of the regular points are sufficient to ensure the convergence.

\subsection*{Acknowledgments}

The authors would like to thank Uri Bader, Ruth Charney, Indira Chatterji, Amos Nevo, L'Institut Henri Poincar\'e, and the first and third named authors would like to thank the Laboratoire de Math\'ematique d'Orsay.

\section{Generalities about Random Walks}

\subsection{Generalities and Notation}

Let us start with setting up some notation. In what follows, $\G$ is a discrete countable group. 
We fix an \emph{admissible} probability measure $\mu\in\Prob(\G)$, meaning that the semigroup generated by the support of $\mu$ is $\G$.

We define the random walk on $\G$ as follows. Let $\Omega=\G^{\N}$ and $\Ps$ be the probability measure on $\Omega$ defined by $\Ps=\delta_e\times \mu^{\N^*}$. The space $\Omega$ is the \emph{space of increments}. If $\omega\in \Omega$, we denote by $g_i(\omega)$ the $i$th element of the sequence $\omega$. As is customary in probability theory, we often omit the $\omega$ and write only  $g_i$.

Our main object of interest is the random walk on $\G$, which is the sequence of random variables $Z_n:\Omega\to \G$ defined by  $Z_n(\omega)=g_1(\omega)g_2(\omega)\dots g_n(\omega)$, or for short $Z_n=g_1\dots g_n$.

\subsection{The Furstenberg-Poisson Boundary}

The proof of our results will use an important tool: the Furstenberg-Poisson boundary of $(\G,\mu)$. This boundary is a  space designed to encode the asymptotic properties of the sequences $(Z_n)$. We will briefly recall the definition of this space and the key results that we need. The interested reader might consult \cite{Furman}, \cite{Kaimanovich}, \cite{BaderShalom}, \cite{BFICM}, or \cite{Fernos} for more information.

One possible definition is as follows. We denote by $S:\Omega\to \Omega$ the ``shift" map defined by $S(\omega_0,\omega_1,\dots,\omega_n,\dots)=(\omega_0\omega_1,\omega_2,\dots,\omega_n,\dots)$. 

\begin{definition}
The Furstenberg-Poisson boundary is the space $B$ of ergodic components of the action of $S$ on $(\Omega,\Haar\otimes \mu^{\N^*})$. It is equipped with the pushforward $\nu$ of the measure $\Ps$ by the projection $\Omega\to B$.
\end{definition}

So the Furstenberg-Poisson boundary is a measure space equipped with an action of $\G$ and a probability measure $\nu$ whose class is preserved by $\G$. 

We will need to understand the Poisson boundary of finite index subgroups. More precisely, we need the following result, which is proved in \cite[Lemma 4.2]{Furstenberg}.

\begin{lemma}\label{Poissonfi}
Let $(B,\nu)$ be the Furstenberg-Poisson boundary of $(\G,\mu)$, and let $\G_0<\G$ be a subgroup of finite index. Then there exists an admissible measure $\mu_0\in\Prob(\G_0)$ such that the Furstenberg-Poisson boundary of $(\G_0,\mu_0)$ is $\G_0$-equivariantly isomorphic to $(B,\nu)$.
\end{lemma}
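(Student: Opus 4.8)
The plan is to realize the Furstenberg-Poisson boundary of $\G_0$ as the boundary of an \emph{induced} random walk, and then to identify this induced walk's boundary with $(B,\nu)$. First I would recall the key functorial property of the Poisson boundary: if $\mu$ is admissible on $\G$, the random walk $(Z_n)$ visits the finite-index subgroup $\G_0$ infinitely often almost surely (indeed, $\G/\G_0$ is finite, so by ergodicity of the walk on the finite space of cosets the first-return time $\tau$ to $\G_0$ is a.s.\ finite and has finite expectation by a standard argument — actually finiteness a.s.\ suffices here). Define the induced measure $\mu_0$ on $\G_0$ to be the law of $Z_\tau$, i.e.\ the distribution of the position of the walk at its first return to $\G_0$. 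One checks that $\mu_0$ is admissible on $\G_0$: the support of $\mu_0$ generates $\G_0$ as a semigroup because the support of $\mu$ generates $\G$ as a semigroup, and any element of $\G_0$ can be written as a product of generators which, read as a path, can be decomposed into excursions from $\G_0$ to itself.

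Next I would invoke the classical fact (this is exactly the content of \cite[Lemma 4.2]{Furstenberg}, which one may either cite directly or reprove) that the Poisson boundary of $(\G_0,\mu_0)$ is $\G_0$-equivariantly isomorphic to that of $(\G,\mu)$. The isomorphism is built as follows: the subsequence $(Z_{\tau_k})_{k\geq 1}$ of the $\G$-walk at successive return times to $\G_0$ is distributed exactly as the $\G_0$-walk with increments $\mu_0$, and since this is a subsequence of a convergent sequence (in whatever boundary realization one uses — but more intrinsically, at the level of the abstract Poisson boundary, one compares the tail $\sigma$-algebras), it converges to the same boundary point. Concretely, the map $\Omega(\G) \to \Omega(\G_0)$ sending an increment sequence to the sequence of excursion increments is measure-preserving for $\Ps$ versus $\Ps_0$ and intertwines the shift maps appropriately, hence descends to a measurable $\G_0$-equivariant map $B \to B_0$; the inverse is obtained by concatenating excursions, using that the excursion structure is recoverable from the $\G$-walk. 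One must check that the measure $\nu$ on $B$ pushes forward to the measure $\nu_0$ defining the Poisson boundary of $(\G_0,\mu_0)$, which follows because $Z_{\tau_k}$ has law $\mu_0^{*k}$.

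The main obstacle — or rather the main point requiring care — is the verification that the tail (equivalently, the exit/invariant) $\sigma$-algebras genuinely coincide: a priori the $\G$-walk ``sees more'' than its subsequence at return times, so one must argue that no boundary information is lost when passing to $(Z_{\tau_k})$. This is where one uses that the excursions between consecutive visits to $\G_0$ are, conditionally on their endpoints, independent of the future, so that the full trajectory is a deterministic-plus-independent-noise function of the return subsequence together with i.i.d.\ excursion data that carries no tail information; equivalently, the conditional measures (the ``Furstenberg measures'' / Martin kernel) of the two walks on a common boundary model agree. Since the statement is precisely \cite[Lemma 4.2]{Furstenberg}, for the purposes of this paper I would simply cite it, and the paragraphs above indicate the proof only to make the paper self-contained; the only thing genuinely needed downstream is the \emph{existence} of an admissible $\mu_0$ with $B(\G_0,\mu_0) \cong_{\G_0} (B,\nu)$, which the induced-walk construction supplies.
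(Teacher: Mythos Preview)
Your proposal is correct and in fact coincides with what the paper does: the paper gives no proof at all beyond the citation ``proved in \cite[Lemma 4.2]{Furstenberg}'', and your sketch is precisely the induced-walk construction underlying that reference. Since you yourself note that one may simply cite Furstenberg, there is no discrepancy in approach; your additional paragraphs just unpack the cited argument.
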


The Furstenberg-Poisson boundary presents very strong ergodic properties. This was first observed in \cite{KaimanovichErgodic} and more recently generalized and used in \cite{BFICM}. In the following, we denote by $(B_-,\nu_-)$ the Furstenberg-Poisson boundary of $(\G,\check\mu)$, where $\check\mu\in\Prob(\G)$ is defined by $\check\mu(g)=\mu(g^{-1})$.

\begin{theorem}\label{IsomErgodic}
Let $Y$ be a separable metric space endowed with an action of $\G$ by isometries. Then:
\begin{itemize}
\item Any $\G$-equivariant measurable map $B\to Y$ is essentially constant;
\item Any $\G$-equivariant measurable map $B_-\times B\to Y$ is essentially constant.
\end{itemize}
\end{theorem}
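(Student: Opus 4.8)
The statement is the \emph{isometric ergodicity} of the Furstenberg--Poisson boundary together with the \emph{double ergodicity with isometric coefficients} of the pair $(B_-,B)$; I would follow the classical approach of Kaimanovich \cite{KaimanovichErgodic} (see also \cite{BFICM}), proving the first bullet first and then bootstrapping to the second. Below $d_Y$ denotes the metric on $Y$.

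\emph{First bullet.} The only input is that $(B,\nu)$ is a $\mu$-boundary in the \emph{strong} sense: there is a $\G$-equivariant measurable map $\mathrm{bnd}\colon\Omega\to B$ with $\mathrm{bnd}_*\Ps=\nu$ such that for every Borel $A\subseteq B$ one has $(Z_n)_*\nu(A)\to\1_{\{\mathrm{bnd}(\omega)\in A\}}$ almost surely --- this is Lévy's martingale convergence theorem, since $(Z_n)_*\nu(A)$ is the conditional law given $g_1,\dots,g_n$ of the limit-measurable function $\1_{\{\mathrm{bnd}\in A\}}$. Because $\nu(A)>0$ forces $\Ps(\mathrm{bnd}(\omega)\in A)=\nu(A)>0$, this yields the \emph{strong approximate transitivity} (SAT) property: $\sup_{g\in\G}(g_*\nu)(A)=1$ for every $A$ with $\nu(A)>0$. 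Now let $\phi\colon B\to Y$ be $\G$-equivariant and put $\lambda:=\phi_*\nu$. Then $\lambda$ is $\G$-quasi-invariant, so $\supp(\lambda)$ is a closed $\G$-invariant subset of $Y$; replacing $Y$ by a completion of $\supp(\lambda)$ (the isometric action extends, and the eventual constant value of $\phi$, being an actual value of $\phi$, lies back in the original $Y$) we may assume $Y$ is complete with $\supp(\lambda)=Y$. Since $\phi$ is equivariant and $(B,\nu)$ is SAT, $(Y,\lambda)$ is SAT too, as $(g_*\lambda)(A)=(g_*\nu)(\phi^{-1}A)$ and $\nu(\phi^{-1}A)=\lambda(A)$. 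Fix $y_0\in Y$ and $\eps>0$, and let $V_\eps=\{y\in Y:d_Y(y,y_0)<\eps\}$, which has positive $\lambda$-mass; by SAT there is $g\in\G$ with $1-\eps<(g_*\lambda)(V_\eps)=\lambda(g^{-1}V_\eps)=\lambda\big(\{y:d_Y(y,g^{-1}y_0)<\eps\}\big)$, the last equality because $\G$ acts by isometries. Hence for every $\eps>0$ there is a point $z_\eps:=g^{-1}y_0$ carrying $\lambda$-mass $>1-\eps$ on its $\eps$-neighbourhood; any two such neighbourhoods of radius $<\tfrac12$ must meet, so $(z_{1/k})_k$ is Cauchy, converges to some $z_*\in Y$, and $\lambda$ has full mass in every neighbourhood of $z_*$. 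Therefore $\lambda=\delta_{z_*}$ and $\phi$ is essentially constant. The same argument applies verbatim to $(B_-,\nu_-)$, the Furstenberg--Poisson boundary of $(\G,\check\mu)$.

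\emph{Second bullet.} Let $f\colon B_-\times B\to Y$ be $\G$-equivariant. The plan is to show that for $\nu_-$-a.e.\ $b_-$ the fibre $f(b_-,\cdot)\colon(B,\nu)\to Y$ is essentially constant, with value $\phi(b_-)$ say; equivariance of $f$ then forces $\phi\colon B_-\to Y$ to be $\G$-equivariant, so $\phi$ --- hence $f$ --- is essentially constant by the first bullet applied to $B_-$. To collapse the fibres I would pass to the two-sided random walk (increments $h_i$, $i\in\Z$): there $B$ and $B_-$ are realized as the forward and backward tail boundaries, with joint law $\nu_-\times\nu$ on $B_-\times B$, and the pullback $F:=f\circ(b_-,b_+)$ satisfies $F\circ U=h_1^{-1}\cdot F$ for the measure-preserving time-shift $U$ (both boundary factors get translated by the first increment $h_1$), hence $F=Z_n\cdot(F\circ U^n)$ for all $n$. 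Conditioning this identity on the past together with the first $n$ increments, and invoking the strong $\mu$-boundary property in the forward direction (the conditional law of $b_+$ is then $(Z_n)_*\nu$, which contracts to a Dirac mass in the Lévy sense), is designed to force the fibre measure $f(b_-,\cdot)_*\nu$ to be a Dirac mass --- this is the SAT argument of the first bullet run \emph{relative to} the base $B_-$.

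The main obstacle is precisely this relative step. One cannot simply invoke the SAT property of the product $(B_-\times B,\nu_-\times\nu)$, which \emph{fails}: no single sequence in $\G$ contracts both $\nu_-$ and $\nu$ at once. So one must genuinely couple the forward dynamics (which contract the $B$-coordinate) with the way $\G$ simultaneously transports the $B_-$-coordinate, keeping everything measurable in $b_-$; this bookkeeping is the content of the double-ergodicity argument of \cite{KaimanovichErgodic} (see also \cite{BFICM}), which I would import.
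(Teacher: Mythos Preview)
The paper does not prove this theorem at all; it is stated as a known result with references to \cite{KaimanovichErgodic} and \cite{BFICM}, so there is no ``paper's own proof'' to compare against. Your SAT argument for the first bullet is correct and is the standard one: martingale convergence gives strong approximate transitivity of $(B,\nu)$, equivariance pushes SAT forward to $(Y,\phi_*\nu)$, and since $\G$ acts by isometries every $g$-translate of a small ball is again a small ball, forcing $\phi_*\nu$ to concentrate on a single point. For the second bullet you correctly identify the real obstacle---the product $(B_-\times B,\nu_-\times\nu)$ is \emph{not} SAT, so one must run the contraction argument fibrewise over $B_-$ via the two-sided walk---and then defer to the same references the paper cites; this is honest, and in effect matches what the paper does.
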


\subsection{Stationary Measures}

Let $\G$ act continuously on some topological space $K$.  A measure $\lambda\in \Prob(K)$ is \emph{stationary} if $\mu *\lambda=\lambda$, in other words if $\int_ \G g_*\lambda \,d\mu(g)=\lambda$. It is a general fact that if $\G$ acts continuously on some compact space $K$ then there always exists some stationary measure on $K$ \cite[Lemma 1.2]{Furstenberg1963}.

We will use the following important consequence of the Martingale Convergence Theorem \cite[Lemma 1.3]{Furstenberg1963}:

\begin{theorem}\label{martingale}
Let $\lambda$ be a stationary measure on the compact space $K$. For $\Ps$-almost every $\omega\in\Omega$ there exists $\lambda_\omega\in\Prob(K)$ such that $Z_n(\omega)\lambda$ converges to $\lambda_\omega$. Furthermore we have $\lambda=\int_\Omega \lambda_\omega\, d\mathbb P(\omega)$.
\end{theorem}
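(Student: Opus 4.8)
The plan is to carry out Furstenberg's classical martingale argument for stationary measures; I record it for completeness. Assume $K$ is compact metrizable, as it is in all our applications, so that $\Prob(K)$ with the weak-$*$ topology is a compact metrizable space and $C(K)$ is separable; fix once and for all a countable dense set $\{f_k\}_{k\geq 1}\subseteq C(K)$. Equip $\Omega$ with the filtration $\mathcal{F}_n=\sigma(g_1,\dots,g_n)$, so that each $Z_n$ is $\mathcal{F}_n$-measurable.

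The first step is to prove that for every $f\in C(K)$ the real process
$$M_n^f(\omega):=\int_K f\,d\big(Z_n(\omega)\lambda\big)=\int_K f\big(Z_n(\omega)x\big)\,d\lambda(x)$$
is an $(\mathcal{F}_n)$-martingale bounded by $\|f\|_\infty$. Measurability is immediate because $\G$ is countable and $Z_n$ depends on finitely many coordinates. For the martingale identity I would write $Z_{n+1}=Z_ng_{n+1}$, with $g_{n+1}$ independent of $\mathcal{F}_n$ and of law $\mu$, and compute, using the change of variable $y=g_{n+1}x$,
$$\EE\big[\,M_{n+1}^f\mid\mathcal{F}_n\,\big]=\int_\G\Big(\int_K f(Z_ny)\,d(g_*\lambda)(y)\Big)d\mu(g)=\int_K f(Z_ny)\,d(\mu*\lambda)(y)=\int_K f(Z_ny)\,d\lambda(y)=M_n^f,$$
the penultimate equality being precisely the stationarity $\mu*\lambda=\lambda$. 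By the Martingale Convergence Theorem, for each fixed $f$ there is a set of full $\Ps$-measure on which $M_n^f$ converges, the convergence holding also in $L^1(\Omega,\Ps)$.

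The second step upgrades this to almost-sure convergence of the measures $Z_n(\omega)\lambda$ themselves. Intersecting the countably many null sets coming from $f=f_k$ yields a single set $\Omega_0$ of full measure on which $M_n^{f_k}(\omega)$ converges for all $k$; since each $Z_n(\omega)\lambda$ is a probability measure and $\{f_k\}$ is dense in $C(K)$, a routine $3\eps$ estimate shows that $M_n^f(\omega)$ converges for every $f\in C(K)$ and every $\omega\in\Omega_0$. The functional $f\mapsto\lim_n M_n^f(\omega)$ is linear, positive, bounded, and sends the constant function $1$ to $1$, hence by the Riesz representation theorem equals $\int_K f\,d\lambda_\omega$ for a unique $\lambda_\omega\in\Prob(K)$; equivalently, $Z_n(\omega)\lambda\to\lambda_\omega$ in the weak-$*$ topology. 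Measurability of $\omega\mapsto\lambda_\omega$ is clear, since $\omega\mapsto\int_K f\,d\lambda_\omega=\lim_n M_n^f(\omega)$ is measurable for each $f\in C(K)$ and such maps generate the Borel structure of $\Prob(K)$.

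Finally, for the disintegration, note that $\EE[M_1^f]=\int_\G\int_K f(gx)\,d\lambda(x)\,d\mu(g)=\int_K f\,d(\mu*\lambda)=\int_K f\,d\lambda$, so by the martingale property $\EE[M_n^f]=\int_K f\,d\lambda$ for all $n\geq 1$. Letting $n\to\infty$ and using the $L^1$-convergence (or bounded convergence) gives $\int_\Omega\big(\int_K f\,d\lambda_\omega\big)\,d\Ps(\omega)=\int_K f\,d\lambda$ for all $f\in C(K)$, which is exactly the asserted identity $\lambda=\int_\Omega\lambda_\omega\,d\Ps(\omega)$. I do not anticipate a real obstacle; the only delicate point is the passage in the second step from ``one null set per test function'' to a single null set outside of which the $\Prob(K)$-valued sequence converges, and that is precisely what separability of $C(K)$ (equivalently, metrizability of $K$) provides — for a non-metrizable $K$ one would first pass to the metrizable quotient through which all relevant functions factor.
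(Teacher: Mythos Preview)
Your argument is correct and is precisely Furstenberg's classical proof. The paper does not give its own proof of this theorem at all: it simply records the statement as ``an important consequence of the Martingale Convergence Theorem'' and cites \cite[Lemma 1.3]{Furstenberg1963}, so there is nothing to compare beyond noting that what you wrote is the standard argument behind that citation.
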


It is easy to check that the measure $\nu$ on $B$ is always $\mu$-stationary. Furthermore, if $\lambda$ is a $\mu-$stationary measure on a compact space $K$, then by Theorem \ref{martingale} we get a map $\Omega\to \Prob(K)$ given by $\omega\mapsto \lim Z_n(\omega)\lambda$. This map is clearly $S$-invariant, so it factors through a map $B\to \Prob(K)$.

The above theorem can be generalized to Polish spaces. Let $Y$ be a Polish space, with a continuous action of $\G$. We endow $\Prob(Y)$ with the topology of weak-* convergence, when seen as a dual of the space of bounded continuous function. It is again a Polish space with a continuous action of $\G$. The following is proved in \cite[Lemma 3.2]{BenoistQuintStationnaire}:

\begin{theorem}\label{MartingalePolish}
Let $Y$ be a Polish space with a continuous $\G$-action.
Assume that $\lambda$ is a stationary probability measure on $Y$. Then  for $\Ps$-almost every $\omega\in\Omega$ there exists $\lambda_\omega\in\Prob(Y)$ such that $Z_n(\omega)\lambda$ converges to $\lambda_\omega$. Furthermore we have $\lambda=\int_\Omega \lambda_\omega\, d\mathbb P(\omega)$. 
\end{theorem}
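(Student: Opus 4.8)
The plan is to run the martingale argument behind Theorem~\ref{martingale} inside a metrizable compactification of $Y$ and then to rule out any escape of mass. Fix a homeomorphic embedding of $Y$ as a $G_\delta$ subset of a compact metrizable space $\hat Y$ (for instance, embed $Y$ in the Hilbert cube $[0,1]^{\N}$ and let $\hat Y$ be the closure of its image), and write $Y=\bigcap_{m\geq 1}V_m$ with each $V_m$ open in $\hat Y$. Since $Y\subseteq V_m$, every Borel probability measure on $Y$, viewed as an element of the compact metrizable space $\Prob(\hat Y)$, is concentrated on each $V_m$. Set $\mathcal F_n=\sigma(g_1,\dots,g_n)$ and let $\hat\nu_n:=(Z_n)_*\lambda$, regarded in $\Prob(\hat Y)$; it is $\mathcal F_n$-measurable. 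First I would check that $(\hat\nu_n)_n$ is a martingale with values in the compact convex set $\Prob(\hat Y)$: for $F\in C(\hat Y)$, independence of $g_{n+1}$ from $\mathcal F_n$, linearity of pushforward and the stationarity $\mu*\lambda=\lambda$ give $\EE[\<\hat\nu_{n+1},F\>\mid\mathcal F_n]=\int_\G\<(Z_n g)_*\lambda,F\>\,d\mu(g)=\<(Z_n)_*(\mu*\lambda),F\>=\<\hat\nu_n,F\>$, so $(\<\hat\nu_n,F\>)_n$ is a bounded real martingale. Applying Doob's theorem to $\<\hat\nu_n,F\>$ for $F$ in a countable dense subset of the separable space $C(\hat Y)$, and using that the functionals $\<\hat\nu_n,\cdot\>$ all have norm $1$ and hence are equicontinuous, one obtains a full-measure event on which $\hat\nu_n$ converges weakly in $\Prob(\hat Y)$ to some $\hat\nu_\infty\in\Prob(\hat Y)$ (the limit functional is positive and equals $1$ on the constant $\1$, so by Riesz representation it is a probability measure on the compact space $\hat Y$). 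Since $\hat Y$ carries no $\G$-action, Theorem~\ref{martingale} cannot be invoked directly, but this is its verbatim analogue.

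The heart of the proof is to show that $\hat\nu_\infty$ is \emph{almost surely concentrated on $Y$}. Fix $m$ and pick continuous $g_k\colon\hat Y\to[0,1]$ with $g_k\uparrow\1_{V_m}$ pointwise (e.g.\ $g_k(x)=\min(1,k\,d(x,\hat Y\setminus V_m))$ for a compatible metric $d$). For each $k$, $(\<\hat\nu_n,g_k\>)_n$ is a bounded, hence uniformly integrable, martingale with $\EE\<\hat\nu_n,g_k\>=\<\lambda,g_k\>$ for all $n$; as $\hat\nu_n\to\hat\nu_\infty$ weakly and $g_k\in C(\hat Y)$, its almost sure limit equals $\<\hat\nu_\infty,g_k\>$ and $\EE\<\hat\nu_\infty,g_k\>=\<\lambda,g_k\>$. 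Letting $k\to\infty$, monotone convergence gives $\<\lambda,g_k\>\uparrow\lambda(V_m)=1$ and $\<\hat\nu_\infty,g_k\>\uparrow\hat\nu_\infty(V_m)$, so $\EE\,\hat\nu_\infty(V_m)=1$ and therefore $\hat\nu_\infty(V_m)=1$ almost surely. Intersecting over the countably many $m$ shows $\hat\nu_\infty(Y)=1$ almost surely, so $\hat\nu_\infty$ is the image of a unique $\lambda_\omega\in\Prob(Y)$.

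It remains to upgrade the convergence from $\Prob(\hat Y)$ to $\Prob(Y)$ and to derive the disintegration formula. For $C$ closed in $Y$ with closure $\overline C$ in $\hat Y$ one has $\overline C\cap Y=C$, hence $(Z_n)_*\lambda(C)=\hat\nu_n(\overline C)$ and $\lambda_\omega(C)=\hat\nu_\infty(\overline C)$ because all these measures are carried by $Y$; since $\overline C$ is closed and $\hat\nu_n\to\hat\nu_\infty$ weakly in $\Prob(\hat Y)$, $\limsup_n(Z_n)_*\lambda(C)=\limsup_n\hat\nu_n(\overline C)\leq\hat\nu_\infty(\overline C)=\lambda_\omega(C)$, and by the portmanteau theorem $(Z_n)_*\lambda\to\lambda_\omega$ weakly in $\Prob(Y)$. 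Finally, for every $f\in C_b(Y)$ and every $n$, $\<\lambda,f\>=\int_\Omega\<(Z_n)_*\lambda,f\>\,d\Ps$ (this is just $\mu^{*n}*\lambda=\lambda$), while $\<(Z_n)_*\lambda,f\>\to\<\lambda_\omega,f\>$ boundedly, so dominated convergence gives $\<\lambda,f\>=\int_\Omega\<\lambda_\omega,f\>\,d\Ps(\omega)$; a probability measure on a Polish space being determined by its integrals against $C_b$, this is exactly $\lambda=\int_\Omega\lambda_\omega\,d\Ps(\omega)$.

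The step I expect to be the real obstacle is the concentration $\hat\nu_\infty(Y)=1$: weak convergence in the compactification does not by itself prevent mass from leaking into $\hat Y\setminus Y$ (this genuinely fails for general weakly convergent sequences supported on $Y$), and what saves the day is precisely the martingale identity, which forces $\EE\,\hat\nu_\infty(V_m)=\lambda(V_m)=1$. The remaining ingredients --- the martingale computation, Doob's theorem, the portmanteau step and the disintegration --- are routine and already present in the compact case of Theorem~\ref{martingale}.
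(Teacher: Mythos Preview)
Your argument is correct. The paper itself does not supply a proof of this statement: it is quoted verbatim from \cite[Lemma 3.2]{BenoistQuintStationnaire}, so there is no ``paper's own proof'' to compare against beyond that citation.

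Your approach --- embed $Y$ as a $G_\delta$ in a metrizable compactification, run the Doob martingale argument there on a countable dense family in $C(\hat Y)$, and then use the martingale identity $\EE\,\hat\nu_\infty(V_m)=\lambda(V_m)=1$ to rule out escape of mass --- is exactly the standard route and is essentially how the cited reference proceeds. The key insight, which you identify correctly, is that weak limits in $\Prob(\hat Y)$ of measures supported on $Y$ need not be supported on $Y$ in general, and that it is precisely the preservation of expectations under the martingale convergence (uniform integrability of bounded martingales) that forces $\hat\nu_\infty(Y)=1$. Your verification that $\overline C\cap Y=C$ for $C$ closed in $Y$, and the portmanteau transfer back to $\Prob(Y)$, are clean and correct.

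One cosmetic remark: in the martingale computation you implicitly use that the $\G$-action is on $Y$, so $(Z_{n+1})_*\lambda$ is first formed in $\Prob(Y)$ and only then pushed into $\Prob(\hat Y)$; you note this (``$\hat Y$ carries no $\G$-action''), but it may be worth saying explicitly that the identity $\int_\G (Z_n g)_*\lambda\,d\mu(g)=(Z_n)_*\lambda$ is an equality in $\Prob(Y)$, hence a fortiori in $\Prob(\hat Y)$.
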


\begin{corollary}\label{cor:uniquestationmeas}
Let $\G$ act continuously on some Polish space $Y$. Assume that there is a unique $\G$-equivariant map $\f: B\to \Prob(Y)$. Then there is a unique stationary measure on $Y$.
\end{corollary}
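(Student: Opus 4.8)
The plan is to set up the standard correspondence between $\mu$-stationary probability measures on $Y$ and $\G$-equivariant measurable maps $B\to\Prob(Y)$, and then read off the statement from the hypothesis. First, to a $\mu$-stationary $\lambda\in\Prob(Y)$ I would attach a $\G$-equivariant map $\f_\lambda\colon B\to\Prob(Y)$ as follows. By Theorem~\ref{MartingalePolish}, for $\Ps$-a.e.\ $\omega$ the sequence $Z_n(\omega)\lambda$ converges to some $\lambda_\omega\in\Prob(Y)$, and $\lambda=\int_\Omega\lambda_\omega\,d\Ps(\omega)$. Exactly as in the discussion following Theorem~\ref{martingale} (now invoking Theorem~\ref{MartingalePolish} in place of Theorem~\ref{martingale}), the assignment $\omega\mapsto\lambda_\omega$ is $S$-invariant and therefore factors through a measurable map $\f_\lambda\colon B\to\Prob(Y)$; pushing the identity $\lambda=\int_\Omega\lambda_\omega\,d\Ps(\omega)$ forward along the projection $\Omega\to B$, which by definition sends $\Ps$ to $\nu$, yields the barycenter formula $\lambda=\int_B\f_\lambda(b)\,d\nu(b)$. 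That $\f_\lambda$ is moreover $\G$-equivariant is the usual fact underlying the construction of the Furstenberg--Poisson boundary: for $f\in C_b(Y)$ the function $g\mapsto\int_Y f\,d(g\lambda)$ is bounded and $\mu$-harmonic, and the correspondence $\lambda\leftrightarrow\f_\lambda$ respects the $\G$-actions (it dualizes the resulting positive unital $\G$-map $C_b(Y)\to L^\infty(B,\nu)$).

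Conversely, I would check that any $\G$-equivariant measurable map $\psi\colon B\to\Prob(Y)$ has a $\mu$-stationary barycenter $\lambda_\psi:=\int_B\psi(b)\,d\nu(b)\in\Prob(Y)$: using equivariance of $\psi$ and the $\mu$-stationarity of $\nu$,
$$\mu*\lambda_\psi=\int_\G g_*\left(\int_B\psi(b)\,d\nu(b)\right)d\mu(g)=\int_\G\int_B\psi(gb)\,d\nu(b)\,d\mu(g)=\int_B\psi\,d(\mu*\nu)=\lambda_\psi.$$

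Finally I would assemble the pieces. By hypothesis there is exactly one $\G$-equivariant map $\psi\colon B\to\Prob(Y)$, and its barycenter $\lambda_\psi$ is a $\mu$-stationary measure on $Y$, so one exists; and if $\lambda_1,\lambda_2$ are both $\mu$-stationary then $\f_{\lambda_1}$ and $\f_{\lambda_2}$ are $\G$-equivariant maps $B\to\Prob(Y)$, hence both coincide with $\psi$, so the barycenter formula gives $\lambda_1=\int_B\f_{\lambda_1}\,d\nu=\int_B\f_{\lambda_2}\,d\nu=\lambda_2$, proving uniqueness. The one step deserving real care --- and the main obstacle in a fully rigorous write-up --- is the first paragraph: verifying that $\omega\mapsto\lambda_\omega$ descends to a genuinely $\G$-\emph{equivariant} map on $B$ in the Polish setting, which is exactly the place where the realization of $B$ as a space of ergodic components, and of $L^\infty(B)$ as the bounded $\mu$-harmonic functions on $\G$, is actually used; everything else is formal manipulation of Theorem~\ref{MartingalePolish} together with the stationarity of $\nu$.
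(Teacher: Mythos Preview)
Your proposal is correct and follows essentially the same route as the paper: existence via the barycenter $\int_B \f(b)\,d\nu(b)$, and uniqueness by sending any stationary $\lambda$ through Theorem~\ref{MartingalePolish} to a $\G$-equivariant map $B\to\Prob(Y)$, which must coincide with $\f$ and hence recovers $\lambda$ as the unique barycenter. The paper's write-up is terser and asserts the $\G$-equivariance of $b\mapsto\lambda_b$ without comment, whereas you explicitly flag this as the one step needing care; that is a fair observation, but the overall strategy is identical.
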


\begin{proof}
Let $b\mapsto \lambda_b$ be a $\G$-equivariant map. Then it is easy to check that $\lambda=\int \lambda_b \,d\nu(b)$ is a stationary measure on $Y$.

Now let us turn to the uniqueness. Let $\lambda$ be a stationary measure on $Y$. We know from the Martingale Convergence Theorem that $Z_n\lambda$ converges to some measure $\lambda_b$, and $b\mapsto\lambda_b$ is a $G$-equivariant map from $B $ to $\Prob(Y)$. Hence we have $\lambda_b=\f(b)$. 

Since we have also $\lambda=\int_\Omega \lambda_b \,d\nu(b)$, we see that $\lambda$ is uniquely defined.

\end{proof}

\section{General Facts about CAT(0) Cube Complexes}

In this section we collect some general results about  CAT(0) cube complexes. We assume some familiarity with these basic concepts. We refer the interested reader to \cite{CapraceSageev}, \cite{NevoSageev} or \cite{Fernos} for more information.

\begin{convention}
 In what follows all the complexes we consider will be finite-dimensional and second countable.
\end{convention}

\begin{remark}
The restriction to second countable complexes is needed for ergodic-theoretic arguments, but is not essential to our purpose. Indeed, if a countable group $\Gamma$ acts on a complex $X$, then it is easy to check that there is a sub-complex $Y\subset X$ which is second countable and $\Gamma$-invariant.
\end{remark}

\subsection{Sageev-Roller Duality and the Roller Boundary}\label{SecRoller}

Let $X$ be a finite-dimensional CAT(0) cube complex. In what follows, we identify $X$ with  its set of vertices. We endow $X$ with the combinatorial distance (also called the $\ell^1$-distance): the distance between any  two vertices is defined as their distance in the 1-skeleton of $X$.

We denote by $\frakH$ the collection of half-spaces of $X$. If $h\in\frakH$, we denote by $h^*$ the half-space which is the complement of $h$. For $h,k\in\frakH$, we say that $h$ is transverse to $k$ and write $h\pitchfork k$ if the four intersections $h\cap k$, $h\cap k^*$, $h^*\cap  k$ and $h^*\cap k^*$ are nonempty.

Fix $v\in X$ and consider the collection $U_v= \{h\in\frakH: v\in h\}$.
The \emph{Sageev-Roller Duality} is then obtained via the following observation: 

$$\Cap{h \in U_v}{} h = \{v\}.$$

This shows that every vertex $v$ is uniquely defined by the set $U_v$.
This immediately yields an embedding $X\hookrightarrow 2^\frakH$ obtained by $v\mapsto U_v$. 
Thanks to this duality, it may at times be simpler to confuse $v$ and $U_v$, though we will make an effort to make the distinction. 
 The metric on $X$ becomes then
$d(x,y) = \frac{1}{2}\#(U_x\triangle U_y).$

In the following definition, we identify $X$ with its image in $2^\frakH$. 

\begin{definition} 
The \emph{Roller Compactification} is denoted by $\~X$ and is the closure of $X$ in $2^\mathfrak H$. The \emph{Roller Boundary} is then $\partial X = \~X \setminus X$. 
\end{definition}

Let $\eta\in\~X$. Then, $\eta$ is the limit of some sequence $(x_n)$ of vertices of $X$, and by definition, $U_\eta$ is the pointwise limit of $U_{x_n}$. We say that $\eta$ is in the  half-space $h$ if $h\in U_\eta$. In this way we have a partition $\~X=h\sqcup h^*$.

It is possible (and more common in the literature) to define the Roller boundary as a subset of $2^\frakH$ satisfying some combinatorial conditions (\emph{totality} and \emph{consistency}). This  turns out to be equivalent to the construction described above. 

In the Roller boundary, the vertices of $X$ correspond to  $U\in  \~X\subset 2^\frakH$ satisfying the \emph{descending chain condition}: any decreasing sequence of elements of $U$ is eventually constant.

On the opposite side, we find \emph{nonterminating elements}. These special elements were defined by Nevo and Sageev \cite{NevoSageev} as follows:

\begin{definition}
An element $v\in\~X$ is \emph{nonterminating} if every finite descending chain can be extended, i.e. given any $h\in U_v$ there exists $k\in U_v$ such that
$k\subset h$.

The set of nonterminating elements is denoted by $\partial_{NT}X$.
\end{definition}

\subsection{Medians and Intervals}\label{secmedian}

The \emph{interval} between two points $x$ and $y$ in $X$ is defined as $\I(x,y)=\{z\mid d(x,z)+d(z,y)=d(x,y)\}$.

It is easy to  see that 
$$\I(x,y)=\{z\in X\mid U_x\cap U_y\subset U_z\}$$

This definition extends easily to  the Roller boundary: the interval between $v,w\in\~X$ is defined as $\I(v,w)=\{m\in \~X\mid U_v\cap U_w\subset U_m\}$. This interval structure endows $X$ with the structure of a \emph{median  space} \cite{Chatterji_Niblo,Nica},  which can be extended to the Roller compactification as follows.

The \emph{median} of three points $u,v,w\in \~X$ is the point $m=m(u,v,w)$ defined by the formula
$$\label{eq:median}
U_m = (U_u\cap  U_v) \cup ( U_v\cap  U_w)\cup ( U_w\cap  U_u),
$$

Equivalently, the point $m$ is the unique point 
$$\{m\}= \I(u,v) \cap \I(v,w) \cap \I(w,u).$$

While CAT(0) cube complexes can be quite wild, the structure of intervals is somewhat tamable by the following (see
\cite[Theorem 1.16]{BCGNW}).

\begin{lemma}\label{embedintervals} Let $v,w\in\~{X}$.  
Then the vertex interval $\I(v,w)$ isometrically embeds into $\~{\Z^D}$ 
(with the standard cubulation) where $D$ is the dimension of $X$.
\end{lemma}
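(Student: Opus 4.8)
The plan is to reduce the statement to the combinatorics of the half-spaces that actually separate $v$ from $w$, together with one application of Dilworth's theorem. Write $\frakH(v,w)$ for the set of half-spaces $h\in\frakH$ such that neither $h$ nor $h^*$ lies in $U_v\cap U_w$; equivalently, $h$ separates $v$ from $w$. The first observation is that for $m,m'\in\I(v,w)$ one has $U_m\triangle U_{m'}\subset\frakH(v,w)$: if, say, $h\in U_v\cap U_w$, then $h\in U_m\cap U_{m'}$ by the definition of the interval, and likewise for $h^*$; hence $d(m,m')=\frac{1}{2}\#(U_m\triangle U_{m'})$ only counts half-spaces in $\frakH(v,w)$. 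Next, orient each pair $\{h,h^*\}\subset\frakH(v,w)$ by choosing the representative lying in $U_w$, and let $P\subset\frakH$ be the resulting poset under inclusion. Any two elements $h,k\in P$ satisfy $h\cap k\neq\emptyset$ (both contain vertices approximating $w$) and $h^*\cap k^*\neq\emptyset$ (both contain vertices approximating $v$), so $h$ and $k$ are either nested or transverse. Thus an antichain in $P$ is exactly a family of pairwise transverse half-spaces, and since a family of pairwise crossing hyperplanes spans a cube, any such family has at most $\dim X=D$ elements (pass to finite subfamilies if it is infinite).

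By Dilworth's theorem, $P=C_1\sqcup\dots\sqcup C_k$ with $k\le D$, each $C_i$ a chain, i.e.\ totally ordered by inclusion. The key structural point about each $C_i$ is that it is \emph{interval-finite}: if $k\subsetneq h$ in $C_i$, pick vertices $x\in k$ and $y\in h^*$; every half-space $l$ with $k\subseteq l\subseteq h$ then separates $x$ from $y$, and there are only $d(x,y)<\infty$ such half-spaces. A countable linear order in which every closed interval is finite is order-isomorphic to an interval of $\Z$ (fix a base point $a_0$ and send $a\mapsto\pm(\#[a_0,a]-1)$; immediate successors map to immediate successors, so the image has no gaps). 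Consequently the set of Dedekind cuts of $C_i$, equipped with the metric ``number of elements of $C_i$ strictly between the two cuts'', embeds isometrically into $\~{\Z}=\Z\cup\{\pm\infty\}$: a finite $C_i$ of size $n$ gives the segment $\{0,\dots,n\}$, a half-infinite chain gives $\Z_{\ge 0}\cup\{+\infty\}$, and a bi-infinite chain gives all of $\~{\Z}$.

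Now define $\f_i\colon\I(v,w)\to\~{\Z}$ by sending $m$ to the element of $\~{\Z}$ corresponding to the cut $(\{h\in C_i: h^*\in U_m\},\ \{h\in C_i: h\in U_m\})$ of $C_i$; this really is a cut, since for $m\in\I(v,w)$ the set $\{h\in C_i:h\in U_m\}$ is an up-set of $C_i$ (if $h\in U_m$ and $h\subseteq l$ then $l\in U_m$, a relation that passes to $\~{X}$). Let $\f=(\f_1,\dots,\f_k,0,\dots,0)\colon\I(v,w)\to(\~{\Z})^D=\~{\Z^D}$, the last equality because the half-spaces of the standard cubulation of $\Z^D$ split into $D$ mutually transverse families. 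Injectivity: if $\f(m)=\f(m')$, then $m$ and $m'$ agree on every half-space of $\frakH(v,w)=\bigsqcup_i\{h,h^*:h\in C_i\}$, and on every other half-space as well by the argument of the first paragraph, so $U_m=U_{m'}$. For the isometry, the first observation gives that $d(m,m')$ equals the number of unordered pairs in $\frakH(v,w)$ on which $m,m'$ disagree, which is $\sum_{i}\#\{h\in C_i:\text{exactly one of }m,m'\text{ lies in }h\}$, and the $i$-th summand equals $d_{\~{\Z}}(\f_i(m),\f_i(m'))$ by construction of the cut metric; summing over $i$ gives the $\ell^1$-distance in $\~{\Z^D}$.

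The point I expect to require the most care is the middle paragraph: producing an honest isometry onto a subinterval of $\~{\Z}$ rather than a mere order embedding into some larger linear order — that is, ruling out a chain $C_i$ with two ``stacked'' infinite gaps, which would produce three cuts pairwise at infinite distance with no room for them in $\~{\Z}$. This is exactly where it matters that every half-space in $C_i$ is a genuine finite half-space separating the \emph{fixed} pair $v,w$, so that the finiteness-between-nested-half-spaces estimate applies uniformly all along $C_i$; in particular this is the step that makes the argument work for $v,w$ in the Roller boundary and not just for vertices. The remaining ingredients — the Sageev--Roller duality, Dilworth's theorem, and the fact that pairwise crossing hyperplanes span a cube — are standard.
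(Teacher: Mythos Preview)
The paper does not prove this lemma at all: it simply cites \cite[Theorem 1.16]{BCGNW}. So there is no ``paper's own proof'' to compare against, only the external reference.

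Your argument is correct and is in fact the standard route to this result (and essentially what one finds in the literature). A couple of minor remarks. First, you invoke Dilworth's theorem for a possibly infinite poset; this is fine, since the version for posets of finite width (every antichain of size $\le D$ implies a decomposition into $\le D$ chains) holds in full generality via a compactness argument, but it is worth flagging that you are using the infinite form. Second, your worry about ``stacked infinite gaps'' is well placed and your resolution is the right one: the finite-interval condition on half-spaces (between any two nested half-spaces there are only finitely many) is exactly what forces each chain $C_i$ to be order-isomorphic to an interval of $\Z$, and this step genuinely uses that half-spaces are subsets of $X$ with actual vertices in them, regardless of whether $v,w$ lie in the boundary. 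Finally, the verification that $h\cap k\neq\varnothing$ and $h^*\cap k^*\neq\varnothing$ for $h,k\in P$ (needed to conclude that incomparable elements are transverse) goes through because $v$ and $w$, even if boundary points, are limits of vertices of $X$, so nearby vertices witness the nonemptiness.

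In short: your proof is a clean, self-contained substitute for the citation.
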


\subsection{Product Structure}

A CAT(0) cube complex is said to be \emph{reducible} if it can be expressed as a nontrivial product. Otherwise, it is said to be \emph{irreducible}. A CAT(0) cube complex $X$ with half-spaces $\frakH$, admits a product decomposition $X=X_1\times \cdots \times X_n$ if and only if there is a decomposition 
$$\frakH= \frakH_1 \sqcup \cdots \sqcup \frakH_n$$
such that if $i\neq j$ then $h_i\pitchfork h_j$ for every $(h_i, h_j)\in \frakH_i\times \frakH_j$ and $X_i$ is the CAT(0) cube complex on half-spaces $\frakH_i$. 

Furthermore, we have the following \cite[Proposition 2.6]
{CapraceSageev}:

\begin{proposition}
The decomposition 
$$X=X_1\times \cdots \times X_n$$
where each $X_i$ is irreducible, is unique (up to permutation of the factors).
The group $\Aut(X)$ contains $\Aut(X_1)\times \dots \times \Aut(X_n)$ as a finite index subgroup.
\end{proposition}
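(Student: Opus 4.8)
The statement has two parts: uniqueness of the irreducible decomposition, and the finite-index claim for $\Aut(X)$. I would prove both from the characterization of product decompositions given just before the proposition, namely that decompositions of $X$ correspond to partitions $\frakH = \frakH_1\sqcup\cdots\sqcup\frakH_n$ such that half-spaces in different blocks are transverse, with $X_i$ the cube complex on $\frakH_i$. The key combinatorial object is the binary relation on $\frakH$ of being \emph{not} transverse (together with $h$ being not transverse to itself); call two half-spaces \emph{non-separated} if they fail to be transverse. I would first check that this relation, while not obviously transitive, generates an equivalence relation whose classes are exactly the finest possible blocks $\frakH_i^{\min}$: a subset of $\frakH$ can appear as a block of some product decomposition iff it is a union of such equivalence classes, because the transversality condition between distinct blocks forces every class to lie entirely inside a single block. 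This immediately yields that there is a canonical finest decomposition $X = X_1^{\min}\times\cdots\times X_m^{\min}$ whose factors are the complexes on the equivalence classes.

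Next I would show each $X_i^{\min}$ is irreducible and that \emph{any} decomposition of $X$ into irreducibles must be a reindexing of this one. Irreducibility of $X_i^{\min}$: if $X_i^{\min}$ decomposed nontrivially, its half-space set (one equivalence class) would split into two blocks with all cross-pairs transverse, but within an equivalence class one can connect any two half-spaces by a chain of non-transverse pairs, forcing a cross-pair that is non-transverse — contradiction. Conversely, given any decomposition $X=Y_1\times\cdots\times Y_k$ with each $Y_j$ irreducible, the corresponding partition $\{\frakG_j\}$ has each $\frakG_j$ a union of the $\frakH_i^{\min}$'s by the first paragraph; irreducibility of $Y_j$ then forces $\frakG_j$ to be a \emph{single} $\frakH_i^{\min}$ (a nontrivial union would give a nontrivial product decomposition of $Y_j$). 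Hence $k=m$ and the decomposition agrees with the canonical one up to permutation, proving uniqueness.

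For the statement about $\Aut(X)$: since $\Aut(X)$ acts on $\frakH$ preserving transversality and hence preserving the equivalence relation, it permutes the $m$ canonical blocks $\frakH_1^{\min},\dots,\frakH_m^{\min}$, giving a homomorphism $\Aut(X)\to S_m$. Its kernel $K$ consists of automorphisms fixing each block setwise, hence (via the identification of $X$ with $\prod X_i^{\min}$ as a set of configurations, where the product structure is canonical) $K$ embeds into $\prod_i \Aut(X_i^{\min})$; one checks this embedding is onto by noting that a tuple of automorphisms of the factors assembles to an automorphism of the product, so $K \cong \Aut(X_1^{\min})\times\cdots\times\Aut(X_m^{\min})$. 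Since $[\Aut(X):K] \le m! < \infty$, this product sits inside $\Aut(X)$ as a finite-index subgroup, as claimed.

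\textbf{Main obstacle.} The delicate point is the first paragraph: that the non-transversality relation genuinely forces the claimed rigidity of admissible blocks. One must verify carefully that if $h,h'$ are non-transverse and lie in distinct blocks of some decomposition, a contradiction with the ``all cross-pairs transverse'' condition arises — i.e. that the equivalence classes cannot be split across blocks. Equivalently, one needs that being in the same block is \emph{exactly} generated by non-transversality; showing a block cannot be coarser than a union of classes is the transversality condition, but showing every decomposition's block is a union of \emph{whole} classes (no class straddles two blocks) is the substantive step, and it is here that one leans on the precise combinatorial content of the product characterization cited from \cite{CapraceSageev} and on connectivity arguments inside $\frakH$.
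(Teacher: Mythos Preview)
The paper does not prove this proposition; it simply records it as \cite[Proposition 2.6]{CapraceSageev}. Your sketch is correct and is essentially the argument of Caprace--Sageev: the equivalence relation on $\frakH$ (equivalently, on the set of hyperplanes) generated by non-transversality is canonical and $\Aut(X)$-invariant, its classes give the unique finest product decomposition, and irreducibility of a factor is precisely the statement that its block of half-spaces is a single equivalence class. The permutation representation $\Aut(X)\to S_m$ on the set of classes then has kernel $\prod_i\Aut(X_i)$, giving the finite-index claim.

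One point you leave implicit is that $m<\infty$: choosing one hyperplane from each class yields $m$ pairwise transverse hyperplanes, hence an $m$-cube in $X$, so $m\leq\dim X$ (finite by the paper's standing convention). Your ``main obstacle'' is not really an obstacle: once the equivalence relation is \emph{generated} by non-transversality, a class cannot straddle two blocks of any product partition, since a chain of non-transverse pairs connecting half-spaces in different blocks would contain a consecutive pair lying in distinct blocks --- but such a pair is transverse by hypothesis. So that step is immediate.
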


 Therefore, if $\G$ acts on $X$ by automorphisms, then there is a subgroup of finite index which preserves the product decomposition.

We also note that the Roller compactification behaves quite well with respect to products: indeed, if $X=X_1\times \dots\times X_n$, then we have $\~X=\~X_1\times \dots\times \~X_n$.

\section{Actions on CAT(0) Cube Complexes}

We denote by $\bd X$ the visual  boundary of $X$.

\begin{definition}
 An isometric action on a CAT(0) space is said to be elementary if there is a finite orbit in either the space or the visual boundary. 
\end{definition}

Caprace and Sageev developed a theory of non-elementary actions on a CAT(0) cube complex. They first prove that there is a nonempty ``essential core" where  the action is well behaved. Let us now develop the necessary terminology and recall the key facts.

\begin{definition}
Let $\G<\Aut(X)$. A half-space $h\in \frakH$ is called \emph{shallow} if for some (hence all) $x\in X$, the set $\G x\cap h$ is at bounded distance from $h^*$.

The action of $\G$ on $X$ is \emph{essential} if no half-space is shallow.
\end{definition}

As mentioned above, it is always possible to reduce a non-elementary action to an essential action \cite[Proposition 3.5]{CapraceSageev}:

\begin{proposition}
Let $\G$ be a group with a non-elementary action on $X$. There exists a non-empty subcomplex $Y\subset X$ which is $\G$-invariant and on which the $\G$-action is essential and nonelementary.
\end{proposition}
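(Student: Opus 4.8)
The plan is to reconstruct Caprace--Sageev's \emph{essential core} of the action and to check that the non-elementarity hypothesis keeps it non-empty and non-elementary. Fix a base vertex $x_0\in X$ and write $\frakH$ for the half-spaces of $X$. First I would replace $X$ by the cubical convex hull of $\G x_0$, a non-empty $\G$-invariant convex subcomplex, so that afterwards every hyperplane separates two points of $\G x_0$; in particular $h\cap \G x_0\neq\emptyset\neq h^*\cap \G x_0$ for every $h\in\frakH$. Since $\G$ acts by isometries, ``shallow'' is a $\G$-invariant property of half-spaces, so each hyperplane $\^h$ falls, $\G$-invariantly, into exactly one class: \emph{essential} (both $h$ and $h^*$ deep, i.e.\ not shallow), \emph{half-essential} (exactly one of $h,h^*$ deep), or \emph{inessential} (both shallow); an inessential hyperplane has all of $\G x_0$ within a bounded neighbourhood of it.

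Step one is to discard the half-essential directions. Set $\mathfrak D=\{h\in\frakH:h\text{ is deep and }h^*\text{ is shallow}\}$, a $\G$-invariant family, and let $Y_1=\bigcap_{h\in\mathfrak D}h$; this is a $\G$-invariant convex subcomplex. The substantive point is that $Y_1\neq\emptyset$. I would first note that no two members of $\mathfrak D$ are disjoint: if $h\subseteq k^*$ with $h,k\in\mathfrak D$ then $k\subseteq h^*$, and since $h^*$ is shallow the trace $\G x_0\cap k$ lies in a bounded neighbourhood of $\^h$; as $\^h$ and $\^k$ are nested this confines $\G x_0\cap k$ to a bounded neighbourhood of $\^k$ as well, contradicting deepness of $k$. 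The Helly property for convex subcomplexes of a CAT(0) cube complex then gives a common vertex for every finite subfamily of $\mathfrak D$, so the intersection $\~{Y_1}=\bigcap_{h\in\mathfrak D}\~h$ is non-empty in the Roller compactification. If $\~{Y_1}$ were disjoint from $X$, it would be a non-empty $\G$-invariant closed subset of the Roller boundary whose points are driven to infinity by the nested family $\mathfrak D$ of half-spaces all of whose complements are shallow; following the orbit $\G x_0$ into these half-spaces one extracts a $\G$-fixed point of the visual boundary $\bd X$, contradicting non-elementarity. Hence $Y_1$ contains a vertex.

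Step two discards the inessential directions. Every hyperplane crossing $Y_1$ is essential or inessential, since the half-essential ones do not cross $Y_1$ by construction. The inessential hyperplanes of $Y_1$ form a bounded cubical factor --- all of the orbit stays uniformly near each, and an unbounded such factor would again yield a $\G$-fixed point at infinity --- so the restriction quotient $Y$ of $Y_1$ obtained by deleting exactly the inessential hyperplanes is a non-empty CAT(0) cube complex carrying a $\G$-action with no shallow half-space; that is, the $\G$-action on $Y$ is essential. Up to a cubical subdivision, $Y$ is realized as a $\G$-invariant convex subcomplex of $X$.

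It remains to see the $\G$-action on $Y$ is non-elementary, and this is inherited from $X$: a finite $\G$-orbit of vertices in $Y$ would, via Bruhat--Tits applied to the bounded fibres of $Y_1\to Y$, give a $\G$-fixed vertex in $Y_1\subseteq X$; and a finite $\G$-orbit in the visual boundary $\bd Y$ would likewise give a $\G$-fixed point in $\bd X$ (deleting a bounded factor does not affect the visual boundary), each contradicting the hypothesis on $X$. The main obstacle is precisely the non-emptiness in step one, together with the companion boundedness claim in step two: in both places one must convert the purely combinatorial shallowness hypotheses into an honest point of the visual boundary fixed by $\G$. Everything else is routine manipulation of half-spaces, the Helly property, and restriction quotients.
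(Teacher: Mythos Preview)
The paper does not give its own proof of this proposition; it simply quotes it as \cite[Proposition~3.5]{CapraceSageev}. Your outline is a faithful reconstruction of Caprace--Sageev's essential-core argument, so in that sense you and the paper are doing the same thing: you are supplying the proof the paper outsources.

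One point in Step two deserves care. The claim that the inessential hyperplanes of $Y_1$ ``form a bounded cubical factor'' is not literally true: a family of hyperplanes need not split off as a direct factor. What is true, and what Caprace--Sageev actually use, is that the restriction quotient of $Y_1$ onto the \emph{inessential} hyperplanes has bounded $\G$-orbits (this is exactly your observation that the orbit stays in a uniform neighbourhood of each such hyperplane), hence a $\G$-fixed vertex there; the fibre of $Y_1$ over that fixed vertex is then a genuine $\G$-invariant convex subcomplex of $Y_1\subseteq X$ whose hyperplanes are precisely the essential ones. This replaces your sentence ``up to a cubical subdivision, $Y$ is realized as a $\G$-invariant convex subcomplex of $X$'', which as stated is not correct (restriction quotients are quotients, not subcomplexes). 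With that adjustment your argument goes through.
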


Suppose that $\G$ is acting on $X$ a CAT(0) cube complex. A simple but powerful concept introduced by Caprace and Sageev is that of flipping a half-space. A half-space $h\in \frakH$ is said to be $\G$-flippable if there is a $g\in \G$ such that $h^* \subset g h$. The following is due to Caprace and Sageev:

\begin{lemma}[Flipping Lemma]\label{flip}
 Let $\G$ act non-elementarily on the CAT(0) cube complex $X$. If $h\in \frakH$ is essential, then $h$ is $\G$-flippable. 
\end{lemma}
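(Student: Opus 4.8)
The plan is to prove the Flipping Lemma by combining the essentiality of $h$ with the non-elementarity of the action, using a limiting/compactness argument on the Roller boundary together with the fact that a non-elementary action cannot fix a point in $\partial_\eye X$ or in $\~X$.

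First I would set up a proof by contradiction: suppose $h$ is essential but not $\G$-flippable, i.e. for every $g\in\G$ we have $g h\not\supset h^*$, equivalently $g h\cap h$ is ``large'' in the sense that $gh^*\cap h^*\neq\emptyset$ is not the right formulation — more precisely, $h^*\not\subset gh$ means $gh^*\cap h^*\neq \emptyset$, i.e. there is always a vertex outside both $h$ and $gh$. Translating via the Sageev-Roller duality, non-flippability says that for all $g$, the half-space $gh^*$ is never contained in $h$. The key consequence to extract is that the family $\{gh : g\in\G,\ gh\subseteq h\}$ (half-spaces in the $\G$-orbit of $h$ that are nested inside $h$) cannot ``fill up'' $h$; intuitively the orbit $\G x$ stays on one side in a controlled way, which should contradict essentiality (no half-space is shallow) — the subtle point is that essentiality alone gives deep half-spaces but one must leverage non-elementarity to actually move $h^*$ fully across.

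The main technical step: I would use the essentiality of $h$ to produce group elements $g_n$ with $g_n h \subsetneq h$ and $d(g_n x, h^*)\to\infty$ (deep translates), by iterating the definition of non-shallow half-space. Then consider the nested intersection and pass to a limit point $\eta\in\~X$ lying deep inside $\bigcap g_n h$. Symmetrically, using the inverse direction, one obtains a point on the other side. The hypothesis that $h$ is \emph{not} flippable should force the existence of a $\G$-invariant (or finite-orbit) configuration — for instance a point of $\~X$ or of $\bd X$ fixed by a finite-index subgroup, or a bounded orbit — contradicting the definition of non-elementary action. Concretely, I expect the argument to show that non-flippability of an essential $h$ implies all of $\G$ preserves the ``halfspace at infinity'' determined by the descending chain $h\supset g_1 h\supset g_2g_1 h\supset\cdots$, yielding a fixed point in the Roller boundary whose image gives a finite orbit in $\bd X$ or $\~X$.

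The hard part will be making precise the combinatorial mechanism by which non-flippability propagates to \emph{every} group element and then extracting from it an honest elementary invariant (a finite orbit in $X$ or $\bd X$). One has to be careful that essentiality is a statement about orbits being unbounded on \emph{both} sides of each half-space, while flippability demands a single element realizing the full flip; bridging this gap presumably requires a double-deep-translate argument (find $g$ pushing $h^*$ deep into $h$ from one side and $g'$ from the other, then use the group structure), and controlling transversality so that the pushed copies genuinely nest rather than cross. I would expect the cleanest route is the one actually used by Caprace and Sageev: assume non-flippable, build a $\G$-invariant halfspace-ultrafilter at infinity, and invoke that a non-elementary action has no fixed point in $\~X$.
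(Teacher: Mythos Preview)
The paper does not supply a proof of this lemma; it is quoted as a result of Caprace and Sageev, so there is no in-paper argument to compare against directly.

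That said, your sketch has a genuine gap relative to the Caprace--Sageev proof. The overall shape---assume $h$ is non-flippable, manufacture a $\G$-fixed point in $X\cup\bd X$, contradict non-elementarity---is correct. But your mechanism for producing the fixed point is to build a descending chain $h\supset g_1 h\supset g_2 g_1 h\supset\cdots$ of translates of $h$. Essentiality only says that orbit \emph{points} go arbitrarily deep into $h$; it does not hand you translates $gh$ \emph{nested} inside $h$---the half-space $gh$ could be transverse to $h$ even when $go$ is deep in $h$. Obtaining strictly nested translates is exactly the content of the Double Skewering Lemma, which in \cite{CapraceSageev} is \emph{derived from} the Flipping Lemma, so invoking it here is circular.

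The missing idea is this: non-flippability says $h^*\not\subset gh$ for all $g$, equivalently $gh^*\cap h^*\neq\varnothing$; by equivariance, $gh^*\cap g'h^*\neq\varnothing$ for all $g,g'\in\G$. The Helly property of half-spaces in a CAT(0) cube complex promotes pairwise intersection to the finite intersection property, so closing $\{gh^*:g\in\G\}$ under finite intersections gives a $\G$-invariant filtering family of nonempty closed convex sets. Theorem~\ref{thm:CapLytch} then applies: either $\bigcap_g gh^*\neq\varnothing$, which is a $\G$-invariant set contained in $h^*$ and hence forces $h$ to be shallow, contradicting essentiality; or the common visual boundary has a canonical center fixed by $\G$, contradicting non-elementarity. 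This pairwise-intersection/Helly step is what replaces the descending chain you were reaching for.
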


Another very important operation on half-spaces studied by Caprace and Sageev is the notion of double skewering. The following is  again from \cite{CapraceSageev}:

\begin{lemma}[Double Skewering Lemma]\label{Double Skewering Lemma}
 Let $\G$ act non-elementarily on the CAT(0) cube complex $X$. If $h\subsetneq k$ are two essential half spaces, then there exists an $g \in \G$ such that $$g k\subsetneq h \subsetneq k.$$
\end{lemma}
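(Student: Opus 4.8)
The plan is to deduce the statement from the Flipping Lemma (Lemma~\ref{flip}), applied twice, using only the easy facts that essentiality of a half-space is preserved by the $\G$-action (immediate from the definition, since $\G x\cap gh=g(\G x\cap h)$ for $g\in\G$) and by passing to the complementary half-space (as essentiality refers to the bounding hyperplane). First I would reformulate the goal: since $h\subsetneq k$ is already given, it suffices to produce $g\in\G$ with $gk\subsetneq h$, and then $gk\subsetneq h\subsetneq k$ comes for free.

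Here is the chain of steps. Since $h$ is essential, the Flipping Lemma provides $a\in\G$ with $h^*\subseteq ah$; taking complements, this reads $ah^*\subseteq h$. From $h\subsetneq k$ we get $k^*\subsetneq h^*$, so applying $a$ gives $ak^*\subsetneq ah^*\subseteq h$, hence $ak^*\subsetneq h$ — and this inclusion is proper precisely because $h\subsetneq k$ is, so that we need no strictness from the Flipping Lemma itself. Now $ak^*$ is again an essential half-space, being a $\G$-translate of $k^*$, so the Flipping Lemma applied to it yields $b\in\G$ with $ak=(ak^*)^*\subseteq b(ak^*)$, and taking complements once more, $bak\subseteq ak^*$. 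Combining these inclusions, $$bak\subseteq ak^*\subsetneq h\subsetneq k,$$ so $g:=ba$ satisfies $gk\subsetneq h\subsetneq k$, as wanted.

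The point that requires a little care is the parity of the complements: one application of the Flipping Lemma to $h$ only pushes a translate of $k^*$ — not of $k$ — inside $h$, so the second application has to be made to $ak^*$ (rather than again to $h$, or to $k$) in order to land an honest translate of $k$ inside $h$. Apart from this bookkeeping and the routine check that essentiality survives translations and complementation, nothing substantive remains: the real content, namely that an essential half-space can be flipped at all under a non-elementary action, is entirely absorbed into the Flipping Lemma, which we are assuming. So I expect no genuine obstacle here beyond keeping the complements straight.
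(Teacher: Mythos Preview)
Your argument is correct: two applications of the Flipping Lemma, with the bookkeeping of complements handled exactly as you indicate, yield $g=ba$ with $gk\subsetneq h\subsetneq k$, and the strictness is indeed inherited from $h\subsetneq k$ rather than from the Flipping Lemma itself. The paper does not supply its own proof of this lemma but simply attributes it to Caprace--Sageev; your derivation from the Flipping Lemma is in fact the standard one used there, so there is nothing to compare.
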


 For the proof of the following lemma we refer to \cite[Lemma 2.28]{CFI}.

\begin{lemma}
Let $\G\to \Aut(X)$ be a non-elementary and essential action. Let $\G_0<\G$ be the finite index subgroup which preserves every factor. Then the action of $\G_0$ on each irreducible factor of $X$ is again non-elementary and essential.
\end{lemma}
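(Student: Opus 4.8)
The plan is to reduce the assertion to the corresponding statements for essentiality and non-elementarity of the factor action, treating the two properties separately, and in both cases exploiting the product decomposition $\frakH = \frakH_1 \sqcup \dots \sqcup \frakH_n$ together with the fact that $\G_0$ preserves each $\frakH_i$. Fix an irreducible factor $X_i$, so that the half-spaces of $X_i$ are exactly those in $\frakH_i$, and recall that $\~X = \~X_1 \times \dots \times \~X_n$, so there is a $\G_0$-equivariant projection $p_i \colon \~X \to \~X_i$ which on vertices sends $U_x \cap \frakH_i$ to the corresponding vertex of $X_i$. The key elementary fact is that the combinatorial distance on $X$ restricts, in each coordinate, to the combinatorial distance on $X_i$: $d_X(x,y) = \sum_j d_{X_j}(p_j(x), p_j(y))$, since $U_x \triangle U_y$ is the disjoint union of the $(U_x \triangle U_y) \cap \frakH_j$.

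First I would handle essentiality. Suppose $h \in \frakH_i$ were shallow for the $\G_0$-action on $X_i$: then $\G_0 x_i \cap h$ lies within bounded distance of $h^*$ in $X_i$ for some $x_i \in X_i$. Pick any $x \in X$ with $p_i(x) = x_i$. Viewing $h$ as a half-space of $X$ (its preimage under $p_i$), for $g \in \G_0$ we have $gx \in h \iff g x_i \in h$, and $d_X(gx, h^*) = d_{X_i}(g x_i, h^*)$ because a geodesic from $g x_i$ to $h^*$ only changes the $i$-th coordinate. Hence $h$ would be shallow for the $\G_0$-action on $X$; but the lemma stated just before (with $\G_0$ in place of $\G$, using that $\G_0$ acts essentially on $X$ — this is the hypothesis) says no half-space of $X$ is shallow, a contradiction. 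Wait: the hypothesis is that $\G_0$ acts essentially on $X$, so this is immediate. Thus the $\G_0$-action on $X_i$ is essential.

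Next I would handle non-elementarity. I must rule out a finite $\G_0$-orbit in $X_i$ or in $\bd X_i$. For a finite orbit in $X_i$: its median (or the set of vertices it spans) would be a bounded $\G_0$-invariant subcomplex of $X_i$; combining this in the $i$-th coordinate with the essentiality in the other coordinates quickly leads to a contradiction, or more directly, a finite orbit in $X_i$ together with an arbitrary orbit point in the complementary factors would let one use the Double Skewering Lemma for the essential $\G_0$-action on $X$ to produce unbounded displacement in $\frakH_i$, contradicting finiteness. For a finite orbit in $\bd X_i$: a point of $\bd X_i$ includes canonically (in the $i$-th coordinate, with any fixed CAT(0) ray in the other factors) into $\bd X$, and a $\G_0$-fixed point up to finite index in $\bd X_i$ would give a $\G_0$-invariant finite set in $\bd X$, contradicting that $\G_0$ acts non-elementarily on $X$. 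So the cleanest route is: a finite orbit in $X_i$ or $\bd X_i$ produces a finite orbit in $X$ or $\bd X$ by taking products with fixed data in the remaining factors, contradicting the non-elementarity of the $\G_0$-action on $X$ (which holds by hypothesis).

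The main obstacle I anticipate is the non-elementarity step, specifically ruling out a finite orbit in the visual boundary $\bd X_i$: one has to be careful that the embedding $\bd X_i \hookrightarrow \bd X$ obtained by freezing the other coordinates is $\G_0$-equivariant and sends finite orbits to finite orbits, which uses that $\G_0$ preserves the product decomposition (so that it acts coordinatewise on $\~X = \prod \~X_j$ and on $\bd X$). Once equivariance of the coordinate projections and embeddings is nailed down, both halves follow by transporting the offending invariant finite set between $X$ and its factor. I would also remark that essentiality of the $X_i$-action alone already rules out global fixed points, so the real content is ruling out finite orbits in the boundary, and there the product embedding argument is what does the work.
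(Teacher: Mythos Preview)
The paper does not prove this lemma at all: it simply refers the reader to \cite[Lemma 2.28]{CFI}. So there is no ``paper's own proof'' to compare against, and your sketch is a reasonable attempt to fill in what the cited reference presumably contains.

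Your argument is close to correct but has one genuine slip and one point that needs more care.

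\textbf{The slip.} You write ``the hypothesis is that $\G_0$ acts essentially on $X$.'' It is not: the hypothesis is that $\G$ acts essentially and non-elementarily on $X$. You must first pass these two properties from $\G$ to the finite-index subgroup $\G_0$, still on $X$. This is easy (a $\G_0$-orbit is at bounded Hausdorff distance from a $\G$-orbit, so shallowness is unchanged; and a finite $\G_0$-orbit in $X$ or $\bd X$ yields a finite $\G$-orbit since $[\G:\G_0]<\infty$), but it is a step, and without it your reductions have nothing to land on.

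\textbf{The point needing care.} Your ``product with fixed data'' trick works for $\bd X_i$ but \emph{not} for $X_i$. If $F\subset X_i$ is a finite $\G_0$-orbit and you form $F\times\{c\}\subset X$, then $\G_0$ moves $c$ in the complementary factors, so the resulting orbit in $X$ is typically infinite. Fortunately you also give the correct argument: once essentiality on $X_i$ is established, a finite $\G_0$-orbit in $X_i$ would have a circumcenter fixed by $\G_0$, and then any half-space of $X_i$ containing that point is $\G_0$-shallow, contradicting essentiality. Keep that argument and drop the product one for interior points.

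For $\bd X_i$ the product trick does work, but the equivariance you flag as the ``main obstacle'' requires an extra sentence: if $\gamma$ is a ray in $X_i$ and $c$ is a point in the complementary factor, then for $g\in\G_0$ the rays $t\mapsto(g\gamma(t),c)$ and $t\mapsto(g\gamma(t),gc)$ stay at bounded distance $d(c,gc)$, hence represent the same point of $\bd X$. That is what makes the embedding $\bd X_i\hookrightarrow\bd X$ genuinely $\G_0$-equivariant. With these two fixes your outline goes through.
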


\section{Separation Properties of Hyperplanes and the Regular Boundary}

\subsection{Strongly Separated Hyperplanes}

The following notion was introduced by Behrstock and Charney \cite{Behrstock_Charney}, in their study of Right Angled Artin Groups. Caprace and Sageev later used this to find a powerful criterion for irreducibility of CAT(0) cube complexes. 

Recall that two half-spaces $h$ and $k$  are \emph{transverse} if $\^ h \cap \^k \neq \varnothing$. This is equivalent to the four intersections  $h\cap k$, $h\cap k^*$, $h^*\cap  k$ and $h^*\cap k^*$ being nonempty.

\begin{definition}
Two walls $\^h$ and $\^k$ are called \emph{strongly separated} if there is no wall which is transverse to both $\^h$ and $\^k$. Two half-spaces are said to be \emph{strongly separated} if their walls are so. 
\end{definition}

Clearly if a complex is not irreducible, then it is can not contain strongly separated pairs. This turns out to be both necessary and sufficient:

\begin{theorem}[\cite{CapraceSageev}]
Let $X$ be a CAT(0) cube complex such that the action of $\Aut(X)$ is essential and nonelementary. There exists a pair of strongly separated half-spaces if and only if $X$ is irreducible. 
\end{theorem}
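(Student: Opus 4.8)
The plan is to prove the two implications separately; the ``only if'' direction is immediate, and the ``if'' direction is the substantial one. For ``only if'' I would just record the observation already made above: in a nontrivial product $X=X_1\times\cdots\times X_n$, a (disjoint) pair of walls $\^h,\^k$ must lie in a single factor $\frakH_i$, since walls in distinct factors are transverse; and then every wall belonging to any other factor $\frakH_j$ is transverse to both $\^h$ and $\^k$. So a reducible complex has no strongly separated pair, i.e.\ the existence of such a pair forces $X$ to be irreducible.

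For ``if'' I would argue by contraposition: assuming that no two half-spaces of $X$ are strongly separated, I would produce a nontrivial product decomposition. Since the $\Aut(X)$-action is essential, every half-space is non-shallow, so the Flipping Lemma and the Double Skewering Lemma apply to all half-spaces; combining flipping with repeated double skewering produces arbitrarily long properly nested chains $h_1\subsetneq h_2\subsetneq\cdots\subsetneq h_N$ (indeed bi-infinite ones). The first small lemma I would isolate is that transversality \emph{propagates along nested chains}: if a wall $\^m$ is transverse to $\^h$ and to $\^k$ and $h\subseteq i\subseteq k$, then $\^m$ is transverse to $\^i$, because $i\cap m\supseteq h\cap m$, $i\cap m^*\supseteq h\cap m^*$, $i^*\cap m\supseteq k^*\cap m$ and $i^*\cap m^*\supseteq k^*\cap m^*$ are all nonempty. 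Together with the standing hypothesis that every nested pair admits a common transverse wall, this gives, for every $N$, a single wall transverse to all of $\^h_1,\dots,\^h_N$ at once.

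The crux is to upgrade this abundance of walls crossing long chains into an honest splitting $\frakH=\frakH_1\sqcup\frakH_2$ into two nonempty families with every wall of $\frakH_1$ transverse to every wall of $\frakH_2$; the product criterion recalled above then exhibits $X$ as a nontrivial product, contradicting irreducibility. Here I would use finite-dimensionality decisively --- a collection of pairwise-transverse half-spaces has at most $D=\dim X$ elements --- to control how the walls crossing a bi-infinite chain can accumulate (a pigeonhole/Ramsey argument), and then use non-elementarity together with flipping and double skewering to propagate the resulting local splitting across all of $\frakH$.

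I expect this last step to be the main obstacle. The first two steps are short, but converting the purely local statement ``every nested pair of half-spaces has a common transverse wall'' into a global product decomposition is delicate precisely because $X$ need not be proper or locally finite; it is exactly here that finite-dimensionality (to bound antichains of pairwise-transverse walls) and the essentiality/non-elementarity hypotheses (which rule out degenerate complexes such as a single cube or a line, and supply the deep nested chains the argument feeds on) all have to be used in concert. This is, unsurprisingly, the part for which Caprace and Sageev do the real work.
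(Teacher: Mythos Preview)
The paper does not give its own proof of this theorem: it is stated with attribution to \cite{CapraceSageev} and used as a black box. The only thing the paper itself says is the sentence immediately preceding the statement, ``Clearly if a complex is not irreducible, then it can not contain strongly separated pairs,'' which is exactly your ``only if'' paragraph. So there is no in-paper proof to compare your attempt against.

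As for the substance of your sketch: the easy direction is correct and complete. For the hard direction your outline is on the right track --- contrapose, use essentiality/non-elementarity to feed Double Skewering and produce arbitrarily long nested chains, observe that transversality propagates along a chain, and then try to split $\frakH$ into two mutually transverse families --- but, as you yourself flag, the last step is the entire content of the Caprace--Sageev argument and you have not carried it out. In particular, nothing in your write-up explains \emph{how} the dimension bound plus a Ramsey/pigeonhole step actually manufactures two nonempty $*$-closed families $\frakH_1,\frakH_2$ with every wall of $\frakH_1$ transverse to every wall of $\frakH_2$; this requires defining the right equivalence relation on hyperplanes and checking its nontriviality, which is where Caprace and Sageev do the real work. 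So your proposal is a correct proof of one direction together with an honest but incomplete road map for the other; since the paper simply cites the result, that is already more than the paper provides, but it is not a proof.
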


\subsection{The Combinatorial Bridge}
\label{bridge}

Behrstock and Charney showed that the CAT(0) bridge connecting two strongly separated walls is a finite geodesic segment \cite{Behrstock_Charney}. In \cite{CFI} this idea is translated to the ``combinatorial", i.e. median setting for general walls. For our purposes, it suffices to consider strongly separated pairs. Most of what follows is from or adapted from \cite{CFI} and \cite{Fernos}.

Let $h_1\subset h_2$ be a nested pair of halfspaces. Consider the set of pairs of points in $h_1\times h_2^*$ minimizing the distance between $h_1$ and $h_2^*$, that is
$$M_{h_1,h_2} =\{(x, y) \in h_1\times h_2^* :  \text{ if }(a,b) \in h_1\times h_2^* \text{ then } d(x,y)\leq d(a,b)\}.$$

Observe that we immediately have that $(x,y) \in M_{h_1,h_2}$ then $x,y \in X$. The following lemma is taken from \cite[Section 2.G]{CFI}

\begin{lemma}\label{bridgeunique}
If $h_1\subset h_2$ are strongly separated nested half-spaces, then there exists a unique pair of vertices $(p_1,p_2)$ such that $M_{h_1,h_2} = \{(p_1,p_2)\}$.
\end{lemma}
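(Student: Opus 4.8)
The plan is to show existence first and then uniqueness, both by exploiting the strong separation hypothesis together with the embedding of intervals into $\~{\Z^D}$ (Lemma \ref{embedintervals}).

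\textbf{Existence.} First I would argue that the infimum defining $M_{h_1,h_2}$ is attained. The key point is that if $(x,y)\in h_1\times h_2^*$ realizes the distance $d(h_1,h_2^*)$, then one may replace $(x,y)$ by a pair lying in the interval $\I(x,y)$: indeed, any geodesic from $x$ to $y$ must cross $\^{h_1}$ and then $\^{h_2}$, so if we let $x'$ be the point where it first enters $h_1^*$ from the $h_1$-side... more carefully, let me instead take $x$ to be the vertex of $h_1$ closest to $h_2^*$ and $y$ the vertex of $h_2^*$ closest to $h_1$; these are the ``gates'' and they exist because half-spaces are convex and gate-convex in a CAT(0) cube complex (finite-dimensionality guarantees the relevant minima are attained over the finitely many hyperplanes separating any fixed basepoint from the half-space). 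This already gives $M_{h_1,h_2}\ne\varnothing$ and shows $x,y\in X$ as noted in the excerpt.

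\textbf{Uniqueness.} Suppose $(p_1,p_2)$ and $(q_1,q_2)$ both lie in $M_{h_1,h_2}$. The walls crossed by a geodesic from $p_1$ to $p_2$ are exactly the walls separating $h_1$ from $h_2^*$, i.e. the walls $\^{w}$ with $h_1\subseteq w\subseteq h_2$ (including $\^{h_1}$ and $\^{h_2}$ themselves) — this is where minimality of $d(p_1,p_2)$ is used: the geodesic cannot cross any wall twice, and it cannot cross a wall transverse to the ``corridor'' without increasing length. Strong separation of $\^{h_1}$ and $\^{h_2}$ means no wall is transverse to both, and I claim it forces the walls strictly between $h_1$ and $h_2$ to form a chain: if $\^{u},\^{v}$ were two such walls that are transverse to each other, a pigeonhole/transversality argument shows one of them is transverse to $\^{h_1}$ or to $\^{h_2}$, contradicting strong separation (this is the main obstacle — making this transversality bookkeeping precise; it is essentially the statement that the ``combinatorial bridge'' is a geodesic segment, following Behrstock--Charney). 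Consequently $\I(p_1,p_2)$ embeds in $\~{\Z^D}$ by Lemma \ref{embedintervals}, but since all the separating walls are pairwise nested, the interval is in fact a \emph{geodesic segment} (embeds in $\~{\Z^1}$), so $p_1$ and $p_2$ are its two endpoints and are uniquely determined by the set of separating walls. The same applies to $(q_1,q_2)$; since both pairs are separated by exactly the same set of walls (those $w$ with $h_1\subseteq w\subseteq h_2$), we get $U_{p_1}\triangle U_{q_1}$ contains no wall, hence $p_1=q_1$, and likewise $p_2=q_2$.

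\textbf{Where the difficulty lies.} The routine part is the gate argument for existence. The substantive part is the claim that strong separation makes the family of intermediate walls a chain and hence the bridge a single geodesic segment; this is exactly the combinatorial translation of the Behrstock--Charney result and, as the excerpt indicates, is developed in \cite{CFI}. I would cite that lemma directly rather than reprove the transversality combinatorics, and then only need to assemble: (gates exist) $+$ (intermediate walls form a chain) $+$ (Lemma \ref{embedintervals}) $\Rightarrow$ (the interval is a segment with uniquely determined endpoints).
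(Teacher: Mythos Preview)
The paper does not prove this lemma but simply cites \cite[Section 2.G]{CFI}, so your instinct to defer there is exactly what the authors do. However, the argument you sketch is not the right one: your claim that strong separation forces the corridor half-spaces $\{w:h_1\subseteq w\subseteq h_2\}$ to form a chain is false. For a counterexample, attach a pendant edge to each of two opposite corners of a single square; the walls $\hat h_1,\hat h_2$ dual to the pendant edges are strongly separated (no wall of the complex is transverse to either), yet the two walls of the square lie strictly between them and are transverse to one another. The bridge here is the whole complex---an interval, not a segment---so the conclusion ``$\I(p_1,p_2)$ embeds in $\~{\Z^1}$'' fails, and your ``pigeonhole/transversality'' step has no content. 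Even granting a chain, the deduction ``same separating walls $\Rightarrow U_{p_1}\triangle U_{q_1}=\varnothing$'' is a non sequitur as written.

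The correct uniqueness argument is shorter and uses only the piece you did establish correctly, namely that a minimizing pair $(p_1,p_2)$ is separated precisely by the corridor walls. If $(q_1,q_2)$ is another minimizing pair and $p_1\neq q_1$, choose a half-space $h$ with $p_1\in h$, $q_1\in h^*$. Since $p_1,q_1\in h_1$ lie on the same side of every corridor wall, $\hat h$ is not a corridor wall; hence it separates neither $p_1$ from $p_2$ nor $q_1$ from $q_2$, so $p_2\in h$ and $q_2\in h^*$ as well. Now $p_1\in h\cap h_1$, $q_1\in h^*\cap h_1$, $p_2\in h\cap h_1^*$, $q_2\in h^*\cap h_1^*$ show $\hat h\pitchfork \hat h_1$, and the same four points (using $h_2^*\subset h_1^*$ and $h_1\subset h_2$) give $\hat h\pitchfork \hat h_2$, contradicting strong separation. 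Thus $p_1=q_1$, and symmetrically $p_2=q_2$.
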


\begin{definition}
For $h_1\subset h_2$ the \emph{combinatorial bridge} connecting $h_1$ and $h_2^*$ is the union of intervals between  minimal distance pairs: 
$$B(h_1,h_2^*)= \bigcup_{(x,y)\in M_{h_1,h_2}}{}\mathcal{I}(x,y).$$ 
\end{definition}

Lemma \ref{bridgeunique} rewrites as follows:

\begin{lemma}
Let $h_1\subset h_2$ be strongly separated nested halfspaces. Then there exists $p_1\in h_1$ and $p_2\in h_2$ such that $B(h_1,h_2)=\I(p_1,p_2)$.
\end{lemma}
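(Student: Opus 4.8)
The statement to prove is the reformulation of Lemma \ref{bridgeunique}: if $h_1\subset h_2$ are strongly separated nested half-spaces, then there are vertices $p_1\in h_1$ and $p_2\in h_2$ with $B(h_1,h_2) = \I(p_1,p_2)$. Given Lemma \ref{bridgeunique}, which already asserts that $M_{h_1,h_2} = \{(p_1,p_2)\}$ is a single pair of vertices, this is essentially a matter of unwinding the definition of the combinatorial bridge. The plan is therefore to feed the uniqueness output of Lemma \ref{bridgeunique} into the defining formula $B(h_1,h_2^*) = \bigcup_{(x,y)\in M_{h_1,h_2}} \I(x,y)$ and observe that a union over a one-element index set is just the single interval $\I(p_1,p_2)$.

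The steps, in order, are as follows. First, I would invoke Lemma \ref{bridgeunique} to obtain the unique pair $(p_1,p_2)$ with $M_{h_1,h_2} = \{(p_1,p_2)\}$; as remarked right after the definition of $M_{h_1,h_2}$, any minimizing pair consists of genuine vertices of $X$, so $p_1,p_2\in X$, and by definition of $M_{h_1,h_2}$ we have $p_1\in h_1$ and $p_2\in h_2^*\subset X$. Second, note $p_2\in h_2^*$ is not quite what the statement asks — the statement writes $p_2\in h_2$ — so I would double-check the indexing convention: the bridge $B(h_1,h_2^*)$ connects $h_1$ to $h_2^*$, and in the restated lemma ``$B(h_1,h_2)$'' is shorthand for the same object, with $p_2$ taken in $h_2^*$; I would simply state the conclusion with the correct membership (or note that $p_2\in h_2^*$, matching the earlier $M_{h_1,h_2}\subset h_1\times h_2^*$). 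Third, substitute into the definition: $B(h_1,h_2^*) = \bigcup_{(x,y)\in M_{h_1,h_2}} \I(x,y) = \I(p_1,p_2)$, since the index set is the singleton $\{(p_1,p_2)\}$. This completes the argument.

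There is essentially no obstacle here: the content of the lemma is entirely contained in Lemma \ref{bridgeunique}, and this restatement is a bookkeeping step that rephrases ``the minimizing set is a single pair'' as ``the bridge is a single interval.'' If anything, the only point requiring a word of care is the harmless clash of notation between $h_2$ and $h_2^*$ in the definition versus the restatement; I would resolve it explicitly so the reader is not confused. One could optionally add the remark that $\I(p_1,p_2)\subset \~X$ is in fact contained in $X$ since $p_1,p_2\in X$ and intervals between vertices are finite, but this is not needed for the statement as written.

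\begin{proof}
By Lemma \ref{bridgeunique}, since $h_1\subset h_2$ are strongly separated nested half-spaces, there is a unique pair of vertices $(p_1,p_2)$ with $M_{h_1,h_2} = \{(p_1,p_2)\}$; by definition of $M_{h_1,h_2}$ we have $p_1\in h_1$ and $p_2\in h_2^*$. Then directly from the definition of the combinatorial bridge,
$$B(h_1,h_2^*) = \bigcup_{(x,y)\in M_{h_1,h_2}} \I(x,y) = \I(p_1,p_2),$$
the union being over the singleton $\{(p_1,p_2)\}$. This is the desired conclusion.
\end{proof}
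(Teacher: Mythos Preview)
Your proof is correct and matches the paper's approach exactly: the paper simply introduces this lemma with the phrase ``Lemma \ref{bridgeunique} rewrites as follows,'' giving no separate argument, so your unwinding of the definition is precisely what is intended. Your observation about the notational slip ($p_2\in h_2^*$ rather than $p_2\in h_2$) is also apt.
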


If $h_1$ and $h_2$ are strongly separated, define the \emph{length} of the bridge $b(h_1,h_2)$ as the distance from $p_1$ to $p_2$. We also call this length the \emph{distance} between the two strongly separated half-spaces $h_1$ and $h_2$.

%
%
%
%

%
%

\begin{definition}
Two hyperplanes $\hat h$ and $\hat k$ are \emph{super strongly separated} (or \"uber-separated in \cite{CFI}) if for any hyperplanes $\hat h'$ and $\hat k'$ intersecting respectively $\hat h$ and $\hat k$, we have $\hat h'\cap \hat k'=\emptyset$.
\end{definition}

Two half-spaces are super strongly separated if their walls are so. Note that if $h\subset k\subset l$ are pairwise strongly separated, then $h$ and $l$ are super strongly separated. So if $X$ is irreducible with a non-elementary and essential automorphism group, there always exists a pair of super strongly separated half-spaces.

Super strong separation has the following consequence on the bridge. If $A$ is a subset of $X$ and $r>0$, we denote by $V_r(A)$ the $r$-neighborhood of $A$ (always in the combinatorial distance).

\begin{proposition}\label{BridgeGeod}
Let $h\subset k$ be a pair of super strongly separated half-spaces, and $\ell$ be the length of the bridge. If $x\in h$ and $y\in k^*$ then $I(x,y)\subset V_\ell(b(h,k))$.
\end{proposition}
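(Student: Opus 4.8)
## Proof Proposal for Proposition \ref{BridgeGeod}

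The plan is to exploit the median structure together with the super strong separation to show that every vertex $z \in I(x,y)$ lies within distance $\ell$ of the bridge $b(h,k)$. Recall that the bridge equals the interval $\I(p_1,p_2)$ where $(p_1,p_2)$ is the unique minimal-distance pair, with $p_1 \in h$, $p_2 \in k^*$, and $d(p_1,p_2) = \ell$. First I would reduce to understanding the half-spaces separating $z$ from the bridge: since $\~X$ embeds in $2^\frakH$, it suffices to bound the number of walls $\hat w$ with $z$ on one side and all of $\I(p_1,p_2)$ on the other. The key point will be that any such wall must be transverse to $\hat h$ or to $\hat k$ — or else lie ``between'' them in the nesting sense, i.e. in the interval region — and super strong separation severely constrains these.

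The main steps, in order: (1) Take $z \in I(x,y)$ and let $m = m(z, p_1, p_2)$ be the median; then $m \in \I(p_1,p_2) = b(h,k)$, so it suffices to show $d(z,m) \leq \ell$. (2) By the median formula, the walls separating $z$ from $m$ are exactly those walls $\hat w$ separating $z$ from \emph{both} $p_1$ and $p_2$. (3) For such a wall $\hat w$, analyze its position relative to $\hat h$ and $\hat k$: since $z \in I(x,y) \subseteq I(h, k^*)$ roughly speaking, and $p_1 \in h$, $p_2 \in k^*$, one checks that $\hat w$ cannot have both $p_1$ and $p_2$ strictly on the far side from $z$ unless $\hat w$ is transverse to $\hat h$, transverse to $\hat k$, or nested strictly between them (separating a point of $h$ from a point of $k^*$ while being crossed appropriately). (4) Apply super strong separation: a wall transverse to $\hat h$ and a wall transverse to $\hat k$ cannot cross each other, and more to the point, the collection of walls crossing $\hat h$ together with those crossing $\hat k$ together with those strictly between $\hat h$ and $\hat k$ forms a ``thin'' set — specifically, the walls strictly between are exactly those crossed by the bridge geodesic $\I(p_1,p_2)$, of which there are $\ell$, and I would argue that a wall crossing $\hat h$ (resp. $\hat k$) and separating $z$ from $m$ must in fact already cross the bridge, because super strong separation forbids it from crossing $\hat k$ (resp. $\hat h$) and forces it into the nested region. (5) Conclude that the walls separating $z$ from $m$ all cross $\I(p_1,p_2)$, hence number at most $\ell$, giving $d(z,m) \leq \ell$.

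The hard part will be step (3)–(4): precisely pinning down that a wall $\hat w$ separating $z$ from both $p_1$ and $p_2$ is forced to cross the bridge. The subtlety is that $z$ ranges over all of $I(x,y)$ with $x \in h$, $y \in k^*$ arbitrary, not just over the bridge itself, so $\hat w$ could a priori be a wall of $\hat h$'s ``transverse family'' that does not meet $\hat k$ at all. I expect the resolution to be: if $\hat w$ is transverse to $\hat h$ but disjoint from $\hat k$, then $\hat w$ lies entirely on the $k$ side of $\hat k$; since $p_2 \in k^*$ lies on the opposite side of $\hat w$ from $z$, and $z$ must be separated from $p_2$, one derives that $\hat w$ in fact separates $p_1$ from $p_2$ — but then $\hat w$ is crossed by the geodesic from $p_1$ to $p_2$, i.e. by the bridge. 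The case $\hat w$ transverse to $\hat k$ is symmetric, and the remaining case ($\hat w$ disjoint from both, or nested with both) is handled by a direct separation argument. Once this dichotomy is established, the counting is immediate and the proposition follows.
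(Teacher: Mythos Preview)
Your argument contains a self-contradiction. In step (2) you correctly observe that the walls separating $z$ from $m=m(z,p_1,p_2)$ are exactly those having $z$ on one side and \emph{both} $p_1,p_2$ on the other. But a wall with $p_1$ and $p_2$ on the same side does \emph{not} separate $p_1$ from $p_2$, hence does \emph{not} cross $\I(p_1,p_2)$. Your step (5) (``the walls separating $z$ from $m$ all cross $\I(p_1,p_2)$'') is therefore the negation of step (2), and the ``resolution'' in your final paragraph --- deriving that $\hat w$ separates $p_1$ from $p_2$ from the hypothesis that $\hat w$ separates $z$ from both $p_1$ and $p_2$ --- is exactly this contradiction written out.

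This is not repairable, because the proposition as literally stated is false. Take $X$ a simplicial line (where any two nested half-spaces are super strongly separated), choose $h\subset k$ with $\ell=d(p_1,p_2)$, and pick $x\in h$ with $d(x,p_1)>\ell$; then $x\in I(x,y)$ for any $y\in k^*$, but $d(x,b(h,k))=d(x,p_1)>\ell$. The paper does not prove the statement; it cites \cite[Lemma 3.5]{CFI}, whose actual content concerns only the portion of $I(x,y)$ lying in the slab $h^*\cap k$ between the two hyperplanes, and this is all that is used downstream (see the proof of Lemma~\ref{lem:specialvisual}). For the corrected statement your median approach does work: if $z\in I(x,y)\cap h^*\cap k$, then any wall $\hat w$ separating $z$ from both $p_1$ and $p_2$ must separate two points of $h^*$ (namely $z$ and $p_2$) and two points of $k$ (namely $z$ and $p_1$), and one argues using super strong separation and the minimality of $(p_1,p_2)$ that such walls are among those separating $p_1$ from $p_2$ after all --- but via a different mechanism than the one you sketched.
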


\begin{proof}
See \cite[Lemma 3.5]{CFI}
\end{proof}

\subsection{The Regular Boundary}\label{sec:reg}

Using strongly separated hyperplanes, it is possible to define a notion of a \emph{regular} boundary. This notion was first defined in \cite{Fernos} (and independently in \cite{KarSageev}, where it was called ``strongly separated points").

\begin{definition}
Assume $X$ is irreducible. A point $\xi\in\partial X$ is called \emph{regular} if for every $h_1,h_2\in U_\xi$ there is $k\in U_\xi$ such that $k\subset h_1\cap h_2$ and $k$ is strongly separated both from $h_1$ and $h_2$.
The set of regular points of $X$ is denoted by $\partial_r X$.
\end{definition}

This notion has a natural extension to products:

\begin{definition}
Let $X=X_1\times \cdots\times X_n$ be the decomposition of $X$ into irreducible factors. The set of \emph{regular points} of $X$ is defined as
$$\partial_r X=\partial_r X_1\times\cdots\times \partial_r X_n$$

The \emph{regular boundary} of $X$ is the closure of $\partial_r X$ in $\overline X$. We denote  the regular boundary by $R(X)$.

\end{definition}

\subsection{On Descending Chains of Half-Spaces}

In the irreducible case, regular points can be characterized as follows. Recall that a \emph{descending chain} is a sequence $(h_n)_{n\in\N}$ of half-spaces such that $h_{n+1}\subsetneq h_n$. Vertices in $X$ are characterized as the set of points $x\in \~X$ satisfying the \emph{descending chain condition}: there is no (infinite) descending chain in $U_x$.

\begin{proposition}{\cite[Proposition 7.4]{Fernos}}\label{Rank1Char}
Let $X$ be an irreducible complex, and $\alpha\in\~X$. The follwing are equivalent:
\begin{enumerate}[(i)]
\item $\alpha\in\bdr X$
\item There exists an infinite descending chain $(h_n)_{n\in\N}$ of pairwise strongly separated half-spaces such that $\alpha\in h_n$.
\end{enumerate}
\end{proposition}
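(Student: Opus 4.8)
The plan is to prove the two implications separately; the direction (i)$\Rightarrow$(ii) is a routine iteration, while (ii)$\Rightarrow$(i) is the substantial one. Throughout I will use one elementary fact about strong separation: if $a\subsetneq b\subsetneq c$ are half-spaces and $\^a,\^b$ are strongly separated, then $\^a,\^c$ are strongly separated. Indeed, a wall $\^m$ transverse to both $\^a$ and $\^c$ meets all four of $m\cap b$, $m^*\cap b$ (using $a\subset b$), $m\cap b^*$, $m^*\cap b^*$ (using $c^*\subset b^*$), so $\^m$ is transverse to $\^b$ as well --- contradicting the strong separation of $\^a,\^b$. An immediate consequence: a strictly descending chain of half-spaces whose consecutive terms are strongly separated is automatically pairwise strongly separated.

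For (i)$\Rightarrow$(ii): assume $\a\in\bdr X$, so $\a$ is regular. Choose any $h_0\in U_\a$ (nonempty, since $U_\a$ is total), and inductively, given $h_n\in U_\a$, apply the definition of regularity to the pair $(h_n,h_n)$ to produce $h_{n+1}\in U_\a$ with $h_{n+1}\subset h_n$ and $\^h_{n+1}$ strongly separated from $\^h_n$. Strong separation forces $\^h_{n+1}\neq\^h_n$, hence $h_{n+1}\subsetneq h_n$, so $(h_n)$ is a strictly descending chain; by the consequence noted above it is pairwise strongly separated, and $\a\in h_n$ for every $n$ because $h_n\in U_\a$.

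For (ii)$\Rightarrow$(i): let $(h_n)$ be an infinite descending chain of pairwise strongly separated half-spaces with $\a\in h_n$ for all $n$, so $\{h_n\}\subset U_\a$; since this violates the descending chain condition, $\a\in\partial X$. The engine of the argument is the dichotomy: \emph{if $c$ is a half-space and $\zeta\in\~X$ satisfies $c,h_n\in U_\zeta$ for all $n$, then either $c\subseteq h_n$ for all large $n$, or $h_n\subsetneq c$ for all large $n$.} For each $n$, $c$ and $h_n$ are not anti-nested (both contain $\zeta$), so $c\subseteq h_n$, or $h_n\subsetneq c$, or $\^c\pitchfork\^h_n$, and the transverse case occurs for at most one index because the $\^h_n$ are pairwise strongly separated; if neither alternative held there would be cofinal sets of indices with $h_n\subsetneq c$ and with $c\subseteq h_n$, and choosing $p<q$ with $h_p\subsetneq c$ and $c\subseteq h_q$ gives $c\subseteq h_q\subsetneq h_p\subsetneq c$, absurd. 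I then deduce $\bigcap_n h_n=\{\a\}$ in $\~X$: if $\zeta\in\bigcap_n h_n$ with $\zeta\neq\a$, pick a wall with $\a\in c$ and $\zeta\in c^*$; the dichotomy for $c$ (basepoint $\a$) and for $c^*$ (basepoint $\zeta$) leaves four cases, two forcing $c\subseteq c^*$ or $c^*\subseteq c$, one forcing $h_n=X$, one forcing $h_n=\varnothing$ --- all impossible. In particular $\bigcap_n h_n$ contains no vertex.

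Finally, given $g_1,g_2\in U_\a$, apply the dichotomy to each $g_i$: the alternative ``$g_i\subseteq h_n$ for all large $n$'' would put the nonempty half-space $g_i$ inside $\bigcap_n h_n=\{\a\}$, impossible since $\a\notin X$; hence $h_n\subsetneq g_i$ for all $n\geq N$ and $i=1,2$. Then $k:=h_{N+1}$ lies in $U_\a$ and in $g_1\cap g_2$, and $\^k$ is strongly separated from each $\^g_i$: a wall $\^c$ transverse to $\^h_{N+1}$ and $\^g_i$ would, via $h_{N+1}\subset h_N\subset g_i$, be transverse to $\^h_N$ too, contradicting the strong separation of $\^h_N$ and $\^h_{N+1}$. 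This exhibits $\a$ as regular. The main obstacle is this reverse direction --- specifically establishing the dichotomy and the identity $\bigcap_n h_n=\{\a\}$, which together both pin down which $h_n$ to use as the witness $k$ and guarantee its strong separation from $g_1$ and $g_2$; by comparison the forward implication and the nesting lemma are bookkeeping.
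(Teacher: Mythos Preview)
The paper does not supply its own proof of this proposition: it is quoted verbatim from \cite[Proposition 7.4]{Fernos} and no argument is given. So there is nothing in the paper to compare your proof against directly.

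That said, your proof is correct. The forward implication is indeed routine, and your nesting lemma (if $a\subsetneq b\subsetneq c$ with $\hat a,\hat b$ strongly separated then $\hat a,\hat c$ strongly separated) is exactly what is needed to upgrade consecutive strong separation to pairwise. For the reverse implication, the two ingredients you isolate --- the dichotomy for a half-space $c$ against a strongly separated descending chain, and the identity $\bigcap_n h_n=\{\alpha\}$ --- are precisely (variants of) what the paper records immediately \emph{after} this proposition as Lemma~\ref{lemma Annoying cases} and Lemma~\ref{singleton} (the latter again cited from \cite[Corollary 7.5]{Fernos}). In effect you have folded those two lemmas into a self-contained proof of the proposition, whereas the paper outsources the proposition to \cite{Fernos} and then states the two auxiliary facts separately for later use. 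Your case analysis for $\bigcap_n h_n=\{\alpha\}$ and your choice $k=h_{N+1}$ with the sandwich $h_{N+1}\subsetneq h_N\subsetneq g_i$ to force strong separation from each $g_i$ are both clean and correct.
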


It is possible to analyze more precisely the descending chains containing $\alpha$. We first record  the following.

\begin{lemma}\label{lemma Annoying cases}
 Let $\{h_n\}\in \frakH$ be an infinite descending chain of half-spaces. If $k\in \frakH$ such that $k\cap h_n \neq \varnothing$ for all $n$ then one of the following is true:
\begin{enumerate}
\item[(a)] There is an $N$ such that $k\pitchfork h_n$ for all $n>N$.
\item[(b)] There is an $N$ such that $k\supset h_n$ for all $n>N$.
\end{enumerate}
In particular if the sequence $\{h_n\}$ is composed of pairwise strongly separated half-spaces then Case \emph{(b)} holds. 
\end{lemma}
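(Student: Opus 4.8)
The plan is to reduce everything to the elementary trichotomy for a pair of half-spaces, combined with the fact that two vertices of $X$ are separated by only finitely many walls. First I would record the standard fact that for any $a,b\in\frakH$, either $a\pitchfork b$ or one of the four intersections $a\cap b$, $a\cap b^{*}$, $a^{*}\cap b$, $a^{*}\cap b^{*}$ is empty. Feeding $a=k$, $b=h_n$ into this and discarding the possibilities excluded by $k\cap h_n\neq\varnothing$, I would conclude that for each $n$ at least one of the following holds: (i) $k\pitchfork h_n$; (ii) $h_n\subseteq k$; (iii) $k\subseteq h_n$; (iv) $k^{*}\subseteq h_n$.

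The two facts to establish are then: $(1)$ if (ii) ever holds, case (b) follows; and $(2)$ (iii) and (iv) can each hold for at most finitely many $n$. Step $(1)$ is immediate from the chain being descending: if $h_{n_0}\subseteq k$ then $h_n\subsetneq h_{n_0}\subseteq k$, hence $k\supsetneq h_n$, for every $n>n_0$. For step $(2)$ — the real content — I would argue by contradiction: if (iii) held for arbitrarily large $n$, then, since the chain is descending, $k\subseteq h_m$ would be forced for \emph{every} $m$; choosing a vertex $x\in k$ and a vertex $y\in h_0^{*}$, every wall $\widehat{h_m}$ would separate $x$ from $y$ (as $x\in h_m\subseteq h_0$ and $y\in h_0^{*}\subseteq h_m^{*}$), producing infinitely many distinct separating walls and contradicting $d(x,y)=\tfrac12\#(U_x\triangle U_y)<\infty$. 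Running the same argument with a vertex of $k^{*}$ in place of $x$ disposes of (iv). Combining the pieces: if case (b) fails, then (ii) fails for all $n$ by step $(1)$, and (iii), (iv) fail for all large $n$ by step $(2)$, so (i) holds for all large $n$ — which is precisely case (a).

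For the final assertion I would observe that if the $h_n$ are pairwise strongly separated then case (a) cannot occur: it would yield an $N$ with $k\pitchfork h_{N+1}$ and $k\pitchfork h_{N+2}$, so the wall $\hat k$ — which is distinct from $\widehat{h_{N+1}}$ and $\widehat{h_{N+2}}$, since transversality forbids $k\in\{h_{N+1},h_{N+1}^{*}\}$ and likewise for $h_{N+2}$ — would be transverse to both of the strongly separated walls $\widehat{h_{N+1}}$ and $\widehat{h_{N+2}}$, a contradiction. Hence case (b) must hold. I expect the only delicate point to be the bookkeeping in the half-space trichotomy, namely checking that (i)--(iv) genuinely exhaust all cases once $k\cap h_n\neq\varnothing$ is imposed, including the degenerate coincidences $h_n=k$ or $h_n=k^{*}$; the geometric substance of the proof is entirely in the finiteness of $d(x,y)$, which is what prevents a strictly descending chain of half-spaces from all containing (or all having complement contained in) one fixed half-space.
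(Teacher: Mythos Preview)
Your proof is correct and follows essentially the same approach as the paper's: both set up the same four-way case split for each $n$, argue that the two cases in which $k$ or $k^{*}$ is contained in $h_n$ can occur for only finitely many $n$ (you via an explicit vertex-distance argument, the paper via the equivalent fact that only finitely many half-spaces lie between two nested ones), and conclude that eventually either $k\pitchfork h_n$ or $h_n\subset k$; the strongly separated coda is identical. Your write-up is simply more explicit about the bookkeeping, but there is no genuine difference in strategy.
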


\begin{proof}
Fix $n$. Our assumption that $k\cap h_n \neq \varnothing$ implies that one of the following cases hold:

\begin{enumerate}
\item $h_n^* \subset k$
\item $h_n \supset k$
\item $h_n \pitchfork k$
\item $h_n \subset k$
\end{enumerate}

Now, observe that since there are finitely many half-spaces in-between any two, and hence the collection of all $n$ which satisfy conditions (1) and (2) is finite. Next observe that if there is an infinite subsequence  which satisfies property (3) (respectively property (4)) then $h_n$ satisfies  property (3) (respectively property (4)) for all $n$ sufficiently large. 

Of course, if the sequence $\{h_n\}$ is pairwise strongly separated, it follows that condition (3) can hold for at most one $n$.
\end{proof}

We can now prove the following.

\begin{lemma}\label{singleton}
Let $(s_n)$ be an infinite descending chain of pairwise strongly separated half-spaces. Then $\bigcap\limits_{n\in\N} s_n$ is a singleton.

If $X$ is an irreducible complex and $\alpha\in\bdr X$, then any infinite descending chain $(h_n)_{n\in\N}$ of half-spaces containing $\alpha$ satisfies that $\bigcap\limits_{n\in\N} h_n=\{\alpha\}$.
\end{lemma}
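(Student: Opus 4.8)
The plan is to prove the two assertions in turn, using throughout that each half-space $h\in\frakH$, viewed as a cylinder set in $2^\frakH$, is clopen in $\~X$: consequently, whenever a boundary point $\eta=\lim x_j$ (a limit of vertices) lies in $h$, one has $x_j\in h$ for all large $j$, so every incidence ``$\eta\in h$'' is witnessed by honest vertices.

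For the first assertion I would first note that $\bigcap_n s_n$ is nonempty, since the $s_n$ are closed in the compact space $\~X$ and $\bigcap_{n\le N}s_n=s_N\neq\varnothing$, so the family has the finite intersection property. For uniqueness, suppose $\eta\neq\eta'$ both lie in $\bigcap_n s_n$, and pick $k\in\frakH$ with $\eta\in k$, $\eta'\in k^*$. For each $n$ the clopen sets $s_n\cap k$ and $s_n\cap k^*$ contain vertices, so $s_n$ meets both $k$ and $k^*$; this forces $k$ and $s_n$ to have distinct walls and leaves only three possibilities: $k\pitchfork s_n$, or $k\subsetneq s_n$, or $k^*\subsetneq s_n$. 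But $k\pitchfork s_n$ holds for at most one $n$, for otherwise the wall of $k$ would be transverse to two distinct members of the pairwise strongly separated chain; and each of $\{n:k\subsetneq s_n\}$ and $\{n:k^*\subsetneq s_n\}$ is finite, since an infinite one would nest infinitely many half-spaces between a fixed $s_n$ and $k$ (resp. $k^*$), contradicting that only finitely many half-spaces lie between two nested ones. These three cases exhaust $\N$, a contradiction; hence $\bigcap_n s_n$ is a singleton.

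For the second assertion, given $\alpha\in\bdr X$ (with $X$ irreducible) I would invoke Proposition \ref{Rank1Char} to obtain an infinite descending chain $(s_m)$ of pairwise strongly separated half-spaces with $\alpha\in s_m$ for all $m$, so that $\bigcap_m s_m=\{\alpha\}$ by the first part. Let $(h_n)$ be an infinite descending chain with $\alpha\in h_n$ for all $n$; clearly $\alpha\in\bigcap_n h_n$. Suppose toward a contradiction that some $\beta\neq\alpha$ also lies in $\bigcap_n h_n$. Then $\beta\notin\bigcap_m s_m$, so $\beta\in s_{m_0}^*$ for some $m_0$, hence $\beta\in s_m^*$ for all $m\ge m_0$. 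Fix $m\ge m_0$; since $\alpha\in s_m\cap h_n$ for all $n$, Lemma \ref{lemma Annoying cases} applied to the chain $(h_n)$ and the half-space $s_m$ gives that either $s_m\pitchfork h_n$ for all large $n$ or $s_m\supseteq h_n$ for all large $n$; the second option forces $\beta\in h_n\subseteq s_m$, contradicting $\beta\in s_m^*$, so $s_m\pitchfork h_n$ for all large $n$, and this holds for every $m\ge m_0$. Taking $n$ large enough to serve both $m=m_0$ and $m=m_0+1$ produces a wall, namely that of $h_n$, transverse to both $s_{m_0}$ and $s_{m_0+1}$, contradicting their strong separation. Therefore $\bigcap_n h_n=\{\alpha\}$.

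I expect the second assertion to be the main obstacle. The delicate point is that the two facts one extracts — ``$h_n$ eventually crosses $s_m$, for each fixed $m$'' and ``$h_n$ eventually swallows $s_m$, for each fixed $n$'' — are not in themselves contradictory; the contradiction only materializes once one forces the transversality $s_m\pitchfork h_n$ to hold for two distinct values of $m$ simultaneously and plays it off against the strong separation built into the chain $(s_m)$. The remaining ingredients — the compactness/clopenness bookkeeping and the finiteness of the set of half-spaces lying between two nested ones — are routine.
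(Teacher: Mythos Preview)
Your argument is correct and follows essentially the same route as the paper. For the first assertion the paper simply cites \cite[Corollary 7.5]{Fernos}, whereas you supply a direct proof; for the second assertion both you and the paper apply Lemma \ref{lemma Annoying cases} to the chain $(h_n)$ against $s_m$ and then against $s_{m+1}$, using strong separation of $s_m$ and $s_{m+1}$ to eliminate the transversal case---the only cosmetic difference is that the paper argues directly that $h_n\subset s_m$ for $n$ large (hence $\bigcap_n h_n\subset\{\alpha\}$), while you phrase the same step as a contradiction via the hypothetical point $\beta$.
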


\begin{proof}
The fact that $\bigcap\limits_{n\in\N} s_n$ is a singleton is proved in \cite[Corollary 7.5]{Fernos}.

Now consider an arbitrary descending chain $(h_n)$ containing $\alpha$. By the first part of the lemma, it is sufficient to prove that for every $m\in\N$ there exists $n\in\N$ such that $h_n\subset s_m$. Since $h_m$ and $s_n$ both contain $\alpha$, we have $h_m\cap s_n\neq \emptyset$ for every $m,n$. 

Fix $m$. By Lemma \ref{lemma Annoying cases}, we know that either for every $n$ large enough we have $h_n\subset s_m$ (in which case we are done), or for every $n$ large enough $h_n\pitchfork s_m$. In the second case, apply now Lemma \ref{lemma Annoying cases} to $k=s_{m+1}$. By strong separation, we know that $h_n$ is not transverse to $s_{m+1}$ for $n$ large. So we must have $h_n\subset s_{m+1}$, which contradicts the fact that $h_n\pitchfork s_m$.

Hence we have proved that for every $m$ and every $n$ large enough we have $h_n\subset s_m$. So $\bigcap_{n\in\N} h_n\subset\bigcap_{n\in\N} s_n=\{\alpha\}$, and by assumption $\alpha$ is in $\bigcap_{n\in\N} h_n$, which proves that we have equality.
\end{proof}

The previous lemmas deal with one boundary point. For two points, we have the following:

\begin{proposition}\label{biinfinitechain}
Let $X$ be an irreducible complex and $\alpha,\beta\in\bdr X$. Assume that $\alpha \neq \beta$. Then there exists a sequence $(s_n)_{n\in\Z}$ of pairwise strongly separated half-spaces, with $s_{n+1}\subset s_n$, and such that 
$s_n\in U_\alpha\setminus U_\beta$ for all $n$.
\end{proposition}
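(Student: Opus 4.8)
The plan is to build the bi-infinite chain by stitching together two descending chains, one "at $\alpha$" and one "at $\beta$", using the characterization of regular points from Proposition~\ref{Rank1Char} together with the separation arguments of Lemma~\ref{lemma Annoying cases} and Lemma~\ref{singleton}. First I would invoke Proposition~\ref{Rank1Char} to fix an infinite descending chain $(h_n)_{n\geq 0}$ of pairwise strongly separated half-spaces with $\alpha\in h_n$ for all $n$. Since $\alpha\neq\beta$, there is a half-space separating them; the key point is that, after passing to a tail, the chain $(h_n)$ itself does the separating. Indeed, by Lemma~\ref{singleton} we have $\bigcap_n h_n=\{\alpha\}$, so $\beta\notin\bigcap_n h_n$, and hence there is an $N$ with $\beta\notin h_N$, i.e. $h_N\in U_\alpha\setminus U_\beta$. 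Reindexing, I may assume $h_n\in U_\alpha\setminus U_\beta$ for all $n\geq 0$, which gives the "positive half" $s_n:=h_n$ for $n\geq 0$.

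Next I would produce the "negative half" of the chain, i.e. a descending chain $s_0\supsetneq s_{-1}\supsetneq s_{-2}\supsetneq\cdots$ that continues upward past $s_0=h_0$, staying in $U_\alpha\setminus U_\beta$ and pairwise strongly separated with the whole family. For this I would use the regularity of $\beta$ applied to the complement side: work with $\beta$ and the half-spaces $h_0^*$. Alternatively — and this seems cleaner — I would use the essential, non-elementary action together with the Double Skewering Lemma to push $h_0$ further out: given $h_0$ essential, repeatedly apply Lemma~\ref{Double Skewering Lemma} (or rather its iterates) to find half-spaces $t\supsetneq h_0$ and then pass to strongly separated subchains via Proposition~\ref{Rank1Char} again, but now anchored so that $\alpha$ stays inside and $\beta$ stays outside. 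The obstacle is that double skewering alone does not control where $\beta$ lands; so instead I would apply regularity of $\alpha$ to the pair coming from $\beta$: since $\alpha$ is regular there are arbitrarily "deep" strongly separated half-spaces in $U_\alpha$, and I can use these both below $h_0$ (already done) and, by a symmetric construction starting from a strongly separated chain for $\beta$ and complementing, produce a strongly separated chain escaping to $\beta$ from the $\alpha$-side. Concretely: regularity of $\beta$ yields a descending chain $(k_m)$ of pairwise strongly separated half-spaces with $\beta\in k_m$; then the complements $k_m^*$ form an ascending chain, each $k_m^*\in U_\alpha$ once $m$ is large (again by Lemma~\ref{singleton}, $\bigcap k_m=\{\beta\}\not\ni\alpha$), and after passing to a tail these $k_m^*$ are the negative part, being pairwise strongly separated because the $k_m$ are.

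The remaining step is to check that the merged two-sided sequence is pairwise strongly separated and nested. Nestedness is immediate from the two one-sided nestings once I verify that every $s_{-m}$ contains every $s_n$ ($n\geq0$): this follows because $s_{-m}\in U_\alpha$ contains $\alpha\in\bigcap_n s_n=\{\alpha\}$ and, by Lemma~\ref{lemma Annoying cases} applied to the descending chain $(s_n)_{n\geq0}$ and $k=s_{-m}$, case (b) holds — a tail of the $s_n$ is contained in $s_{-m}$ — and then strong separation within $(s_n)$ forces the whole chain in. The strong-separation across the two halves is the genuinely delicate point and the one I expect to be the main obstacle: a wall transverse to both some $s_{-m}$ and some $s_n$ ($n\geq 0$) would, via Lemma~\ref{lemma Annoying cases}, be transverse to cofinitely many of the $s_n$, contradicting pairwise strong separation of $(s_n)_{n\geq0}$; the same argument on the negative side handles walls transverse to two negative terms. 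So the only thing to arrange carefully is the interface, and that is exactly what Lemma~\ref{lemma Annoying cases} is designed to control. Once these compatibilities are verified, relabeling gives the sequence $(s_n)_{n\in\Z}$ with all the required properties, and $s_n\in U_\alpha\setminus U_\beta$ holds throughout by construction.
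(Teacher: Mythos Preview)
Your overall strategy is exactly the paper's: take a strongly separated descending chain in $U_\alpha$ and one in $U_\beta$ via Proposition~\ref{Rank1Char}, complement the $\beta$-chain to serve as the negative half, and stitch. The paper streamlines the stitching by first fixing a single separating half-space $h\in U_\alpha\setminus U_\beta$ and using Lemma~\ref{lemma Annoying cases} to push tails of \emph{both} chains to opposite sides of $h$; this gives the cross-nesting $s_n(\alpha)\subset h\subset s_m(\beta)^*$ for free, so that after reindexing $s_0\subset s_{-1}$ holds automatically.

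Two of your interface arguments, however, do not work as written. For nesting: Lemma~\ref{lemma Annoying cases} indeed gives that a \emph{tail} of $(s_n)_{n\geq0}$ sits inside $s_{-m}$, but your assertion that ``strong separation within $(s_n)$ forces the whole chain in'' is unjustified. Nothing prevents $s_0$ from being transverse to (or even containing) $s_{-m}$ for small $m$. The fix is to run Lemma~\ref{lemma Annoying cases} in the other direction---apply it to the descending chain $(k_m)$ with $k=s_0^*$---and then discard finitely many negative terms; or simply use the paper's separating half-space $h$, which makes $s_0\subset h\subset s_{-1}$ immediate.

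For strong separation across the two halves: your claim that a wall transverse to some $s_{-m}$ and some $s_n$ would, via Lemma~\ref{lemma Annoying cases}, be ``transverse to cofinitely many of the $s_n$'' is false---the lemma gives the opposite conclusion (case (b), containment, not transversality). The correct mechanism is the elementary ``between'' observation: if $a\subset b\subset c$ and $\hat k$ is transverse to $\hat a$ and $\hat c$, then $\hat k$ is transverse to $\hat b$. Once $s_0\subset s_{-1}$ is arranged, a wall transverse to any $s_n$ ($n\geq0$) and any $s_{-m}$ is transverse to everything in between, in particular to two consecutive members of one of the original chains, contradicting that chain's pairwise strong separation. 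This is precisely how the paper reduces the check to the single pair $s_0,s_{-1}$.
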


\begin{proof}

 Proposition \ref{Rank1Char} guarantees that we can find two sequences, each of pairwise strongly separated half-spaces $\{s_n(\a): n\in \N\}\subset U_\a$ and $\{s_n(\b): n\in \N\} \subset U_\b$.
 
Since $\alpha\neq \beta$, there exists $h\in U_\alpha\setminus U_\beta$ (and hence $h^*\in U_\beta$). By Lemma \ref{lemma Annoying cases}, there exists an $N$ such that for every $n>N$ we have $s_n(\alpha)\subset h$ and $s_n(\beta)\subset h^*$. Discarding finitely many half-spaces, we may and shall assume that these two equalities hold for every $n$.
 We define $s_n = s_n(\a)$ for $n\geq 0$ and $s_n = s_{-n}(\b)^*$ for $n<0$. Then almost all the conditions on the chain $(s_n)$ are clear. The only thing remaining to check is the strong separation of $s_0$ and $s_{-1}$. But a half-space $k$ which is transverse to both $s_0$ and $s_{-1}$ must be transverse to $s_0(\beta)$ which  is in-between, contradicting the strong separation of $s_0(\beta)$ and $s_1(\beta)$. 

\end{proof}








\begin{lemma}\label{medianinX}
Let $X$ be an irreducible complex and $\alpha,\beta,\g$ be pairwise distinct points of $\~X$ with  $\alpha$ and $\beta$ regular. Then the median point $m(\alpha,\beta,\g)$ is a vertex in $X$.
\end{lemma}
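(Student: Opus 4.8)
The plan is to prove that the set $U_m$ associated to $m=m(\alpha,\beta,\g)$ satisfies the descending chain condition; by the characterization of vertices recalled in \S\ref{SecRoller}, this is precisely what it means for $m$ to be a vertex of $X$. Recall that by the median formula of \S\ref{secmedian},
$$U_m=(U_\alpha\cap U_\beta)\cup(U_\beta\cap U_\g)\cup(U_\g\cap U_\alpha).$$

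So I would suppose toward a contradiction that $m\notin X$; then there is an infinite descending chain $(h_n)_{n\in\N}$ of half-spaces with $h_n\in U_m$ for all $n$. By the displayed formula each $h_n$ belongs to one of the three sets $U_\alpha\cap U_\beta$, $U_\beta\cap U_\g$, $U_\g\cap U_\alpha$, so the pigeonhole principle provides two distinct points $p,q\in\{\alpha,\beta,\g\}$ and an infinite set of indices along which $h_n\in U_p\cap U_q$. Discarding the other indices, we may assume $(h_n)$ is an infinite descending chain every term of which contains both $p$ and $q$.

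The key observation is that the only point among $\{\alpha,\beta,\g\}$ which is not assumed regular is $\g$, so at least one of $p,q$, say $p$, is a regular point, i.e.\ $p\in\bdr X$. By Lemma~\ref{singleton} applied to the infinite descending chain $(h_n)$, which contains the regular point $p$, we get $\bigcap_{n\in\N}h_n=\{p\}$. But $q\in h_n$ for every $n$, hence $q\in\bigcap_{n\in\N}h_n=\{p\}$, and therefore $p=q$, contradicting the assumption that $\alpha,\beta,\g$ are pairwise distinct. Thus $U_m$ satisfies the descending chain condition and $m\in X$.

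I do not anticipate a real obstacle: the argument is essentially a pigeonhole reduction to Lemma~\ref{singleton}. The only points requiring a little care are the bookkeeping that lets one arrange the chain to lie in a single one of the three sets defining the median, and the remark that among the two surviving coordinates at least one is a genuine regular boundary point, so that Lemma~\ref{singleton} is applicable; both are immediate. (Should one prefer not to invoke Lemma~\ref{singleton} directly, one could instead thin $(h_n)$ to a pairwise strongly separated descending subchain using Proposition~\ref{Rank1Char} together with Lemma~\ref{lemma Annoying cases}, but this detour is not needed here.)
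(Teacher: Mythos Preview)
Your proof is correct and in fact more direct than the paper's. Both arguments show that $U_m$ satisfies the descending chain condition and both ultimately rest on Lemma~\ref{singleton}, but the routes diverge after producing a descending chain in $U_m$.

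You pigeonhole the chain into one of the three intersections $U_p\cap U_q$, observe that at least one of $p,q$ (say $p$) is regular, and then a single application of Lemma~\ref{singleton} gives $\bigcap_n h_n=\{p\}\ni q$, forcing $p=q$. The paper instead uses only the weaker inclusion $U_m\subset U_\alpha\cup U_\beta$, passes to a subchain in $U_\alpha$, and applies Lemma~\ref{singleton} to conclude $m=\alpha$; from $U_\alpha=U_m$ one gets $U_\alpha\subset U_\beta\cup U_\gamma$, and then Proposition~\ref{biinfinitechain} is invoked to find a strongly separated chain in $U_\alpha\setminus U_\beta\subset U_\gamma$, whence a second use of Lemma~\ref{singleton} yields $\alpha=\gamma$. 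Your argument avoids Proposition~\ref{biinfinitechain} altogether: by pigeonholing on the \emph{intersections} rather than the union $U_\alpha\cup U_\beta$, you retain two points in the chain from the start, so one invocation of Lemma~\ref{singleton} suffices.
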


\begin{proof}
Consider $m= m(\a,\b, \g)$. We claim that $m\in X$ and to this end we show that $U_m$ satisfies the descending chain condition.  Recall that $ U_m= (U_\a\cap U_\b)\cup(U_\b\cap U_\g)\cup(U_\g\cap U_\a) \subset U_\a\cup U_\b$. Assume by contradiction that $U_m$ contains an infinite descending chain. Then, up to discarding finitely many (and possibly relabeling $\a$ and $\b$), we may assume by Lemma \ref{singleton} that the chain belongs to $U_\a$ and hence $m=\a$. This means that $U_\a \subset U_\b\cup U_\g$. By Proposition \ref{biinfinitechain} there is an infinite descending chain of pairwise strongly separated half-spaces in $U_\a\setminus U_\b\subset U_\g$. Once more by Lemma \ref{singleton} we deduce that $\a= \g$, a contradiction.

\end{proof}

\begin{lemma}\label{IinterX}
Let $X$ be an irreducible complex and $\alpha\in\bdr X$ and $\beta\in\overline X$ with $\beta\neq\alpha$. Then $\I(\alpha,\beta)\cap X\neq \varnothing$.
\end{lemma}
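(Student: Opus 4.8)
The plan is to exhibit a vertex of $X$ inside $\I(\alpha,\beta)$, realized as a median. If $\beta\in X$ there is nothing to do, since then $\beta$ itself lies in $\I(\alpha,\beta)\cap X$; so I would first reduce to the case $\beta\in\partial X$. Now pick any vertex $\gamma\in X$, which automatically differs from both $\alpha$ and $\beta$ because they lie in $\partial X$. The candidate point is the median $m=m(\alpha,\beta,\gamma)\in\~X$. It lies in $\I(\alpha,\beta)$ thanks to the identity $\{m\}=\I(\alpha,\beta)\cap\I(\beta,\gamma)\cap\I(\gamma,\alpha)$, so everything reduces to proving $m\in X$, i.e. that $U_m$ satisfies the descending chain condition.

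To see this, I would argue by contradiction, assuming $U_m=(U_\alpha\cap U_\beta)\cup(U_\beta\cap U_\gamma)\cup(U_\gamma\cap U_\alpha)$ contains an infinite descending chain. A pigeonhole argument (as in the proof of Lemma \ref{medianinX}) shows that, after passing to a subsequence, the chain lies entirely in one of these three sets. The sets $U_\beta\cap U_\gamma$ and $U_\gamma\cap U_\alpha$ are contained in $U_\gamma$, and $U_\gamma$ satisfies the descending chain condition because $\gamma$ is a vertex of $X$; so neither of them can contain an infinite descending chain. Hence the subchain $(h_n)$ must lie in $U_\alpha\cap U_\beta$.

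Now I would invoke Lemma \ref{singleton}: since $\alpha\in\partial_r X$ and $(h_n)$ is an infinite descending chain of half-spaces all containing $\alpha$, we get $\bigcap_{n}h_n=\{\alpha\}$. But each $h_n$ belongs to $U_\beta$, i.e. $\beta\in h_n$ for every $n$, so $\beta\in\bigcap_n h_n=\{\alpha\}$, forcing $\beta=\alpha$ and contradicting the hypothesis $\beta\neq\alpha$. Therefore $U_m$ satisfies the descending chain condition, $m\in X$, and $m\in\I(\alpha,\beta)\cap X$, as desired.

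The one genuinely new point compared with Lemma \ref{medianinX} — where \emph{both} boundary points were assumed regular — is the choice of the third point: by taking $\gamma$ to be an honest vertex, the descending chain condition for $U_\gamma$ takes over the role that regularity of $\beta$ played there, and then Lemma \ref{singleton} does the rest, with $\beta$ needing only to be distinct from $\alpha$. I do not expect any real obstacle beyond getting this reduction right; the pigeonhole step and the appeal to Lemma \ref{singleton} are routine once the setup is in place.
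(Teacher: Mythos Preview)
Your proof is correct, and the heart of it is identical to the paper's: both arguments boil down to showing that $U_\alpha\cap U_\beta$ contains no infinite descending chain, and both accomplish this by invoking Lemma~\ref{singleton} to force $\bigcap_n h_n=\{\alpha\}\ni\beta$, contradicting $\alpha\neq\beta$.

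The only difference is in packaging. The paper skips the median construction and instead cites directly the general fact (from \cite[Lemma~2.3]{NevoSageev}) that $\I(\alpha,\beta)\cap X\neq\varnothing$ as soon as $U_\alpha\cap U_\beta$ satisfies the descending chain condition; this makes the proof a couple of lines. Your route through $m(\alpha,\beta,\gamma)$ with $\gamma\in X$ is slightly longer but has the merit of being self-contained and of producing an explicit witness in the interval, and the pigeonhole reduction using DCC for $U_\gamma$ is a clean way to avoid the external reference.
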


\begin{proof}
It suffices to show that the set $U_{\alpha}\cap U_\beta$ satisfies the descending chain condition (see for example \cite[Lemma 2.3]{NevoSageev}). Assume that there exists a decreasing sequence of half-spaces $(h_n)$ with $h_n\in U_\alpha\cap U_\beta$. Then by Lemma \ref{singleton} the intersection of all the half-spaces $h_n$ is reduced to $\{\a\}$. Since we also have $\beta\in h_n$ for all $n$, this implies $\alpha=\beta$, contradicting the assumption.

\end{proof}



\section{Comparing Various Boundaries}

So far, we have introduced two  boundaries of CAT(0) cube complexes: the Roller boundary and the regular boundary. There are also other interesting constructions. In this section, we aim to compare these.

\subsection{The Roller and Visual Boundaries}\label{Roller vs Visual}

Let us start by the most common boundaries of CAT(0) cubical complexes: the Roller boundary  $\partial X$ and the visual boundary $\bd X$.

The following theorem, which is due to P.E. Caprace and A. Lytchak \cite[Theorem 1.1]{CapraceLytchak}, is very useful in this situation.

\begin{theorem}\label{thm:CapLytch}
Let $(X_i)_{i\in I}$ be a filtering family of closed convex subsets of a finite-dimensional CAT(0) space $X$. Then either the intersection $\bigcap\limits_{i\in I} X_i$ is not empty, or the intersection $\bigcap\limits_{i\in I}\bd X_i$ of their boundaries is not empty, and has intrinsic radius less than $\pi/2$.
\end{theorem}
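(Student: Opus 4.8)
The plan is to fix a base point $o\in X$ and exploit nearest‑point projections onto the $X_i$ (available since $X$ is complete), distinguishing two cases according to whether the distances $d(o,X_i)$ stay bounded. Write $p_i=\pi_{X_i}(o)$ and $r_i=d(o,X_i)=d(o,p_i)$, and say that $k$ \emph{refines} $i$ when $X_k\subseteq X_i$. When $k$ refines $i$ we have $p_k\in X_i$, so the obtuse‑angle inequality for projections gives $r_k^2\geq r_i^2+d(p_i,p_k)^2$; hence $(r_i)$ is monotone along the net, with limit $r_\infty\in[0,+\infty]$. If $r_\infty<+\infty$, the same inequality forces $d(p_i,p_k)^2\leq r_k^2-r_i^2\to0$, so $(p_i)$ is a Cauchy net; its limit lies in every (closed) $X_i$, giving $\bigcap_iX_i\neq\varnothing$ and the first alternative. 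So the substance of the theorem is the case $r_\infty=+\infty$, which I treat next.

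In that case I would first produce a point of $\bigcap_i\bd X_i$. Let $u_i\in\Sigma_oX$ be the initial direction of $[o,p_i]$; a comparison‑triangle estimate from $r_k^2\geq r_i^2+d(p_i,p_k)^2$ shows $\cos\angle_o(u_i,u_k)\geq r_i/r_k$, hence $\angle_o(u_i,u_k)<\pi/2$ whenever $k$ refines $i$. Thus the closures $T_i=\overline{\{u_k\mid k\text{ refines }i\}}$ form a filtering family of nonempty closed subsets of $\Sigma_oX$, each of radius $\leq\pi/2$ (with $u_i$ as a center). This is where finite‑dimensionality is essential: by Kleiner's theorem $\Sigma_oX$ and $\bd X$ (with the Tits metric) are complete CAT(1) spaces of finite dimension, and the companion fact I would invoke — of essentially the same depth, proved by a parallel induction — is that in such a space every filtering family of nonempty closed subsets of radius $\leq\pi/2$ has nonempty intersection, itself of radius $\leq\pi/2$. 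Applying this to $(T_i)$ yields $u\in\bigcap_iT_i$, i.e. the geodesics $[o,p_i]$ converge uniformly on compacta to a ray $\rho$ with $\rho(0)=o$; I set $\xi=\rho(+\infty)$. Since $[p_i,p_k]\subseteq X_i$ for $k$ refining $i$ and these segments follow arbitrarily long initial portions of $\rho$ up to error $r_i$, the convex function $t\mapsto d(\rho(t),X_i)$ is bounded, and a standard CAT(0) argument then gives $\xi\in\bd X_i$. Hence $Z:=\bigcap_i\bd X_i$ is nonempty; being an intersection of boundaries of convex sets it is also $\pi$‑convex in $\bd X$.

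The remaining, and in my view hardest, step is to bound the intrinsic (Tits) radius of $Z$ by $\pi/2$, with $\xi$ as the natural candidate center. Transporting the inequality $\angle_{p_i}(o,\cdot)\geq\pi/2$ on $X_i$ to the boundary gives $\angle_o(\xi,\eta)\leq\pi/2$ for all $\eta\in Z$ fairly directly, but this bounds only the angle measured from $o$, whereas the intrinsic radius is read off the Tits metric; bridging that gap is precisely where the finite‑dimensional structure theory of CAT(1) spaces must be used. The mechanism I would pursue: if two points $\eta,\eta'\in Z$ had $\angle_T(\eta,\eta')>\pi/2$, then — since $\eta,\eta'\in\bd X_i$ and each $X_i$ is convex — the flat‑sector and flat‑closing results for finite‑dimensional CAT(0) spaces (the endpoint case $\angle_T(\eta,\eta')=\pi$ being the easier one, handled by the flat‑strip theorem) would produce a flat half‑plane or strip with a representative inside every $X_i$, contradicting $\bigcap_iX_i=\varnothing$. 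I would organize this as an induction on $\dim\bd X$ — which in turn reduces, through the Euclidean cone construction, to the CAT(0) statement one dimension lower — taking care at each stage of the exceptional configuration in which the relevant CAT(1) space splits off a round spherical factor, so that no canonical circumcenter exists and one argues directly. Combined with the previous paragraph, this yields the dichotomy and the radius bound, completing the proof; the main obstacle throughout is the finite‑dimensional CAT(1) circumcenter/intersection machinery, which is what upgrades the elementary angle estimate at $o$ to genuine control of the Tits geometry of $Z$.
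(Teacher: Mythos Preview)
The paper does not prove this theorem: it is quoted from Caprace--Lytchak and used as a black box, so there is no argument in the paper to compare yours against.

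On its own merits, your outline follows the architecture of the original Caprace--Lytchak proof (dichotomy on $\sup_i r_i$; in the unbounded case, pass to a CAT(1) companion statement in the link or Tits boundary and induct on dimension), and the bounded case is handled correctly. The unbounded case has a genuine problem, however. Your proposed mechanism for the radius bound is aimed at the wrong target: you set out to bound the \emph{radius} of $Z$ with $\xi$ as candidate center, but then argue by contradiction from two points $\eta,\eta'\in Z$ with $\angle_T(\eta,\eta')>\pi/2$, which is a \emph{diameter} statement. That statement is simply false in general: already for $X=\R^2$ with the filtering family $X_n=\{(x,y):x\geq n\}$, the set $Z=\bigcap_n\bd X_n$ is a closed half-circle in the Tits boundary, of diameter $\pi$ and radius $\pi/2$. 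So no flat-sector contradiction can be extracted from a pair at Tits distance $>\pi/2$, and the argument must stay with a specific center. The original proof does this entirely on the CAT(1) side via the Balser--Lytchak circumcenter construction rather than by coming back down to flats in $X$; your sketch also does not indicate how the non-strict angle estimates are to be upgraded to the strict inequality asserted in the statement.
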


The intrinsic radius less than $\pi/2$ gives the existence of a ``canonical" center. 

For the purpose of the following, we shall consider a half-space as the closure of the CAT(0) convex hull of the vertices contained in the half-space. Consider a point $\a$ in the Roller boundary $\partial X$ and its collection of half-spaces $U_\a$. This is a filtering family of closed convex spaces, so we can apply Theorem \ref{thm:CapLytch}. Since $\a$ contains an infinite descending chain, the intersection of all half-spaces in $U_\a$ with  $X$  is empty. So we get:

\begin{corollary}\label{cor:Q(a)}
Let $\a\in\partial X$. Let $Q(\a)=\bigcap\limits_{h\in U_\a} \bd h$. Then $Q(\a)$ is not empty. 

Furthermore, the map associating to $\a$ the center of $Q(\a)$ is an $\Aut(X)$-equivariant map from $\partial X$ to $\bd X$.
\end{corollary}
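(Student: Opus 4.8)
The plan is to verify the two assertions separately. For the non-emptiness of $Q(\a)$, I would apply Theorem \ref{thm:CapLytch} to the family $\{h : h\in U_\a\}$, where each half-space is regarded (as the surrounding text instructs) as the closure of the CAT(0) convex hull of its vertices. First I must check that this family is \emph{filtering}: given finitely many $h_1,\dots,h_k\in U_\a$, their intersection $h_1\cap\cdots\cap h_k$ contains $\{v\}$ for any vertex $v$ with $U_v\supset\{h_1,\dots,h_k\}$ — such a $v$ exists because $\a$ is a limit of vertices $x_n$ with $U_{x_n}\to U_\a$ pointwise, so $x_n\in h_1\cap\cdots\cap h_k$ for $n$ large — hence the intersection is a nonempty closed convex set, and it is contained in each $h_i$, so the family is directed downward by inclusion and thus filtering. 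Now Theorem \ref{thm:CapLytch} gives the dichotomy: either $\bigcap_{h\in U_\a} h\neq\varnothing$, or $\bigcap_{h\in U_\a}\bd h=Q(\a)\neq\varnothing$ with intrinsic radius $<\pi/2$. To rule out the first alternative, I would use that $\a\in\partial X$, so $U_\a$ contains an infinite descending chain $h_1\supsetneq h_2\supsetneq\cdots$; any point of $\bigcap_{h\in U_\a}h$ would lie in $\bigcap_n h_n$, but (arguing in the $1$-skeleton / combinatorially) a vertex lying in every $h_n$ would violate the descending chain condition, and more care is needed to exclude CAT(0) points that are not vertices — here I would invoke that each $h_n$ is the convex hull of its vertex set together with the fact that the combinatorial intervals shrink, or simply cite that a nonterminating collection of half-spaces has empty intersection in $X$. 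This forces $Q(\a)\neq\varnothing$.

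For the second assertion, the intrinsic radius bound $<\pi/2$ from Theorem \ref{thm:CapLytch} means $Q(\a)$ is a closed convex subset of $\bd X$ (in the Tits metric) of radius $<\pi/2$, and by the standard circumcenter construction in CAT(1) spaces (Bruhat–Tits fixed point / center of a bounded set) there is a unique point $c(\a)\in\bd X$, the circumcenter of $Q(\a)$, characterized canonically. Define the map $\a\mapsto c(\a)$. For $\Aut(X)$-equivariance, I would observe that for $g\in\Aut(X)$ we have $U_{g\a}=gU_\a$, hence $Q(g\a)=\bigcap_{h\in U_\a}\bd(gh)=g\bigl(\bigcap_{h\in U_\a}\bd h\bigr)=gQ(\a)$, using that $g$ acts on $X$ by isometries of both metrics and thus maps $\bd h$ to $\bd(gh)$ and preserves the Tits metric on $\bd X$; since $g$ is a Tits isometry it carries the circumcenter of $Q(\a)$ to the circumcenter of $gQ(\a)=Q(g\a)$, i.e. $c(g\a)=g\,c(\a)$.

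The main obstacle I anticipate is the first alternative of the dichotomy, i.e. rigorously excluding $\bigcap_{h\in U_\a} h\neq\varnothing$ when the half-spaces are taken as \emph{CAT(0)} convex hulls rather than vertex sets: a descending chain of half-spaces has empty intersection among \emph{vertices}, but one must also ensure no interior CAT(0) point survives in the intersection. I expect this is handled by noting that each CAT(0) half-space deformation retracts onto (or is the convex hull of) its vertex set and that the combinatorial distance controls the CAT(0) distance up to the dimension $D$, so a point in $\bigcap_n h_n$ would be within bounded CAT(0) distance of vertices in $\bigcap_n h_n$, which is empty — alternatively one cites the analogous statement already implicit in \cite{Fernos} or \cite{CapraceLytchak}. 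The circumcenter construction and the equivariance are then routine.
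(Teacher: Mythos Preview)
Your proposal is correct and matches the paper's approach exactly: apply Caprace--Lytchak to $U_\alpha$, rule out a common point in $X$ via an infinite descending chain in $U_\alpha$, and take the canonical circumcenter of the resulting set of intrinsic radius $<\pi/2$ (the paper does all of this in the paragraph immediately preceding the corollary and leaves equivariance implicit). Two minor remarks: your flagged obstacle dissolves once you note that any CAT(0) point lies in a cube crossed by only finitely many hyperplanes, so for large $n$ it lies on the same side of $\hat h_n$ as that cube's vertices, reducing to the vertex case; and $U_\alpha$ is not literally downward-directed unless $\alpha$ is non-terminating, so to obtain a filtering family one should first close under finite intersections---these are still nonempty closed convex sets, as you verify, and the resulting boundary intersection is unchanged.
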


In general, there is more than one point in $Q(\a)$, and it might also happen that $Q(\a)=Q(\b)$ for $\a\neq \b$. For example, take $\a=(\8,0), \b=(\8, 1)\in \~\Z^2$, then $Q(\a)=Q(\b)$ corresponds to the geodesic of slope 0.

Now let us attempt to find some kind of inverse map. 
Let $\xi\in \partial_\sphericalangle X$,
let $g:[0,\infty)\to X$ be a geodesic asymptotic to $\xi$. We say that a half-space $h\in\frakH$ is \emph{transverse} to $\xi$ if for every $R>0$ there exists $t_R\geq0$ such that the $R$-neighborhood
of the image of the geodesic ray $g|_{(t_R,\infty)}$ is contained in $h$.  We denote by $T_\xi$ the set of half-spaces transverse to $\xi$.
This set does not depend on the particular choice of the geodesic $g$ in the class of $\xi$.

\begin{lemma}\label{lemTxi}
 Let $X$ be a CAT(0) cube complex
 and let $\xi\in\partial_\sphericalangle X$.  
Then the set $T_\xi$ is not empty and  $\Cap{h\in T_\xi}h\neq \varnothing$.

Furthermore, $T_\xi$ contains an infinite descending chain.
\end{lemma}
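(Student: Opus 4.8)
The plan is to work with a fixed geodesic ray $g:[0,\infty)\to X$ in the class of $\xi$ and extract half-spaces transverse to $\xi$ directly from the combinatorics of which hyperplanes the ray crosses. First I would observe that since $g$ is a geodesic ray that leaves every bounded set (it represents a point at infinity), and since $X$ is a CAT(0) cube complex, the ray must cross infinitely many hyperplanes: indeed, a path contained in the union of finitely many half-spaces' bounded neighborhoods would be bounded, so there are infinitely many hyperplanes $\^h$ separating $g(0)$ from $g(t)$ for $t$ large. For each such hyperplane let $h$ denote the half-space on the ``far'' side, i.e. the one containing $g(t)$ for all large $t$. The key point to check is that such an $h$ is transverse to $\xi$ in the sense defined above: because $g$ is a geodesic, once it crosses $\^h$ it never returns to $h^*$ (geodesics in CAT(0) cube complexes cross each hyperplane at most once), so the tail $g|_{(t_0,\infty)}$ lies in $h$; the uniform $R$-neighborhood condition then follows from the fact that, for a hyperplane $\^h'$ transverse to $\^h$, the ray can only stay within bounded distance of $\^h'$ for bounded time (again by the geodesic property), so after discarding a bounded initial segment the tail stays at distance $>R$ from $\^h$ inside $h$. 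This shows $T_\xi\neq\varnothing$, and in fact produces infinitely many elements of $T_\xi$.

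Next I would address $\Cap{h\in T_\xi}h\neq\varnothing$ and the descending chain. The natural candidate for a point in the intersection is a limit point of $g(t)$ in the Roller compactification $\~X$: by compactness of $\~X=2^\frakH$, the sequence $g(n)$ has a subsequential limit $\eta\in\~X$, and by construction every $h\in T_\xi$ eventually contains the ray, hence contains $\eta$, so $\eta\in\Cap{h\in T_\xi}h$. For the descending chain, I would order the hyperplanes crossed by $g$ by the crossing time: if $\^{h_1},\^{h_2},\dots$ are crossed at increasing times $t_1<t_2<\cdots$, then for $i<j$ the ray lies in $h_i^*$ before time $t_i$ and in $h_j$ after time $t_j$... wait — more carefully, the ray starts in $h_i^*\cap h_j^*$ and ends in $h_i\cap h_j$, and since it crosses $\^{h_i}$ strictly before $\^{h_j}$, between times $t_i$ and $t_j$ it lies in $h_i\cap h_j^*$, so $h_i\cap h_j^*\neq\varnothing$, $h_i\cap h_j\neq\varnothing$, $h_i^*\cap h_j^*\neq\varnothing$; combined with the fact that a geodesic crosses nested and transverse pairs in a constrained way, one gets that after passing to a subsequence the $h_i$ form a nested descending chain $h_{i_1}\supsetneq h_{i_2}\supsetneq\cdots$. (Alternatively: among the infinitely many half-spaces in $T_\xi$, any two $h,k$ satisfy $h\cap k\neq\varnothing$ since both contain the tail of $g$, so by Lemma \ref{lemma Annoying cases} applied repeatedly — or rather by a Ramsey-type argument on the relations $\pitchfork$ versus $\subsetneq$ — an infinite subfamily is either pairwise transverse or forms a descending chain; pairwise transverse is impossible because $X$ is finite-dimensional, so we get the descending chain.) This last Ramsey/dimension argument is actually the cleanest route and I would use it.

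The main obstacle I anticipate is the verification that the far half-space of a crossed hyperplane is genuinely transverse to $\xi$ in the strong sense required — i.e. that the \emph{$R$-neighborhood} of the tail, for every $R$, is eventually contained in $h$, not merely the tail itself. This requires controlling how long a CAT(0) geodesic can remain within distance $R$ of a given hyperplane before crossing it and after crossing it, which one gets from the fact that the geodesic crosses $\^h$ exactly once together with convexity of half-spaces in the CAT(0) metric (the combinatorial and CAT(0) metrics being comparable on the relevant scales is not needed if one phrases the neighborhoods consistently). Once that geometric estimate is in hand, the rest — nonemptiness of the intersection via a Roller limit point, and the descending chain via finite-dimensionality — is routine.
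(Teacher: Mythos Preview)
Your approach is sound and supplies a direct argument; the paper itself does not prove this lemma but simply cites \cite[Lemma 2.27]{CFI}, where it is shown that $T_\xi$ is nonempty, contains an infinite descending chain, and satisfies the consistency and partial-choice conditions (whence the intersection in $\~X$ is nonempty). Your Roller-limit argument for the nonempty intersection and the Ramsey/finite-dimension argument for the descending chain are both correct and are essentially what one finds in that reference.

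The one place where your write-up is muddled is the verification that a half-space $h$ whose hyperplane is crossed by the ray actually lies in $T_\xi$. The sentence invoking ``a hyperplane $\^h'$ transverse to $\^h$'' is a red herring. The clean argument, which you do gesture at in your final paragraph, is simply this: the function $t\mapsto d'(g(t),h^*)$ (CAT(0) distance to the closed convex set $h^*$) is convex; it vanishes for $t\le t_0$ and is strictly positive at some $t_1>t_0$ (since the ray enters the open half-space $h$); convexity then forces
\[
d'(g(t),h^*)\ \geq\ \frac{d'(g(t_1),h^*)}{t_1-t_0}\,(t-t_0)\ \longrightarrow\ \infty.
\]
Since the combinatorial and CAT(0) metrics are bi-Lipschitz in finite dimension, the $R$-neighbor\-hood condition follows for every $R$. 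With that clarification your proof stands.
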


\begin{proof}
See \cite[Lemma 2.27]{CFI}, where it is proved that $T_\xi$ is not empty, contains an infinite descending chain, and that it satisfies the partial choice and consistency condition (hence has a non-empty intersection  in $\overline X$).
\end{proof}

We denote the intersection by $\~X_\xi=\Cap{h\in T_\xi} h$. It is a subset of $\overline X$ (and by Lemma \ref{lemTxi} is disjoint from $X$). We will also denote by $X_\xi$ the subset of $\a\in\~X_\xi$ such that $U_\a\setminus T_\xi$ satisfies the descending chain condition (which is trivially satisfied if $U_\a\setminus T_\xi=\varnothing$).

We have defined two maps: the map $\a\mapsto Q(\a)$ from the Roller boundary to (closed subsets of) the visual boundary, and the map $\xi\mapsto \~X_\xi$ from the visual boundary to (closed subsets of) the Roller boundary. 
These two maps are somehow inverse to one another.

\begin{lemma}\label{ainQxi}
Let $\a\in \partial X$. Let $Q(\a)$ be as in Corollary \ref{cor:Q(a)}, and let $\xi\in Q(\a)$. Then $\a\in \~X_\xi$.

Conversely, let $\xi\in\bd X$ and $\a\in \~X_\xi$. Then $\xi\in Q(\a)$.
\end{lemma}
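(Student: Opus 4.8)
The plan is to unwind the definitions of the two maps and relate them through the notion of transversality of a half-space to a geodesic ray. For the first statement, let $\a\in\partial X$, let $\xi\in Q(\a)=\bigcap_{h\in U_\a}\bd h$, and fix a geodesic ray $g\colon[0,\infty)\to X$ asymptotic to $\xi$. To prove $\a\in\~X_\xi=\bigcap_{h\in T_\xi}h$, it suffices to show $T_\xi\subset U_\a$. So fix $h\in T_\xi$; I want to conclude $h\in U_\a$. I would argue by contradiction: suppose $h^*\in U_\a$. Since $\xi\in\bd h^*$ as well (indeed $\xi\in Q(\a)\subset\bd h^*$ because $h^*\in U_\a$), the CAT(0) geodesic $g$ can be pushed, using convexity of the half-space $h^*$ (viewed as the closure of the convex hull of its vertices) together with $\xi\in\bd h^*$, so that a sub-ray of $g$ lies inside any prescribed neighborhood of $h^*$; more precisely one shows that for every $R$ a tail of $g$ has its $R$-neighborhood meeting $h^*$ in an unbounded set, which is incompatible with $h\in T_\xi$ (transversality forces a tail of $g$ to have its $R$-neighborhood contained in $h$, hence eventually disjoint from $h^*$ once $R$ exceeds the width of the wall). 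This contradiction gives $h\in U_\a$, so $T_\xi\subset U_\a$ and thus $\a\in\~X_\xi$.

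For the converse, let $\xi\in\bd X$ and $\a\in\~X_\xi$, so $T_\xi\subset U_\a$; I must show $\xi\in Q(\a)$, i.e. $\xi\in\bd h$ for every $h\in U_\a$. Fix $h\in U_\a$. By Lemma \ref{lemTxi}, $T_\xi$ contains an infinite descending chain $(k_n)$; since $T_\xi\subset U_\a$, all $k_n\in U_\a$, and in particular $\bigcap_n k_n\subset\bigcap_n \bd k_n$ contains $\xi$ by definition of transversality (a geodesic representing $\xi$ eventually enters every $k_n$). Now I would compare $h$ with the chain $(k_n)$: since both $h$ and each $k_n$ lie in $U_\a$, they pairwise intersect (their intersection contains points converging to $\a$), so Lemma \ref{lemma Annoying cases} applies to the descending chain $(k_n)$ and the half-space $h$. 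Either some tail $k_n\subset h$, in which case $\xi\in\bd k_n\subset\bd h$ and we are done; or some tail satisfies $k_n\pitchfork h$; or some tail has $h\supset k_n$ — the last is the case $k_n\subset h$ again. The remaining possibility to rule out is $h^*\subset k_n$, but that forces $h^*\in U_\a$ (since $k_n\in U_\a$ and $U_\a$ is closed under passing to larger half-spaces), contradicting $h\in U_\a$. In the transverse case $k_n\pitchfork h$ for all large $n$, I would argue that a geodesic ray for $\xi$ that eventually lies deep inside $k_n$ for all $n$ must also eventually lie on the $h$-side: if not, it would have points arbitrarily far in $h^*\cap k_n$ for all $n$, and since the $k_n$ shrink to $\xi$ one gets $\xi\in\bd(h^*\cap k_n)$ for all $n$, but one can then also produce a point of $X$ in the interval toward $\xi$ on the $h^*$ side contradicting $h\in U_\a$; alternatively, and more cleanly, replace $h$ by reasoning directly: among the half-spaces of $U_\a$ transverse to a cofinal part of $(k_n)$, their intersection still contains $\a$ and the center argument from Corollary \ref{cor:Q(a)} shows $\xi\in\bd h$. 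Concluding, $\xi\in\bd h$ for all $h\in U_\a$, i.e. $\xi\in Q(\a)$.

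The main obstacle I anticipate is the passage between the combinatorial data (half-spaces, the set $U_\a$, nesting/transversality of walls) and the CAT(0)-geometric data (the geodesic ray $g$, its $R$-neighborhoods, the visual-boundary point $\xi$). The delicate point is precisely the definition of ``$h$ transverse to $\xi$'': one must show that if $\xi\in\bd h$ then a tail of any geodesic ray asymptotic to $\xi$ is eventually arbitrarily deep inside $h$ (so that $h\in T_\xi$ or at least is compatible with it), and conversely that $h\in T_\xi$ forces $\xi$ to lie on the $h$-side of the wall. This requires a careful comparison between the $\ell^1$ (combinatorial) and CAT(0) metrics near infinity, and is the kind of statement that is implicitly contained in \cite[Lemma 2.27]{CFI} and its surrounding discussion; I would lean on that reference for the technical core and only assemble the logical implications here. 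The chain-comparison via Lemma \ref{lemma Annoying cases} is then routine once the transversality dictionary is in place.
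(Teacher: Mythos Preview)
Your argument for the first implication is essentially the paper's: assume $h\in T_\xi$ but $h^*\in U_\a$, use $\xi\in Q(\a)\subset\bd h^*$ to produce a geodesic ray to $\xi$ lying in (or at bounded distance from) $h^*$, and contradict transversality. That part is fine.

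For the converse, however, you take an unnecessary detour through the descending chain $(k_n)\subset T_\xi$ and Lemma~\ref{lemma Annoying cases}, and the transverse case $k_n\pitchfork h$ is left genuinely incomplete. Neither of the two sketches you offer there works: the first (``the ray must eventually lie on the $h$-side'') is simply false in examples like $X=\Z^2$ with $h$ a vertical half-plane and $(k_n)$ horizontal half-planes; the second (``the center argument from Corollary~\ref{cor:Q(a)} shows $\xi\in\bd h$'') is circular, since that is precisely what you are trying to prove. The paper's proof avoids this entirely by a direct trichotomy on $h$ itself: if $h\in T_\xi$ we are done; since $T_\xi\subset U_\a$ and $h\in U_\a$, we have $h^*\notin T_\xi$; and if neither $h$ nor $h^*$ is in $T_\xi$, then the failure of $h^*\in T_\xi$ means, by definition, that some geodesic ray to $\xi$ does \emph{not} eventually have its $R$-neighborhood inside $h^*$ for some $R$, i.e.\ the ray stays at bounded distance from $h$, so $\xi\in\bd h$ (indeed $\xi\in\bd\hat h$). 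This one-line use of the negation of transversality is the missing idea; once you see it, the chain $(k_n)$ and Lemma~\ref{lemma Annoying cases} are irrelevant.
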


\begin{proof}
Let us prove the first part: let $\a\in\partial X$ and $\xi\in Q(\a)$. Let $h\in T_\xi$. Assume that $h\not\in U_\a$, which means that $h^*\in U_\a$. Since $\xi\in Q(\a)$, this implies that $\xi$ is in the visual boundary of $h^*$. So there is a geodesic ray $g_0$ converging to $\xi$ which is contained in $h^*$. Any other geodesic ray converging to $\xi$ will be at bounded distance from $g_0$. This implies that $h\not\in T_\xi$, which is a contradiction. So we have $h\in U_\a$. It follows that $\a$ is contained in the intersection of all half-spaces in $U_\a$, which is $\~X_\xi$. 

Now let $\xi\in\bd X$ and $\a\in \~X_\xi$. Let $h\in U_\a$, and let us prove that $\xi\in\bd h$. If $h\in T_\xi$, then the result is clear. Since $\a\in \~X_\xi$, we cannot have $h^*\in T_\xi$. Now assume that neither  $h$ or $h^*$ are in $T_\xi$. Pick a geodesic asymptotic to $\xi$. If this geodesic is in $h$, then we are done. If not, since $h^*\not\in T_\xi$, we see that this geodesic stays at bounded distance from $h$. This means that $\xi\in\bd h$  (in fact even $\xi\in\bd\hat h$). This proves that every $\xi$ is in the boundary of every half-space in $U_\a$. So $\xi\in Q(\a)$.

\end{proof}

We also record the following.

\begin{lemma}\label{lem:geodsector}
Let $\a\in\partial X$ and $\xi\in Q(\a)$. Let $o\in X$. Then the CAT(0)-geodesic ray from $o$ to $\xi$ is contained in the interval $I(o,\a)$.
\end{lemma}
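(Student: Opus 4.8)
The plan is to show that the CAT(0)-geodesic ray $g\colon[0,\infty)\to X$ from $o$ to $\xi$ stays inside $I(o,\a)$ by checking the defining half-space condition: a vertex $z$ lies in $I(o,\a)$ iff $U_o\cap U_\a\subseteq U_z$, and more generally a point $p\in\bar X$ lies in the CAT(0) convex hull of $I(o,\a)$ provided that for every half-space $h$ with $o\in h$ and $\a\in h$ (i.e. $h\in U_o\cap U_\a$) the point $p$ lies in (the CAT(0) closure of) $h$. So the real content is: for every $h\in\frakH$ with $o\in h$ and $h\in U_\a$, the entire geodesic ray $g$ is contained in the closed convex subset $\bar h$ (the CAT(0)-convex hull of the vertices of $h$). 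Since $o\in h$, the ray starts inside $\bar h$; since $\bar h$ is closed and convex, it suffices to rule out the ray ever leaving $\bar h$, equivalently to show $\xi\in\bd h$ together with the ray not crossing the wall $\hat h$.

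First I would record that, because $\xi\in Q(\a)=\bigcap_{k\in U_\a}\bd k$, we have $\xi\in\bd h$ for our chosen $h\in U_\a$. Now consider the geodesic ray $g$ from $o$ to $\xi$. If $g$ never meets $h^*$, then $g$ lies in $\bar h$ and we are done for this $h$. Suppose for contradiction that $g$ enters $h^*$, i.e. $g(t_0)\in h^*$ for some $t_0>0$. Since $o\in h$, the ray must cross the hyperplane $\hat h$ at some time; after that crossing the ray is on the $h^*$ side. But $\xi\in\bd h$ means there is a geodesic ray from $o$ (or from any basepoint) asymptotic to $\xi$ that is eventually contained in (a neighborhood of) $h$ — more precisely $\xi\in\bd h$ means some ray to $\xi$ stays in $\bar h$, hence all rays to $\xi$ stay within bounded distance of $\bar h$. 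The key geometric fact I would invoke is the standard one that in a CAT(0) cube complex the CAT(0)-geodesic between two points crosses each hyperplane at most once (equivalently, the set of hyperplanes separated by the geodesic is exactly $U_{g(0)}\triangle U_{g(t)}$ at the combinatorial level, and the CAT(0) geodesic is "taut" with respect to hyperplanes). Thus once $g$ has crossed $\hat h$ into $h^*$, it stays in $h^*$ forever, so $\xi$ would be the endpoint of a ray eventually in $h^*$ and at bounded distance from $\hat h$ would force $\xi\in\bd\hat h\subset\bd h^*$; combined with $\xi\in\bd h$ this is possible only if $\xi\in\bd\hat h$, i.e. the ray to $\xi$ is asymptotic into the wall itself. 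I then argue this cannot happen for a ray that has strictly entered $h^*$: convexity of $\bar h$ forces the nearest-point projection of $g$ to $\bar h$ to be a bounded ray, contradicting that $\xi\in\bd h$ is realized by an unbounded ray in $\bar h$ asymptotic to the same point $\xi$ (two asymptotic rays in a CAT(0) space are at bounded distance, so $g$ itself would be at bounded distance from $\bar h$, contradicting that $g$ eventually lies deep inside $h^*$ and, being a geodesic, recedes from the wall at linear rate unless it is parallel to it).

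The cleanest way to package the last paragraph is via Corollary \ref{cor:Q(a)} and Lemma \ref{ainQxi}: since $\xi\in Q(\a)$ we have $\a\in\~X_\xi$, so every $h\in U_\a$ satisfies either $h\in T_\xi$ or ($h,h^*\notin T_\xi$ and a geodesic to $\xi$ stays at bounded distance from $h$). In the first case, $h\in T_\xi$ directly gives that the ray $g$ is eventually (hence, being a geodesic that starts in $h$, entirely) in $h$. In the second case the analysis in the proof of Lemma \ref{ainQxi} already shows the geodesic to $\xi$ stays at bounded distance from $h$, and since $o\in h$ a CAT(0) geodesic from $o$ that stays at bounded distance from the convex set $\bar h$ must in fact lie in $\bar h$ (a CAT(0) geodesic ray based in a closed convex set and staying within bounded distance of it is contained in it, because the convex function $t\mapsto d(g(t),\bar h)$ is bounded and starts at $0$, hence is identically $0$). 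Intersecting over all $h\in U_o\cap U_\a$ then places $g$ in the CAT(0) convex hull of $I(o,\a)$, i.e.\ in $I(o,\a)$. The main obstacle is the technical point that "$\xi\in\bd h$" should be upgraded to "the geodesic to $\xi$ lies in $\bar h$" given that it starts at $o\in h$; this is exactly where I rely on the convexity of $t\mapsto d(g(t),\bar h)$ and the bounded-distance-to-$\bar h$ property extracted from $\a\in\~X_\xi$ via Lemma \ref{ainQxi}.
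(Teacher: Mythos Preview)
Your argument (the ``cleanest way'' paragraph) is correct, but it takes a longer route than the paper. The paper applies CAT(0) convexity once to the whole interval: since $I(o,\a)\cap X=\bigcap_{h\in U_o\cap U_\a}h$ is an intersection of $\ell^2$-convex half-spaces, it is itself a closed CAT(0)-convex subset, and since $\xi\in Q(\a)$ gives $\xi\in\bd h$ for every $h\in U_\a$, the point $\xi$ lies in the visual boundary of $I(o,\a)\cap X$; the geodesic ray from $o$ to $\xi$ is then automatically contained in this convex set. You instead work half-space by half-space and introduce a case split via Lemma \ref{ainQxi} and the set $T_\xi$. This works, but the detour is unnecessary: your Case-2 convex-function argument (a bounded convex function vanishing at $0$ is identically $0$) already handles every $h\in U_o\cap U_\a$ directly, because $\xi\in\bd h$ alone furnishes some ray to $\xi$ inside $\bar h$, so $g$ is at bounded distance from $\bar h$. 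Thus Lemma \ref{ainQxi} and the $T_\xi$ dichotomy can be dropped, and what remains is essentially the paper's one-line proof applied to each $h$ separately rather than to their intersection.
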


\begin{proof}
Let $\overline I(o,\a)$ be the closure of $I(o,\a)\cap X$ in $X\cup\bd X$. Since $I(o,\a)=\bigcap\limits_{h\in U_o\cap U_\a} h$, we have $\overline I(o,\a)=\bigcap\limits_{h\in U_o\cap U_\a} ((h\cap X)\cup \bd h)$. 
So  $\xi\in Q(\a)$ implies that $\xi\in\overline I(o,\a)$.  As $I(o,\a)\cap X$ is a convex subset of $X$ (for the $\ell^1$ metric and hence also for the $\ell^2$ metric), it follows that the geodesic from $o$ to $\xi$ is contained in $I(o,\a)\cap X$.
\end{proof}

\subsection{Squeezing Points}\label{Squeezing}

The notion of a squeezing point will be indispensable in Section \ref{RW and Visual} where we connect the behavior of the random walk with the visual boundary. We begin by establishing the notion for points in the Roller boundary, and then discuss the notion for points in the visual boundary. 

\begin{definition}
Assume that $X$ is irreducible. We say that a point $\eta\in\partial X$ is \emph{squeezing} if there exists an $x\in X$ and an $r>0$ such that there exist infinitely many pairs of super strongly separated $h\subset k$ at distance $r$, with $\eta\in h\cap k$ and $x\in h^*\cap k^*$.

If $X$ is not irreducible, a squeezing point is one that is squeezing in each factor.
\end{definition}

\begin{remark}
For an irreducible complex $X$ a point $\eta \in \partial X$ is \emph{contracting} if there is a bi-infinite decreasing sequence of pairwise strongly separated half-spaces in $U_\eta$ which are at consecutive distance  $r$.
The reader may then note the similarity between a \emph{squeezing point} and  a \emph{contracting point}.  Contracting points are necessarily squeezing, but the converse does not hold in general. Both squeezing and contracting points are necessarily regular. 
\end{remark}

Recall the definition of $Q(\eta)$ from  Corollary \ref{cor:Q(a)}. The properties of squeezing points are summarized in the following lemma.

\begin{lemma}\label{lem:specialvisual}
Let $\eta\in \partial X$ be a squeezing point. Then there exists $\xi\in\bd X$ such that $Q(\eta)=\{\xi\}$. Furthermore, any sequence of vertices $(x_n)$ converging to $\eta$ in the Roller boundary also converges to $\xi$ in the visual boundary.
\end{lemma}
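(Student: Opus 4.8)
The plan is to reduce to the irreducible case and exploit Proposition \ref{BridgeGeod}. Since a squeezing point is by definition squeezing in each factor, and $Q(\eta)$ splits as a product $Q(\eta_1)\times\cdots\times Q(\eta_n)$ under the product decomposition $\~X=\~X_1\times\cdots\times\~X_n$, it suffices to treat a single irreducible $X$. So assume $X$ irreducible and fix $x\in X$, $r>0$, and the infinite family of super strongly separated pairs $h_i\subset k_i$ at distance $r$ with $\eta\in h_i\cap k_i$ and $x\in h_i^*\cap k_i^*$. After passing to a subsequence we may assume the walls $\hat h_i$ are pairwise distinct, hence (using that between any two half-spaces there are only finitely many, plus super strong separation) that $k_{i+1}\subset h_i$; in particular $(h_i)$ is an infinite descending chain of pairwise strongly separated half-spaces, so $\eta\in\partial_r X$ and $\bigcap_i h_i=\{\eta\}$ by Lemma \ref{singleton} and Proposition \ref{Rank1Char}.

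The first main step is to show $Q(\eta)$ is a single point. Let $b_i=b(h_i,k_i)$ be the (length-$r$) combinatorial bridge of the $i$-th pair, with endpoints $p_i\in h_i$, $q_i\in k_i^*$; note $d(p_i,q_i)=r$. By Proposition \ref{BridgeGeod}, for any $u\in h_i$ and $v\in k_i^*$ the interval $\I(u,v)$ lies in the $r$-neighborhood $V_r(b_i)$ of the bridge. I want to apply this with $v=x$ (which is in $k_i^*$) and $u$ ranging over vertices of any $\xi\in Q(\eta)$. Concretely, let $\xi\in Q(\eta)$ and let $g\colon[0,\infty)\to X$ be the CAT(0) geodesic ray from $x$ to $\xi$; by Lemma \ref{lem:geodsector}, $g$ lies in $\I(x,\eta)$, so every vertex of $X$ within bounded distance of $g$ lies in $\I(x,\eta)\cap X\subset \bigcap_{h\in U_x\cap U_\eta}h\subset h_i$ once it is past the point where $g$ enters $h_i$. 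Since $\eta\in h_i$ and $x\notin h_i$, the ray $g$ does eventually enter and stay in $h_i$; applying Proposition \ref{BridgeGeod} to the tail of $g$ inside $h_i$ together with $x\in k_i^*$ shows that this tail lies in $V_r(b_i)$, a set of diameter at most $3r$ (two endpoints at distance $r$, each fattened by $r$). Thus arbitrarily long terminal segments of $g$ are confined to sets of bounded diameter, which forces the Busemann/visual class of $g$ to be pinned down: two points $\xi,\xi'\in Q(\eta)$ both have geodesic rays from $x$ whose tails lie in $V_r(b_i)$ for every $i$, and since $\mathrm{diam}\,V_r(b_i)\le 3r$ uniformly while the $b_i$ march off to infinity (they separate $x$ from $\eta$ and are nested), standard CAT(0) comparison gives $\angle_x(\xi,\xi')=0$, i.e. $\xi=\xi'$. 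This produces the desired $\xi$ with $Q(\eta)=\{\xi\}$.

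The second step is the convergence statement. Let $(x_n)$ be vertices with $x_n\to\eta$ in $\~X$, i.e. $U_{x_n}\to U_\eta$ pointwise. Fix $i$; since $h_i\in U_\eta$, eventually $h_i\in U_{x_n}$, so $x_n\in h_i$ for $n$ large, while $x\in k_i^*$; hence by Proposition \ref{BridgeGeod} applied to $\I(x,x_n)$ we get that the CAT(0) geodesic segment $[x,x_n]$ (which lies in $\I(x,x_n)$, being a convex $\ell^1$-interval hence $\ell^2$-convex) stays, beyond the point it enters $h_i$, inside $V_r(b_i)$. Combined with the fact from the first step that the ray $[x,\xi)$ also has its tail in $V_r(b_i)$, we conclude that for large $n$ the segment $[x,x_n]$ and the ray $[x,\xi)$ fellow-travel up to distance $3r$ for longer and longer initial portions (as $i\to\infty$, so $d(x,b_i)\to\infty$); by the definition of the cone topology on $\bd X$ this is exactly $x_n\to\xi$ in the visual boundary.

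The main obstacle I anticipate is the uniformity in the first step: turning ``every long tail of the ray lies in some bounded-diameter set'' into an honest statement that $Q(\eta)$ is a \emph{single} point and that the convergence is genuinely in the cone topology. The care needed is (a) checking that $V_r(b_i)$ really has uniformly bounded diameter — this is immediate from $b(h_i,k_i)=r$ — and (b) controlling where the geodesic from $x$ to $\xi$ (or to $x_n$) \emph{enters} $h_i$, since Proposition \ref{BridgeGeod} only constrains the part of the interval inside $h_i\cap k_i^*$; one must argue that the entry points $d(x,\cdot\cap\partial h_i)\to\infty$ as $i\to\infty$, which follows because the $h_i$ are nested with $\bigcap h_i=\{\eta\}\notin X$ and $x\notin h_i$. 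Once these two points are pinned down, the rest is routine CAT(0) comparison geometry.
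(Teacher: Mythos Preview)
Your reduction to the irreducible case is wrong. The visual boundary of a product $X_1\times\cdots\times X_n$ is the spherical join $\bd X_1*\cdots*\bd X_n$, not a product, so $Q(\eta)$ does \emph{not} decompose as $Q(\eta_1)\times\cdots\times Q(\eta_n)$. In fact the lemma is simply false for reducible $X$: as the paper itself remarks just before Theorem \ref{cvg:reducible}, when $n\ge 2$ and each $\eta_i\in\bdr X_i$, the set $Q((\eta_1,\dots,\eta_n))$ is an entire sector of $S^{n-1}$, never a singleton. The lemma is intended (and only valid) for irreducible $X$; the paper's proof tacitly assumes this by immediately unpacking the definition of squeezing in its irreducible form. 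So this step is both unnecessary and mistaken.

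For irreducible $X$, your plan is essentially the paper's own argument: use Proposition \ref{BridgeGeod} to confine, for each $i$, the portion of any CAT(0) geodesic from $x$ to a point $\xi\in Q(\eta)$ (resp.\ to $x_n$) that crosses the pair $\hat h_i,\hat k_i$ to the uniformly bounded set $V_r(b(h_i,k_i))$; then invoke convexity of the CAT(0) distance to identify any two such rays, and a comparison-triangle estimate to upgrade ``bounded distance at arbitrarily large times'' to convergence of $[x,x_n]$ to the ray in the cone topology. Your appeal to Lemma \ref{lem:geodsector} is a reasonable way to justify that the ray actually enters each $h_i$, a point the paper asserts without further comment. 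One phrasing to correct: the region controlled by Proposition \ref{BridgeGeod} is the slab $k_i\cap h_i^*$ between the two walls, not ``the tail inside $h_i$'' (and not $h_i\cap k_i^*$, which is empty since $h_i\subset k_i$); use it that way, as the paper does.
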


\begin{proof}
 Let $x\in X$ and  $r>0$ be such that there exists an infinite sequence of super strongly separated half-spaces $h_i\subset k_i$ at distance $r$, with $\eta\in h_i\cap k_i$ and $x\in h_i^*\cap k_i^*$.  
 
Let us prove first that $Q(\eta)$ is a singleton. Assume that there exist $\xi,\xi'\in Q(\eta)$. Let $g$ and $g'$ be the geodesic rays from $x$ to $\xi$ and $\xi'$ respectively. Then for every $i$, both the rays $g$ and $g'$ cross both hyperplanes $\hat h_i$ and $\hat k_i$. By Lemma \ref{BridgeGeod}, they have to be in the $r$-neighborhood of the bridge $b(h_i,k_i)$. Furthermore, the bridge $b(h_i,k_i)$ crosses exactly the $r$ hyperplanes separating $h_i$ from $k_i$. So its diameter (for the combinatorial distance $d$) is at most $r$. Hence its diameter for the distance $d'$ is at most $C $, for some $C>0$ (depending only on $r$). It follows that the two geodesic rays $g$ and $g'$ are at distance $C'$ for some (fixed) $C'>0$ when they travel in $h_i\cap k_i^*$. 

Since $h_i$ and $k_i$ can be arbitrarily far from $x$, it follows that $g$ and $g'$ are at distance $C'$ from each other at arbitrarily large distance from $x$. By convexity of the distance in a CAT(0) space, it follows that they are always  at distance at most $C'$ from each other. Hence $\xi=\xi'$.

Now let $(x_n)$ be a sequence of vertices of $X$ converging to $\eta$. Let $g_n$ be the geodesic ray from $x$ to $x_n$. We have to prove that $g_n$ converges to $g$ uniformly on every compact set. Let $R>0$ and let $(h_i,k_i)$ be half-spaces in the sequence defined above which are at distance $>R$ from $x$. For $n$ large enough, we see that $x_n$ belongs to $h_i\cap k_i$, so that $g_n$ crosses $\hat h_i$ and $\hat k_i$. So using the same argument as above, for every $R>0$ and every $t<R$, we have $d'(g_n(t),g(t))<C'$.

To avoid cumbersome notation for the remainder of the proof only we shall denote both the CAT(0) metric on $X$ and on Euclidean space by $d$.  Fix $\eps>0$ small.  Consider the comparison triangle $\bar x$, $\bar\g_n(R)$, and $\bar \g(R)$ in the Euclidean plane $\R^2$. Let $t<R\eps/C'$, $p=\g_n(t)$, and $q=\g(t)$, and consider again the  points $\bar p$ and $\bar q$ in $\R^2$ on the segments $[\bar x \bar \g_n(R)]$ and $[\bar x \bar g(R)]$ respectively and both at distance $t$ from $\bar x $. Since we know that $d(\bar\g_n(R),\bar\g(R))=d(\g_n(R),\g(R))\leq C'$,  using the Law of Similar Triangles  we see that $d(\bar p,\bar q)\leq \frac{t C'}{R}<\eps$. By definition of CAT(0) spaces, it follows that $d(p,q)<\eps$. In other words, we have, for all $t<R\eps/C'$, $d(\g_n(t),\g(t))<\eps$. The result follows.

\end{proof}

Lemma \ref{lem:specialvisual} justifies the following:

\begin{definition}
 Assume that $X$ is irreducible. An element $\xi \in \partial_\eye X$ is said to be \emph{squeezing} if for some (and hence all) $x\in X$ there is an $r>0$ and infinitely many pairs of super strongly separated $h\subset k$ at distance $r$, with $x\in h^*\cap k^*$ such that geodesic ray from $x$ asympotic to $\xi$ crosses the walls $\^h$ and $\^k$.
 \end{definition}

Recall from Corollary \ref{cor:Q(a)} that there is an $\Aut(X)$-equivariant map $\partial X \to \partial_\eye X$. This together with Lemma \ref{lem:specialvisual} yields:

\begin{lemma}\label{lem: visaula and roller squeezing bijection}
 There is an $\Aut(X)$-equivariant bijection between the squeezing points in $\partial X$ and the squeezing points in $\partial_\eye X$. 
\end{lemma}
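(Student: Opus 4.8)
The plan is to realize the claimed bijection as the restriction of the map $\eta\mapsto Q(\eta)$ of Corollary \ref{cor:Q(a)}. Since, by definition, a squeezing point of a reducible $X$ is squeezing in each irreducible factor, and since $Q(\cdot)$ (and the visual boundary, as a spherical join) decomposes factorwise, it suffices to treat the irreducible case; the reducible case then follows componentwise. So assume $X$ irreducible. By Lemma \ref{lem:specialvisual}, for every squeezing $\eta\in\partial X$ the set $Q(\eta)$ is a singleton $\{\xi\}$, which defines a map $\Phi$ from the squeezing points of $\partial X$ to $\partial_\eye X$. It is $\Aut(X)$-equivariant because it is the restriction of the equivariant map of Corollary \ref{cor:Q(a)}. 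It remains to check that $\Phi$ lands in the squeezing points of $\partial_\eye X$ and is a bijection onto them.

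\textbf{Image and surjectivity.} The common mechanism is the following claim: if a CAT(0) geodesic ray $g$ from $x$ to $\xi\in\partial_\eye X$ crosses both walls of a super strongly separated nested pair $h\subset k$ of bounded bridge length, with $x\in h^*\cap k^*$, then $h,k\in T_\xi$. Indeed half-spaces are CAT(0)-convex, so $g$ meets $h$ and $k$ in intervals of parameters, and the slab $k\setminus h$ has diameter bounded in terms of the bridge length by Proposition \ref{BridgeGeod}; together with the presence of infinitely many such pairs marching to infinity, this forces $g$ eventually into each $h_i$, and arbitrarily deep. Granting the claim: (i) if $\eta$ is squeezing with data $(x,r,\{h_i\subset k_i\})$, pick vertices $z_n\to\eta$; by Lemma \ref{lem:specialvisual} they converge to $\xi=\Phi(\eta)$ in $\partial_\eye X$, so the geodesics $[x,z_n]$ converge to $g$ uniformly on compact sets, and since $h_i,k_i\in U_\eta$ we have $z_n\in h_i\cap k_i$ for $n$ large, so $[x,z_n]$ crosses $\hat h_i$ and $\hat k_i$; passing to the limit and applying the claim shows $g$ crosses $\hat h_i,\hat k_i$ for all $i$, i.e.\ $\xi$ is squeezing with the same data. (ii) Given a squeezing $\xi\in\partial_\eye X$ with data $(x,r,\{h_i\subset k_i\})$, the claim gives $h_i,k_i\in T_\xi$, hence $\~X_\xi=\Cap{h\in T_\xi}h\subseteq\Cap{i}(h_i\cap k_i)$; this is nonempty and disjoint from $X$ by Lemma \ref{lemTxi}, any $\eta$ in it lies in every $h_i\cap k_i$ with $x\in h_i^*\cap k_i^*$, so $\eta$ is a squeezing point of $\partial X$, and by Lemma \ref{ainQxi} $\xi\in Q(\eta)$, whence $\Phi(\eta)=\xi$.

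\textbf{Injectivity.} Suppose $\eta\neq\eta'$ are squeezing with $\Phi(\eta)=\Phi(\eta')=\xi$. Both are regular, so Proposition \ref{biinfinitechain} gives a chain $(s_m)_{m\in\Z}$ of pairwise strongly separated half-spaces with $s_{m+1}\subset s_m$ and $s_m\in U_\eta\setminus U_{\eta'}$ for all $m$. Then $s_1\subset s_0\subset s_{-1}$ are pairwise strongly separated, so $\hat s_1$ and $\hat s_{-1}$ are super strongly separated. From $\xi\in Q(\eta)\subseteq\bd s_1$ and $\xi\in Q(\eta')\subseteq\bd s_{-1}^*$, any geodesic ray to $\xi$ stays within bounded distance of both of the disjoint half-spaces $s_1$ and $s_{-1}^*$; but by Proposition \ref{BridgeGeod} applied to $s_1\subset s_{-1}$, every point lying within a bounded distance of both $s_1$ and $s_{-1}^*$ lies in a fixed neighborhood of the finite bridge $b(s_1,s_{-1})$, so this common neighborhood is bounded — impossible for a ray. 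Hence $\eta=\eta'$, and $\Phi$ is the desired $\Aut(X)$-equivariant bijection.

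\textbf{Main obstacle.} The heart of the argument is the claim in the second paragraph: upgrading the combinatorial condition that the limiting CAT(0) ray toward $\xi$ \emph{crosses} the walls $\hat h_i,\hat k_i$ to membership in $T_\xi$, uniformly over the infinitely many super strongly separated pairs. The inputs are convexity of half-spaces (so crossing is an interval condition and cannot be undone once and returned to) and the bounded diameter of the slabs between super strongly separated pairs (Proposition \ref{BridgeGeod}); the delicate point is to rule out the ray re-exiting some $h_i$ while still being forced to cross every later pair of walls.
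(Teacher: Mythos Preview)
The paper's own argument is the two sentences preceding the statement: invoke the equivariant map of Corollary~\ref{cor:Q(a)} and Lemma~\ref{lem:specialvisual}, and declare the bijection. No verification of image, injectivity, or surjectivity is written out; the two definitions of squeezing are arranged so that the correspondence is meant to be self-evident. Your proof takes exactly the same route---restrict $\eta\mapsto Q(\eta)$ to squeezing points---but supplies the details the paper omits. Two minor remarks: your reduction to the irreducible case is unnecessary, since the paper only defines visual squeezing when $X$ is irreducible (so the lemma is implicitly stated in that setting, and the spherical-join issue does not arise); and your image direction is essentially already contained in the proof of Lemma~\ref{lem:specialvisual}, where it is shown that the geodesic ray to $\xi\in Q(\eta)$ crosses each $\hat h_i$ and $\hat k_i$. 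Your injectivity argument via Proposition~\ref{biinfinitechain} and the bounded bridge region is correct.

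The gap you flag in surjectivity is genuine, and the paper does not address it either. Upgrading ``the CAT(0) ray to $\xi$ crosses $\hat h_i$ and $\hat k_i$'' to ``$h_i,k_i\in T_\xi$'', or otherwise producing a Roller point in $\bigcap_i h_i$, does require ruling out that the ray re-exits $h_i$; convexity alone only gives that $g^{-1}(h_i)$ is an interval, possibly bounded. This can be closed with additional work---for instance, a finite CAT(0) segment crosses only finitely many walls, which forces the entry times to be unbounded, and then finite dimension lets one extract a nested subfamily of the $h_i$ along which the ray is eventually trapped---but it is honest work that you have correctly isolated. In the paper's applications (Theorem~\ref{thm:cvgxi}, Lemma~\ref{Visual Projection to X}) every visual squeezing point in play already arises as $\Phi(\eta)$ for a known Roller squeezing $\eta$, so the full surjectivity is never actually used.
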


This justifies the following definition:

\begin{definition}
 The interval between two visual squeezing points $\xi_-, \xi_+\in \partial_\eye X$ is defined as $\I(\xi_-, \xi_+) := \I(Q^{-1}(\xi_-),Q^{-1}( \xi_+))$ which is a subset of the Roller compactification $\~X$. 
\end{definition}

\subsection{A Quotient of the Roller Boundary.}

The set of boundary points has a natural partition into cubical subcomplexes, which is especially interesting for points that are not nonterminating). The following definition is due to Guralnik \cite{Guralnik}.

\begin{definition}
Let $\alpha,\beta\in\partial X$. We say that $\alpha$ is equivalent to $\beta$, denoted by $\alpha\sim\beta$, if the symmetric difference between $U_\alpha$ and $U_\beta$  is finite. The equivalence class of $\alpha$ is denoted $[\alpha]$.
\end{definition}

\begin{definition}
The \emph{extended metric} on $\~X$ is the function $d:\~X\times\~X\to \R\cup \{+\infty\}$ defined by the same formula as on $X$:
$$d(\a,\b)=\frac12 \#(U_\a\triangle U_\b) $$
\end{definition}

The extended distance between two points $\a$ and $\b$ is finite if and only if we have $\a\in[\b]$. For every $\a$, this endows $[\a]$ with a distance. In fact, $[\a]$ is a CAT(0) cubical complex in its own right, the half-spaces of $[\a]$ being the half-spaces of $X$ which separate two points in $[\a]$.

\begin{lemma}\label{existschain}
For every $\a\in \partial X$, there exists $\xi\in \bd X$ such that $[\a]\subset \~X_\xi$. 

Furthermore, there exists a descending chain $(h_n)_{n\in \N}$ of half-spaces such that $[\alpha]\subset\bigcap_{n\in\N}h_n$.
\end{lemma}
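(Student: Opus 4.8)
Write $\mathcal{W}$ for the collection of half-spaces of $X$ separating two points of $[\a]$; by the discussion preceding the statement these are exactly the half-spaces of the CAT(0) cube complex $[\a]$, which is finite-dimensional (a family of pairwise transverse half-spaces in $\mathcal W$ is one in $\frakH$, hence of size $\leq\dim X$) and of which $\a$ is a vertex. Since $\a\in\partial X$, the set $U_\a$ fails the descending chain condition, so it contains an infinite descending chain $(g_n)$. Decompose $U_\a=\mathcal{C}\sqcup(U_\a\cap\mathcal{W})$ with $\mathcal{C}=\{h\in U_\a: [\a]\subseteq h\}$. Now $U_\a\cap\mathcal{W}$ is the set of half-spaces of $[\a]$ containing the vertex $\a$, hence satisfies the descending chain condition; so all but finitely many $g_n$ lie in $\mathcal{C}$. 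Discarding those, set $h_n:=g_n$, obtaining a descending chain with $[\a]\subseteq\bigcap_n h_n$. This proves the ``furthermore'' assertion.

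\textbf{Step 2: producing $\xi$.} View each $h_n$ as the closed CAT(0) convex half-space it spans. The family $(h_n)_n$ is filtering, and $\bigcap_n h_n=\varnothing$ in $X$: a vertex lying in every $h_n$ would violate the descending chain condition, while a point in the interior of a cube $c$ can meet only finitely many of the pairwise disjoint hyperplanes $\^h_n$, so $c\subseteq h_n$ for $n$ large, again a contradiction. By Theorem \ref{thm:CapLytch}, $\bigcap_n\bd h_n$ is non-empty of intrinsic radius $<\pi/2$; let $\xi\in\bd X$ be its canonical center. (Alternatively one may take $\xi$ to be the center of $Q(\a)$, which is non-empty by Corollary \ref{cor:Q(a)}; the two choices behave the same way below.)

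\textbf{Step 3: $[\a]\subseteq\~X_\xi$.} By Lemma \ref{ainQxi}, $\b\in\~X_\xi$ is equivalent to $\xi\in Q(\b)$, so it suffices to prove $\xi\in\bd h$ for every $h\in U_\b$ and every $\b\in[\a]$. Fix such $h$ and $\b$. Since $\a,\b\in[\a]\subseteq\bigcap_n h_n$, we have $h\cap h_n\neq\varnothing$ for all $n$, so Lemma \ref{lemma Annoying cases} gives one of two cases. If $h\supseteq h_n$ for $n$ large, then $\bd h\supseteq\bd h_n\ni\xi$ and we are done. If instead $\^h\pitchfork\^h_n$ for $n$ large, then both $h\cap h_n$ and $h^*\cap h_n$ are non-empty for large $n$; restricting to the (finite-dimensional) CAT(0) space $\^h$, the family $(\^h\cap h_n)_n$ is filtering, consists of non-empty closed convex sets, and has empty intersection (it is contained in $\bigcap_n h_n=\varnothing$). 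Applying Theorem \ref{thm:CapLytch} inside $\^h$ yields a point of $\bigcap_n\bd(\^h\cap h_n)\subseteq \bd\^h\cap\bigcap_n\bd h_n$, and the point of the argument is that this point must be the center $\xi$: this uses that $\bd h\cup\bd h^*=\bd X$, that both slices $\bd h\cap\bigcap_n\bd h_n$ and $\bd h^*\cap\bigcap_n\bd h_n$ are non-empty (by Theorem \ref{thm:CapLytch} applied to $(h\cap h_n)_n$ and $(h^*\cap h_n)_n$), and the uniqueness of the circumcenter of a convex subset of $\bd X$ of radius $\leq\pi/2$ that is split by the wall $\bd\^h$ in this way. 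Hence $\xi\in\bd\^h\subseteq\bd h$, completing the proof.

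\textbf{Main obstacle.} Everything is routine except the italicised point in Step 3: showing that a hyperplane $\^h$ crossing all but finitely many terms of the chain $(h_n)$ has $\xi$ in its visual boundary, i.e.\ that such a $\^h$ ``bisects'' $\bigcap_n\bd h_n$ through its circumcenter. The cleanest way to make this rigorous is likely to carry out the whole of Step 3 with $Q(\a)$ in place of $\bigcap_n\bd h_n$: the additional constraints imposed by the half-spaces in $U_\a\cap\mathcal W$ cut $Q(\a)$ down until its center is forced onto $\bd\^h$ for every $h\in\mathcal W$; one still has to verify that every $h\in U_\b$ not containing $[\a]$ lies in $\mathcal W$ and is transverse to the tail of the chain, which is exactly the dichotomy of Lemma \ref{lemma Annoying cases} applied to both orientations of $h$.
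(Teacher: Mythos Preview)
Your Step~1 is correct and in fact gives a clean direct proof of the ``furthermore'' assertion that bypasses the visual boundary entirely; this is a genuine alternative to the paper's route, which deduces the chain only \emph{after} producing $\xi$, by pulling it out of $T_\xi$ via Lemma~\ref{lemTxi}.

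Step~3, however, has a real gap, and you have correctly identified where it lies. In the transverse case you produce a point of $\bd\hat h\cap\bigcap_n\bd h_n$, but there is no reason this point should be the circumcenter $\xi$: the set $\bigcap_n\bd h_n$ may be strictly larger than $Q(\a)$, and the ``wall bisects the set through its circumcenter'' heuristic has no CAT(1) justification. Passing to $Q(\a)$ as you suggest in the obstacle paragraph is the right move, but the argument is still incomplete: what one actually needs is that \emph{all} of $Q(\a)$ lies in $\bd\hat h$ for each $h\in\mathcal W$, not merely that its center does, and your sketch does not establish this.

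The paper's argument sidesteps the issue. It picks any $\xi\in Q(\a)$, so that $\a\in\~X_\xi$ by Lemma~\ref{ainQxi}, and then asserts $[\a]\subset\~X_\xi$. The reason this inclusion holds is that $T_\xi$ has no minimal element: for each $h\in T_\xi$ the geodesic ray to $\xi$ goes arbitrarily deep into $h$, and finite dimension then forces some $k\in T_\xi$ with $k\subsetneq h$. Consequently, if some $h\in T_\xi$ had $\b\in h^*$ for $\b\in[\a]$, then every $k\in T_\xi$ with $k\subset h$ would also satisfy $\b\in k^*$, producing infinitely many elements of $U_\a\setminus U_\b$ --- a contradiction. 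Hence $T_\xi\subset U_\b$ and $\b\in\~X_\xi$. This is shorter and avoids any circumcenter considerations.
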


\begin{proof}
Let $Q=Q(\a)$ be defined as in Corollary \ref{cor:Q(a)}, and fix $\xi\in Q$. Then $\a\in \~X_\xi$ by Lemma \ref{ainQxi}. It follows that $[\a]\subset \~X_\xi$. Finally, $T_\xi$ contains an infinite descending chain by Lemma \ref{lemTxi}.
\end{proof}

\subsection{Subcomplexes as Decreasing Intersections}

We defined in the previous section an extended distance $d:\~X\times\~X\to \R\cup\{+\infty\}$, which partitions $\~X$ into cubical subcomplexes. We aim to write these subcomplexes as intersections of half-spaces in $\~X$.

\begin{lemma}\label{Z=Ybar}
Let $(h_n)_{n\geq 1}$ be a descending chain of half-spaces, and $Z=\bigcap\limits_{n\geq 1} h_n$. Then $Z$ is the Roller compactification of some subcomplex $Y\subset \~X$.
\end{lemma}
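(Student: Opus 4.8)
The plan is to exhibit the candidate subcomplex $Y$ explicitly as the union of all $\sim$-classes meeting $Z$, equivalently $Y=\bigcup_{z\in Z}[z]$, and then to show that $Z=\~Y$. First I would record the key structural feature: if $z\in Z$ and $z'\sim z$, then $U_{z'}$ differs from $U_z$ in only finitely many half-spaces, while $U_z$ contains all of the $h_n$; since the $h_n$ form an infinite descending chain, cofinitely many of them still belong to $U_{z'}$, and because the chain is nested this forces \emph{all} $h_n\in U_{z'}$, i.e. $z'\in Z$. Hence $Z$ is a union of equivalence classes, and $Y$ as defined above is a well-defined subset of $\~X$ that is a disjoint union of cubical subcomplexes $[z]$. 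The half-spaces of this subcomplex structure are, by the discussion preceding Lemma \ref{existschain}, exactly the half-spaces $k\in\frakH$ that separate two points of $Y$; call this collection $\frakH_Y$. One checks $k\in\frakH_Y$ iff $k\cap Z\ne\varnothing$ and $k^*\cap Z\ne\varnothing$, i.e. iff neither $k$ nor $k^*$ is forced on all of $Z$; in particular each $h_n\notin\frakH_Y$ and $h_n^*\notin \frakH_Y$, and more generally $\frakH_Y$ consists of those half-spaces transverse to cofinitely many $h_n$ together with... — the precise description is not needed, only that $Z$ is a single orbit of the median/interval structure built on $\frakH_Y$.

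Next I would identify $Z$ with the Roller compactification of a bona fide CAT(0) cube complex $Y$ on the half-space system $\frakH_Y$. The combinatorial definition of a CAT(0) cube complex (as a set of ultrafilters on its half-space poset satisfying totality and consistency, the description alluded to after the definition of the Roller boundary) applies: one shows that restricting $U_\alpha$ to $\frakH_Y$, for $\alpha\in Z$, gives a total consistent ultrafilter on $\frakH_Y$, and conversely every such ultrafilter extends uniquely to an element of $Z$ by declaring every $h_n\in U_\alpha$ and propagating consistency through the transverse and nested relations. This sets up a bijection $Z\to\~Y$; that it is a homeomorphism follows because the topology on both sides is the restriction of the product topology on $2^{\frakH_Y}$ (the coordinates outside $\frakH_Y$ being constant on $Z$). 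The vertices of $Y$ are precisely the $\alpha\in Z$ for which $U_\alpha\cap\frakH_Y$ satisfies the descending chain condition, and one verifies $Y\subset Z$ sits inside as the non-boundary part, so that $\~Y=Y\sqcup\partial Y$ matches the decomposition of $Z$ into $\sim$-classes of vertices and their limits.

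The main obstacle I anticipate is verifying consistency and totality for the restricted ultrafilters, i.e. that $\frakH_Y$ is genuinely the half-space system of a CAT(0) cube complex and that nothing pathological happens at the ``boundary between $\frakH_Y$ and the discarded half-spaces'' — one must rule out, for instance, a half-space $k\in\frakH_Y$ together with some $h_n$ creating an inconsistent configuration (the chain $h_n$ descending inside both $k$ and $k^*$ simultaneously, which would be absurd, or an infinite descending chain in $\frakH_Y$ that forces a point out of $Z$). This is handled by a case analysis in the spirit of Lemma \ref{lemma Annoying cases}: any $k$ with $k\cap Z\ne\varnothing\ne k^*\cap Z$ is either transverse to cofinitely many $h_n$ or nests with the chain in a controlled way, and in every case the extension prescription (``every $h_n$ is in''; propagate) is forced and consistent. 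Once this bookkeeping is done, the identification $Z\cong\~Y$ is essentially formal, and the lemma follows.
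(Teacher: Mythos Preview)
Your approach is essentially the paper's: define the half-space system $\frakH'=\{k\in\frakH: k\cap Z\ne\varnothing\ne k^*\cap Z\}$, build the CAT(0) cube complex $Y$ on $\frakH'$, and identify $\~Y$ with $Z$ via the restriction/extension of ultrafilters. The paper compresses all of this into two lines by citing \cite[Lemma 2.6]{CFI} (or \cite[Proposition 2.10]{Fernos}) for the embedding and identification, whereas you carry out the bookkeeping by hand; both are fine, and the paper even records the same characterization you arrive at, namely that $\frakH'$ consists of the half-spaces transverse to infinitely many $h_n$.

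One expository point worth cleaning up: your opening definition $Y=\bigcup_{z\in Z}[z]$ is redundant. You yourself prove that $Z$ is saturated for $\sim$, so this union is just $Z$ again, and you are not yet describing a \emph{subcomplex} in the sense of the lemma. The object $Y$ that actually works is the abstract cube complex on $\frakH'$ (equivalently, the set of $\alpha\in Z$ whose restriction $U_\alpha\cap\frakH'$ satisfies the descending chain condition), which you introduce correctly in your second paragraph. Starting there, and dropping the $\sim$-class preamble, makes the argument match the paper's exactly. Also note that your ``or nests with the chain in a controlled way'' case in the final paragraph is in fact vacuous: if $k\supset h_n$ for some $n$ then $k\supset Z$, contradicting $k^*\cap Z\ne\varnothing$, so by Lemma~\ref{lemma Annoying cases} every $k\in\frakH'$ is transverse to all but finitely many $h_n$.
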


\begin{proof}
Indeed, consider the set of half-spaces $\frakH'\subset\frakH$ such that $h\cap Z$ and $h^*\cap Z$ are both nonempty. Then by \cite[Lemma 2.6]{CFI} (see also \cite[Proposition 2.10]{Fernos}) there is an isometric embedding of the CAT(0) cube complex associated to $\frakH'$ into $\~X$, whose closure is exactly $Z$. 

We note that  $\frakH'$ is given by all half-spaces which are transverse to infinitely many $h_n$.
\end{proof}

\begin{lemma}\label{dim<}
Let $Y\subset \overline X$ be a subcomplex disjoint from $X$. Then $\dim(Y)<\dim(X)$.
\end{lemma}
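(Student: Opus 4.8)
The plan is to produce, from a subcomplex $Y\subset\overline X$ disjoint from $X$, a strictly longer nested chain of pairwise transverse half-spaces inside $X$ than $Y$ could possibly carry, thereby bounding the dimension of $Y$ below that of $X$. Recall that the dimension of a CAT(0) cube complex equals the supremum, over collections of pairwise transverse half-spaces, of the size of such a collection (the ``cubical dimension''). So it suffices to show: if $Y$ has a cube of dimension $d$, then $X$ has a cube of dimension $d+1$. A $d$-cube in $Y$ corresponds to $d$ pairwise transverse half-spaces $h_1,\dots,h_d$ of $X$ whose walls all separate points of $Y$; write $y\in\overline X$ for a vertex of that cube (a point of $Y$). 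The key point is that because $Y\cap X=\varnothing$, the point $y$ lies in the Roller boundary, so $U_y$ fails the descending chain condition; hence there is an infinite descending chain $(k_n)$ of half-spaces with $y\in k_n$ for all $n$.

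First I would use Lemma \ref{lemma Annoying cases} to compare this descending chain $(k_n)$ with each of the finitely many half-spaces $h_1,\dots,h_d$: since $y\in k_n$ and $y$ lies in the cube whose walls are $\hat h_1,\dots,\hat h_d$, we have $k_n\cap h_i\neq\varnothing$ and $k_n\cap h_i^*\neq\varnothing$ for every $n$ and every $i$ (the $h_i$-walls separate points of $Y$, and a tail of the $k_n$ must lie inside whichever side of $h_i$; more carefully, one picks the two vertices of the cube adjacent across $\hat h_i$, both in $Y$, and notes each lies in all but finitely many $k_n$). For each fixed $i$, Lemma \ref{lemma Annoying cases} then forces, for all large $n$, either $k_n\pitchfork h_i$ or $k_n\subset h_i$ (case (a) of that lemma gives transversality; the ``$k\supset h_n$'' alternative is impossible since the $k_n$ shrink below any fixed half-space only finitely often — here I'd instead observe the $h_i$ is fixed while the $k_n$ descend, so $h_i\subset k_n$ eventually, i.e. the relevant alternative is $h_i\subset k_n$, and I need to rule it out or absorb it). Passing to a tail of the chain $(k_n)$, I may assume that for every $i$, either $k_n\pitchfork h_i$ for all $n$, or $h_i\subsetneq k_n$ for all $n$. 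In the first situation for all $i$ simultaneously, pick any one $k_n$: it is transverse to all of $h_1,\dots,h_d$, so $\{h_1,\dots,h_d,k_n\}$ are pairwise transverse, giving a $(d+1)$-cube in $X$, done.

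The remaining case is when $h_i\subsetneq k_n$ for some $i$ (for all large $n$); this is exactly where the subtlety lies and is the step I expect to be the main obstacle. Here I would change the chain: replace $(k_n)$ by the chain of \emph{complements} obtained near the opposite vertex. Concretely, $y$ is one vertex of the $d$-cube $c$; let $y'$ be the opposite vertex, so $U_{y'}=(U_y\setminus\{h_1,\dots,h_d\})\cup\{h_1^*,\dots,h_d^*\}$. Since $y'\in Y\subset\overline X\setminus X$ as well, $U_{y'}$ also has an infinite descending chain $(k_n')$, and now $k_n'\in U_{y'}$ means $k_n'$ contains $h_i^*$ eventually or is transverse to $\hat h_i$. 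Applying Lemma \ref{lemma Annoying cases} to $(k_n')$ against each $h_i^*$ as before: in the transverse case for all $i$ we again win; the bad alternative is $h_i^*\subsetneq k_n'$, i.e. $k_n'^*\subsetneq h_i$, which combined with $k_n'\ni y'$ would put $h_i$ ``above'' the descending chain at $y'$. The point is that the bad alternatives at $y$ and at $y'$ cannot both occur for the same index: if $h_i\subsetneq k_n$ and $h_i^*\subsetneq k_n'$ for all large $n$, then taking $n$ large enough that $k_n$ and $k_n'$ are comparable or transverse and using that the $k_n,k_n'$ descend to the single points $y,y'$ respectively (by Lemma \ref{singleton}, after extracting strongly separated subchains via Proposition \ref{Rank1Char} if needed — or more elementarily, since any two half-spaces of a descending chain are nested) forces $\hat h_i$ to be transverse to a tail of one of the two chains, contradiction. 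Hence for each $i$ at least one of the two chains is eventually transverse to $\hat h_i$; since there are finitely many indices, passing to a common tail and, for each $i$, choosing whichever chain works, one extracts a single half-space $k$ (a deep enough term of one chain, using that deeper terms stay transverse) transverse to all $\hat h_1,\dots,\hat h_d$ at once. Then $\{h_1,\dots,h_d,k\}$ are pairwise transverse, so $X$ contains a $(d+1)$-cube. Taking $d=\dim Y$ gives $\dim Y<\dim X$. The delicate bookkeeping — ensuring the ``bad'' half-space alternatives at the two opposite vertices are genuinely incompatible, so that a \emph{single} new transverse half-space can be found — is the crux of the argument.
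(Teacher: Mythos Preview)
Your overall strategy is sound and is essentially the dual of the paper's proof: the paper takes a maximal transverse family $\mathcal D$ in $X$ and shows that at least one of its hyperplanes does not separate $Y$, whereas you take a maximal transverse family $h_1,\dots,h_d$ coming from a top cube of $Y$ and show it can be enlarged by one inside $X$. Both arguments hinge on the same step: a descending chain of half-spaces must eventually be transverse to any fixed half-space that meets the chain on both sides.

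However, you misapply Lemma~\ref{lemma Annoying cases}, and this is what generates your entire ``remaining case''. In that lemma the fixed half-space is $k$ and the chain is $(h_n)$; case~(b) reads $k\supset h_n$. In your application the fixed half-space is $h_i$ and the chain is $(k_n)$, so case~(b) is $h_i\supset k_n$, i.e.\ $k_n\subset h_i$ --- not $h_i\subset k_n$ as you write. Now you yourself already observed that the vertex $y'_i$ adjacent to $y$ across $\hat h_i$ satisfies $y'_i\in k_n$ for all large $n$ (since $U_{y'_i}=(U_y\setminus\{h_i\})\cup\{h_i^*\}$ and a descending chain contains $h_i$ at most once). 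Hence $k_n\cap h_i^*\neq\varnothing$, which immediately rules out $k_n\subset h_i$. So only case~(a) survives and $k_n\pitchfork h_i$ for all large $n$, for every $i$; taking $n$ large enough gives the desired $(d+1)$-cube in $X$.

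In particular, the elaborate argument with the opposite vertex $y'$, the second chain $(k_n')$, and the appeal to Lemma~\ref{singleton} and Proposition~\ref{Rank1Char} is unnecessary --- and in fact unjustified, since those results require strong separation or regularity that is nowhere assumed here. Once Lemma~\ref{lemma Annoying cases} is applied with the correct orientation, your proof is complete in one line and matches the paper's argument exactly.
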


\begin{proof}
  Let $\mathcal{D}$ be a maximal collection of pairwise transverse half-spaces in $X$. Let us denote by $H_Y$ the set of half-spaces containing $Y$. We aim to prove that there is a $k\in \mathcal{D}$ such that $k$ or $k^*\in H_Y$ and so $\^k$ does not participate in any maximal cube of $Y$. 
  
  We begin by observing that if $h,k\in \frakH$ are such that  $k\cap h\neq \varnothing$ and $k^*\cap h\neq \varnothing$ then one of the following hold:
  
  \begin{enumerate}
\item $k\subset h$;
\item $k^* \subset h$;
\item $h\pitchfork k$.
\end{enumerate}
  
Consider $h_{n+1}\subsetneq h_n $  an infinite  descending chain in $H_Y$, which exists by Lemma \ref{existschain} (since $Y=[y]$ for any $y\in Y$). We now show that $\mathcal{D}\cap (H_Y\sqcup H_Y^*)\neq\varnothing$.
  
By contradiction, assume this is not the case, i.e. that if $k\in \mathcal{D}$ then $k\cap Y$ and $k^*\cap Y$ are both nonempty, and in particular, $k\cap h_n, k^*\cap h_n\neq \varnothing$ for each $n$. Therefore, for each $k\in \mathcal D$ and $n\in \N$, we are in one of the situations (1)--(3) above. Since in between any two half-spaces there are finitely many, and  $\mathcal D$ is finite, there must be an $N$ such that if $n>N$ then $h_n\pitchfork k$ for every $k \in \mathcal D$. It follows that for every $k\in \mathcal D$ and for every $n$ large enough we have $h_n\pitchfork k$. This of course contradicts the maximality of $\mathcal D$.  


This shows that any maximal family of pairwise transverse half-spaces must have non-trivial intersection with $H_Y\sqcup H_Y^*$ and hence the dimension of $Y$ is less than $D$.
\end{proof}

\begin{lemma}
Let $\xi_0\in\partial X$. There exists $k\leq \dim (X)$ and a family $(h^1_m)_{m\geq 0}$, $(h_m^2)_{m\geq 0},\dots, (h_m^k)_{m\geq 0}$ of descending chain of  half-spaces  such that 
$$\overline{[\xi_0]} =\bigcap_{i=1}^k\bigcap_{m\geq 0}h_m^i$$
\end{lemma}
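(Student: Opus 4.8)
The plan is to induct on $\dim X$, at each step peeling off one descending chain of half-spaces and replacing the complex by a strictly lower-dimensional subcomplex of the Roller compactification. Concretely, I would prove the following slightly more flexible assertion, which is what the induction needs: for \emph{every} finite-dimensional (second countable) CAT(0) cube complex $W$ and every $\xi\in\partial W$ there are $k\leq\dim W$ and descending chains $(h^1_m)_m,\dots,(h^k_m)_m$ of half-spaces of $W$ with $\overline{[\xi]}=\bigcap_{i=1}^k\bigcap_m h^i_m$. Since by Lemma~\ref{Z=Ybar} the half-spaces of any subcomplex of $\overline W$ are among the half-spaces of $W$, the chains produced recursively are genuine half-spaces of the original $X$, so the statement as phrased follows.

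For the inductive step I would start with $\xi_0\in\partial X$ and invoke Lemma~\ref{existschain} to obtain a descending chain $(h^1_m)_m$ with $[\xi_0]\subset\bigcap_m h^1_m=:Z_1$; as half-spaces are closed in $\overline X$ this already gives $\overline{[\xi_0]}\subset Z_1$, and by Lemma~\ref{Z=Ybar} one has $Z_1=\overline{Y_1}$ for a subcomplex $Y_1\subset\overline X$ (again finite-dimensional and second countable). Because the infinite descending chain $(h^1_m)$ lies in $U_\eta$ for every $\eta\in Z_1$, no point of $Z_1$ satisfies the descending chain condition, so $Y_1$ is disjoint from $X$ and $\dim Y_1<\dim X$ by Lemma~\ref{dim<}. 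The key book-keeping fact is that $[\xi_0]$ is unaffected by passing to $Y_1$: first, the same descending-chain argument shows $[\xi_0]\subset Z_1$ outright (a point at finite symmetric difference from $\xi_0$ still contains the chain $(h^1_m)$); second, for any $\eta,\eta'\in Z_1$ each half-space in $U_\eta\triangle U_{\eta'}$ separates two points of $Z_1$ and is therefore a half-space of $Y_1$, so $\eta$ and $\eta'$ are $X$-equivalent exactly when they are $Y_1$-equivalent. Hence $[\xi_0]$, and so its closure $\overline{[\xi_0]}$, is the same whether read in $X$ or in $Y_1$.

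Now I would split into two cases. If $\xi_0$ is a vertex of $Y_1$, then every vertex of the connected complex $Y_1$ is at finite $\ell^1$-distance from $\xi_0$, hence equivalent to it, while conversely any $\eta\in\overline{Y_1}$ equivalent to $\xi_0$ differs from it by finitely many half-spaces and so still satisfies the descending chain condition in $Y_1$, i.e. is a vertex; thus $[\xi_0]$ is exactly the vertex set of $Y_1$ and $\overline{[\xi_0]}=\overline{Y_1}=\bigcap_m h^1_m$, so $k=1$ works (the case $\dim X=0$ being vacuous, since then $\partial X=\varnothing$). If instead $\xi_0\in\partial Y_1$, I would apply the induction hypothesis to $(Y_1,\xi_0)$ to get $k'\leq\dim Y_1$ descending chains $(h^2_m)_m,\dots,(h^{k'+1}_m)_m$ of half-spaces of $Y_1$ whose iterated intersection inside $\overline{Y_1}$ equals $\overline{[\xi_0]}$ (computed in $\overline{Y_1}$, which agrees with $\overline{[\xi_0]}$ in $\overline X$ by the previous paragraph). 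Since a half-space of $Y_1$ is, as a subset of $Z_1$, the trace on $Z_1$ of the corresponding half-space of $X$, intersecting inside $\overline{Y_1}=Z_1$ is the same as intersecting inside $\overline X$ and then meeting $Z_1=\bigcap_m h^1_m$; this yields $\overline{[\xi_0]}=\bigcap_{i=1}^{k'+1}\bigcap_m h^i_m$ with $k'+1\leq\dim Y_1+1\leq\dim X$. The base case $\dim W\leq 1$ is automatic, because then $\dim Y_1=0$, so $Y_1$ is a single vertex and $\xi_0$ is forced to be that vertex.

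The part I expect to require the most care is the middle paragraph: verifying that restricting to the subcomplex $Y_1$ neither loses nor gains points of the equivalence class $[\xi_0]$, so that the recursion terminates at $\overline{[\xi_0]}$ itself rather than at some strictly larger boundary subcomplex. The remaining steps are routine manipulations with the descending chain condition together with Lemmas~\ref{existschain}, \ref{Z=Ybar} and~\ref{dim<}.
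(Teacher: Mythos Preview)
Your approach is essentially identical to the paper's: induct on $\dim X$, use Lemma~\ref{existschain} to peel off a first descending chain, pass via Lemmas~\ref{Z=Ybar} and~\ref{dim<} to a strictly lower-dimensional subcomplex $Y_1$, and recurse. Your bookkeeping paragraph (checking that $[\xi_0]$ is the same computed in $X$ or in $Y_1$) is in fact more explicit than the paper, which simply asserts ``$\overline{[\xi_0]}\subset\overline Y$'' and ``if $\xi_0\in Y$ then $\overline{[\xi_0]}=Z$''.

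There is one small point the paper addresses that you skip: you must check that the descending chains obtained inductively in $Y_1$ remain \emph{descending} when viewed as half-spaces of $X$. Your set-theoretic identity $\overline{[\xi_0]}=\bigcap_{i}\bigcap_m h^i_m$ is correct regardless, but the statement of the lemma (and its later use in Lemma~\ref{limitpoints}) requires each $(h^i_m)_m$ to be a genuine descending chain in $X$, not just in $Y_1$. The paper handles this with the parenthetical ``any non-empty intersection of walls or half-spaces in $X$ projects to a non-empty intersection in $Y$'': concretely, if $h,k$ are half-spaces of $Y_1$ (hence transverse to all $h^1_m$ for $m$ large) and $h\pitchfork k$ held in $X$, then by the Helly property of half-spaces together with compactness of $\overline X$ each of the four quadrants would meet $Z_1$, so $h$ and $k$ would also be transverse in $Y_1$. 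Contrapositively, a strict inclusion $h\cap Z_1\subsetneq k\cap Z_1$ forces $h\subsetneq k$ in $X$. Adding a sentence to this effect completes your argument.
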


\begin{proof}\label{lemma finitely many chains}
We argue by induction on the dimension. If $\dim(X)=1$, then the result is clear. 

Assume the lemma holds for every complex of dimension $<\dim(X)$. By Lemma \ref{existschain}, there exists a descending chain $(h_m)$  whose intersection contains $[\xi_0]$ (and since half-spaces are closed, it also contains  $\overline{[\xi_0]}$). Let $Z=\bigcap h_m$.
By Lemma \ref{Z=Ybar},  $Z$ is isomorphic to the Roller compactification of some complex $Y$.

By Lemma \ref{dim<} we have $\dim(Y)<\dim(X)$. We also know that  $\overline{[\xi_0]}\subset \overline Y$. If $\xi_0\in Y$ then $\overline{[\xi_0]}=Z$ and there is nothing left to prove. If not, then by induction there exists finitely many chains of  half-spaces in $Y$ such that $\overline{[\xi_0]}$ is the intersection of all these half-spaces. These half-spaces lift to half-spaces of $X$. 
To conclude the proof we observe that the lift of these half-spaces in $X$ form again a descending chain (indeed, any non-empty intersection of walls or half-spaces in $X$  projects to a non-empty intersection in $Y$).
\end{proof}

\subsection{Horofunction Boundary}

\label{sec:horo}

Let $(X,d)$ be a metric space. Let us recall the construction of the horoboundary of $X$. Fix an origin $o\in X$. For $x\in X$, consider the function $h_x:X\to \R$ defined by $h_x(y)=d(y,x)-d(o,x)$. This defines an embedding $\iota$ from $X$ to the set $\mathcal C(X)$ of continuous function on $X$.

\begin{definition}
The \emph{horocompactification} $\overline{X}^h$ is the closure of $\iota(X)$ in $\mathcal C(X)$. The \emph{horoboundary} of $X$ is $\bdh(X)=\overline{X}^h\setminus X$.

A function in $\bdh(X)$ (and sometimes even in $\overline{X}^h$) is called a \emph{horofunction}.
\end{definition}

Because every function $h_x$ is actually 1-Lipschitz and satisfies $h_x(o)=0$, it follows from the Arzela-Ascoli theorem that the horocompactification is indeed a compact space (regardless of the topology of $X$). Furthermore, the horoboundary, as a topological space, does not depend on the choice of the origin $o$ (a different choice would just translate the horofunctions by a constant).

It is well-known that for a proper CAT(0) space $X$ (with its CAT(0) metric),  the horoboundary is the same as the visual boundary, denoted $\bd X$. 

\begin{remark}
This notion of horoboundary is not the usual one because we consider the topology of  convergence on \emph{compact} subsets, and not on \emph{bounded} ones. For proper spaces, the two notions are of course equivalent. The main advantage of our definition is that it produces a compact space. However, there are two possible inconveniences: the first one is that the space is no longer open in its compactification, and the second one is that for general spaces this construction might produced more points than desired. To avoid the confusion, these limit points are called \emph{metric functionals} instead of horofunctions in  \cite{KarlssonGouezel}. However, when considering the horoboundary with the $\ell^1$ as we do above,  there are no additional points and so we stick to the more standard terminology.
\end{remark}

Now let us go back to our situation when $X$ is a CAT(0) cube complex. Recall from \S\ref{SecRoller} that the distance on $X$ can be calculated as $d(x,y)= \frac{1}{2}\#(U_x\triangle U_y).$ 

The following is an unpublished result of Bader and Guralnik, and seems to be well-known to experts. We include a proof for completeness.

\begin{proposition}\label{horo=Roller}
The horocompactification (respectively the horoboundary) of the set of vertices of $X$ is equivariantly homeomorphic to the Roller compactification (respectively the Roller boundary) of $X$.

Furthermore, for every $\xi\in\overline X$, if $m$ is the median point of $\xi,x$ and $o$, then the horofunction associated to $\xi$ is defined by
$$h_\xi(x)=d(m,x)-d(m,o).$$
\end{proposition}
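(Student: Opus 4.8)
The plan is to write down an explicit map and transport the desired formula through it. For $\xi\in\overline X$ and $x\in X$, let $m=m(\xi,x,o)$ be the median point, given by $U_m=(U_\xi\cap U_x)\cup(U_x\cap U_o)\cup(U_o\cap U_\xi)$, and define $\Phi(\xi)(x):=d(m,x)-d(m,o)$. The first task is to check this makes sense and to recast it in a usable form: a direct manipulation of the median formula gives $U_m\setminus U_x=(U_o\setminus U_x)\cap U_\xi$ and $U_x\setminus U_m\subseteq U_x\setminus U_o$, so that $U_m\triangle U_x$ (hence also $U_m\triangle U_o$, by symmetry) lies inside the \emph{finite} set $U_o\triangle U_x$ and $d(m,x),d(m,o)<\infty$; combining these identities with their analogues obtained by exchanging $x$ and $o$, and using $\#(U_o\setminus U_x)=\#(U_x\setminus U_o)$, one arrives at the closed form
\[
\Phi(\xi)(x)=\#\bigl((U_o\setminus U_x)\cap U_\xi\bigr)-\#\bigl((U_x\setminus U_o)\cap U_\xi\bigr).
\]
This identity is the computational core of the proof.

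From the closed form, for each fixed $x$ the value $\Phi(\xi)(x)$ depends only on $U_\xi\cap(U_o\triangle U_x)$, a finite set of half-spaces; hence $\xi\mapsto\Phi(\xi)(x)$ is locally constant on $\overline X$ and $\Phi\colon\overline X\to\mathcal C(X)$ is continuous (here $\mathcal C(X)=\R^X$ with the product topology, which, $X$ being discrete, is the topology defining $\overline X^h$). When $\xi=x_0\in X$, the median $m(x_0,x,o)$ lies in $\I(x,x_0)\cap\I(o,x_0)$, so $d(x,x_0)=d(x,m)+d(m,x_0)$ and $d(o,x_0)=d(o,m)+d(m,x_0)$; subtracting yields $\Phi(x_0)(x)=d(x,x_0)-d(o,x_0)=\iota(x_0)(x)$, i.e.\ $\Phi|_X=\iota$. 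Since $\overline X$ is compact, $\Phi(\overline X)$ is a closed subset of $\mathcal C(X)$ containing the dense set $\iota(X)$, whence $\Phi(\overline X)=\overline{\iota(X)}=\overline X^h$; in particular $\Phi$ indeed takes values in the horocompactification.

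For injectivity, fix a half-space $h_0\in\frakH$ and pick an edge dual to the hyperplane $\widehat{h_0}$, with endpoints $x\in h_0^*$ and $x'\in h_0$, so that $U_x\triangle U_{x'}=\{h_0,h_0^*\}$. Inserting this into the closed form and distinguishing the cases $o\in h_0$ and $o\in h_0^*$ (each a short computation) gives, for every $\xi\in\overline X$,
\[
\Phi(\xi)(x)-\Phi(\xi)(x')=2\cdot\1_{h_0\in U_\xi}-1,
\]
so this difference detects whether $h_0\in U_\xi$. Letting $h_0$ range over $\frakH$ recovers $U_\xi$, hence $\xi$, from $\Phi(\xi)$; thus $\Phi$ is injective, and being a continuous bijection from a compact space onto a Hausdorff space it is a homeomorphism, which restricts to a homeomorphism $\partial X\cong\bdh X$. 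Equivariance is then formal: $\Aut(X)$ acts on $\overline X^h$ by $(g\cdot f)(x)=f(g^{-1}x)-f(g^{-1}o)$ (the unique action rendering $\iota$ equivariant), and the two maps $\xi\mapsto\Phi(g\xi)$ and $\xi\mapsto g\cdot\Phi(\xi)$ are continuous and agree on the dense subset $X$, where they encode the obvious identity $h_{gx}=g\cdot h_x$; hence they agree throughout. The displayed formula for $h_\xi$ in the statement is, under this identification, exactly the definition of $\Phi(\xi)$.

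I expect the only genuinely delicate points to be the two half-space computations — the derivation of the closed form for $\Phi(\xi)(x)$ and of the one-step difference across $\widehat{h_0}$ — where one must keep careful track of which of $h_0$ and $h_0^*$ belongs to each of $U_o$, $U_x$, $U_{x'}$ and $U_\xi$; the rest is a soft compactness-plus-density argument. The only inputs taken for granted are that the median of three points of $\overline X$ is again a point of $\overline X$ (recalled above) and that every hyperplane of $X$ is dual to at least one edge.
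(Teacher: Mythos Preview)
Your proof is correct and follows the same overall skeleton as the paper's --- construct the map $\overline X\to\mathcal C(X)$, verify continuity, injectivity, surjectivity, and equivariance --- but the execution differs in a useful way. The paper defines the map by taking limits along sequences $x_n\to\xi$ and invokes a separately proved lemma on the continuity of the median $\overline X^3\to\overline X$ to pass to the limit; you instead work directly with the median formula for $U_m$ and extract the closed form $\Phi(\xi)(x)=\#((U_o\setminus U_x)\cap U_\xi)-\#((U_x\setminus U_o)\cap U_\xi)$, from which continuity is immediate (local constancy in finitely many coordinates of $2^{\frakH}$), so the auxiliary lemma is bypassed. For injectivity the paper examines vertices adjacent to $o$ and inducts on distance to $o$; your edge-difference formula $\Phi(\xi)(x)-\Phi(\xi)(x')=2\cdot\1_{h_0\in U_\xi}-1$ recovers membership in each half-space in one stroke and avoids the induction. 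The gain is modest but real: your argument is self-contained and more explicit, while the paper's is slightly more conceptual and reuses the median-continuity lemma elsewhere.
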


Let us start with a lemma which is of independent interest. Recall (from \S\ref{secmedian}) that the median point of $x,y,z$ is the unique point contained in the intersection $I(x,y)\cap I(y,z)\cap I(z,x)$. 

\begin{lemma}\label{mediancontinuous}
The map $m:\overline{X}\times\overline{X}\times\overline{X}\to\overline{X}$ which associates to a triple of points their median is continuous.
\end{lemma}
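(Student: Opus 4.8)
The plan is to prove continuity of the median map $m\colon \overline{X}^3 \to \overline{X}$ by working with the explicit formula $U_{m(u,v,w)} = (U_u\cap U_v)\cup(U_v\cap U_w)\cup(U_w\cap U_u)$ and the fact that the topology on $\overline{X}\subset 2^{\frakH}$ is the product (pointwise convergence) topology. So convergence of a sequence (or net) $\eta_n\to\eta$ in $\overline{X}$ means exactly: for every half-space $h\in\frakH$, eventually $h\in U_{\eta_n}\iff h\in U_\eta$.

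First I would reduce to sequences, since $\overline{X}$ is second countable (the excerpt's standing convention), hence metrizable, so continuity can be checked on sequences. Suppose $u_n\to u$, $v_n\to v$, $w_n\to w$ in $\overline{X}$. I want to show $m(u_n,v_n,w_n)\to m(u,v,w)$, i.e. for each fixed $h\in\frakH$, eventually $h\in U_{m(u_n,v_n,w_n)}$ iff $h\in U_{m(u,v,w)}$. The key point is that for each of the six half-spaces $U_{u_n}$ vs $U_u$, $U_{v_n}$ vs $U_v$, $U_{w_n}$ vs $U_w$, membership of the fixed $h$ stabilizes: there is $N_h$ such that for $n\geq N_h$ we have $h\in U_{u_n}\iff h\in U_u$ and similarly for $v,w$. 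For $n\geq N_h$, whether $h$ lies in $(U_{u_n}\cap U_{v_n})\cup(U_{v_n}\cap U_{w_n})\cup(U_{w_n}\cap U_{u_n})$ is determined by exactly the same Boolean combination of the three membership bits as for $u,v,w$; hence $h\in U_{m(u_n,v_n,w_n)}\iff h\in U_{m(u,v,w)}$ for all $n\geq N_h$. Since $h$ was arbitrary, this is precisely pointwise convergence $U_{m(u_n,v_n,w_n)}\to U_{m(u,v,w)}$, i.e. $m(u_n,v_n,w_n)\to m(u,v,w)$ in $\overline{X}$.

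One point that needs a word of care: I should confirm that $m(u,v,w)$ as defined by the union formula indeed lies in $\overline{X}\subset 2^{\frakH}$ (that $U_m$ is total and consistent, so it names a genuine point of the Roller compactification) — but this is exactly what was asserted when the median was introduced in \S\ref{secmedian}, where $m$ is characterized as the unique point of $\I(u,v)\cap\I(v,w)\cap\I(w,u)$, so I may simply cite that. Thus the map is well-defined into $\overline{X}$, and the argument above shows it is continuous. I do not expect any real obstacle here; the only thing to be slightly careful about is that $\frakH$ may be infinite, so "eventually" depends on $h$ and there is no uniform $N$, but that is fine because the target topology is pointwise convergence, which only ever tests one coordinate $h$ at a time.

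\begin{proof}
Recall that $\overline{X}$ is a closed subspace of $2^{\frakH}$ with the product topology, so a net $\eta_n\to\eta$ in $\overline X$ if and only if for every $h\in\frakH$ one has $h\in U_{\eta_n}$ eventually when $h\in U_\eta$, and $h\notin U_{\eta_n}$ eventually when $h\notin U_\eta$. By our standing second countability convention $\overline X$ is metrizable, so it suffices to treat sequences.

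Let $u_n\to u$, $v_n\to v$, $w_n\to w$ in $\overline X$, and set $m_n=m(u_n,v_n,w_n)$, $m=m(u,v,w)$. Recall from \S\ref{secmedian} that the median point is the point whose associated set of half-spaces is
$$U_{m}=(U_u\cap U_v)\cup(U_v\cap U_w)\cup(U_w\cap U_u),$$
and likewise $U_{m_n}=(U_{u_n}\cap U_{v_n})\cup(U_{v_n}\cap U_{w_n})\cup(U_{w_n}\cap U_{u_n})$; in particular $m_n,m\in\overline X$.

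Fix $h\in\frakH$. Since $u_n\to u$ in the product topology, there is $N_1$ with, for all $n\geq N_1$, $h\in U_{u_n}\iff h\in U_u$; similarly there are $N_2,N_3$ for $v$ and $w$. Put $N=\max(N_1,N_2,N_3)$. For $n\geq N$ the three bits ``$h\in U_{u_n}$'', ``$h\in U_{v_n}$'', ``$h\in U_{w_n}$'' agree with ``$h\in U_u$'', ``$h\in U_v$'', ``$h\in U_w$'' respectively. Membership of $h$ in $(A\cap B)\cup(B\cap C)\cup(C\cap A)$ depends only on whether $h\in A$, $h\in B$, $h\in C$; hence for $n\geq N$ we have $h\in U_{m_n}\iff h\in U_m$.

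As $h\in\frakH$ was arbitrary, this shows $U_{m_n}\to U_m$ pointwise, i.e. $m_n\to m$ in $\overline X$. Therefore $m$ is continuous.
\end{proof}
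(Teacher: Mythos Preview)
Your proof is correct and takes essentially the same approach as the paper: both rely on the explicit formula $U_m=(U_u\cap U_v)\cup(U_v\cap U_w)\cup(U_w\cap U_u)$ and the fact that such a Boolean combination is continuous for the product topology on $2^{\frakH}$. The paper simply asserts this is ``straightforward to verify'' as a continuous map $2^{\frakH}\times 2^{\frakH}\times 2^{\frakH}\to 2^{\frakH}$, while you spell out the verification coordinate by coordinate using sequences; the reduction to sequences via second countability is fine (and in fact unnecessary, since the same argument works for nets).
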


\begin{proof}
Let $x,y,z\in\overline X$, and $m=m(x,y,z)$. The definition of the median translates easily to get that
$$U_m=(U_x\cap U_y)\cup (U_y\cap U_z)\cup (U_z\cap U_x).$$
It is straightforward to verify that this is in fact defines a continuous map  $2^\frakH \times 2^\frakH \times 2^\frakH\to 2^\frakH$.
\end{proof}

\begin{proof}[Proof of Proposition \ref{horo=Roller}]
Fix an origin $o\in X$. Let $\xi\in \~ X$ and $x_n \in \~X$ a sequence with $x_n \to \xi$. For $x\in X$, set $m=m(o,x,\xi)$ and observe that $m\in X$. Also set $m_n = m(o,x,x_n)$. By definition of the median, we have:

\begin{eqnarray*}
h_{x_n}(x) &:=& d(x,x_n) -d(o,x_n) \\
&=& d(x,m_n) + d(m_n,x_n) - d(o,m_n) - d(m_n, x_n)\\
&=& d(x,m_n) - d(o,m_n).
\end{eqnarray*}
Taking limits and utilizing Lemma \ref{mediancontinuous} which guarantees the continuity of the median, we deduce
$$h_\xi(x)=d(m,x)-d(m,o).$$ 

Next observe that, since $x\in X$, we have $m \in I(o,x)\subset X$, and hence $h_\xi(x)<+\infty$, that is $h_\xi$ is a function from $X$ to $\R$. It is continuous as the metric is continuous. We denote by $H:\~ X\to \mathcal C(X)$ the map which associates $h_\xi$ to $\xi$. We have shown that $h_{x_n}\to h_\xi$ and from this it is straightforward to conclude that the map $H:\overline X\to X\cup\bdh X$ is continuous.


Let us prove that $H$ is injective. Assume that $\xi,\xi'\in \~ X$ are such that $h_{\xi}=h_{\xi'}$. Let $x$ be a vertex adjacent to $o$ and $k$ be the half-space containing $x$ but not $o$. We have $h_\xi(x)=1$ if $\xi\not\in k$ and $h_\xi(x)=-1$ otherwise. It follows that $\xi\in k$ if and only $\xi'\in k$. The same argument works starting from any vertex (by induction on the distance to $o$). Hence we have $U_\xi=U_{\xi'}$ and therefore $\xi=\xi'$.

Now, let $f$ be a horofunction. Hence $f$ is a limit of functions of the form $(h_{x_n})$, for some sequence $(x_n)$ of vertices. Let $(x_{\f(n)})$ be a subsequence converging to some $\xi\in \overline X$. Then it follows that $(h_{x_{\f(n)}})$ converges to $h_\xi$, hence that $f=h_\xi$. So the map $H$ is surjective, hence bijective. Since $\~ X$ is compact it is a homeomorphism.

Finally, the above arguments show that $H|_{\partial X}$ is a homeomorphism from $\partial X$ to $\bdh X$.
\end{proof}

We also note, for future use, the following cocycle relation:

\begin{lemma}\label{cocycle horo}
Let $g_1,g_2\in\Aut(X)$, $\xi\in\overline X$. 
Then
$$h_\xi(g_2^{-1}g_1^{-1} o) = h_{g_2\xi}(g_1^{-1}o) + h_\xi(g_2^{-1} o).$$
\end{lemma}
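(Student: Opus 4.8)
The plan is to prove the identity by approximating $\xi$ from inside $X$ and passing to the limit, exploiting the description of horofunctions obtained in Proposition \ref{horo=Roller} together with the fact that $g_2$ acts on $\~X$ as a homeomorphism extending an isometry of $X$. Since the complex is second countable, $\frakH$ is countable, so $\~X\subset 2^\frakH$ is metrizable and it suffices to argue with sequences. First I would pick a sequence of vertices $x_n\in X$ with $x_n\to\xi$ in $\~X$. By the proof of Proposition \ref{horo=Roller} (continuity of $\xi\mapsto h_\xi$), for every vertex $y\in X$ one has $d(y,x_n)-d(o,x_n)=h_{x_n}(y)\to h_\xi(y)$. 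Moreover $g_2$ permutes half-spaces, hence acts continuously on $\~X$, so $g_2 x_n\to g_2\xi$ with $g_2 x_n\in X$, and therefore $d(y,g_2 x_n)-d(o,g_2 x_n)\to h_{g_2\xi}(y)$ for every vertex $y$.

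The core of the argument is then a telescoping identity together with isometry-equivariance. For each fixed $n$, all the points $o$, $g_1^{-1}o$, $g_2^{-1}o$, $g_2^{-1}g_1^{-1}o$, $x_n$, $g_2 x_n$ lie in $X$, so all distances are finite and I may write
\begin{align*}
d(g_2^{-1}g_1^{-1}o,x_n)-d(o,x_n)
&=\bigl(d(g_2^{-1}g_1^{-1}o,x_n)-d(g_2^{-1}o,x_n)\bigr)+\bigl(d(g_2^{-1}o,x_n)-d(o,x_n)\bigr).
\end{align*}
Applying the isometry $g_2$ to the first bracket gives $d(g_2^{-1}g_1^{-1}o,x_n)=d(g_1^{-1}o,g_2 x_n)$ and $d(g_2^{-1}o,x_n)=d(o,g_2 x_n)$, so that bracket equals $d(g_1^{-1}o,g_2 x_n)-d(o,g_2 x_n)$. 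Letting $n\to\infty$, the left-hand side tends to $h_\xi(g_2^{-1}g_1^{-1}o)$, the first bracket tends to $h_{g_2\xi}(g_1^{-1}o)$ (using $g_2 x_n\to g_2\xi$), and the second bracket tends to $h_\xi(g_2^{-1}o)$. This yields exactly
$$h_\xi(g_2^{-1}g_1^{-1} o) = h_{g_2\xi}(g_1^{-1}o) + h_\xi(g_2^{-1} o).$$

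I do not expect a genuine obstacle here: the statement is the standard Busemann-cocycle identity, and every ingredient — that $\~X$ is the horocompactification, that $\xi\mapsto h_\xi$ is continuous, and that automorphisms of $X$ extend to homeomorphisms of $\~X$ — is already established in or immediately before Proposition \ref{horo=Roller}. The only point deserving a line of care is the legitimacy of working with sequences (guaranteed by metrizability of $\~X$, which follows from the second-countability convention) and the bookkeeping that all intermediate quantities are finite real numbers, so that the algebraic splitting above is valid before taking limits; once that is noted, the identity is a formal consequence of equivariance and continuity.
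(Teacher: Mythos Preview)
Your proof is correct and follows essentially the same approach as the paper: approximate $\xi$ by a sequence of vertices $x_n\to\xi$, telescope the difference of distances, apply the isometry $g_2$ to one bracket, and pass to the limit using continuity of $\xi\mapsto h_\xi$. The paper's version is slightly terser (it omits the remarks on metrizability and continuity of the $g_2$-action), but the argument is identical.
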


\begin{proof}
Let $x_n$ be a sequence converging to $\xi$. Then 
\begin{eqnarray*}
h_{g_2\xi}(g_1^{-1}o) + h_\xi(g_2^{-1} o) 
&=& \lim_{n\to+\infty} d(g_2 x_n,g_1^{-1} o )-d(g_2 x_n,o)+d(x_n,g_2^{-1} o)-d(x_n,o)\\
&=& \lim_{n\to+\infty} d(x_n,g_2^{-1}g_1^{-1} o) - d(x_n,o)\\
&=& h_{\xi}(g_2^{-1}g_1^{-1} o)
\end{eqnarray*}

\end{proof}

The equality of Lemma \ref{cocycle horo} is better understood and remembered in the following form: if $\sigma(g,\xi)=h_\xi(g^{-1}o)$, then we have
$$\sigma(g_1g_2,\xi)=\sigma(g_1,g_2\xi)+\sigma(g_2,\xi).$$
In other words $\sigma$ is an additive cocycle.

\subsection{Remarks on $B(X)$ and $R(X)$}

In \cite{NevoSageev},  Nevo and Sageev introduce another boundary which they call $B(X)$ and define as follows: 

\begin{definition}
A point $\xi\in \partial X$ is called \emph{non-terminating} if for every $h\in U_{\xi}$ there exists $k\in U_{\xi}$ with $k\subset h$.

The set of non-terminating points is denoted by $\partial_{NT}X$. The \emph{non-terminating boundary} $B(X)$ is the closure of $\partial_{NT} X$.
\end{definition}

It is clear that any regular point is non-terminating. Hence, we always have $R(X)\subset B(X)$.

Furthermore, one of the main results of \cite{NevoSageev} is:
\begin{theorem}
Let $\Gamma$ be a group acting properly and cocompactly on the complex $X$. Then the action of $\Gamma$ on $B(X)$ is minimal and strongly proximal.
\end{theorem}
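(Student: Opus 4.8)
The plan is to manufacture enough hyperbolic isometries with controlled north--south dynamics on $\overline{X}$ and then feed them into the standard arguments for minimality and for strong proximality of a measure. If $X$ is bounded then $B(X)=\varnothing$ and there is nothing to prove, so assume $X$ is unbounded. Replacing $X$ by its essential core (the proposition on essential cores above), which is $\Gamma$-invariant, still cocompact, and has the same non-terminating boundary up to the canonical identification, we may assume the action is essential; it is then non-elementary. Write $X=X_1\times\cdots\times X_n$ for the decomposition into irreducible factors and pass to the finite-index subgroup $\Gamma_0\leq\Gamma$ preserving the factors, which acts essentially, non-elementarily and cocompactly on each $X_i$. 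Minimality and strong proximality pass from $\Gamma_0$ back to $\Gamma$ (a $\Gamma$-invariant closed set is $\Gamma_0$-invariant, and $\overline{\Gamma\mu}\supseteq\overline{\Gamma_0\mu}$), and $B(X)=B(X_1)\times\cdots\times B(X_n)$ because a point of $\overline X=\prod_i\overline{X_i}$ is non-terminating exactly when each coordinate is. In each irreducible factor Caprace--Sageev provide contracting isometries; in the present cocompact setting one moreover obtains \emph{regular} elements $g\in\Gamma_0$, acting as a contracting hyperbolic isometry in every factor at once.

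For such a $g$, double-skewering (Lemma \ref{Double Skewering Lemma}) produces in each factor $X_i$ a bi-infinite descending chain of pairwise strongly separated half-spaces translated by $g$; its ends are regular points $g^+_i,g^-_i\in\partial_r X_i$ (Proposition \ref{Rank1Char}), and $g^{\pm}:=(g^{\pm}_1,\dots,g^{\pm}_n)\in\partial_r X\subseteq B(X)$. Lemmas \ref{singleton} and \ref{lem:specialvisual} (the singleton property of pairwise strongly separated chains together with the accompanying contraction estimate) give the key fact: if $z=(z_i)\in\overline X$ satisfies $z_i\neq g^-_i$ for every $i$, then $g^kz\to g^+$, uniformly off any fixed neighbourhood of the ``bad set'' $\mathrm{Bad}(g):=\bigcup_i\pi_i^{-1}(g^-_i)$. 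In addition, attracting points of regular elements are dense in $B(X)$: given a non-terminating $\xi=(\xi_i)$ and half-spaces $h_i\in U_{\xi_i}$ lying on the descending chains that witness non-termination of $\xi$, the Flipping and Double Skewering Lemmas yield a regular $g$ with each $g^+_i$ inside a half-space nested arbitrarily deep in $h_i$; pushing the $h_i$ out along the chains, such points $g^+$ accumulate on $\xi$.

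For minimality, let $F\subseteq B(X)$ be non-empty, closed and $\Gamma_0$-invariant, fix $z\in F$ and a basic neighbourhood $V$ of a non-terminating $\xi$, and pick (by density) a regular $g$ with $g^+\in V$. If $z\notin\overline{\mathrm{Bad}(g)}$ then $g^kz\to g^+\in V$; otherwise one first applies finitely many flip/skewering elements coming from the individual factors to move $z$ off each coordinate-slice of $\mathrm{Bad}(g)$ (possible since no point of $\partial X_i$ is $\Gamma_0$-fixed), and then concludes. Hence $F=B(X)$. For strong proximality it suffices, by weak-$*$ compactness of $\mathrm{Prob}(B(X))$, to show that for every $\mu$ and every $\eps>0$ some $\gamma_*\mu$ assigns mass $>1-\eps$ to a basic set of diameter $<\eps$. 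Applying a regular $g$ concentrates all of $\mu$ away from the closed set $\overline{\mathrm{Bad}(g)}=\bigcup_i\pi_i^{-1}(\overline{[g^-_i]})$ onto a shrinking neighbourhood of $g^+$; the residual mass lives on the $\overline{[g^-_i]}$, which by Lemmas \ref{Z=Ybar}, \ref{dim<} and \ref{lemma finitely many chains} are Roller compactifications of cube complexes of strictly smaller dimension, so after sliding $g^-_i$ out along a descending chain to push the residual mass onto a single such stratum, one finishes by induction on $\dim X$ applied to the (lower-dimensional, still essential) action there. Together with minimality, which makes the Dirac masses the unique minimal subset of $\mathrm{Prob}(B(X))$, this yields strong proximality.

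The genuine difficulty is the concentration step for strong proximality: a measure can hide almost all of its mass on the thin, lower-dimensional strata $\overline{[\eta]}$ at infinity, which contracting isometries of $X$ barely move, so one cannot simply iterate a single hyperbolic element. Making the induction on dimension run --- identifying $\overline{[\eta]}$ as a smaller cube complex via Lemmas \ref{dim<}--\ref{lemma finitely many chains}, checking the induced action there is again essential and non-elementary, and carrying this out compatibly with the product decomposition and the choice of regular elements in every factor simultaneously --- is where the real work lies.
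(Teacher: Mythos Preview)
The paper does not prove this theorem at all: it is quoted from \cite{NevoSageev} and then used (immediately afterwards) to deduce that $R(X)=B(X)$ in the cocompact case. So there is no ``paper's own proof'' to compare your proposal against; the comparison is with the Nevo--Sageev argument, which is external to this text.

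As for the proposal itself, there are two genuine gaps. First, you assume the existence of \emph{regular} elements in the reducible case (and, for the density step, even regular elements with prescribed attracting half-spaces in every factor). In this paper that existence is one of the main new results, established only later via random walks; the prior result of Caprace--Sageev (their Theorem~D) requires $\Gamma$ to be a lattice in $\Aut(X)$, which is not part of the hypothesis ``proper and cocompact'' for a not necessarily locally finite complex. So this step is either circular or unjustified. Second, your density argument shows only that attracting fixed points of regular elements accumulate on any non-terminating point; but such fixed points are regular, hence their closure lies in $R(X)$. You would therefore be proving minimality on $R(X)$, and the equality $R(X)=B(X)$ is precisely the corollary the paper draws \emph{from} the Nevo--Sageev theorem---another circularity. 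Finally, you yourself flag that the dimension induction for strong proximality (essentiality and non-elementarity of the induced action on the lower-dimensional strata $\overline{[\eta]}$, compatibly across factors) is where ``the real work lies''; as written, that step is a plan rather than a proof.
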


The set $R(X)$ is clearly a $\Gamma$-invariant closed subset of $B(X)$. Thus, the minimality of the action on $B(X)$ implies the following.

\begin{corollary}
Assume that $\Isom(X)$ contains a discrete subgroup acting cocompactly.  Then $B(X)=R(X)$.
\end{corollary}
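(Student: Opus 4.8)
The proof is essentially the minimality half of the quoted Nevo--Sageev theorem, once one knows that $R(X)$ is nonempty. Let $\G<\Isom(X)$ be the given discrete cocompact subgroup. We have already recorded that every regular point is non-terminating, so, taking closures, $R(X)\subseteq B(X)$. The set $R(X)$ is closed by definition, and it is $\G$-invariant: regularity of a point $\xi\in\partial X$ is expressed purely through the combinatorics of $U_\xi$ (for any $h_1,h_2\in U_\xi$ there is $k\in U_\xi$ contained in $h_1\cap h_2$ and strongly separated from each of $h_1,h_2$), and isometries preserve half-spaces, nesting, and strong separation; for reducible $X$ one argues coordinate-wise, using that $\Aut(X)$ permutes the irreducible factors and $\partial_r X=\partial_r X_1\times\cdots\times\partial_r X_n$. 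Since $\G$ acts minimally on $B(X)$, its only closed invariant subsets are $\varnothing$ and $B(X)$; hence it suffices to prove $R(X)\neq\varnothing$ as soon as $B(X)\neq\varnothing$ (if $B(X)=\varnothing$ then $R(X)\subseteq B(X)$ is empty and there is nothing to prove).

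To see that $\partial_r X\neq\varnothing$, I would first reduce to the irreducible case. Since non-terminating points are compatible with products (a point of $\overline{X_1\times X_2}$ is non-terminating exactly when both of its coordinates are), $B(X)\neq\varnothing$ forces $B(X_i)\neq\varnothing$, and in particular $X_i$ unbounded, for every irreducible factor $X_i$; and $\partial_r X=\prod_i\partial_r X_i$, so it is enough to exhibit a regular point in each $X_i$. Passing to the finite-index subgroup $\G_0<\G$ preserving the decomposition, the image of $\G_0$ in $\Aut(X_i)$ still acts cocompactly on $X_i$ (projections of cocompact sets are cocompact), and after replacing $X_i$ by its essential core — which affects neither $B$ nor $R$ — we may assume this action is essential. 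Now I invoke Caprace--Sageev: an essential cocompact action on an unbounded irreducible cube complex is either (i) the action of (virtually) $\Z$ on a single line, in which case both endpoints are vacuously regular (no two walls of a line are transverse), or (ii) non-elementary, in which case it contains a contracting isometry $g$; then for $h$ a half-space crossed by the contracting axis of $g$, the chain $\{g^{m}h:m\geq0\}$ is an infinite descending chain of pairwise strongly separated half-spaces all containing the attracting endpoint $\xi^{+}\in\partial X_i$ of $g$, so $\xi^{+}\in\partial_r X_i$ by Proposition \ref{Rank1Char}. Either way $\partial_r X_i\neq\varnothing$, whence $R(X)=\overline{\partial_r X}\neq\varnothing$.

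Putting the pieces together, $R(X)$ is a nonempty, closed, $\G$-invariant subset of $B(X)$, so minimality forces $R(X)=B(X)$. The only delicate part of this plan is establishing $\partial_r X\neq\varnothing$: one must check that neither the passage to the product factors nor the replacement of a factor by its essential core disturbs $B(\cdot)$ or $R(\cdot)$, that cocompactness survives these reductions, and then cite the Caprace--Sageev existence of contracting (rank-one) isometries in the irreducible, essential, cocompact setting. This is where the hypothesis that $\Isom(X)$ contains a discrete cocompact subgroup is genuinely used; the minimality step itself is formal.
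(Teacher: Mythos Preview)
Your approach is essentially the paper's: $R(X)$ is a closed $\Gamma$-invariant subset of $B(X)$, so Nevo--Sageev minimality forces $R(X)=B(X)$ once $R(X)\neq\varnothing$. The paper's proof is literally that single sentence and does not say anything about non-emptiness of $R(X)$; you go further and sketch how to obtain $\partial_r X\neq\varnothing$ from Caprace--Sageev's existence of contracting isometries in the cocompact setting, which is a genuine (if routine) point the paper leaves implicit. One caution: your reduction step ``replace $X_i$ by its essential core, which affects neither $B$ nor $R$'' is the place where real checking is needed, and you correctly flag it as such; an alternative is to note that a proper cocompact action on a finite-dimensional complex has only finitely many orbits of shallow half-spaces, so passing to the essential core amounts to deleting a compact factor, which clearly leaves both boundaries unchanged. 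With that tweak your argument is complete and strictly more detailed than what the paper provides.
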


For concrete examples, such as the Salvetti complex associated to a Right Angled Artin Group, it is straightforward to check the equality $B(X)=R(X)$. Let us do so in a particular case. It is of interest to us as we will modify it in Proposition \ref{B(x) not R(X)} to give an example of a complex where $B(x)\neq R(X)$.

\begin{example}\label{B=R}
Let  $X=X(\Z^2*\Z)$  be the universal cover of the Salvetti complex associated to  $\Z^2*\Z$, where $\Z^2 = \<a,b\>$ and $\Z = \<c\>$ are the generators of the free factors. It is straightforward to check that there are unique points $x_n, x_\8\in B(X)$ such that  
$c^ka^nb^n \to x_n$ as $k\to \8$, and $a^nb^n \to x_\8$  as $n\to \8$.
Furthermore, $x_n\in \partial_r X $ and $x_\8 \in \partial_{NT} X\setminus \partial_r X $. On the other hand, as $n\to \8$ we have $x_n\to x_\8$. Finally,  observe that a similar construction can be applied to any element of  $\partial_{NT} X\setminus \partial_r X $ and so we have that $B(X) = R(X)$.  
\end{example}

\begin{proposition}\label{B(x) not R(X)}
There exists a complex $X$, with $\Isom(X)$ acting essentially and non-elementarily, but with $R(X)\neq B(X)$.

In particular, the action of $\Isom(X)$ on $B(X)$ is not minimal.
\end{proposition}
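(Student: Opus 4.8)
The plan is to construct $X$ by modifying Example \ref{B=R}, where the key phenomenon is that $x_\infty \in \partial_{NT}X\setminus\partial_r X$ is a limit of regular points $x_n$. If in that example we could arrange for the complex $Y=[x_\infty]$ (the cubical subcomplex carrying $x_\infty$) to have \emph{no} regular points near $x_\infty$, then $x_\infty$ would lie in $B(X)$ but not in $R(X)$. In $X(\Z^2*\Z)$ the subcomplex $[x_\infty]$ is a copy of $\~\Z^2$, so its own boundary is full of regular points; the fix is to \emph{cut down} this $\Z^2$-subcomplex so that it becomes a single Euclidean flat (or a half-flat) that sits at infinity with no strongly separated pairs among the half-spaces approaching $x_\infty$. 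Concretely, I would take the Salvetti complex of $\Z^2 * \Z$ and pass to an appropriate $\Isom$-invariant subcomplex, or modify the defining graph, so that the walls $\hat a^{(n)}$, $\hat b^{(n)}$ used to build $x_\infty$ are all pairwise transverse (coming from the $\Z^2$ factor) while still allowing the $\<c\>$-direction to provide essentiality and non-elementarity of the whole action.

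The key steps, in order, would be: (1) Describe the complex $X$ explicitly — I expect something like a tree of flats, e.g. take a copy of $\R^2$ with a family of trees attached along lines, arranged so that $\Isom(X)$ still acts essentially (every half-space is deep in both directions) and non-elementarily (there is no finite orbit in $X$ or $\bd X$, which one checks via a pair of independent hyperbolic isometries, e.g. one hyperbolic in a tree direction); this uses the Flipping Lemma \ref{flip} and Double Skewering Lemma \ref{Double Skewering Lemma} implicitly to verify the action is rich enough. (2) Exhibit a specific non-terminating point $\xi\in\partial_{NT}X$ lying on the flat: take a descending chain $(h_n)$ of half-spaces of $X$ all parallel to one side of the $\R^2$, so that $\bigcap h_n = [\xi]$ is the boundary line of the flat, and check $\xi$ is non-terminating (each $h\in U_\xi$ contains a smaller one, since the flat has no minimal half-space) but \emph{not} regular (any two half-spaces in $U_\xi$ that are parallel to the flat have a transverse wall — the other flat direction — so no strongly separated pair exists inside $U_\xi$; one invokes the characterization of $\partial_r X$ via Proposition \ref{Rank1Char}). (3) Verify $\xi\in B(X)$ by showing $\xi$ is a limit of non-terminating points (or is itself in $\partial_{NT}X\subset B(X)$, which suffices). (4) Verify $\xi\notin R(X)$: this is the delicate point — I must show $\xi$ is not even in the \emph{closure} of $\partial_r X$. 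For this I would produce a half-space $h_0\in U_\xi$ such that every point of $h_0\cap\overline X$ lying near $\xi$ fails to be regular, e.g. by arguing that all regular points of $X$ lie "off the flat" in the tree directions and are separated from $\xi$ by a fixed wall. (5) Conclude the last sentence: since $R(X)$ is a nonempty proper closed $\Isom(X)$-invariant subset of $B(X)$, and the Nevo–Sageev theorem would give minimality of the action on $B(X)$ if $\Isom(X)$ had a cocompact discrete subgroup — here we instead directly observe that $R(X)\subsetneq B(X)$ is itself $\Isom(X)$-invariant, closed, nonempty, and proper, so the action on $B(X)$ cannot be minimal.

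The main obstacle I expect is step (4): ruling out $\xi$ being in the \emph{closure} of $\partial_r X$, not merely showing $\xi\notin\partial_r X$. A sequence of regular points could in principle limit onto the flat. The way to handle this is to choose the geometry so that there is a single half-space $h_0$ with $\xi\in h_0$ but $\partial_r X\cap h_0=\varnothing$ (all regular points are "visible" from outside $h_0$), using that regular points require strongly separated descending chains which the flat obstructs within any deep-enough half-space around $\xi$; then $\overline{\partial_r X}\cap h_0=\varnothing$ as well since $h_0$ is closed, giving $\xi\notin R(X)$. A secondary obstacle is checking non-elementarity and essentiality of $\Isom(X)$ for whatever explicit $X$ one writes down; this is routine but must be done carefully, likely by exhibiting two "independent" contracting-in-a-tree-direction isometries and applying the standard ping-pong criterion together with Lemma \ref{flip}. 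I would also double-check, via Lemma \ref{dim<} and Lemma \ref{Z=Ybar}, that the subcomplex $[\xi]$ really is the $\R^2$ (or half-plane) I claim it is, so that the non-regularity argument is airtight.
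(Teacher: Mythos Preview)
Your outline is in the right spirit, but it remains a plan rather than a proof, and the strategy you propose for the crucial step (4) is likely to fail. The paper's construction is concrete and sidesteps your obstacle entirely: inside the Salvetti complex of $\Z^2*\Z$ (with $\Z^2=\langle a,b\rangle$, $\Z=\langle c\rangle$) one takes the subgroup $\langle ab\rangle * \langle c\rangle\cong F_2$, embeds its Bass--Serre tree along the diagonal, and lets $X'$ be the $\ell^1$-convex hull of this tree. The action of $F_2$ on $X'$ is essential and non-elementary because $ab$ already acts essentially on each copy of the plane. The decisive observation is that the $c$-branches attach \emph{only along the diagonal} of each plane. Hence the off-diagonal corners $(-\infty,+\infty)$ and $(+\infty,-\infty)$ of the base plane are \emph{isolated} in $B(X')$: for large $N$ the basic neighbourhood $\{a\leq -N\}\cap\{b\geq N\}$ contains no diagonal point $(n,n)$, so it meets no $c$-branch, and the only non-terminating boundary point in that quadrant is $(-\infty,+\infty)$ itself. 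An isolated non-regular point of $B(X')$ obviously cannot lie in $\overline{\partial_r X'}=R(X')$.

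Compare this with your step (4): you look for a single half-space $h_0$ with $\xi\in h_0$ and $\partial_r X\cap h_0=\varnothing$. In the actual example no such $h_0$ exists --- every half-space of $X'$ containing $(-\infty,+\infty)$ also contains infinitely many diagonal points and hence entire $c$-branches, which carry regular points. What works is the intersection of \emph{two} half-spaces (one from each $\Z^2$-direction), i.e.\ showing the point is isolated in $B(X')$. Your diagnosis of Example \ref{B=R} is also slightly off: $[x_\infty]$ there is not a copy of $\overline{\Z^2}$, and the reason $x_\infty\in R(X)$ is simply that the regular points $x_n$ (tree-ends based at $(n,n)$) accumulate on it. The fix is not to alter $[x_\infty]$ but to move the target to an off-diagonal corner and strip away the branching that used to approach it --- exactly what restricting to the $\langle ab\rangle$-diagonal accomplishes.
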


\begin{proof}
We retain the notation of Example \ref{B=R}. We will construct a subcomplex of $X' \subset X(\Z^2*\Z)$ and it will have an action of $\Z*\Z = \<ab\>*\<c\>$ which is essential and non-elementary. First observe that the action of $ab$ on the plane associated to $\Z^2$ is essential and we have an embedding (which is a similarity) of $\Z \hookrightarrow \Z^2$ by mapping a generator of $\Z$ to $ab$. 
This embedding extends to an embedding of the tree associated to $F_2\cong \Z*\Z$ into $X$. We define $X'$ as the ($\ell^1$-) convex hull of the image of this tree in $X$. In particular, $X'$ contains every plane of $X$ containing an axis of a conjugate of $ab$. It is straightforward to check that since the action of $\<ab\>$ is essential on the plane, the action of $\Z*\Z$ is essential as well. 

Now, the non-terminating points corresponding to $(-\8, \8)$ and $(\8, -\8)$ in the plane containing the axis of $ab$ are isolated in $B(X)$. Since these are not regular, we deduce that $B(X) \neq R(X)$.
\end{proof}

\section{Uniqueness of the Stationary Measure}

Let $X$ be a finite dimensional CAT(0) cube complex, $\Gamma$ a group acting on $X$ and $\mu$ an admissible measure on $\G$. We denote by $B$ the Furstenberg-Poisson boundary of $(\G,\mu)$. Our goal in this section is to prove that there is a unique stationary measure on $\partial X$.

The main tool is the following:

\begin{theorem}\label{eta}
Assume that the action of $\Gamma$ on $X$ is non-elementary and essential.
There is a $\Gamma$-equivariant map $\eta:B\to \partial X$. 

Furthermore, for every such equivariant map and almost every $b\in B$, $\eta(b)\in \partial_r X$.
\end{theorem}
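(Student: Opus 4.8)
The plan is to build the map $\eta$ by using stationarity and the Martingale Convergence Theorem, and then to push the regularity statement onto $\eta$ using the ergodicity of the boundary (Theorem~\ref{IsomErgodic}). First, since $\partial X$ is a subset of the compact space $\~X$, and the action is continuous on $\~X$, there is a stationary measure $\lambda$ on $\~X$. I would first argue that any stationary measure is supported on $\partial X$ rather than on $X$: if $\lambda$ gave positive mass to $X$, then since $\Gamma$ acts non-elementarily and essentially, there would be no finite orbit, and a standard argument (using the Flipping Lemma and the fact that stationary measures on a set with no finite orbit cannot be atomic, combined with the linear-growth/recurrence obstruction) shows that the $X$-part of $\lambda$ must vanish; alternatively one invokes the fact that a non-elementary action has no invariant probability measure on $X$ together with the structure of stationary measures. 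Granting $\lambda\in\Prob(\partial X)$, the map $B\to\Prob(\partial X)$, $b\mapsto\lambda_b$ obtained from Theorem~\ref{martingale} is $\Gamma$-equivariant.

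Next I would show $\lambda_b$ is almost surely a Dirac mass, which then \emph{defines} $\eta(b)$ as that atom. The idea is the classical one: were $\lambda_b$ not a.s.\ a point mass, one could extract from it a $\Gamma$-equivariant measurable assignment of, say, a pair or a center of mass-type invariant into some separable metric space, contradicting the first bullet of Theorem~\ref{IsomErgodic} unless the thing assigned is constant — but a constant value here would contradict non-elementarity. More concretely: the map $b\mapsto\lambda_b$ composed with $b\mapsto(\text{barycenter-type data})$ is equivariant into a separable space, hence essentially constant, hence $\Gamma$-invariant; since the action has no finite orbit in $\~X$ and no invariant probability measure with finite support, and using the median structure to make sense of "most concentrated" configurations, one forces $\lambda_b=\delta_{\eta(b)}$ for a measurable $\eta\colon B\to\partial X$, which is automatically $\Gamma$-equivariant. (One may instead quote an existing boundary-map theorem for non-elementary actions on $\~X$ to get $\eta$ directly; the point of the present section's lead-in is uniqueness, via Corollary~\ref{cor:uniquestationmeas}, so existence of at least one $\eta$ is what is needed.)

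For the regularity statement, let $\eta\colon B\to\partial X$ be \emph{any} $\Gamma$-equivariant measurable map. Decompose $X=X_1\times\cdots\times X_n$ into irreducible factors; since $\Gamma$ up to finite index preserves the decomposition (and by Lemma~\ref{Poissonfi} the Poisson boundary is unchanged on that finite-index subgroup), I may assume $\Gamma$ preserves each factor and treat each $X_i$ separately, so WLOG $X$ is irreducible. I must show $\eta(b)\in\partial_r X$ a.s. Fix two independent copies: consider $(b_-,b_+)\in B_-\times B$ with $b_-\mapsto\eta_-(b_-)\in\partial X$ coming from the reverse walk. Using the second bullet of Theorem~\ref{IsomErgodic}, any $\Gamma$-equivariant measurable map $B_-\times B\to Y$ into a separable metric space is constant; I would apply this to a map recording a half-space (an element of the discrete, hence separable, set $\frakH$) canonically associated to the pair $(\eta_-(b_-),\eta_+(b_+))$ — but a $\Gamma$-invariant half-space contradicts essentiality via the Flipping Lemma. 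The upshot is that $\eta_-(b_-)\neq\eta_+(b_+)$ a.s. Then, by Proposition~\ref{biinfinitechain}'s mechanism, I want to produce from the pair a bi-infinite descending chain of pairwise strongly separated half-spaces separating the two limit points; the existence of \emph{some} strongly separated pair is guaranteed by irreducibility (Caprace--Sageev), and the Double Skewering Lemma~\ref{Double Skewering Lemma} lets me manufacture infinitely many nested ones. Combined with Proposition~\ref{Rank1Char} (existence of an infinite descending chain of pairwise strongly separated half-spaces in $U_{\eta(b)}$ characterizes $\partial_r X$), this forces $\eta(b)\in\partial_r X$ almost surely.

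The main obstacle I expect is the middle step: going from "$\eta_-(b_-)\neq\eta_+(b_+)$ almost surely" to "almost every $\eta(b)$ lies on a strongly separated descending chain." The subtlety is that Theorem~\ref{IsomErgodic} only gives statements about $B$ or $B_-\times B$, so one has to be careful to phrase everything in terms of pairs and to use the independence/stationarity so that a property holding for the pair $(\eta_-(b_-),\eta(b))$ a.s.\ actually transfers to a property of $\eta(b)$ alone (by Fubini, after fixing a positive-measure set of $b_-$). There is also a genuine point in ensuring that the strongly separated pair one extracts can be taken inside $U_{\eta_+(b_+)}$ rather than merely separating the two points — this is where one leans on the measurability of the construction and on the fact that the set of $b_+$ for which $\eta(b_+)$ fails to be regular is $\Gamma$-invariant (by equivariance of $\eta$ and of $\partial_r X$), hence null or conull, and the null option is what the pair argument rules out.
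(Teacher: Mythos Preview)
The paper does not prove this theorem at all: its ``proof'' consists of two citations, to \cite{CFI} and \cite{Fernos}. So you are not being asked to compare with an argument in the paper; you are attempting to reconstruct the cited results. With that understood, your proposal has a real gap in the existence step and is on the right track but incomplete for regularity.

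\textbf{Existence.} Your plan is to obtain $b\mapsto\lambda_b$ from the Martingale Theorem and then argue that $\lambda_b$ is almost surely a Dirac mass, thereby \emph{defining} $\eta$. This is the wrong order, and the ``barycenter-type data'' step you gesture at is precisely the difficulty. Notice that in this very paper, the proof that $\lambda_b$ is a Dirac mass (Proposition~\ref{UniqueDirac}) already \emph{uses} the map $\eta$ and the fact that $\eta(b)\in\partial_r X$; you cannot invoke that argument to produce $\eta$ in the first place. The construction in the cited references proceeds differently: starting from \emph{any} equivariant $\varphi\colon B\to\Prob(\overline X)$, one looks at the set $H^+_{\varphi(b)}$ of \emph{heavy} half-spaces (those of $\varphi(b)$-measure $>1/2$). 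This set is automatically consistent. One then uses essentiality and the Flipping Lemma, together with ergodicity, to show there are almost surely no balanced half-spaces; hence $H^+_{\varphi(b)}=U_{\eta(b)}$ for a well-defined point $\eta(b)\in\overline X$. No Dirac-mass statement is needed at this stage. Your sketch skips this mechanism entirely, and the vague appeal to isometric ergodicity of $B$ alone does not substitute for it.

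\textbf{Regularity.} Your outline here is closer to what actually happens: one works with the pair $(\eta_-(b_-),\eta(b))$ on $B_-\times B$ and uses double ergodicity. However, the crucial step you flag as ``the main obstacle'' is indeed a genuine lemma, not a formality. One must show that for almost every pair there are \emph{infinitely many} pairs of strongly separated half-spaces lying in $U_{\eta(b)}\setminus U_{\eta_-(b_-)}$; this is the content of \cite[Lemma~7.8]{Fernos} (restated later in this paper as Lemma~\ref{lem:Sinfinite}). Your sketch invokes double skewering and Proposition~\ref{Rank1Char} but does not explain how to pass from ``some strongly separated pair exists in $X$'' to ``infinitely many such pairs lie in $U_{\eta(b)}$ for almost every $b$''. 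That passage is exactly where the $\Gamma$-invariance of the set $\mathcal S$ of strongly separated pairs, combined with double ergodicity applied to a carefully chosen countable target, does the work; without it the argument does not close.
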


\begin{proof}
The existence of the map is \cite[Theorem 4.1]{CFI} in the symmetric case and \cite[Theorem 7.1]{Fernos} in the general case. The fact that $\eta(b)$ is almost surely regular is \cite[Theorem 7.7]{Fernos}.
\end{proof}





\begin{proposition}\label{UniqueDirac}

If the action of $\Gamma$ on $X$ is non-elementary and essential 
 then there is a unique $\Gamma$-equivariant measurable map $\f:B\to \Prob(\overline X)$ and for almost every $b\in B$, $\f(b)$ is the Dirac mass at $\eta(b)$.
\end{proposition}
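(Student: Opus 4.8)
The plan is to use the ergodicity result Theorem~\ref{IsomErgodic} together with the map $\eta$ from Theorem~\ref{eta}. The key point is that $\Prob(\overline X)$ is a separable metric space on which $\G$ acts — but \emph{not} by isometries in an obvious way, so one cannot directly apply Theorem~\ref{IsomErgodic} to conclude a $\G$-equivariant map $B\to\Prob(\overline X)$ is essentially constant. Instead I would argue that any such $\f$ must be a.e.\ a Dirac mass, and that the point it is supported on is forced to be $\eta(b)$, which already exists and is essentially unique as a map to $\partial X$ by the ergodicity of $B$ (any two $\G$-equivariant maps $B\to\partial X$ agree a.e., since their "distance" or the pair map $b\mapsto(\eta_1(b),\eta_2(b))$ lands in a separable metric $\G$-space and the diagonal is $\G$-invariant — more carefully, one shows the set where they differ is $\G$-invariant and by the contracting properties of regular points this forces equality; this is the content of the uniqueness half of Theorem~\ref{eta} as used in \cite{Fernos}).

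Concretely, here are the steps. First, given a $\G$-equivariant measurable $\f\colon B\to\Prob(\overline X)$, push forward: for $\nu$-a.e.\ $b$ we want to show $\f(b)=\delta_{\eta(b)}$. Consider the barycenter-type or "essential support" construction: the map $b\mapsto \supp(\f(b))$ is a $\G$-equivariant map from $B$ to closed subsets of $\overline X$. Second, use the contracting dynamics of regular points: by Theorem~\ref{eta}, $\eta(b)\in\partial_r X$ a.e.; I would combine this with the martingale convergence theorem (Theorem~\ref{MartingalePolish}) applied to the stationary measure $\lambda=\int\f(b)\,d\nu(b)$ on the Polish space $\overline X$, so that $Z_n\lambda\to\lambda_\omega$ a.s., and $\lambda_\omega$ is the pushforward of $\f$ under the boundary projection. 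Third — and this is where regularity does the work — I would show that $Z_n\lambda$ concentrates: using that $Z_n o\to\eta(b)$ where $b$ is the boundary point of the sample path (this convergence, or rather the contracting estimates of \cite{Fernos} for regular points, shows that half-spaces around $\eta(b)$ eventually swallow the support of $Z_n\lambda$), we get that $\lambda_\omega$ is supported on a single equivalence class shrinking down, hence $\lambda_\omega=\delta_{\eta(b)}$. Since $\lambda_\omega=\f(b)$ a.e.\ (the disintegration of the stationary measure over $B$ agrees with $\f$ by the uniqueness in Corollary~\ref{cor:uniquestationmeas}'s proof scheme), we conclude $\f(b)=\delta_{\eta(b)}$.

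For uniqueness of $\f$: once we know every $\G$-equivariant $\f$ has the form $b\mapsto\delta_{\eta(b)}$ for \emph{some} equivariant $\eta\colon B\to\partial X$, uniqueness of $\f$ reduces to uniqueness of $\eta$ up to null sets, which is precisely the ergodicity statement — two equivariant maps $\eta_1,\eta_2\colon B\to\partial X$ give a $\G$-equivariant map $B\to\partial X\times\partial X$, and since $\partial_r X$ points are regular one shows (as in \cite{Fernos}) that distinct pairs of regular points span a bi-infinite chain of strongly separated half-spaces, so the "distance" function or the half-space data yields a nonconstant equivariant map to a separable metric space unless $\eta_1=\eta_2$ a.e., contradicting Theorem~\ref{IsomErgodic}.

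The main obstacle I expect is Step~3: showing that $\f(b)$, a priori just some probability measure on the compact space $\overline X$, must collapse to the Dirac mass at $\eta(b)$. The heart of it is quantitative — one needs that for a.e.\ sample path, the sequence $Z_n$ "squeezes" any stationary measure toward the limit point, which uses the strongly-separated-half-space characterization of regular points (Proposition~\ref{Rank1Char}) and the fact that under $Z_n$ the complement half-spaces $h_n^*$ of a descending chain around $\eta(b)$ get pushed to have arbitrarily large index, so their $\lambda$-mass — a priori positive — is transported out to infinity and in the limit the whole mass sits at $\eta(b)$. Making this rigorous requires the boundary convergence $Z_n o \to \eta(b)$ (which is one of the paper's main theorems but whose proof presumably precedes or is independent of this proposition) or at least the contracting estimates from \cite{Fernos} that underlie it.
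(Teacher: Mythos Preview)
Your proposal has a genuine circularity problem. You want to use the convergence $Z_n o\to\eta(b)$ to show that $Z_n\lambda$ collapses to a Dirac mass. But in this paper the convergence to the Roller boundary (Theorem~\ref{cvg}) is proved \emph{after} Proposition~\ref{UniqueDirac}, and its proof uses the uniqueness of the stationary measure (Corollary~\ref{stationary}), which is itself a corollary of Proposition~\ref{UniqueDirac}. So your assumption that convergence ``presumably precedes or is independent of this proposition'' is incorrect: the logical dependency runs the other way. There is a second gap in Step~3 even if one grants convergence: from an arbitrary equivariant $\f$ you form $\lambda=\int\f(b)\,d\nu(b)$ and then invoke martingale convergence to get $\lambda_\omega$. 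But $b\mapsto\lambda_b$ is just \emph{some} equivariant map $B\to\Prob(\overline X)$ integrating to $\lambda$; you have not shown it equals the $\f$ you started with. Knowing $\lambda_\omega$ is Dirac does not tell you $\f(b)$ is Dirac unless you already have the uniqueness you are trying to prove.

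The paper's argument is entirely static and avoids any sample-path convergence. In the irreducible case it first proves uniqueness of $\eta$ using the \emph{double} ergodicity of $B_-\times B$: if $\eta\neq\eta'$ one forms the median $m(\eta_-(b_-),\eta(b),\eta'(b))$, which by Lemma~\ref{medianinX} lies in $X$, giving a $\G$-equivariant map $B_-\times B\to X$ that must be constant, contradicting non-elementarity. Then for an arbitrary $\f$, the set $H^+_{\f(b)}$ of half-spaces with $\f(b)$-mass $>1/2$ is consistent, so its intersection defines a point; by the uniqueness just established this point is $\eta(b)$. Since $\eta(b)$ is regular, it is the intersection of a descending chain of strongly separated heavy half-spaces, forcing $\f(b)(\{\eta(b)\})\geq 1/2$. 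Writing $\f(b)=\alpha\delta_{\eta(b)}+(1-\alpha)\f'(b)$ with $\alpha\geq 1/2$ constant by ergodicity, one reapplies the argument to $\f'$ and derives a contradiction unless $\alpha=1$. The reducible case is handled by passing to the finite-index subgroup stabilizing factors and projecting. The key idea you are missing is this heavy-half-space/median mechanism, which lets one extract the point $\eta(b)$ from an arbitrary measure $\f(b)$ without any dynamics.
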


\begin{proof}
We first prove the result for $X$ irreducible and then use this to prove the result in general. 

Assume that $X$ is irreducible. Let $B_-$ be the Poisson boundary for the inverse measure $\ch \mu$. Recall from Theorem \ref{IsomErgodic} that $B_-\times B$ is isometrically ergodic. By \cite[Theorem 7.1 \& 7.7]{Fernos}, there is another equivariant map $\eta_-:B_-\to \~X$ with essential image in $\bdr X$. Furthermore it follows from the proof of \cite[Theorem 7.1]{Fernos} that we have, for almost every $(b_-,b)\in B_-\times B$, $\eta_-(b_-)\neq \eta(b)$.

We claim first that there is a unique measurable and $\G$-equivariant map $\eta:B\to \~X$. Indeed, if there is another such map $\eta'$, then by ergodicity we have almost surely $\eta(b)\neq \eta'(b)$, and the same argument as in \cite[Theorem 7.1]{Fernos}  also proves that $\eta_-(b_-)\neq \eta'(b)$ almost surely. 
Now consider the map $p:B_-\times B\to \~X$ defined by $p(b_-,b)=m(\eta_-(b_-),\eta(b),\eta'(b))$. By Lemma \ref{medianinX} it follows that $p(b_-,b)$ is almost  surely in $X$. Obviously $p$ is $\G$-equivariant, and measurable by Lemma \ref{mediancontinuous}. By metric ergodicity, it is constant. Hence $\G$ fixes a point in $X$, contradicting the assumption that the $\G$-action is non-elementary and proving the claim.

Now let $\f: B \to \Prob(\~X)$ be a measurable $\G$-equivariant map and let us show that for almost every $b\in B$, $\f(b)$ is the Dirac mass at $\eta(b)$. We start with some notation, borrowed from \cite{CFI} (see also \cite{Fernos}).
To a measure $\mathrm{m}\in\Prob(\partial X)$, we can associate three subsets of the set of half-spaces: the heavy half-spaces $H_\mathrm{m}^+$ (of measure $>1/2$), the light ones $H_\mathrm{m}^-$ (of measure $<1/2$), and the balanced ones $H_\mathrm{m}$ (of measure 1/2).  It is easy to see that for any measure $\mathrm{m}$, the set $H_\mathrm{m}^+$  is a consistent set of half-spaces and  hence   if there are no balanced half-spaces then there exists an element $x\in \~X$ such that $U_{x} = H_\mathrm{m}^+$, that is $\{x\} = \Cap{h\in H_\mathrm{m}^+}{}h$.

  Assume that $\f(b)$ is not the Dirac mass at $\eta(b)$. Recall that the map which associates to $b$ the intersection of all heavy half-spaces of $\f(b)$, denoted by $H^+_{\f(b)}$  is again measurable and equivariant, so that $H^+_{\f(b)}=U_{\eta(b)}$. Since $\eta(b)$ is a regular point, by Proposition \ref{Rank1Char}, we can find an infinite descending chain of pairwise strongly separated  heavy half-spaces whose intersection is $\eta(b)$. This proves that the measure of $\{\eta(b)\}$ is at least $1/2$.
  
  Hence we can write, for almost every $b$, $\f(b)=\alpha\delta_{\eta(b)}+(1-\alpha)\f'(b)$, with $1/2\leq \alpha <1$, and $\f'(b)(\{\eta(b)\})=0$. Note that $\alpha:B\to [1/2,1)$ is a $\Gamma$-invariant function, so that by ergodicity, it is constant. By assumption we have $\alpha<1$.
 
 The map $\f':B\to \Prob(\overline X)$ is still equivariant.  Now, applying the same argument to $\f'$ instead of $\f$, we conclude that $\f'(b)(\{\eta(b)\})\geq 1/2$. This is a contradiction.

Therefore $\f(b)$ is almost surely a Dirac mass. As was shown above, there is a unique map from $B \to \~X$ and hence $\f(b) = \eta(b)$ almost surely.\\

Now assume that $X$ is a product $X=X_1\times \dots \times X_n$, where each $X_i$ is irreducible. Then there exists a finite index subgroup $\Gamma_0<\Gamma$ which preserves each factor. By \cite[Lemma 2.13]{CFI}, the induced action of $\Gamma_0$ on each factor is again essential and non-elementary. We note also that by Lemma \ref{Poissonfi}, the Furstenberg-Poisson boundary of $(\Gamma,\mu)$ is $\Gamma_0$-equivariantly isomorphic to the Poisson boundary of $(\Gamma_0,\mu_0)$, where $\mu_0$ is the first return probability.

Applying Theorem \ref{eta} to each irreducible factor, we find $\eta_i:B\to\partial X_i$, which in turn gives $\eta: B \to \partial X$, all of which are $\Gamma_0$-equivariant maps. Let $\pi_i:\~X\to \~X_i$ be the projection.  Let $\f: B \to \Prob(\~X)$ be a $\G$ (and hence $\G_0$)-equivariant map. As was shown above in the irreducible case, the $\Gamma_0$-equivariant map $(\pi_i)_*\f(b)$ is equal to the Dirac mass at $\eta_i(b)$. This means that $$\f(b)(\overline {X_1}\times\dots\times \overline{X_{i-1}}\times\{\eta_i(b)\}\times \overline{ X_i}\times\dots\times \overline{X_n})=1.$$ Since this holds for each $i$, we see that indeed $\f(b)(\{\eta(b)\})=1$, meaning that $\f(b)$ is the Dirac mass at $\eta(b)$.

\end{proof}

\begin{cor}\label{stationary}
Assume that the action of $\Gamma$ on $X$ is essential and non-elementary. Then there is a unique stationary measure on $\overline X$.
\end{cor}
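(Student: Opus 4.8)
The plan is simply to combine Proposition \ref{UniqueDirac} with Corollary \ref{cor:uniquestationmeas}. Recall that Corollary \ref{cor:uniquestationmeas} reduces the uniqueness of a stationary measure on a Polish $\G$-space $Y$ to the uniqueness of the $\G$-equivariant measurable map $B\to\Prob(Y)$ (the existence of such a map produces, by integration against $\nu$, a stationary measure, and Martingale Convergence forces any stationary measure to be this one). Proposition \ref{UniqueDirac}, under the standing hypothesis that the action of $\G$ on $X$ is essential and non-elementary, asserts precisely that there is a unique $\G$-equivariant measurable map $\f\colon B\to\Prob(\overline X)$, namely $b\mapsto\delta_{\eta(b)}$. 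So the corollary will follow at once, once we have checked that $\overline X$ is an admissible target for Corollary \ref{cor:uniquestationmeas}.

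First I would record the elementary topological facts. Since $X$ is second countable (our standing convention), the set $\frakH$ of half-spaces is countable, so $2^{\frakH}$ with the product topology is compact metrizable; $\overline X$ is by definition the closure of $X$ inside $2^{\frakH}$, hence a compact metrizable — in particular Polish — space, and $\G$ acts on it continuously by homeomorphisms. Compactness also guarantees, via \cite[Lemma 1.2]{Furstenberg1963}, that a stationary measure exists on $\overline X$, so that uniqueness is the only content.

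Then I would apply Corollary \ref{cor:uniquestationmeas} with $Y=\overline X$: its hypothesis — a unique $\G$-equivariant measurable map $B\to\Prob(\overline X)$ — is supplied by Proposition \ref{UniqueDirac}, and its conclusion is exactly the statement to be proved.

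I do not anticipate any genuine obstacle: all the substance sits inside Proposition \ref{UniqueDirac} (and, behind it, Theorem \ref{eta}, Lemma \ref{medianinX}, and the isometric ergodicity of $B_-\times B$ from Theorem \ref{IsomErgodic}). The only points requiring the slightest care are the topological ones above, ensuring that Corollary \ref{cor:uniquestationmeas} genuinely applies to $Y=\overline X$.
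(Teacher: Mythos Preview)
Your proposal is correct and follows exactly the paper's approach: apply Proposition \ref{UniqueDirac} together with Corollary \ref{cor:uniquestationmeas}. Your additional verification that $\overline X$ is compact metrizable (hence Polish) is a welcome bit of care that the paper leaves implicit.
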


\begin{proof}
Apply Proposition \ref{UniqueDirac} and Corollary \ref{cor:uniquestationmeas}.
\end{proof}

\begin{remark}
The assumption that the action is essential cannot be removed. Indeed, take the example of the free group $F_2$ acting on the product $T\times L$, where $T$ is the Cayley tree of $F_2$, and $L$ is a line (with trivial action). Let $\nu$ be the stationary measure on $\partial T$. Then for every $x\in L$, the measure $\nu\times \delta_x$ is a stationary measure on $\partial T\times x\subset \partial(T\times L)$.
\end{remark}

\begin{remark}
During the writing of this paper,  it has been proved in the paper \cite{KarSageev} that, in the irreducible case, the action on $R(X)$ is minimal and strongly proximal. For irreducible spaces, the uniqueness of the stationary measure follows, as explained in \cite{NevoSageev}.
\end{remark}

\section{Convergence to the Roller Boundary}

Now that we understand better the stationary measure on the boundary, we can attack the problem of the convergence of the random walk. Recall that Theorem \ref{eta} guarantees the existence of a measurable and $\G$-equivariant map $\eta:B\to \partial X$. As $B$ is a factor of $\Omega$, we can also consider the composition $\Omega\to B\to \partial X$, which we still denote by $\eta$.
Our goal is to prove:

\begin{theorem}\label{cvg}
Assume that the action of $\Gamma$ on $X$ is non-elementary, essential, and preserves each irreducible factor of $X$. Then for almost every $\omega\in\Omega$ the sequence $(Z_n(\omega)o)_n$ converges to $\eta(\omega)$.
\end{theorem}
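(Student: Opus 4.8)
The plan is to combine the uniqueness of the stationary measure (Corollary \ref{stationary}) with the Martingale Convergence Theorem (Theorem \ref{martingale}) and then upgrade the resulting weak-* convergence of measures to genuine convergence of the points $Z_n(\omega)o$ in the compact space $\~X$, using the regularity of $\eta(\omega)$ as the crucial extra input. First I would let $\nu_{\partial X}$ denote the unique stationary measure on $\~X$; by Corollary \ref{stationary} it exists and is unique, and by the discussion following Theorem \ref{martingale} (applied to the compact space $K=\~X$) the map $b\mapsto \nu_b := \lim_n Z_n\nu_{\partial X}$ is a $\G$-equivariant map $B\to\Prob(\~X)$. By Proposition \ref{UniqueDirac} this forces $\nu_b=\delta_{\eta(b)}$ for a.e. $b$. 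Pulling everything back along $\Omega\to B$, we get that for $\Ps$-a.e. $\omega$ the sequence of probability measures $Z_n(\omega)_*\nu_{\partial X}$ converges weak-* to $\delta_{\eta(\omega)}$.

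The second, more substantial step is to show that this convergence of measures implies $Z_n(\omega)o\to\eta(\omega)$ in $\~X$. Since $\~X$ is compact, it suffices to show that every subsequential limit of $(Z_n(\omega)o)$ equals $\eta(\omega)$. Pass to a subsequence along which $Z_{n_k}(\omega)o\to\zeta\in\~X$; I must show $\zeta=\eta(\omega)$. The idea is to test against half-spaces. Fix $\xi:=\eta(\omega)$, which by Theorem \ref{eta} is almost surely regular, so by Proposition \ref{Rank1Char} there is an infinite descending chain $(h_m)$ of pairwise strongly separated half-spaces with $\xi\in h_m$ for all $m$, and by Lemma \ref{singleton} we have $\bigcap_m h_m=\{\xi\}$. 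It is therefore enough to show that for each fixed $m$ one has $\zeta\in h_m$, i.e. that $Z_{n_k}(\omega)o\in h_m$ for $k$ large. Equivalently, after translating, $o\in Z_{n_k}(\omega)^{-1}h_m$; and since $Z_{n_k}(\omega)_*\nu_{\partial X}\to\delta_\xi$ and $\xi\in h_m$, the measure $\nu_{\partial X}(Z_{n_k}(\omega)^{-1}h_m)\to 1$ (one must be slightly careful here because $h_m$ is not clopen in $\~X$, but its indicator is continuous off the wall $\hat h_m$, and $\delta_\xi$ gives no mass to the wall since $\xi\in h_m$, not on the boundary wall; so the portmanteau theorem applies to the half-space $h_m$ viewed as a subset of $\~X$). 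So $Z_{n_k}(\omega)^{-1}h_m$ is a half-space carrying $\nu_{\partial X}$-mass close to $1$.

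The last thing to do is to convert "$g^{-1}h_m$ carries almost all the stationary mass" into "$o\in g^{-1}h_m$", i.e. to rule out that $o$ sits on the light side of a half-space of large $\nu_{\partial X}$-measure. The cleanest route is to use strong separation: the $h_m$ are pairwise strongly separated, so their $g$-translates $Z_{n_k}(\omega)h_m$ (for $k$ fixed, $m$ varying) are also pairwise strongly separated, hence in particular the bridges $b(Z_{n_k}h_m, Z_{n_k}h_{m'})$ have bounded combinatorial length; combined with the fact that $\nu_{\partial X}$ is not supported on a single point (it cannot be, since the action has no finite orbit on $\partial X$), there is some bounded-distance region of $X$ that receives definite $\nu_{\partial X}$-mass, and a half-space disjoint from the ball of that radius around $o$ cannot simultaneously contain that mass and fail to contain $o$ — more precisely, if $o\notin g^{-1}h_m$ for infinitely many $m$, then the nested half-spaces $g^{-1}h_m$ all avoid $o$, their intersection avoids $o$, yet has $\nu_{\partial X}$-measure $1$, contradicting that the intersection is a single point $g^{-1}\xi$ (again by Lemma \ref{singleton}) while $\nu_{\partial X}$ is non-atomic away from regular points — actually the simplest contradiction: $\nu_{\partial X}(\{g^{-1}\xi\})=1$ would make $\nu_{\partial X}$ a Dirac mass and the unique stationarity would then force a $\G$-fixed point in $\partial X$, contradicting non-elementarity. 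I expect this last bookkeeping — making precise why $o$ must lie in $Z_{n_k}(\omega)^{-1}h_m$ for large $k$ and all fixed $m$, handling the wall-mass issue and the non-clopenness of half-spaces in $\~X$ — to be the main obstacle; once that is in place, $\zeta\in\bigcap_m h_m=\{\xi\}$ gives $\zeta=\eta(\omega)$ and the theorem follows.
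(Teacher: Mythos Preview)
Your first step---using Corollary \ref{stationary}, Theorem \ref{martingale}, and Proposition \ref{UniqueDirac} to deduce that $Z_n(\omega)_*\nu\to\delta_{\eta(\omega)}$ weak-*---is correct and is exactly how the paper begins. (One minor point: half-spaces \emph{are} clopen in $\~X$, since $\~X=h\sqcup h^*$ with both pieces closed, so your portmanteau worry is unnecessary.)

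The gap is in the second step, and it is a real one. You correctly identify that the crux is to show, for each fixed $m$, that $o\in Z_{n_k}^{-1}h_m$ for all large $k$. But your proposed contradiction conflates the two indices. What weak-* convergence gives you is $\nu(Z_{n_k}^{-1}h_m)\to 1$ \emph{as $k\to\infty$ for each fixed $m$}. Your argument instead freezes $g=Z_{n_k}$ and lets $m\to\infty$: for fixed $g$ the sets $g^{-1}h_m$ decrease to $\{g^{-1}\xi\}$, so $\nu(g^{-1}h_m)\searrow\nu(\{g^{-1}\xi\})=0$ by non-atomicity---the intersection does \emph{not} have measure $1$, and no contradiction arises. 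In fact, for every fixed $k$ it is automatic that $o\notin g^{-1}h_m$ for large $m$ (the intersection is a boundary point), so the statement you are trying to contradict is simply true. The earlier attempts in that paragraph (bounded bridge lengths, a ``bounded-distance region receiving definite mass'') also do not work: bridges between strongly separated half-spaces are finite but not uniformly bounded, and $\nu$ is supported on $\partial X$, so no bounded region of $X$ carries positive mass.

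The paper closes this gap by reversing the logic: instead of trying to force $Z_{n_k}o$ into $h_m$, it lets $\zeta$ be an arbitrary subsequential limit of $Z_n o$ and proves (Proposition \ref{wlimsupp}, via Lemmas \ref{liminX} and \ref{limitpoints}) that any weak-* limit of $g_n\nu$ along a sequence with $g_n o\to\zeta$ must be supported on $\overline{[\zeta]}$. Hence $\xi\in\overline{[\zeta]}$; since $\xi$ is regular, Lemma \ref{singleton} then collapses $\overline{[\zeta]}$ to $\{\xi\}$, giving $\zeta=\xi$. The substantive work there---showing that for $\nu$-almost every $\alpha$ the limit points of $g_n\alpha$ lie in $\overline{[\zeta]}$---uses the median structure and the interval $\I(\alpha,\xi_i)\cap X$ to transfer information from the orbit of a vertex to the orbit of a boundary point, and this is the missing idea in your proposal.
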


Our strategy is inspired by a proof of Kaimanovich in the case of hyperbolic groups \cite[Theorem 2.4]{Kaimanovich}, although we have to face some technical difficulties, these are overcome thanks to the fact that  regular points are well-behaved. This is exemplified by the following:

\begin{proposition}\label{wlimsupp}
Assume $X$ is irreducible, and let $\lambda$ be a non-atomic measure on $\~X$, such that $\lambda(\partial_rX)=1$. If $g_n\in \Gamma$ is such that $g_no \to \xi_0\in\overline X$, and $(g_n\lambda$) weakly converges to $\nu$, then  $\nu(\overline{[\xi_0]})=1$.
\end{proposition}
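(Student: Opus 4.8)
The plan is to exploit the combinatorial characterization of regular points together with the fact that, for $\xi_0 \in \overline X$, the set $\overline{[\xi_0]}$ can be written as a decreasing intersection of half-spaces (Lemma \ref{existschain}). First I would fix a vertex $o \in X$ and set, for each half-space $h$, the ``charge'' $\lambda_h := \lambda(\{\eta \in \overline X : h \in U_\eta\}) = \lambda(h)$, viewing $h \subset \overline X$. Since $g_n o \to \xi_0$, for every $h \in U_{\xi_0}$ we eventually have $g_n o \in h$; I want to conclude that $g_n \lambda$ concentrates more and more on $h$, i.e.\ $\liminf_n (g_n\lambda)(h) = \liminf_n \lambda(g_n^{-1} h) = 1$ for every $h \in U_{\xi_0}$. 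If this is established, then for any weak limit $\nu$ and any $h \in U_{\xi_0}$, since $h$ (as a subset of $\overline X = 2^\frakH$) is clopen, $\nu(h) = \lim (g_n \lambda)(h) = 1$; intersecting over a countable descending chain $(h_m)$ with $\bigcap_m h_m \supset \overline{[\xi_0]}$ (Lemma \ref{existschain}), and noting we only need the chain from Lemma \ref{existschain} rather than all of $U_{\xi_0}$, gives $\nu\big(\bigcap_m h_m\big) = 1$, hence $\nu(\overline{[\xi_0]}) \geq \nu(\text{that intersection's}\ \xi_0\text{-component})$; one then argues the complement $\bigcap_m h_m \setminus \overline{[\xi_0]}$ is $\lambda$-null in the limit.

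The heart of the matter is the claim $\lambda(g_n^{-1} h) \to 1$. Here is where regularity and non-atomicity enter. Suppose not: after passing to a subsequence, $\lambda(g_n^{-1} h^*) \geq \delta > 0$ for some fixed $h \in U_{\xi_0}$ and $\delta > 0$. Equivalently, the measures $g_n \lambda$ keep mass $\geq \delta$ on $h^*$. Since $g_n o \in h$ eventually and $g_n o \to \xi_0$, I would pick a nested pair $h' \subset h$ (or rather a descending chain below $h$) with $g_n o$ in all of them for $n$ large; for a regular point $\xi$ supported by $g_n\lambda$, if it lies in $h^*$ while $g_n o$ lies deep inside $h$, then the median $m(g_n o, \xi, o)$ — or more to the point the geometry forced by strong separation — pins $\xi$ into a bounded region, and I would like to derive a contradiction with non-atomicity by showing that all this mass must collapse onto a single boundary point. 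More precisely: the set of $\eta \in \partial_r X$ with $\eta \in h^*$ but "seen past $g_n o$" forms, via the strongly separated descending chains guaranteed by Proposition \ref{Rank1Char}, a shrinking family; if a definite amount $\delta$ of $\lambda$-mass survives in $g_n\lambda \cap h^*$ for all $n$ while $g_n o$ escapes to infinity inside $h$, then pulling back by $g_n^{-1}$ this mass lives on $g_n^{-1} h^*$, a half-space which (because $g_n o \in h$ with $d(o, g_n^{-1}(\text{wall of } h)) \to \infty$... ) eventually has small $\lambda$-measure unless $\lambda$ has an atom — the atom being the unique point $\bigcap_n g_n^{-1} h^*$ if that intersection is nonempty, contradicting $\lambda(\partial_r X) = 1$ and non-atomicity.

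I expect the main obstacle to be making the previous paragraph precise: controlling how $g_n^{-1} h^*$ behaves as $n \to \infty$ when $g_n o \to \xi_0$ is not immediate because the action need not be proper, so $g_n^{-1} h^*$ need not shrink to a point in any naive sense. The right tool is Lemma \ref{singleton} and Proposition \ref{biinfinitechain}: since almost every point in the support of $\lambda$ is regular, and two regular points are separated by a bi-infinite strongly separated chain, I can work factor-by-factor and reduce to understanding the interval $\I(\xi_0, \eta)$ for $\lambda$-generic $\eta$; Lemma \ref{medianinX} then forces medians into $X$, and I can run an argument analogous to the proof of Proposition \ref{UniqueDirac} (pushing an equivariant-looking construction into $X$ and using that there is no global fixed point, or here more simply that $\lambda$ is non-atomic so cannot concentrate). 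Concretely I would: (1) reduce to $h$ ranging over a single descending chain exhausting $\overline{[\xi_0]}$; (2) show $\nu(h) = 1$ for each such $h$ using clopenness and the concentration claim; (3) prove the concentration claim by contradiction, extracting a regular point in the support and using strong separation to trap it, with non-atomicity ruling out the surviving mass; (4) finally upgrade $\nu\big(\bigcap h_m\big) = 1$ to $\nu(\overline{[\xi_0]}) = 1$ by observing that any point of $\bigcap h_m$ not equivalent to $\xi_0$ would, again by Lemma \ref{singleton} applied to the regular points, be separated from $\overline{[\xi_0]}$ by a half-space $k$ with $\nu(k^*) > 0$, which a repeat of step (3) excludes.
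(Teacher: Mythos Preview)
There is a genuine gap at the heart of your argument, namely the concentration claim $(g_n\lambda)(h)\to 1$ for half-spaces $h\supset\overline{[\xi_0]}$ (your step (3)). Your sketch rests on the half-spaces $g_n^{-1}h^*$ ``shrinking'' so that non-atomicity of $\lambda$ forces their measure to zero, but the only information available is that $d(o, g_n^{-1}\hat h)\to\infty$; the sets $g_n^{-1}h^*$ are in no way nested, their intersection need not be a single point, and there is no mechanism forcing $\lambda(g_n^{-1}h^*)$ to be small. You acknowledge this obstacle and propose to repair it via medians and Proposition \ref{biinfinitechain}, but that outline never produces an actual inequality bounding $(g_n\lambda)(h^*)$: knowing something about $\I(\xi_0,\eta)\cap X$ for generic $\eta$ does not by itself control where $g_n\eta$ lands relative to a fixed half-space $h$. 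Your step (4) then compounds the problem, since it presupposes step (3) for \emph{every} half-space containing $\overline{[\xi_0]}$, not merely those in one chain.

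The paper avoids this difficulty by working pointwise rather than with measures of half-spaces. Lemma \ref{limitpoints} shows that after passing to a subsequence there is a \emph{finite} exceptional set $\{\xi_1,\dots,\xi_k\}\subset\partial_r X$ such that for every other regular $\xi$ all accumulation points of $(g_{\f(n)}\xi)$ already lie in $\overline{[\xi_0]}$. Its proof uses Lemma \ref{lemma finitely many chains} (finitely many chains, not just one) together with a convexity trick on intervals: if some $g_{\f(n)}\xi_i$ escapes a chain while $g_{\f(n)}x$ does not, for $x\in\I(\xi,\xi_i)\cap X$, then $g_{\f(n)}\xi$ is forced to stay inside. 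The finite exceptional set is exactly what absorbs the failure of your ``shrinking'' picture; non-atomicity of $\lambda$ makes it null, and the purely topological Lemma \ref{nu=1} then yields $\nu(\overline{[\xi_0]})=1$ in one stroke --- no separate upgrade step is needed.
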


The proof of Proposition \ref{wlimsupp}, will rely on some more lemmas. 

\begin{lemma}\label{nu=1}
Let $G$ be a group acting by homeomorphisms  on some metrizable compact space $C$,  $(g_n)$ be a sequence in $G$, $\lambda$ a probability measure on $C$ and $A\subset C$ be a Borel subset such that for almost all $x\in C$, any limit point of $(g_nx)$ belongs to $A$.
If  $(g_n\lambda)$ weakly converges to $\nu$ then $\nu(\overline A)=1$.
\end{lemma}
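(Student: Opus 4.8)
The plan is to prove Lemma \ref{nu=1} by a direct measure-theoretic argument, exploiting that weak convergence of $(g_n\lambda)$ to $\nu$ together with the hypothesis that the orbit $(g_nx)$ accumulates only in $A$ forces all the mass of $\nu$ onto $\overline{A}$. First I would fix a closed set $F\subset C$ disjoint from $\overline A$; it suffices to show $\nu(F)=0$, since $\overline A$ is closed and $\nu$ is a Borel measure on the metrizable compact space $C$ (so outer regularity by closed sets, or equivalently testing against the complement of the closure of $A$, pins down $\nu(\overline A)$). Because $F$ is compact and disjoint from $\overline A$, we have $d(F,\overline A)>0$, and by Urysohn we can pick a continuous function $f\colon C\to[0,1]$ with $f\equiv 1$ on $F$ and $f\equiv 0$ on $\overline A$ (in fact on a neighborhood of $\overline A$).

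Next I would estimate $\int f\, d(g_n\lambda)=\int f(g_n x)\,d\lambda(x)$ and show it tends to $0$. For $\lambda$-a.e. $x$, every limit point of $(g_nx)$ lies in $A\subset\overline A$, where $f$ vanishes and is continuous; hence $\limsup_n f(g_n x)=0$, i.e. $f(g_nx)\to 0$ pointwise $\lambda$-a.e. Since $0\le f\le 1$ and $\lambda$ is a probability measure, dominated convergence gives $\int f(g_nx)\,d\lambda(x)\to 0$. On the other hand, weak convergence $g_n\lambda\to\nu$ gives $\int f\, d(g_n\lambda)\to\int f\,d\nu$. Combining, $\int f\,d\nu=0$, and since $f\equiv1$ on $F$ this forces $\nu(F)=0$.

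Finally I would conclude: every closed $F$ disjoint from $\overline A$ has $\nu(F)=0$; writing $C\setminus\overline A$ as a countable increasing union of such closed sets (using metrizability of $C$, e.g. $F_k=\{x: d(x,\overline A)\ge 1/k\}$), we get $\nu(C\setminus\overline A)=0$, that is $\nu(\overline A)=1$.

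The only point requiring any care is the interchange of limit and integral: one must make sure the pointwise statement ``every limit point of $(g_nx)$ is in $\overline A$'' genuinely yields $f(g_nx)\to 0$, which is immediate from continuity of $f$ and compactness of $C$ (every subsequence of $(g_nx)$ has a further subsequence converging into $\overline A$, on which $f=0$), and then dominated convergence applies since $f$ is bounded and $\lambda$ is finite. I do not expect any serious obstacle here; the lemma is essentially a soft portmanteau-type argument, and the genuine difficulties of the paper lie in verifying the hypothesis (that orbits accumulate only on $\overline{[\xi_0]}$) in Proposition \ref{wlimsupp}, not in this lemma.
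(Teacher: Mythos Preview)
Your proof is correct and follows essentially the same approach as the paper: Urysohn's Lemma to produce a continuous test function, pointwise convergence of $f(g_n x)$ from the limit-point hypothesis, dominated convergence, and weak convergence of $g_n\lambda$ to $\nu$. The only cosmetic difference is that the paper works from the inside, showing $\nu(A^\varepsilon)=1$ for every $\varepsilon$-neighborhood of $\overline A$, whereas you work from the outside, showing $\nu(F)=0$ for every closed $F$ disjoint from $\overline A$; these are dual formulations of the same portmanteau argument.
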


\begin{proof}
As we may replace $A$ with its closure without affecting the hypotheses or conclusion, let us assume that $A$ is closed. Fixing a metric compatible with the topology, denote by $A^\varepsilon$ the  $\varepsilon$-neighborhood of $A$. The assumption implies that for almost every $x\in C$ and every $n$ large enough, we have $g_nx\in A^\varepsilon$: if not, there is a subsequence which avoids  $A^\varepsilon$ completely, and any limit point of this subsequence does not belong to $A$.

We note that, since $ A = \bigcap_{n\in\N}A^{1/n}$, it is sufficient to prove that $\nu(A^\varepsilon)=1$ for all $\varepsilon>0$.

Fix $\varepsilon>0$. Denote by $\~{A^{\varepsilon}}$ the closure of the $\varepsilon$ neighborhood of $A$. By Urysohn's Lemma, there is a continuous $f: C\to [0,1]$ bounded above and below by the characteristic functions
$$\1_{\~{A^{\varepsilon}}}(x)\leq f(x) \leq \1_{A^{2\varepsilon}}(x).$$
 By assumption, for each $x$ there is an $n$ sufficiently large such that $g_nx\in A^\varepsilon$ and hence $f_n(x):=f(g_nx)\to 1$. It follows from the Dominated Convergence Theorem that $\nu(f)=1$. Hence $\nu(A^\varepsilon)=1$, which concludes the proof of the lemma.

\end{proof}

\begin{lemma}\label{liminX}
Let $\xi_0\in\overline X$ and $g_n\in\Gamma$ be such that $g_n o\to \xi_0$. Then for every $x\in X$ all limit points of the sequence $(g_n x)_n$ belong to $ [\xi_0]$.
\end{lemma}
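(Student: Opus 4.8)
\textbf{Proof plan for Lemma \ref{liminX}.}

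The plan is to compare distances in $X$ using the fact that $g_n o \to \xi_0$ controls which half-spaces eventually contain the points $g_n o$. Fix $x \in X$. Since $g_n o \to \xi_0$ in $\overline X = \~X \subset 2^\frakH$, for every half-space $h$ we have: if $h \in U_{\xi_0}$ then $g_n o \in h$ for all $n$ large, and if $h^* \in U_{\xi_0}$ then $g_n o \in h^*$ for all $n$ large. Let $\xi$ be any limit point of $(g_n x)$, passing to a subsequence so that $g_n x \to \xi$. The first step is to show $\xi \in [\xi_0]$, i.e. that $U_\xi \triangle U_{\xi_0}$ is finite; in fact I will show it has cardinality at most $2\, d(x,o)$.

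The key point is that $g_n$ is an isometry, so $d(g_n x, g_n o) = d(x,o)$ for every $n$; denote this constant by $R$. In terms of half-spaces, $d(g_n x, g_n o) = \frac12 \#(U_{g_n x} \triangle U_{g_n o}) = R$, so for each $n$ there are exactly $2R$ half-spaces separating $g_n x$ from $g_n o$. Now suppose $h$ is a half-space with $h \in U_{\xi_0} \setminus U_\xi$ (the other case $h^* \in U_{\xi_0}$, $h \in U_\xi$ is symmetric). Since $h \in U_{\xi_0}$, for $n$ large we have $g_n o \in h$; since $h \notin U_\xi$, i.e. $h^* \in U_\xi$, and $g_n x \to \xi$, for $n$ large we have $g_n x \in h^*$. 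Hence for $n$ large, $h$ separates $g_n x$ from $g_n o$. If $U_{\xi_0} \triangle U_\xi$ were infinite, we could pick $2R+1$ distinct such half-spaces $h_1, \dots, h_{2R+1}$, and choose $n$ large enough that all of them simultaneously separate $g_n x$ from $g_n o$ — contradicting $d(g_n x, g_n o) = R$. Therefore $\#(U_{\xi_0} \triangle U_\xi) \leq 2R < \infty$, which is exactly the statement $\xi \in [\xi_0]$.

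The main (and only real) obstacle is the interchange of quantifiers in the last paragraph: one must ensure that finitely many half-spaces can be made to separate $g_n x$ from $g_n o$ \emph{for a common value of $n$}, which is fine because each of the finitely many conditions "$g_n o \in h_i$" and "$g_n x \in h_i^*$" holds for all sufficiently large $n$, so their finite conjunction does too. A minor point to double-check is that $\xi_0$ may itself lie in $\partial X$ (so $U_{\xi_0}$ contains infinite descending chains), but this causes no trouble: the argument only uses the convergence $g_n o \to \xi_0$ and $g_n x \to \xi$ in $2^\frakH$, i.e. pointwise convergence of the indicator functions $\1_{U_{g_n o}}$ and $\1_{U_{g_n x}}$, which is precisely the topology on $\overline X$. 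This completes the plan; the finiteness bound $\#(U_{\xi_0}\triangle U_\xi)\le 2\,d(x,o)$ also records how the equivalence class $[\xi_0]$ absorbs the orbit of $x$, which is what is needed in the sequel.
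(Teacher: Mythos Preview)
Your proof is correct and is essentially identical to the paper's argument: both use that $|U_{g_n o}\triangle U_{g_n x}|=|U_o\triangle U_x|$ is constant, then observe that any finite collection of half-spaces in $U_{\xi_0}\triangle U_\xi$ must lie in $U_{g_n o}\triangle U_{g_n x}$ for all large $n$, bounding $|U_{\xi_0}\triangle U_\xi|$ by $2d(x,o)$. Your write-up is a bit more explicit about the quantifier interchange, but the idea is the same.
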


\begin{proof}

We have for all $n$, $|U_{g_no}\triangle U_{g_n x}|=|U_o\triangle U_x|$. Let $a$ be a limit point of $(g_n x)$. If $h_1,\dots, h_k$ are half-spaces in $U_a\triangle U_{\xi_0}$ then we see that for  $n$ large enough we have $h_i\in U_{g_no}\triangle U_{g_n x}$ for all $1\leq i\leq k$. Hence we have $k\leq |U_o\triangle U_x|$. In other words, there are at most $|U_o\triangle U_x|$ half-spaces in $U_a\triangle U_{\xi_0}$. This means that $a\in [\xi_0]$.
\end{proof}

The previous lemma can be extended to the convergence of  points in the Roller boundary, up to passing to a subsequence and excluding finitely many points:

\begin{lemma}\label{limitpoints}
Let $g_n\in \G$. If there is $\xi_0 \in \~X$ and $o \in X$ such that $g_n o \to \xi_0$ then there is a subsequence $\f(n)$ and $\xi_1, \dots, \xi_k$ such that if $\xi \in \partial_r X \setminus\{\xi_1, \dots, \xi_k\}$ then all limit points of $(g_{\f(n)}\xi)$ belong to $\~{[\xi_0]}$.
\end{lemma}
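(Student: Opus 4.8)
The plan is to first reduce to a single descending chain, then pass to a subsequence along which all vertex orbits converge, transport this to the regular point $\xi$ via medians, and finally control the exceptional set using strong separation.

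Concretely, I would first use the lemma writing $\overline{[\xi_0]}$ as a finite intersection of descending chains (Lemma~\ref{lemma finitely many chains}) to get $\overline{[\xi_0]}=\bigcap_{i=1}^{d}\bigcap_{m\ge 0}h^i_m$ with each $(h^i_m)_m$ a descending chain and $d\le \dim X$; we may assume $\xi_0\in\partial X$, since otherwise $g_no=\xi_0$ eventually and $\overline{[\xi_0]}=\overline X$. It then suffices to treat one chain at a time: for each $i$ I would produce, along a fixed subsequence, a finite set $E_i\subset\partial_r X$ so that every limit point of $(g_n\xi)$ lies in $Z_i:=\bigcap_m h^i_m$ whenever $\xi\notin E_i$; intersecting the $Z_i$, taking the union of the $E_i$, and diagonalizing over the finitely many subsequences then gives the statement. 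So fix one chain, write $(h_m)=(h^i_m)$ and $Z=Z_i$; note $\overline{[\xi_0]}\subset Z$, $\xi_0\in Z$, each $h_m$ is clopen in $\overline X$, so $g_no\in h_m$ for $n$ large (for every fixed $m$).

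Next I would pass to a subsequence $(g_{\f(n)})$, diagonalizing over the countable vertex set, so that $g_{\f(n)}v\to\ell(v)\in\overline X$ for every vertex $v$; by Lemma~\ref{liminX} each $\ell(v)\in[\xi_0]\subset Z$, and $\ell(o)=\xi_0$. After a further extraction I may also assume $g_{\f(n)}^{-1}o\to\omega_0\in\partial X$. Given $\xi\in\partial_r X$ and a limit point $\zeta$ of $(g_{\f(n)}\xi)$ (along a sub-subsequence), I would pick vertices $v_p\in\I(o,\xi)\cap X$ with $v_p\to\xi$ (using Lemma~\ref{IinterX} and regularity to know this interval is infinite). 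Since $v_p=m(o,v_p,\xi)$, applying $g_{\f(n)}$ and taking limits with Lemma~\ref{mediancontinuous} yields $\ell(v_p)=m(\xi_0,\ell(v_p),\zeta)$, i.e.\ $\ell(v_p)\in\I(\xi_0,\zeta)$ for all $p$. As the $\ell(v_p)$ lie in the closed set $Z$, any accumulation point $\zeta_\infty$ of $(\ell(v_p))$ lies in $Z\cap\I(\xi_0,\zeta)$; hence if $\zeta\notin Z$ then some $h_m$ contains $\xi_0$, $\zeta_\infty$ and every $\ell(v_p)$ but not $\zeta$.

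The last and hardest step is to show that the set $E$ of ``bad'' regular $\xi$ (those for which such a failure occurs) is finite. Unwinding the above, $\xi\in E$ forces, for a fixed $m_\xi\ge\min\{m:o\notin h_m\}$, that $k_n:=(g_{\f(n)}^{-1}h_{m_\xi})^{*}\in U_\xi$ for large $n$, with $o\notin k_n$, $g_{\f(n)}^{-1}o\in k_n$, and with $k_n^{*}$ containing an initial segment of a combinatorial geodesic ray $[o,\xi)$ whose length tends to infinity. Thus the wall $\widehat{k_n}$ separates $o$ from $g_{\f(n)}^{-1}o$ while marching ever deeper toward $\xi$; equivalently the geodesics $[o,g_{\f(n)}^{-1}o]$ track the ray toward $\xi$ to unbounded depth. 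I expect this to be the main obstacle. To conclude I would use that $X$ is irreducible, so distinct points of $\partial_r X$ are separated by a bi-infinite chain of pairwise strongly separated half-spaces (Proposition~\ref{biinfinitechain}), together with the fact that a geodesic crosses each wall at most once: a geodesic issued from $o$ cannot track two ``too separated'' regular directions simultaneously, and combining this with Lemma~\ref{lemma Annoying cases} and the bridge estimate of Proposition~\ref{BridgeGeod} should pin $\xi$ down to at most $\dim X$ possibilities, giving finiteness of $E$. Setting $\{\xi_1,\dots,\xi_k\}:=\bigcup_i E_i$ then finishes the argument.
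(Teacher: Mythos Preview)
Your reduction to finitely many descending chains and your use of Lemma~\ref{liminX} for vertices are the same as in the paper, but the final step---bounding the number of bad regular points---is where your argument is incomplete, and the paper's proof circumvents this difficulty entirely by a different mechanism.

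You attempt to show that the exceptional set $E$ is finite by arguing that each bad $\xi$ forces the geodesics $[o,g_{\f(n)}^{-1}o]$ to track unboundedly toward $\xi$, and then that strong separation prevents this from happening for more than $\dim X$ regular directions. This sketch is not a proof: the tracking depth depends on both $n$ and the integer $m_\xi$ (which varies with $\xi$), the convergence $g_{\f(n)}^{-1}o\to\omega_0$ plays no clear role, and nothing you have written pins down why two distinct bad regular points cannot coexist. The ingredients you cite (Proposition~\ref{biinfinitechain}, Proposition~\ref{BridgeGeod}) do not obviously combine to give the bound.

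The paper's argument avoids analysing the bad set altogether. For each chain $(h^i_m)$ one argues a dichotomy: either every $\xi\in\partial_r X$ already has all limit points of $(g_n\xi)$ in $\bigcap_m h^i_m$, or one can \emph{choose} a single witness $\xi_i\in\partial_r X$ and pass to a subsequence along which $g_{\f(n)}\xi_i\to\overline\xi_i\notin\bigcap_m h^i_m$. The point is that this one witness suffices to handle every other regular point: for $\xi\neq\xi_i$, pick $x\in\I(\xi,\xi_i)\cap X$ (non-empty by Lemma~\ref{IinterX}); then $g_{\f(n)}x$ eventually lies in $h^i_m$ by Lemma~\ref{liminX}, while $g_{\f(n)}\xi_i$ eventually lies in $(h^i_m)^*$, and convexity of the interval $\I(g_{\f(n)}\xi,g_{\f(n)}\xi_i)\ni g_{\f(n)}x$ forces $g_{\f(n)}\xi\in h^i_m$. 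Thus at most one exceptional point per chain, giving $k\le\dim X$ exceptional points total. The convexity pivot through a single bad point is the idea you are missing.
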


\begin{proof}
Observe that if $\xi_0 \in X$ then the result follows as $[\xi_0]=X$. Therefore, assume $\xi_0 \in \partial X$. 

 Let $\{h^1_m: m\in \N\}, \dots, \{h^k_m: m\in \N\}$ be the descending chains provided by Lemma \ref{lemma finitely many chains}, i.e. such that $\~{[\xi_0]} = \overset{k}{\Cap{i=1}}\Cap{m\in \N} h^i_m$.
 
Then, the following dichotomy holds:  either  for every $\xi \in \partial_r X$ we have that all limit points of the sequence $(g_n \xi)$ belong to $ \Cap{m}h_m^1$ or there is an $\xi_1\in\partial_r X$ and a subsequence $\f_1(n)$ for which $g_{\f_1(n)}(\xi_1) \to \~\xi_1\notin \Cap{m}h_m^1$. In case all limit points belong to $\Cap{m}h_m^1$ we set $\f_1(n) = n$, and define $\xi_1$ arbitrarily.

By the same process, we construct inductively, for each $1<i\leq k$,  a subsequence $\f_i$ of $\f_{i-1}$, and $\xi_i\in\bdr X$, such that, for every $j\leq i$, we have

\begin{itemize}
\item[(a)] either the limit points of $(g_{\f_i(n)}\xi)_n$ are in $\Cap{m\geq 0} h^j_m$ (and we define $\xi_j$ arbitrarily)
\item[(b)] or we find $\xi_j\in\bdr X$ and $\~\xi_j\not\in \Cap{m}h_m^j$ with $g_{\f_i(n)}\xi_j\to\~\xi_j$. 
\end{itemize}
%
%

Fix $i\leq k$ and for simplicity let $\f(n)=\f_k(n)$. Let us now show that for every $\xi \in \partial_r X \setminus\{\xi_i\}$ we have that the limit points of $g_{\f(n)}\xi $ belong to $\Cap {m}{}h_m^i$.
If $\xi_i$ was chosen arbitrarily as in case (a) above then there is nothing to prove. Therefore, up to passing to a subsequence, assume that $g_{\f(n)}\xi_i\to\~\xi_i\notin \Cap{m}{}h_m^i$.

Consider  $S(\xi, \xi_i) = \I(\xi, \xi_i)\cap X$ (which is not empty by Lemma \ref{IinterX}).
 Let $x\in S(\xi, \xi_i)$. Since $x$ is at finite distance from $o$, it follows from Lemma \ref{liminX} that every limit point of $(g_{\f(n)}(x))$ belongs to $\~{[\xi_0]}$. So, for each $m\in \N$ there is an $N_i$ so that if $n>N_i$ then  $g_{\f(n)}(x) \in h_m^i$. 
On the other hand, since $\~\xi_i\notin \Cap{m}h_m^i$,  there exists $M_i$ such that  $\~\xi_i \notin h_m^i$ for all $m>M_i$. Since $g_{\f(n)}\xi_i$ converges to $\~\xi_i$ there is an $N'_i>N_i$ so that if $n>N'_i$ and $m>M_i$ then $g_{\f(n)}(\xi_i)\notin h_m^i$. 

Fix $m>M_i$. If we had $g_{\f(n)}\xi \not\in h_m^i$, then by convexity of the interval $\I(g_{\f(n)}\xi,g_{\f(n)}\xi_i)$ we would have $g_{\f(n)}x\not\in h_m^i$. So for $n>N'_i$ we have $g_{\f(n)}\xi\in h^i_m$. 

Now redefine $\xi \in \partial_r X\setminus\{\xi_1, \dots, \xi_k\}$ and let $\overline \xi$ be a limit point of the sequence $(g_{\f(n)}\xi)$. The above argument shows that  
 $\~\xi\in \Cap{m}h_m^i$, for each $i = 1, \dots, k$, i.e. 
$$
\~\xi \in \overset{k}{\Cap{i=1}}\Cap{m\in \N} h^i_m = \~{[\xi_0]}.
$$
\end{proof}

\begin{proof}[Proof of Proposition \ref{wlimsupp}]

We first replace as we may $(g_n)$ by a subsequence satisfying the conclusion of Lemma \ref{limitpoints}. Since $\lambda$ is non atomic, we have that for $\lambda$-almost every $\xi$, every limit point of $(g_n \xi)$ is in $\overline{[\xi_0]}$. By Lemma \ref{nu=1}, this implies that $\nu(\overline{[\xi_0]})=1$.
\end{proof}

\begin{lemma}\label{cvgmes}
 Assume $X$ is irreducible. Let $\lambda$ be a non-atomic measure on $\partial_r X$. Let $g_n\in G$ be such that $g_n\lambda$ weakly converges to a Dirac mass $\delta_b$, for some $b\in \partial_rX$. Then $g_n o$ converges to $b$.
\end{lemma}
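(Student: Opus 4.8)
The plan is to argue by contradiction, exploiting the fact that $g_n\lambda$ concentrating near $b$ forces the "bulk" of $\lambda$-mass to be pulled by $g_n$ toward $b$, while $g_n o$ staying away from $b$ would, via the median/interval machinery, force a definite chunk of that mass to land elsewhere. More precisely, suppose $g_n o$ does not converge to $b$. Since $\~X$ is compact, after passing to a subsequence we may assume $g_n o\to \xi_0$ for some $\xi_0\in\~X$ with $\xi_0\notin[b]$ in an appropriate sense; concretely, since $b$ is regular it is a single point in its equivalence class that is "isolated" from $\xi_0$ in the sense that $b\notin\~{[\xi_0]}$ (if $b\in\~{[\xi_0]}$, one should still be able to derive a contradiction because $b$ being regular and $\~{[\xi_0]}$ being a lower-dimensional subcomplex when $\xi_0\in\partial X$ — or all of $X$ when $\xi_0\in X$ — cannot contain the regular point $b$ as an interior-type point; I will need to check this dichotomy carefully). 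After further passing to the subsequence provided by Lemma \ref{limitpoints}, I get finitely many exceptional points $\xi_1,\dots,\xi_k$ such that every $\xi\in\bdr X\setminus\{\xi_1,\dots,\xi_k\}$ has all its $(g_n\xi)$-limit points inside $\~{[\xi_0]}$.

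Now apply Proposition \ref{wlimsupp} (or directly Lemma \ref{nu=1}): since $\lambda$ is non-atomic and supported on $\bdr X$, the excluded finite set has $\lambda$-measure zero, so $g_n\lambda$ weakly converges to a measure giving full mass to $\~{[\xi_0]}$. But by hypothesis $g_n\lambda\to\delta_b$, so $\delta_b(\~{[\xi_0]})=1$, i.e. $b\in\~{[\xi_0]}$. This is the contradiction, provided I can rule out $b\in\~{[\xi_0]}$. To do this I would use that $b$ is regular together with Lemma \ref{singleton}: $b$ lies in an infinite descending chain $(h_n)$ of pairwise strongly separated half-spaces with $\bigcap h_n=\{b\}$; if $b\in\~{[\xi_0]}=\overline Y$ for the subcomplex $Y$ of Lemma \ref{Z=Ybar}, then since $\xi_0\in\partial X$ the subcomplex $Y$ is itself at infinity and $\dim Y<\dim X$ by Lemma \ref{dim<}, but more to the point $\~{[\xi_0]}=\bigcap_{i,m}h^i_m$ is a decreasing intersection of half-spaces, and $b$ being regular forces $b=\xi_0$-type coincidence, contradicting $g_n o\to\xi_0$ while $g_n\lambda\to\delta_b$ with $\lambda$ having "most" of its mass mapped elsewhere — this last point is exactly what I need to pin down.

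The cleanest way to close the gap: if $\xi_0\in X$ then $[\xi_0]=X$ and $\~{[\xi_0]}=\~X$, so the Proposition \ref{wlimsupp}-type conclusion is vacuous and I must argue differently — here I would note that $g_n o\to\xi_0\in X$ means $g_n$ eventually takes $o$ into a bounded set, hence (since $\lambda$ is a fixed measure) $g_n\lambda$ would have limit points that are translates of $\lambda$ under the finitely many group elements near the stabilizer configuration, and such a limit cannot be a Dirac mass because $\lambda$ is non-atomic — contradicting $g_n\lambda\to\delta_b$. If $\xi_0\in\partial X$, then $b\notin\~{[\xi_0]}$ unless $[b]\subset\~{[\xi_0]}$; but by Lemma \ref{existschain} and Lemma \ref{singleton}, since $b$ is regular, $\bigcap_{n}h_n=\{b\}$ for any descending chain through $b$, whereas $\~{[\xi_0]}\supsetneq\{b\}$ would be forced to contain $\xi_0$'s whole equivalence class, and comparing the two descending-chain characterizations shows $b\in\~{[\xi_0]}\Rightarrow b\sim\xi_0$, i.e. $\xi_0\in[b]$ and then $g_n o\to\xi_0\in[b]$ together with $g_n\lambda\to\delta_b$, $\lambda$ non-atomic, again gives a contradiction by the same "bulk mass goes to $\~{[\xi_0]}=\overline{[b]}$, but a non-atomic measure cannot be pushed to a Dirac mass while $o$ tracks the same class" reasoning, which I'll make rigorous using that the interval $I(o,\xi)$ for generic $\xi$ meets $X$ (Lemma \ref{IinterX}) so generic mass is separated from $b$ by the bridge of a super strongly separated pair.

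\textbf{Main obstacle.} The technical heart is the case analysis on $\xi_0$ and showing that $b\in\~{[\xi_0]}$ is genuinely impossible — equivalently, that the "lost" direction $\xi_0$ of $g_n o$ cannot secretly agree with the regular concentration point $b$. I expect to spend most of the effort verifying that when $\xi_0\in[b]$ (the dangerous case), the non-atomicity of $\lambda$ combined with the strong-separation/bridge geometry (Proposition \ref{BridgeGeod}, Lemma \ref{singleton}) forces $g_n\lambda$ to spread a definite amount of mass away from $b$, contradicting convergence to $\delta_b$; the $\xi_0\in X$ sub-case is comparatively easy but still needs the non-atomicity input to exclude a Dirac limit.
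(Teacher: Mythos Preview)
Your strategy is the paper's, but you are missing the clean two-line closure and replacing it with an unnecessary case analysis. Once Proposition~\ref{wlimsupp} gives $b\in\overline{[\xi_0]}$ for a limit point $\xi_0\in\partial X$ of $(g_n o)$, the argument finishes immediately: by Lemma~\ref{existschain} there is a descending chain $(h_n)$ with $[\xi_0]\subset\bigcap_n h_n$, hence (half-spaces being closed) $\overline{[\xi_0]}\subset\bigcap_n h_n$, so $b\in\bigcap_n h_n$. But $b$ is \emph{regular}, and Lemma~\ref{singleton} says that any infinite descending chain of half-spaces containing a regular point has intersection equal to that single point; thus $\bigcap_n h_n=\{b\}$. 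Since $\xi_0$ itself lies in this intersection, $\xi_0=b$. That is the whole argument: your claimed implication ``$b\in\overline{[\xi_0]}\Rightarrow b\sim\xi_0$'' is too weak---you actually get $\xi_0=b$ outright, and there is no ``dangerous case $\xi_0\in[b]$'' left over. The bridge geometry and mass-spreading you propose for that case are entirely superfluous.

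As for the sub-case $\xi_0\in X$: your ``bounded set'' argument is incorrect. The complex is not assumed locally finite, so vertices need not be isolated in the Roller compactification; convergence $g_n o\to\xi_0\in X$ therefore does not place $g_n o$ in a finite set, nor does it constrain $g_n$ to finitely many stabilizer cosets. (The paper's proof, as written, invokes Lemma~\ref{existschain} on the limit point without separating out this sub-case, so it is not explicitly treated there either.)
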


\begin{proof}
Let $b'$ be a limit point of $(g_no)$. 
By Lemma \ref{wlimsupp} we have that $\delta_b$ is supported on $\overline{[b']}$.
 Hence $b\in \overline{[b']}$. By Lemma \ref{existschain}, there exists a sequence of half-spaces $(h_n)$ such that $[b']\subset \bigcap_{n\in\N} h_n$. Since every half-space is closed by definition, we have $\overline{[b']}\subset \bigcap_{n\in\N} h_n$. Hence $b\in\bigcap_{n\in\N} h_n$. Since $b\in\partial_r X$, by Proposition \ref{singleton}, it follows that $\bigcap_{n\in\N} h_n=\{b\}$. So $b'=b$.

\end{proof}

\begin{proof}[Proof of Theorem \ref{cvg}]
Let $\lambda$ be the unique stationary measure on $\overline X$. As we assume the action is non-elementary, the measure $\lambda$ is not atomic. Assume first that $X$ is irreducible. We know that $Z_n\lambda$ converges to the Dirac mass $\delta_{\eta(\omega)}$, where $\eta(\omega)\in\partial_rX$ almost surely. By Lemma \ref{cvgmes}, it follows that $Z_n o$ converges to $\eta(\omega)$.

Now if $X$ is not irreducible, but $\Gamma$ preserves each factor $X_i$ of $X$, then the action of $\Gamma$ on $X_i$ is still non-elementary and essential, and the previous argument proves that the projection of $Z_no$ to $X_i$ converges to some point in $\partial X_i$. Hence $Z_no$ also converges to a point in the boundary of $X$.
\end{proof}

\section{Positivity of the Drift}\label{drift}

\subsection{The Drift}

Before getting into the specifics of our situation, we recall some basic general facts about the drift of an action. Assume that $\G$ acts on a metric space $X$. Choose a vertex $o$ in $X$. This gives rise to a seminorm on $\G$ defined by $|g|=d(go,o)$. The \emph{drift} relative to $|\cdot|$ is defined as follows.

\begin{definition}
The drift of the $\mu$-random walk with respect to a seminorm $|\cdot|$ is:
$$
\lambda= \underset{n}{\inf} \frac{1}{n}\int_\Omega |Z_n(\omega)|\,d\Ps(\omega).
$$
\end{definition}

The following is a standard application of Kingman's Subbaditive Ergodic Theorem:

\begin{theorem}
For almost every $\omega\in\Omega$ we have 
$$\lambda = \Lim{n\to \8}\frac{1}{n} |Z_n(\omega)|.$$
\end{theorem}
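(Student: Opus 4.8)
The plan is to realize $n\mapsto |Z_n|$ as a subadditive cocycle over the shift on the space of increments and then quote Kingman's Subadditive Ergodic Theorem. Write $f_n(\omega)=|Z_n(\omega)|=d(Z_n(\omega)o,o)$, and let $T\colon\Omega\to\Omega$ be the shift that deletes the first increment, i.e.\ $g_i(T\omega)=g_{i+1}(\omega)$. Since $\Ps=\delta_e\times\mu^{\N^*}$ is a product (Bernoulli) measure and $T$ is the associated shift, $T$ preserves $\Ps$ and is ergodic.

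The first step is the cocycle identity $Z_{n+m}(\omega)=Z_n(\omega)\,Z_m(T^n\omega)$, which is immediate from $Z_{n+m}=g_1\cdots g_{n+m}$ and the definition of $T$. From it, using that $\G$ acts on $X$ by isometries, one obtains subadditivity: for all $n,m$ and all $\omega$,
$$f_{n+m}(\omega)=d\bigl(Z_n(\omega)Z_m(T^n\omega)o,\,o\bigr)\leq d\bigl(Z_m(T^n\omega)o,\,o\bigr)+d\bigl(Z_n(\omega)o,\,o\bigr)=f_m(T^n\omega)+f_n(\omega),$$
which is exactly the hypothesis of Kingman's theorem. The functions $f_n$ are non-negative, and $f_1(\omega)=d(g_1o,o)$, so the integrability requirement $f_1\in L^1(\Omega,\Ps)$ amounts to $\int_\G d(go,o)\,d\mu(g)<\infty$ --- the finite first moment condition, which I take to be in force here and which is precisely what guarantees $\lambda<\infty$.

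Integrating the displayed inequality and using the $T$-invariance of $\Ps$ shows that $a_n:=\int_\Omega f_n\,d\Ps$ is a subadditive sequence, so by Fekete's lemma $\lim_n a_n/n=\inf_n a_n/n=\lambda$. Kingman's Subadditive Ergodic Theorem then produces a $T$-invariant function $\bar f$ with $f_n(\omega)/n\to\bar f(\omega)$ for $\Ps$-almost every $\omega$; by ergodicity of $T$, $\bar f$ equals the constant $\lim_n a_n/n=\lambda$ almost surely. This gives $\lambda=\Lim{n\to\8}\frac1n|Z_n(\omega)|$ for $\Ps$-almost every $\omega$, as desired.

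Honestly there is no serious obstacle: this is a textbook application of Kingman's theorem once the setup is fixed. The only points that call for a little attention are getting the index bookkeeping in the cocycle identity right (so that the resulting inequality is subadditive rather than superadditive) and recalling the finite first moment hypothesis, which is what makes $\lambda$ finite and the convergence hold.
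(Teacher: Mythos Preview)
Your proof is correct and is exactly the argument the paper has in mind: the paper does not give a proof at all but simply states that the result ``is a standard application of Kingman's Subadditive Ergodic Theorem,'' and you have supplied precisely those standard details (cocycle identity, subadditivity from the isometric action, ergodicity of the Bernoulli shift, then Kingman). Your remark about needing finite first moment for $f_1\in L^1$ is also in line with the paper, which notes immediately after the theorem that $\lambda$ is finite under that hypothesis.
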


Furthermore, $\lambda$ is finite whenever $\mu$  
 has finite first moment (with respect to $|\cdot|$), i.e. $\Sum{g\in \G} \mu(g)|g|<\8$.

\subsection{Proof of the Positivity of the Drift}

Our goal in this section is to prove that the speed at which the random walk goes to infinity is always linear. Our proof follows a classical strategy 
which was initiated by Guivarc'h and Raugi for linear groups \cite{GR85}. Ledrappier extended it to free nonabelian groups \cite{Led01}, and Benoist and Quint to Gromov hyperbolic groups \cite{BenoistQuintHyp}. 

The main aim of this section is to prove the following:

\begin{theorem}\label{theorem pos drift}
 Let $\G \to \Aut(X)$ be an essential and nonelementary action, $\mu$ a probability measure on $\G$, $o\in X$ such that $\mu$ has finite first moment with respect to $|\cdot|$. Then $\lambda >0$. 
\end{theorem}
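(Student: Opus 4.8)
The plan is to establish positivity of the drift by exhibiting a nonnegative cocycle that grows linearly along almost every trajectory and is dominated by $|Z_n|$. The natural candidate is the horofunction (Busemann-type) cocycle $\sigma(g,\xi)=h_\xi(g^{-1}o)$ introduced in Lemma~\ref{cocycle horo}, evaluated at the boundary point $\eta(\omega)$ to which the walk converges by Theorem~\ref{cvg}. Recall from Proposition~\ref{horo=Roller} that $h_\xi(x)=d(m,x)-d(m,o)$ where $m=m(\xi,x,o)$, so $|h_\xi(x)|\leq d(x,o)$; in particular $\sigma(Z_n,\xi)\leq |Z_n|$, which gives the needed domination. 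The cocycle relation $\sigma(g_1g_2,\xi)=\sigma(g_1,g_2\xi)+\sigma(g_2,\xi)$ lets us write $\sigma(Z_n,\eta_n)$ as a sum of increments that, after a suitable change of variables, become stationary, so Birkhoff's ergodic theorem applies.

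\textbf{Step 1: The drift as an integral.} Consider the skew-product on $\Omega\times B_-$ (or on $B_-\times B$) and the function $F(\omega,\cdot)=\sigma(g_1(\omega),\eta(T\omega,\cdot))=h_{\eta}(\ )$, chosen so that $\sum_{k=1}^n F\circ(\text{shift})^{k}$ telescopes to $\sigma(Z_n,\eta(\omega))=h_{\eta(\omega)}(Z_n^{-1}o)$. By the cocycle identity and $\G$-equivariance of $\eta$, this equals $\sum_{k=0}^{n-1} h_{\check g_{k+1}\cdots\check g_n\,\eta(\omega)}(\ )$ rewritten via the time-reversal so that the increments are i.i.d.\ modulo the boundary variable. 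Birkhoff's theorem then gives $\frac1n\sigma(Z_n,\eta(\omega))\to \ell:=\int F\,d(\Ps\otimes\nu_-)$ almost surely, and since $\sigma(Z_n,\eta)\leq |Z_n|$ we get $\ell\leq\lambda$. The finite first moment hypothesis guarantees $F$ is integrable.

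\textbf{Step 2: Positivity of $\ell$.} This is the heart of the argument and the main obstacle. One shows $\ell>0$ by proving that the reversed quantity $\frac1n h_{\eta(\omega)}(Z_n^{-1}o)$ cannot have nonpositive average; equivalently, using the analogous construction for $\check\mu$, the sum of the two drifts relative to the two boundary points is strictly positive while each is bounded by $\lambda$. The key geometric input is that $\eta(\omega)$ is almost surely a \emph{regular} point (Theorem~\ref{eta}): there is an infinite descending chain of pairwise strongly separated half-spaces $(h_m)\subset U_{\eta(\omega)}$, and by Proposition~\ref{wlimsupp}/Lemma~\ref{liminX} the walk $Z_no$ enters these half-spaces and stays deep inside them, so the median $m(\eta,Z_no,o)$ drifts to infinity — which is exactly $h_\eta(Z_n^{-1}o)=d(m,Z_no)-d(m,o)$ reinterpreted. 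More precisely, if $\ell=0$ then by Theorem~\ref{IsomErgodic} (isometric ergodicity of $B_-\times B$) a suitable $\G$-equivariant map into a space of measures or configurations would be essentially constant, forcing a $\G$-fixed point or bounded orbit, contradicting non-elementarity; the essentiality hypothesis enters through the Double Skewering Lemma~\ref{Double Skewering Lemma}, which ensures the half-spaces in the regular chain can be pushed arbitrarily deep by group elements in the support of $\mu$.

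\textbf{Step 3: Assembling.} Combining Step 1 ($\ell\leq\lambda$) with Step 2 ($\ell>0$) yields $\lambda>0$. In the reducible case one reduces to a finite-index subgroup $\G_0$ preserving each factor (Lemma~\ref{Poissonfi} and the lemma of \cite{CFI} that the factor actions stay non-elementary and essential); positivity of the drift for $\G_0$ on some irreducible factor $X_i$, which is part of the distance on $X$, forces $\lambda>0$ for $\G$ on $X$. I expect Step 2 — showing the horofunction cocycle has strictly positive integral, i.e.\ transferring the regularity of $\eta(\omega)$ into a quantitative linear lower bound via an ergodicity/non-elementarity dichotomy — to be where essentially all the work lies; Steps 1 and 3 are routine once the right cocycle and change of variables are set up.
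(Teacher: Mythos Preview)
Your overall framework --- using the horofunction cocycle $\sigma(g,\xi)=h_\xi(g^{-1}o)$ together with Birkhoff's ergodic theorem on a skew-product, then reducing to a finite-index subgroup in the reducible case --- matches the paper's strategy. However, there are two genuine gaps.

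First, your Step~1 is set up incorrectly. You evaluate the cocycle at the \emph{forward} limit point $\eta(\omega)$, i.e.\ you study $h_{\eta(\omega)}(Z_n^{-1}o)$. But $\eta(\omega)$ depends on the entire future of $\omega$, so the telescoped increments are not of the form $F\circ T^k$ for a measure-preserving $T$; and $(Z_n^{-1}o)_n$ is not a random-walk trajectory. The paper instead fixes an auxiliary boundary point $\xi$, studies $h_\xi(Z_no)$, and integrates $\xi$ against the $\check\mu$-stationary measure $\check\nu$; the skew-product $T(\omega,\xi)=(S\omega,\omega_0^{-1}\xi)$ then preserves $\Ps\times\check\nu$ and is ergodic (Lemma~\ref{T ergodic}), so Birkhoff applies cleanly.

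Second, and more seriously, you are missing the two ingredients that actually give positivity. The paper first proves Proposition~\ref{prop distance follows horofunction}: for \emph{every} $\xi$ and almost every $\omega$ one has $|d(Z_no,o)-h_\xi(Z_no)|<C$. This is an elementary median computation (choose $o\in I(\eta(\omega),\xi)$ and use continuity of the median, so $m(\xi,Z_no,o)\to o$), and it upgrades your inequality $\ell\leq\lambda$ to the equality $\lambda=\ell$. Once that is in place, convergence of $Z_no$ to the boundary forces $h_\xi(Z_no)\to+\infty$, so the Birkhoff cocycle is \emph{transient}; Atkinson's Lemma \cite{Atk76} then yields $\ell=\int F\,d(\Ps\times\check\nu)>0$ immediately. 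Your proposed Step~2 --- an isometric-ergodicity dichotomy to rule out $\ell=0$ --- is not needed, and as stated it is not a proof: you never specify what equivariant map would arise from $\ell=0$, nor why it would land in a separable metric target. The work you anticipate in Step~2 is replaced in the paper by Proposition~\ref{prop distance follows horofunction} plus an off-the-shelf ergodic lemma.
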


Recall from section \ref{sec:horo}  that $\partial X$ is isomorphic to the horofunction boundary of $X$ with the combinatorial distance. If $\alpha\in\partial X$, we denote $h_\alpha$ the corresponding horofunction.

We denote by $\nu$ the unique stationary measure on $\partial X$. By Theorem \ref{eta}, we have $\nu(\partial_r X)=1$. 

The positivity of the drift will follow easily once we prove the following:

\begin{prop}\label{prop distance follows horofunction}
Assume that $\G$ stabilizes each factor of $X$. Then for every $\xi\in\partial X$ and $\Ps$-almost every $\omega\in\Omega$, there exists $C>0$ such that for all $n>0$ we have
$$|d(Z_n(\omega)o,o)-h_\xi(Z_n(\omega)o)|<C.$$
\end{prop}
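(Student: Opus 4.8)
The plan is first to rewrite $d(Z_n(\omega)o,o)-h_\xi(Z_n(\omega)o)$ in terms of a median, and then to bound that median contribution using the regularity of the limit point $\eta(\omega)$. Write $Z_n$ for $Z_n(\omega)$ and set $m_n=m(o,Z_no,\xi)$. By Proposition~\ref{horo=Roller}, $h_\xi(Z_no)=d(m_n,Z_no)-d(m_n,o)$, and since $m_n\in\I(o,Z_no)$ we have $d(o,Z_no)=d(o,m_n)+d(m_n,Z_no)$; combining these gives
$$d(Z_no,o)-h_\xi(Z_no)=2\,d(o,m_n)\geq 0.$$
Moreover, unwinding $U_{m_n}=(U_o\cap U_{Z_no})\cup(U_{Z_no}\cap U_\xi)\cup(U_\xi\cap U_o)$ yields
$$d(o,m_n)=\#\{k\in\frakH:\ o\notin k,\ Z_no\in k,\ \xi\in k\}=:\#\mathcal A_n .$$
So it suffices to prove that for $\Ps$-almost every $\omega$ one has $\sup_n\#\mathcal A_n<\infty$, and then take $C=1+2\sup_n\#\mathcal A_n$.

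Next I would reduce to the irreducible case. Write $X=X_1\times\cdots\times X_N$ with each $X_i$ irreducible; since $\G$ preserves each factor, $Z_n$ acts diagonally, and because $\frakH$ is partitioned among the $X_i$ both the $\ell^1$ distance and the horofunctions split as sums over the factors, hence so does $\#\mathcal A_n$. If $\xi_i\in X_i$ then the corresponding median lies in the finite interval $\I(o_i,\xi_i)$ and that factor contributes a bounded term, so assume $\xi_i\in\partial X_i$. By Theorem~\ref{cvg}, $Z_no_i\to\eta_i(\omega)$ in $\overline{X_i}$; by Theorem~\ref{eta} and the definition of the regular boundary of a product, $\eta_i(\omega)\in\partial_r X_i$ almost surely; and since the action of $\G$ on $X_i$ is still essential and non-elementary, its stationary measure on $\overline{X_i}$ is non-atomic (as in the proof of Theorem~\ref{cvg}), so for our fixed $\xi_i$ one has $\eta_i(\omega)\neq\xi_i$ almost surely. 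Dropping subscripts, we are reduced to: $X$ irreducible, $Z_no\to\eta$ with $\eta\in\partial_r X$ and $\eta\neq\xi\in\partial X$, and we must bound $\#\mathcal A_n$.

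This is where regularity is used. By Proposition~\ref{Rank1Char} choose a descending chain of pairwise strongly separated half-spaces containing $\eta$; by Lemma~\ref{singleton} its intersection is $\{\eta\}$, so, discarding finitely many of them and relabelling, we obtain strongly separated half-spaces $s_2\subsetneq s_1$ with $\eta\in s_2$ and $o,\xi\in s_1^*\subseteq s_2^*$. Since $s_2\in U_\eta$ and $U_{Z_no}\to U_\eta$ pointwise in $2^{\frakH}$, there is $N$ with $Z_no\in s_2$ (hence also $Z_no\in s_1$) for all $n\geq N$. The combinatorial heart is the claim that \emph{for $n\geq N$, every $k\in\mathcal A_n$ satisfies $s_2\subseteq k$}. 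Indeed such a $k$ has $Z_no\in k\cap s_2$ and $\xi\in k\cap s_2^*$, both nonempty, and $o\in k^*\cap s_2^*$ is nonempty, so of the four non-transverse configurations of the pair $(k,s_2)$ only $s_2\subseteq k$ or $k\pitchfork s_2$ survive; and if $k\pitchfork s_2$, the same trichotomy for $(k,s_1)$ (using $Z_no\in k\cap s_1$, $\xi\in k\cap s_1^*$, $o\in k^*\cap s_1^*$) forces either $s_1\subseteq k$, whence $s_2\subseteq k$, or $k\pitchfork s_1$ — which together with $k\pitchfork s_2$ contradicts the strong separation of $s_1$ and $s_2$. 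Consequently $\mathcal A_n\subseteq\{k\in\frakH:o\notin k,\ s_2\subseteq k\}$ for $n\geq N$, and this set is finite: fixing a vertex $q\in s_2$, every such $k$ contains $q$ but not $o$, so their number is at most $d(o,q)$.

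Putting this together, $\#\mathcal A_n$ is bounded by the maximum of $\#\{k:o\notin k,\ s_2\subseteq k\}$ and the finitely many values $d(o,Z_no)$ with $n<N$, so $\sup_n\#\mathcal A_n<\infty$ and the proposition follows. I expect the combinatorial claim of the previous paragraph — that a strongly separated pair "squeezes" every half-space counted by $\mathcal A_n$ to contain $s_2$ as soon as $Z_no$ has entered $s_2$ — to be the main obstacle; everything else (the horofunction/median identity, the splitting over irreducible factors, and the almost-sure facts that $\eta(\omega)$ is regular and distinct from the fixed $\xi$) is routine given the results already established.
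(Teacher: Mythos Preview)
Your proof is correct; the combinatorial ``squeezing'' claim is fine, since any $k\in\mathcal A_n$ meets $s_2$, $s_2^*$, and has $k^*$ meeting $s_2^*$, forcing $s_2\subseteq k$ or $k\pitchfork s_2$, and the latter is ruled out by repeating with $s_1$ and invoking strong separation.

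Your route differs from the paper's, however. After obtaining the same identity $d(Z_no,o)-h_\xi(Z_no)=2\,d(o,m_n)$, the paper does not analyze $\mathcal A_n$ combinatorially. Instead it first observes that the statement is basepoint-independent, and then \emph{chooses} $o\in\I(\eta,\xi)\cap X$ (possible by Lemma~\ref{IinterX}). With that choice one has $m(\xi,\eta,o)=o$, and the continuity of the median map (Lemma~\ref{mediancontinuous}) gives $m_n=m(\xi,Z_no,o)\to m(\xi,\eta,o)=o$; since $X$ is locally finite as a graph, this forces $m_n=o$ for all large $n$, so the difference is eventually zero. Your argument avoids changing the basepoint and instead exploits the strongly separated chain guaranteed by regularity of $\eta$ directly; this is more explicit and arguably more robust, whereas the paper's argument is shorter and packages the work into the continuity of the median and the freedom of basepoint.
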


\begin{proof}

First, we claim that the conclusion of the proposition does not depend on the choice of the basepoint $o$. Indeed, assume that  $$|d(Z_n(\omega)o,o)-h_\xi(Z_n(\omega)o)|<C.$$ If $o'$ is another basepoint then  $$d(Z_n o',o')\leq d(Z_no',Z_no)+d(Z_no,o)+d(o,o'),$$ and hence $d(Z_n o',o')-d(Z_n o,o)\leq 2d(o,o')$.  By symmetry 
$$\vert d(Z_n o',o')-d(Z_n o,o)\vert\leq 2d(o,o').$$
 Similarly $\vert h_\xi(Z_n o)-h_\xi(Z_n o')\vert \leq 2d(o,o')$.
Hence
 $$\vert d(Z_n(\omega)o',o')-h_\xi(Z_n(\omega)o') \vert \leq 4 d(o,o')+C,$$
 which proves the claim.

Let $\xi\in\partial X$. By Theorem \ref{cvg}, for $\Ps$-a.e. $\omega\in \Omega$, there is $\eta(\omega)\in \partial_r X$ such that $Z_n(\omega)o\to\eta(\omega)$ for every $o\in X$. As the action is non-elementary, we know that $\eta(\omega)\neq \xi$ almost surely. Fix such a generic $\omega$ and set $\eta= \eta(\omega)$ and $Z_n = Z_n(\omega)$. By the claim above and Lemma \ref{IinterX}, we may and shall assume
that $o\in I(\eta, \xi)\cap X$.

Recall from Proposition \ref{horo=Roller} that, the median  $m(\xi,x,o)\in\I(x,o)$ is such that
$$h_\xi(x) = d(m(\xi,x,o), x)-d(m(\xi,x,o),o).$$

Let  $m_n = m(\xi,  Z_no, o)$, so that $h_\xi(Z_n o) = d(m_n, Z_n o)-d(m_n,o)$. Then:
\begin{eqnarray*}
d(Z_no,o)-h_\xi(Z_no) & = & d(Z_no,m_n) +d(m_n,o)-\(d(m_n, Z_n o)-d(m_n,o)\)\\
&=&2d(m_n,o)
\end{eqnarray*}
Again, by continuity of the median map, we have that $m_n \to m(\xi,\eta(\omega),o)$.  Recall that we have chosen $o = m(\xi,\eta,o)\in I(\eta, \xi)\cap X$, which is locally compact. Therefore, for $n$ sufficiently large, 
$$d(Z_no,o)-h_\xi(Z_no) =2d(m(\xi,\eta(\omega),o), o)=0.$$

\end{proof}

We immediately deduce that:

\begin{cor}\label{LambdaLimit}
 For every $o\in X$, $\Ps$-a.e. $(Z_n)\in \Omega$ and every $\xi \in \partial X$   we have that  
 $$\lambda = \Lim{n\to \8}\;\frac{1}{n}h_\xi(Z_n o).$$
\end{cor}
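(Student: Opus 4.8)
The plan is to derive Corollary \ref{LambdaLimit} as an essentially immediate consequence of Proposition \ref{prop distance follows horofunction}, combined with the almost-sure existence of the drift from Kingman's theorem. The key observation is that the quantity $|Z_n(\omega)| = d(Z_n(\omega)o,o)$ satisfies $\frac1n d(Z_n(\omega)o,o)\to\lambda$ for $\Ps$-almost every $\omega$, so it suffices to control the difference $d(Z_n(\omega)o,o)-h_\xi(Z_n(\omega)o)$ uniformly in $n$.

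First I would fix $o\in X$ and $\xi\in\partial X$. By Proposition \ref{prop distance follows horofunction}, for $\Ps$-almost every $\omega\in\Omega$ there is a constant $C=C(\omega,\xi)>0$ such that for all $n>0$,
$$
|d(Z_n(\omega)o,o)-h_\xi(Z_n(\omega)o)|<C.
$$
Dividing by $n$ gives
$$
\left|\frac1n d(Z_n(\omega)o,o)-\frac1n h_\xi(Z_n(\omega)o)\right|<\frac{C}{n}\xrightarrow[n\to\infty]{}0.
$$
Combining this with the almost-sure limit $\frac1n d(Z_n(\omega)o,o)\to\lambda$ supplied by Kingman's Subadditive Ergodic Theorem (the theorem in \S\ref{drift} identifying $\lambda$ as the almost-sure limit of $\frac1n|Z_n(\omega)|$) yields $\frac1n h_\xi(Z_n(\omega)o)\to\lambda$ for $\Ps$-almost every $\omega$.

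One small point worth addressing is that the null set outside which the conclusion holds depends a priori on $\xi$, whereas the statement quantifies over all $\xi\in\partial X$. In fact Proposition \ref{prop distance follows horofunction} is already stated "for every $\xi$ and $\Ps$-almost every $\omega$", so the corollary inherits exactly the same logical structure; there is no need to invoke separability of $\partial X$ or to upgrade to a single full-measure set valid simultaneously for all $\xi$. Thus I do not expect any genuine obstacle here — the entire content of the corollary is packaged in the preceding proposition and in the identification of $\lambda$ via Kingman's theorem, and the proof is a two-line division argument. If anything, the only thing to be slightly careful about is making sure the hypothesis "$\G$ stabilizes each factor" that appears in Proposition \ref{prop distance follows horofunction} is available, which it is in the ambient setting; alternatively one notes that Corollary \ref{LambdaLimit} is only ever invoked under that standing assumption.
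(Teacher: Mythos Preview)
Your proposal is correct and matches the paper's approach exactly: the paper presents this corollary with the words ``We immediately deduce that'' and gives no further argument, so the intended proof is precisely the division-by-$n$ argument you wrote, combining Proposition \ref{prop distance follows horofunction} with the Kingman limit $\frac{1}{n}d(Z_n o,o)\to\lambda$. Your remark on the quantifier order (the null set may depend on $\xi$) is accurate and consistent with how the corollary is stated and used.
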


Our aim now is to apply results about additive cocycles to our situation. 
%
 To this end, let 
$T: \Omega\times \~X \to\Omega\times \~X $ be defined by $$T(\omega , \xi) = (S\omega, \omega_0^{-1} \xi),$$
where $\omega=(\omega_0,\omega_1,\dots,)$ and $S:\omega\mapsto (\omega_1,\omega_2,\dots)$ is the usual shift.

The following lemma is borrowed from \cite[Proposition 1.14]{BenoistQuintBook}. We include a proof for completeness.

\begin{lemma}\label{T ergodic}
The transformation $T$ preserves the measure $\Ps \times \ch\nu$ and acts ergodically.
\end{lemma}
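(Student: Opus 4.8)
The transformation $T$ is the standard skew-product associated to the random walk acting on $\overline X$ with the measure $\check\mu$ on increments and $\check\nu$ the $\check\mu$-stationary measure. The plan is to verify the two assertions separately: first that $\mathbb P\times\check\nu$ is $T$-invariant, which is a direct computation; and then that $T$ is ergodic, which one deduces from the description of the Furstenberg--Poisson boundary together with the uniqueness of the stationary measure.

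For invariance, I would compute the pushforward of $\mathbb P\times\check\nu$ under $T$ by testing against a product function $F(\omega)G(\xi)$. Writing $\mathbb P=\delta_e\times\check\mu^{\otimes\mathbb N^*}$, one uses that disintegrating over the first coordinate $\omega_0$ (distributed according to $\check\mu$), the remaining sequence $S\omega$ is again distributed as $\mathbb P$ and is independent of $\omega_0$. So $\int (F\circ S)(\omega)\,(G\circ(\omega_0^{-1}\cdot))(\xi)\,d\mathbb P(\omega)\,d\check\nu(\xi)$ factors, using independence, as $\big(\int F\,d\mathbb P\big)\cdot\int\!\!\int G(\omega_0^{-1}\xi)\,d\check\nu(\xi)\,d\check\mu(\omega_0)$; the inner double integral equals $\int G\,d(\check\mu*\check\nu)=\int G\,d\check\nu$ by stationarity of $\check\nu$ under $\check\mu$. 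This gives $T$-invariance on a generating family of functions, hence in general.

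For ergodicity, the standard argument goes as follows. A bounded $T$-invariant function $\psi$ on $\Omega\times\overline X$ can be viewed, via the identification of $(B,\nu)$ (or rather its $\check\mu$-counterpart) as a quotient of $\Omega$, as a measurable $\Gamma$-equivariant object; more concretely, $T$-invariance of $\psi$ forces $\psi(\omega,\xi)$ to depend on $\omega$ only through the increments in a way compatible with the shift, and by the Martingale Convergence Theorem (Theorem~\ref{MartingalePolish}) and the tail-triviality built into the definition of $B$, $\psi$ descends to a $\Gamma$-equivariant measurable map from $B_-\times\overline X\to L^\infty$, or equivalently one tests $T$-invariant sets against the boundary. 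The key input is then Corollary~\ref{stationary}: the stationary measure $\check\nu$ on $\overline X$ is \emph{unique}, and this rigidity, combined with the fact that $\nu$-almost every boundary map lands in the regular points (Theorem~\ref{eta}) together with the isometric ergodicity of $B_-\times B$ (Theorem~\ref{IsomErgodic}) applied to the relevant space of measures, forces $\psi$ to be essentially constant. I would model the argument on \cite[Proposition 1.14]{BenoistQuintBook}: express the $\sigma$-algebra of $T$-invariant sets in terms of the product $\sigma$-algebra coming from the two boundaries and conclude that invariant functions are constant by the uniqueness statement.

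\textbf{Main obstacle.} The invariance is routine; the real content is ergodicity. The delicate point is making precise the identification of the $T$-invariant $\sigma$-algebra with a boundary object: one must carefully use that $(\Omega,\mathbb P)\to(B_-,\nu_-)$ realizes the Poisson boundary so that shift-invariant (mod $T$) functions correspond to honest functions on $B_-\times\overline X$, and then invoke uniqueness of the stationary measure to kill the $\overline X$-dependence. Getting the measure-theoretic bookkeeping right --- in particular, that the relevant factor map is the boundary map and that Theorem~\ref{IsomErgodic} applies to the separable metric space of probability measures on $\overline X$ --- is where the care is needed, though all the needed ingredients (Theorems~\ref{IsomErgodic}, \ref{MartingalePolish}, \ref{eta}, and Corollary~\ref{stationary}) are already in place.
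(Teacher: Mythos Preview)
Your invariance argument is essentially the paper's: a direct computation using stationarity of $\check\nu$. One small correction: the increments $\omega_i$ are distributed according to $\mu$, not $\check\mu$; it is because $T$ acts on the second coordinate via $\omega_0^{-1}$ that the relevant stationary measure is $\check\nu$.

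For ergodicity, your sketch does not cohere into a proof, and the heavy tools you invoke (the Poisson boundary, isometric ergodicity of $B_-\times B$, Theorem~\ref{eta}) are neither needed nor clearly applicable. In particular, $T$-invariance gives $\psi(S\omega,\omega_0^{-1}\xi)=\psi(\omega,\xi)$, which does \emph{not} make $\psi(\cdot,\xi)$ shift-invariant for fixed $\xi$; so there is no evident descent of $\psi$ to $B_-\times\overline X$, and no obvious way to manufacture a $\Gamma$-equivariant map $B_-\times B\to Y$ to which Theorem~\ref{IsomErgodic} would apply. Your final pointer to \cite[Proposition~1.14]{BenoistQuintBook} is the correct one and is exactly what the paper follows, but that argument is quite different from what you describe. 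One sets $\varphi(x)=\int\psi(\omega,x)\,d\mathbb P(\omega)$ and checks, using $T$-invariance, that $\varphi$ is invariant under the Markov operator $Pf(x)=\int f(gx)\,d\check\mu(g)$. Uniqueness of the stationary measure (Corollary~\ref{stationary}) gives $P$-ergodicity of $\check\nu$, so $\varphi\equiv c$ is constant. Then one computes the conditional expectations $\varphi_n=\mathbb E(\psi\mid\mathscr X_n)$, where $\mathscr X_n$ is generated by $\omega_0,\dots,\omega_{n-1}$ and $x$: using $T^n$-invariance of $\psi$ one finds $\varphi_n(\omega,x)=\varphi(\omega_{n-1}^{-1}\cdots\omega_0^{-1}x)=c$, and martingale convergence $\varphi_n\to\psi$ finishes. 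The only external input is Corollary~\ref{stationary}; no boundary theory beyond that is required.
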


\begin{proof}
Let $\beta=\Ps\times\ch\nu$. We begin by checking the invariance of $\beta$. Let $\psi$ be a bounded Borel function on $\Omega\times\~X$. Let $\f(x)=\int \psi(\omega,x) d\Ps(\omega)$. By definition we have $\beta(\psi)=\ch\nu(\f)$. On the other hand we get $\beta(\psi\circ T)=\int \psi(S \omega,\omega_0^{-1} x )d\Ps(\omega) d\ch\nu(x)=\ch\nu(\f)$ by stationarity of $\ch\nu$. The invariance of $\beta$ follows.

Now let us turn to the proof of ergodicity of $\beta$.
Let $P$ be the averaging operator relative to $\ch\mu$: if $f$ is a bounded Borel function on $\overline X$, then $Pf(x)=\int f(gx) d\ch\mu(g)$. A measure is $\ch\mu$-stationary if and only if it is $P$-invariant. By Corollary \ref{stationary}, 
the measure $\ch\nu$ is the unique $\ch\mu$-stationary measure on $\~X$. It follows that $\ch\nu$ is $P$-ergodic.

Let $\psi$ be a bounded Borel function on $\Omega\times \~X$ which is $T$-invariant. We have to prove it is constant. Let again $\f$ denote the function defined on $\~X$ by $\f(x)=\int \psi(\omega,x) d\Ps(\omega)$.

We first see that 
$$P\f(x)=\int \psi(\omega,g^{-1}x)d\Ps(\omega)d\mu(g)=\int (\psi\circ T)(\omega,x)d\Ps(\omega)= \f(x)$$
so that $\f$ is $P$-invariant. By the above remark it is constant, say equal to $c$.

Let $\mathscr X_n$ be the sigma algebra generated by the first $n$ coordinates $\omega_0,\dots,\omega_{n-1}$ on $\Omega$ and by the variable $x\in\~X$. Let $\f_n=\EE(\f \mid \mathscr X_n)$. Then we have
\begin{align*}
\f_n(\omega_0,\dots,\omega_{n-1},x)&=\int \psi((\omega_0,\dots,\omega_{n-1},\omega),x)d\Ps(\omega)\\
&=\int \psi\circ T^n((\omega_0,\dots,\omega_{n-1},\omega),x)d\Ps(\omega)\\
&=\int \psi(\omega,\omega_{n-1}^{-1}\dots,\omega_0^{-1} x)d\Ps(\omega)\\
&=\f(\omega_{n-1}^{-1}\dots,\omega_0^{-1} x)\\
&=c
\end{align*}

Since the sequence $(\f_n)$ converges to $\psi$, it follows that $\psi$ is also constant, equal to $c$.
\end{proof}

\begin{proof}[Proof of Theorem \ref{theorem pos drift}]

Assume first that the group stabilizes each factor.
Define the function $F: \Omega\times \~X \to\R$ as 
$$F((\omega_n)_n, \xi) = h_\xi(\omega_0 o)$$
and observe that its value only depends on the first coordinate of $(\omega_n)_n$. For every $\xi\in \bd X$, the function $h_\xi$ is 1-Lipschitz on $X$, so that $| F((\omega_n)_n, \xi) |\leq d(o,\omega_0 o)$. It follows that $\int | F(\omega, \xi) | \,\Ps(\omega) d\ch\nu(\xi)<+\infty$.

Recall from Lemma \ref{cocycle horo}  horofunctions satisfy the following relation:
$$h_\xi(g_2^{-1}g_1^{-1} x) = h_{g_2\xi}(g_1^{-1}x) + h_\xi(g_2^{-1} x).$$

Inductively, this shows that if $Z_k = \omega_1\cdots \omega_k$ (and $Z_0=e$) then 
$$h_\xi(Z_n o) =  \overset{n}{\Sum{k=1}} h_{Z_{k-1}^{-1}\xi}(\omega_k o)$$

Therefore, we have the following calculation:

\begin{eqnarray*}
\frac{1}{n}h_\xi(Z_n o) &=& \frac{1}{n}\;\overset{n}{\Sum{k=1}} h_{Z_{k-1}^{-1}\xi}(\omega_k o)\\
&=& \frac{1}{n}\overset{n}{\Sum{k=1}}F(T^k((\omega_n)_n, \xi)
\end{eqnarray*}

Now, assume that $\mu$ has finite first moment.  
By Proposition \ref{prop distance follows horofunction}, we have that  $\frac{1}{n}h_\xi(Z_n o) \to \lambda$. Thanks to Lemma \ref{T ergodic}, we know that $T$ preserves $\Ps \times \ch\nu$ and is ergodic and so we may apply the Birkhoff Ergodic Theorem and conclude:
$$\frac{1}{n}\overset{n}{\Sum{k=0}}F(T^k((\omega_n)_n, \xi)) \to \int F(\omega, \xi) \,\Ps(\omega) d\ch\nu(\xi).$$
Recall that by Proposition \ref{prop distance follows horofunction}, we know that $|d(Z_n(\omega)o,o)-h_\xi(Z_n(\omega)o)|$ is almost surely uniformly bounded. This together with Theorem \ref{cvg} which guarantees the almost sure convergence of the random walk to the boundary, implies that $h_\xi(Z_no)$ tends to $+\infty$ almost surely. 
This means that $\overset{n}{\Sum{k=0}}F(T^k((\omega_n)_n, \xi)) $ is a transient cocycle in the sense of \cite{Atk76} and hence by Atkinson's Lemma $\int F(\omega, \xi) \,\Ps(\omega) d\ch\nu(\xi)$ is strictly positive \cite{Atk76}.  (See also \cite[Lemma 3.6]{GR85}.)

If the group $\Gamma$ does not stabilize each factor, let $\Gamma_0\lhd\Gamma$ be the finite index subgroup which does. Let $(Z_{\f(n)})$ be the subsequence of the random walk formed by the elements which are in $\Gamma_0$. This is a random walk on $\Gamma_0$, which still has finite first moment by \cite[Lemma 2.3]{Kaima91}. Then by the previous result we have $\frac{Z_{\f(n)}}{n}\to \lambda_0>0$. 

Since we already know that $\frac{Z_n}{n}$ converges, the result follows from the fact that $\frac{\f(n)}{n}$ has a positive limit, which is Lemma \ref{phi(n)/n} below.

\end{proof}

\begin{lemma}\label{phi(n)/n}
Let $\Gamma_0\lhd \Gamma$ be a finite index normal subgroup. Let $(Z_{\f(n)})$ be the subsequence formed by all elements of the random walk which are in $\Gamma_0$. Then there is $C>0$ such that  $\frac{\f(n)}{n}\to C$ almost surely.
\end{lemma}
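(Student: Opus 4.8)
The plan is to realize $\varphi(n)$ as a sum of i.i.d.\ (or at least stationary ergodic) return times and invoke the strong law of large numbers. First I would observe that the successive visits of the random walk $(Z_k)$ to the subgroup $\Gamma_0$ are governed by the quotient group $\Gamma/\Gamma_0$, which is finite: the sequence $\bar Z_k = Z_k\Gamma_0 \in \Gamma/\Gamma_0$ is itself a (Markov) random walk on the finite group $\Gamma/\Gamma_0$ driven by the pushforward $\bar\mu$ of $\mu$. Since $\mu$ is admissible, $\bar\mu$ is admissible on $\Gamma/\Gamma_0$, so the walk $(\bar Z_k)$ visits the identity coset infinitely often almost surely, and in fact the return times to the identity coset have exponential tails (the walk on a finite group is a finite irreducible Markov chain when restricted to the support-generated structure, or one argues directly that there is a uniform lower bound $p_0>0$ on the probability of returning to $e\Gamma_0$ within $N$ steps for some fixed $N$).

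Next I would set $R_1 = \varphi(1)$, the first index $k$ with $Z_k\in\Gamma_0$, and more generally $R_j = \varphi(j) - \varphi(j-1)$ the $j$-th return-time increment (with $\varphi(0)=0$). By the Markov property of the walk on $\Gamma/\Gamma_0$ together with the fact that the increments $g_i$ are i.i.d., the sequence $(R_j)_{j\geq 1}$ is i.i.d.: indeed, conditioned on the walk having just returned to the coset $e\Gamma_0$ at time $\varphi(j-1)$, the future increments $g_{\varphi(j-1)+1}, g_{\varphi(j-1)+2},\dots$ are fresh i.i.d.\ $\mu$-samples, and $R_j$ is a deterministic function of them (the first time their running product lands back in $\Gamma_0$), so the $R_j$ are i.i.d.\ with the common law of $R_1$. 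From the exponential-tail estimate above, $\mathbb{E}[R_1] = m < \infty$.

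Now $\varphi(n) = R_1 + \cdots + R_n$, so by the strong law of large numbers $\frac{\varphi(n)}{n}\to m$ almost surely; setting $C = m$ completes the proof. The main obstacle is the verification that the return-time increments $(R_j)$ are genuinely i.i.d.\ (and integrable): this is where one must be careful to use that $g_{\varphi(j-1)+1},g_{\varphi(j-1)+2},\dots$ are independent of the past — which follows because $\varphi(j-1)$ is a stopping time for the filtration generated by $(g_i)$ and $\Omega = \Gamma^{\mathbb N}$ carries the product measure $\delta_e\times\mu^{\mathbb N^*}$, so the strong Markov property applies. Once integrability of $R_1$ is in hand (via the uniform return probability $p_0$ to $e\Gamma_0$ in $N$ steps, giving $\Prob(R_1 > kN)\leq (1-p_0)^k$), everything else is the classical renewal argument. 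I note that normality of $\Gamma_0$ in $\Gamma$ is not strictly needed for this argument — finite index suffices — but it does no harm.
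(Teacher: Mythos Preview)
Your proposal is correct and follows essentially the same approach as the paper: both write $\f(n)$ as a sum of i.i.d.\ return-time increments for the walk on $\Gamma/\Gamma_0$ and apply the strong law of large numbers. You are a bit more explicit than the paper in justifying the i.i.d.\ property via the strong Markov property and the integrability of $R_1$ via exponential tails, while the paper additionally identifies the limit as $C=1/\pi(e\Gamma_0)$ via Kac's formula; your observation that normality is inessential is also correct.
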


\begin{proof}
Note first that $\Gamma_0$ is of finite index so it is a recurrent set. Consider the induced random walk on the finite group $\Gamma/\Gamma_0$. It is an irreducible Markov chain. Let $\pi$ be the stationary measure on $\Gamma/\Gamma_0$.

For $n\geq 0$, let $\tau_n=\f(n+1)-\f(n)$. Then $\tau_n$ is a random variable whose law is the law of the first return time to $\Gamma_0$. The expectation of $\tau_n$ is equal to $C:=\frac{1}{\pi(e\Gamma_0)}$.
Furthermore, the $\tau_n$ are independent. By the Law of Large Numbers, we have almost surely $\lim\limits_{n\to +\infty} \frac{1}{n}\sum\limits_{k=0}^{n-1}\tau_n=C$. In other words, $\frac{\f(n)}{n}\to C$.
\end{proof}

\begin{remark}
 Let $d'$ be the CAT(0) metric on $X$ and fix a $\G$-action that is essential and non-elementary. Recall that $d$ and $d'$ are quasi-isometric. So $\mu$ has finite first moment with respect to $d$ if and only if it has finite first moment with respect to $d'$. Theorem  \ref{theorem pos drift} then also shows that  if $\mu$ has finite first moment then the drift with respect to either metric is positive.
\end{remark}

\section{Random Walks and the Visual Boundary}\label{RW on visual}\label{RW and Visual}

\subsection{Convergence to the Visual Boundary}

In this section, we are interested in the almost sure convergence to the visual boundary. Karlsson and Margulis showed that  if $\mu$ has finite first moment and if the drift is positive, then almost surely there is an $\xi\in\bd X$ such that $Z_n o$ converges to $\xi$ \cite{KarlssonMargulis}.  
We aim to improve on this by getting rid of these conditions. An important tool in our proof will be the notion of a squeezing point which was developed in Section \ref{Squeezing}.

\begin{theorem}\label{thm:cvgxi}
There exists a map $\xi:B\to \bd X$ such that, for all $o\in X$, almost surely  $Z_n(\omega) o$ converges to $\xi(\omega)$. Furthermore $\xi(\omega)$ is almost surely a squeezing point.
\end{theorem}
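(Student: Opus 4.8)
The plan is to deduce the convergence in the visual boundary from the already-established convergence in the Roller boundary (Theorem \ref{cvg}) together with the contracting properties of squeezing points collected in Lemma \ref{lem:specialvisual}. Concretely, Theorem \ref{cvg} gives a $\G$-equivariant map $\eta\colon B\to\partial X$ whose essential image lies in $\partial_r X$, and almost surely $Z_n(\omega)o\to\eta(\omega)$ in the Roller boundary. By Lemma \ref{lem:specialvisual}, \emph{if} $\eta(\omega)$ happens to be a squeezing point, then $Q(\eta(\omega))$ is a single visual boundary point $\xi(\omega)$, and any sequence of vertices converging to $\eta(\omega)$ in the Roller boundary — in particular $(Z_n(\omega)o)$ — converges to $\xi(\omega)$ in the visual boundary. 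So the entire theorem reduces to the single claim: \emph{for almost every $b\in B$, the regular point $\eta(b)$ is squeezing}. Granting that, the map $\xi$ is defined as the composition $b\mapsto\eta(b)\mapsto Q(\eta(b))=\{\xi(b)\}$, i.e. $\xi=\mathrm{center}\circ Q\circ\eta$, which is measurable by Corollary \ref{cor:Q(a)}; the convergence statement and the "squeezing point" conclusion are then immediate from Lemma \ref{lem:specialvisual} and Lemma \ref{lem: visaula and roller squeezing bijection}.

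To prove that $\eta(b)$ is almost surely squeezing I would argue as in the proof of regularity in \cite{Fernos}, but tracking the extra metric control. First reduce to $X$ irreducible: a squeezing point of a product is by definition one that is squeezing in each factor, and after passing to the finite-index subgroup $\G_0$ preserving the factors (using Lemma \ref{Poissonfi} to keep the same Poisson boundary and \cite[Lemma 2.13]{CFI} to keep each factor action essential and non-elementary), it suffices to treat each irreducible factor separately. So assume $X$ irreducible. Now I want to produce, for a.e. $\omega$, a point $x\in X$, a radius $r>0$, and infinitely many pairs of super strongly separated half-spaces $h\subset k$ at distance exactly $r$ with $\eta(\omega)\in h\cap k$ and $x\in h^*\cap k^*$. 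The natural source of such pairs is the double skewering / contracting element machinery of Caprace--Sageev combined with the fact that $\eta$ is regular: by Theorem \ref{cvg} the random walk converges to the regular point $\eta(\omega)$, and I expect to use that the two-sided random walk $(\eta_-(\omega_-),\eta(\omega))$ is almost surely a pair of distinct regular points, so by Proposition \ref{biinfinitechain} there is a bi-infinite chain of pairwise strongly separated half-spaces $(s_n)_{n\in\Z}$ separating them, with $s_n\in U_{\eta(\omega)}$ for $n\ge 0$.

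The key quantitative step — and the main obstacle — is upgrading such a bi-infinite chain of pairwise strongly separated half-spaces into one where the successive "gaps" have a \emph{bounded} bridge length $r$, i.e. into a \emph{contracting} point in the terminology of the Remark after the definition of squeezing points (contracting $\Rightarrow$ squeezing, since super strong separation of $h\subset l$ follows from $h\subset k\subset l$ pairwise strongly separated). I would obtain this by an ergodic/stationarity argument on the space of pairs of half-spaces in the chain: the length $b(s_n,s_{n+1})$ is, after passing to the boundary pair $(\eta_-,\eta)$ and the bi-infinite chain it determines, governed by a shift-invariant process on the two-sided boundary $B_-\times B$; by metric ergodicity (Theorem \ref{IsomErgodic}) and the fact that the distribution of consecutive bridge lengths does not change under the shift, infinitely many of the $b(s_n,s_{n+1})$ must equal some fixed value $r$ (or one passes to a sub-chain realizing a fixed value infinitely often, which still yields pairwise strong separation and hence super strong separation of alternate terms). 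Choosing $x$ to be any vertex on the $\eta_-$-side of $s_0$ then places $x\in h^*\cap k^*$ for all the selected pairs $h=s_{n},k=s_{n+2}$, giving the squeezing condition. I expect the delicate points to be (i) making the "infinitely many pairs at a fixed distance $r$" rigorous — most cleanly via a pigeonhole-plus-ergodicity argument rather than trying to force a single $r$ by hand — and (ii) checking that the selected pairs are genuinely super strongly separated and that a \emph{single} basepoint $x$ works for all of them simultaneously, which is where the fact that the $s_n$ form a nested chain on one side of $x$ is essential.
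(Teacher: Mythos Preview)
Your overall architecture is correct and matches the paper exactly: reduce Theorem~\ref{thm:cvgxi} to the single claim that $\eta(b)$ is almost surely squeezing (this is Proposition~\ref{asspecial} in the paper), and then invoke Lemma~\ref{lem:specialvisual}. The reduction to the irreducible case is also handled as you describe.

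The gap is in your proposed proof that $\eta(b)$ is squeezing. Starting from the bi-infinite chain $(s_n)_{n\in\Z}$ of Proposition~\ref{biinfinitechain} and then trying to control the bridge lengths $b(s_n,s_{n+1})$ a posteriori does not work as stated. The chain produced by Proposition~\ref{biinfinitechain} is not canonical, so there is no well-defined ``shift-invariant process of bridge lengths'' on $B_-\times B$ to which ergodicity could be applied; and even if one fixed a measurable choice of chain, isometric ergodicity (Theorem~\ref{IsomErgodic}) only controls equivariant maps into metric spaces with an isometric $\G$-action --- it says nothing about a sequence of integers attached to a non-canonical chain. In particular nothing prevents the bridge lengths along your chain from tending to infinity, so pigeonhole alone cannot produce a fixed $r$.

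The paper's argument runs in the opposite order: it fixes $r$ \emph{before} looking at the boundary. Since $X$ is irreducible there exists at least one pair of super strongly separated half-spaces; let $r$ be its bridge length and let $\mathcal S$ be the set of all super strongly separated pairs $h\subset k$ at distance exactly $r$. This $\mathcal S$ is non-empty and $\G$-invariant (isometries preserve bridge length), so Lemma~\ref{lem:Sinfinite} (i.e.\ \cite[Lemma~7.8]{Fernos}) applies and gives, for $\check\nu\otimes\nu$-a.e.\ $(\eta_-,\eta_+)$, infinitely many pairs from $\mathcal S$ lying in $U_{\eta_+}\setminus U_{\eta_-}$. A second short isometric-ergodicity argument then shows these pairs can be nested into a descending chain $h_{n+1}\subset k_{n+1}\subset h_n\subset k_n$: if not, there would be a finite set of minimal such half-spaces, giving an equivariant map $B_-\times B\to\frakH^N$, hence a finite $\G$-invariant family of walls, contradicting non-elementarity. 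Any $x\in I(\eta_-,\eta_+)\cap X$ on the $h_0^*$ side then witnesses that $\eta_+$ is squeezing. The point you were missing is that the uniform $r$ comes for free from the $\G$-orbit of a single pair, and Lemma~\ref{lem:Sinfinite} is the tool that turns one pair into infinitely many.
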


We will require:

\begin{proposition}\label{asspecial} 
Almost surely, the point $\eta=\lim\limits_n Z_n o$ is a squeezing point of the Roller boundary.
\end{proposition}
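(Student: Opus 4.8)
The plan is: reduce to the irreducible case, pass to a two‑sided model of the walk so that a backward limit point $\eta^-$ is available, and then use ergodicity of the shift to translate one fixed super strongly separated pair of half‑spaces into infinitely many pairs, at a \emph{fixed} distance, that ``point at'' $\eta$. For the reduction: in a product $X=X_1\times\dots\times X_n$ a point is squeezing precisely when it is squeezing in each factor, so after replacing $\Gamma$ by the finite index subgroup $\Gamma_0$ preserving the factors (whose Furstenberg--Poisson boundary is that of $\Gamma$ by Lemma~\ref{Poissonfi}, and which acts non-elementarily and essentially on each $X_i$) and the walk by its $\Gamma_0$-subsequence --- which still converges to $\eta$ by Theorem~\ref{cvg}, with coordinate projections converging in each $\partial X_i$ --- it suffices to treat $X$ irreducible. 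Assume this, and work in the two-sided Bernoulli model $(\Gamma^{\mathbb{Z}},\mu^{\mathbb{Z}},\theta)$: the shift $\theta$ is ergodic, $Z_n o\to\eta^+:=\eta(\omega)$ as $n\to+\infty$, and the backward ($\check\mu$-)walk converges to some $\eta^-$ as $n\to-\infty$. By Theorem~\ref{eta} applied to $\mu$ and to $\check\mu$, $\eta^\pm\in\partial_r X$ almost surely, and $\eta^+\neq\eta^-$ almost surely (this comes from the proof of \cite[Theorem~7.1]{Fernos}, as already used in the proof of Proposition~\ref{UniqueDirac}). Finally note $\eta^\pm(\theta^j\omega)=Z_j^{-1}\eta^\pm$, and that $\eta^+$ is measurable with respect to $(g_i)_{i\geq 1}$ while $\eta^-$ is measurable with respect to $(g_i)_{i\leq 0}$.

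Now the main construction. Since $X$ is irreducible with non-elementary essential automorphism group, fix a super strongly separated pair $h_\star\subsetneq k_\star$ and set $r:=b(h_\star,k_\star)$. Essentiality and the Flipping Lemma~\ref{flip} show that no half-space is $\nu$- or $\nu_-$-null: the $\nu$-null sets form a $\Gamma$-invariant ideal (as $\mu*\nu=\nu$ with $\mu$ admissible), so $\nu(h)=0$ forces $\nu(gh)=0$ for all $g$, and flipping some $g$ with $h^*\subseteq gh$ gives $\nu(h^*)=0$, contradicting $\nu(h)+\nu(h^*)=1$; hence $\nu(h_\star)>0$ and $\nu_-(k_\star^*)>0$. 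Consider the event $\mathcal{G}_j=\{\eta^+(\theta^j\omega)\in h_\star\}\cap\{\eta^-(\theta^j\omega)\in k_\star^*\}$. By the independence noted above, $\mathbb{P}(\mathcal{G}_0)=\nu(h_\star)\,\nu_-(k_\star^*)>0$, so by ergodicity of $\theta$ the set $G=\{j:\theta^j\omega\in\mathcal{G}_0\}$ is almost surely infinite. For $j\in G$ put $H_j=Z_j h_\star$ and $K_j=Z_j k_\star$; since $Z_j$ acts by an automorphism, $(H_j,K_j)$ is super strongly separated at distance \emph{exactly} $r$, and the defining conditions of $\mathcal{G}_j$ rewrite as $\eta^+\in H_j\subseteq K_j$ and $\eta^-\in K_j^*\subseteq H_j^*$. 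Thus we have produced infinitely many super strongly separated pairs, all at distance $r$, each separating $\eta^+$ (inside) from $\eta^-$ (outside).

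It remains to replace the boundary point $\eta^-$ by a genuine vertex $x\in X$ lying in $H_j^*\cap K_j^*$ for infinitely many $j\in G$; this last step is the main obstacle. Any two of the half-spaces $K_j$ ($j\in G$) both contain $\eta^+$ and miss $\eta^-$, hence are nested or transverse, and as $\dim X<\infty$ there is no infinite pairwise transverse family, so Ramsey's theorem yields an infinite $\subseteq$-monotone subsequence $(K_{j_k})_k$. If it is decreasing, any vertex $x\in K_{j_1}^*$ lies in every $K_{j_k}^*\subseteq H_{j_k}^*$, and $\eta=\eta^+$ is squeezing with radius $r$ and basepoint $x$. The problematic case is when $(K_{j_k})$ is increasing, i.e.\ the outer half-spaces swell so as to exhaust everything outside the end $\eta^-$; one then uses that $\eta^-$ is regular: fix a descending chain $(t_m)$ of pairwise strongly separated half-spaces with $\bigcap_m t_m=\{\eta^-\}$ (Proposition~\ref{Rank1Char}, Lemma~\ref{singleton}); since $\eta^-\in K_{j_k}^*$, Lemma~\ref{lemma Annoying cases} forces $t_m\subseteq K_{j_k}^*$ for $m$ large, and a counting argument exploiting that all the pairs $(H_{j_k},K_{j_k})$ have the \emph{same} combinatorial width $r$ (so the complements cannot escape arbitrarily fast, cf.\ Proposition~\ref{BridgeGeod}) should produce a single $t_m$ lying in infinitely many $K_{j_k}^*$, any vertex of which works. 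Making this bookkeeping airtight --- or, alternatively, building enough extra buffer separation into the fixed pair $h_\star\subsetneq k_\star$ and the event $\mathcal{G}_j$ to force the $K_{j_k}$ to be eventually nested decreasing --- is the delicate point; everything else (ergodicity of the Bernoulli shift, existence of super strongly separated pairs in irreducible non-elementary essential complexes, positivity of $\nu,\nu_-$ on half-spaces) is routine.
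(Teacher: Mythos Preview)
Your proof has a genuine gap that you yourself identify but do not close. After producing (via ergodicity of the two-sided shift) infinitely many indices $j\in G$ with pairs $(H_j,K_j)=(Z_jh_\star,Z_jk_\star)$ super strongly separated at fixed distance $r$, $\eta^+\in H_j$ and $\eta^-\in K_j^*$, you must exhibit a single vertex $x\in X$ lying in infinitely many $K_j^*$. In the ``increasing'' branch of your Ramsey dichotomy, $(K_{j_k}^*)$ is a \emph{decreasing} chain of half-spaces containing $\eta^-$, and nothing in your setup forbids $\bigcap_k K_{j_k}^*=\{\eta^-\}$. The observation that $t_m\subset K_{j_k}^*$ for $m\ge N_k$ does not help when the $N_k$ are unbounded; and the fixed bridge-length $r$ controls only the relative position of $H_{j_k}$ and $K_{j_k}$ (Proposition~\ref{BridgeGeod} locates intervals $\I(x,y)$ near the bridge, not the bridge near a fixed basepoint), so it does not prevent $K_{j_k}^*$ from shrinking to $\eta^-$. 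Neither the ``counting argument'' nor the ``extra buffer'' is actually carried out. A smaller point: you also do not argue that the pairs $(H_j,K_j)$, being translates of a single pair, are \emph{distinct}, which the definition of squeezing requires.

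The paper proceeds differently and avoids this obstacle. It first invokes Lemma~\ref{lem:Sinfinite} to obtain, for $\check\nu\otimes\nu$-almost every $(\eta^-,\eta^+)$, an infinite set $\mathcal S(\eta^+,\eta^-)$ of such pairs --- this is essentially what your shift argument reproves. The decisive extra step is to show that inside $\mathcal S(\eta^+,\eta^-)$ one can always pass to a strictly smaller pair: if not, the inclusion-minimal first coordinates form a finite pairwise-transverse family, giving a $\Gamma$-equivariant measurable map $B_-\times B\to\frakH^N$; by \emph{isometric} ergodicity of $B_-\times B$ (Theorem~\ref{IsomErgodic}) this map is essentially constant, producing a finite $\Gamma$-invariant family of walls and contradicting non-elementarity. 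Once every pair admits a strictly smaller one, an inductive choice yields a nested chain $h_{n+1}\subset k_{n+1}\subset h_n\subset k_n$, and any $x\in\I(\eta^+,\eta^-)\cap X$ with $x\in k_0^*$ witnesses squeezing. The moral: ordinary shift ergodicity is not enough here; it is the stronger isometric ergodicity of $B_-\times B$ that rules out your ``increasing'' case, and that is exactly the ingredient your argument is missing.
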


The proof will use the following useful lemma, proved in \cite[Lemma 7.8]{Fernos}. 

\begin{lemma}\label{lem:Sinfinite}
Let $X$ be irreducible with an essential and nonelementary action of $\G$. Let $\nu$ (resp. $\check\nu$) be the stationary measure on $\overline X$ for the measure $\mu$ (resp. $\check \mu$). 

Let $\mathcal S\subset \frakH^2$ be a non-empty, $\Gamma$-invariant set, with $h\subset k$ for every $(h,k)\in \mathcal S$. Then for $\check\nu\otimes \nu$-almost every $(\eta_-,\eta_+)$, the set of $(h,k)\in\mathcal S$ with $\{h,k\}\subset U_{\eta_+}\setminus U_{\eta_-}$ is infinite.
\end{lemma}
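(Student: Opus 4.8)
The plan is to prove that, setting $\mathcal S_{\alpha,\beta}:=\{(h,k)\in\mathcal S:\beta\in h \text{ and }\alpha\in k^*\}$, one has $\#\mathcal S_{\eta_-,\eta_+}=+\infty$ almost surely. The first step is a small reduction: since $h\subset k$ and $U_{\eta_+},U_{\eta_-}$ are consistent, the condition ``$\{h,k\}\subset U_{\eta_+}\setminus U_{\eta_-}$'' is equivalent to ``$h\in U_{\eta_+}$ and $k^*\in U_{\eta_-}$'', so the set considered in the statement is exactly $\mathcal S_{\eta_-,\eta_+}$. The event $\{\#\mathcal S_{\eta_-,\eta_+}=+\infty\}$ is $\Gamma$-invariant, because $g$ carries $\mathcal S_{\alpha,\beta}$ bijectively onto $\mathcal S_{g\alpha,g\beta}$ (using $\Gamma$-invariance of $\mathcal S$); and $\check\nu\otimes\nu$ is the pushforward of the product of the boundary measures on $B_-\times B$ under $(\eta_-,\eta)$. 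Hence, by the ergodicity of the diagonal $\Gamma$-action on $B_-\times B$ — a special case of Theorem~\ref{IsomErgodic} — this event is null or conull, and I would argue by contradiction that it cannot be null; that is, I assume $\#\mathcal S_{\eta_-,\eta_+}<\infty$ almost surely.

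Under this assumption the set-valued map $\Phi\colon(b_-,b)\mapsto\mathcal S_{\eta_-(b_-),\eta(b)}$ is an almost everywhere defined $\Gamma$-equivariant measurable map from $B_-\times B$ to the set $P_{\mathrm{fin}}(\frakH^2)$ of finite subsets of $\frakH^2$. Since $X$ is second countable, $\frakH$ is countable, so $P_{\mathrm{fin}}(\frakH^2)$ is a countable set; equipped with the discrete metric it is separable and $\Gamma$ acts on it by isometries. Theorem~\ref{IsomErgodic} then forces $\Phi$ to be essentially constant, equal to some finite set $F\subset\mathcal S$ with $gF=F$ for all $g\in\Gamma$. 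If $F\ne\varnothing$, pick $(h,k)\in F$ with $h\subsetneq k$; since the action is essential, every half-space — in particular every $\Gamma$-translate of $k$ — is essential. Iterating the Double Skewering Lemma~\ref{Double Skewering Lemma} (first to $h\subsetneq k$ to obtain $g_1$ with $g_1k\subsetneq h\subsetneq k$, then to $g_1k\subsetneq k$ to obtain $g_2$ with $g_2k\subsetneq g_1k\subsetneq k$, and so on) produces an infinite strictly descending chain $k\supsetneq g_1k\supsetneq g_2k\supsetneq\cdots$ of pairwise distinct $\Gamma$-translates of $k$; but all of these lie in the finite $\Gamma$-invariant family $\bigcup_{(a,b)\in F}\{a,a^*,b,b^*\}$, a contradiction.

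It remains to exclude $F=\varnothing$, i.e.\ $\mathcal S_{\eta_-,\eta_+}=\varnothing$ almost surely. Fix any $(h_0,k_0)\in\mathcal S$. On the product event $\{\eta_+\in h_0\}\cap\{\eta_-\in k_0^*\}$ the pair $(h_0,k_0)$ lies in $\mathcal S_{\eta_-,\eta_+}$, and since $\check\nu\otimes\nu$ is a product measure this event has measure $\nu(h_0)\,\check\nu(k_0^*)$; so it suffices to prove $0<\nu(h)<1$ for every half-space $h$, and likewise for $\check\nu$. Suppose $\nu(h)=1$. Stationarity gives $1=\nu(h)=\int_\Gamma\nu(g^{-1}h)\,d\mu(g)$, hence $\nu(g^{-1}h)=1$ for $\mu$-almost every $g$, hence for every $g$ in the sub-semigroup generated by $\Supp\mu$, which by admissibility is all of $\Gamma$; thus $\nu\big(\bigcap_{g\in\Gamma}gh\big)=1$. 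But the Flipping Lemma~\ref{flip}, applied to the essential half-space $h^*$, yields $g_0\in\Gamma$ with $h\subset g_0h^*$, equivalently $g_0h\subset h^*$, so $h\cap g_0h=\varnothing$ and therefore $\bigcap_{g\in\Gamma}gh=\varnothing$, which is impossible for a probability measure. Hence $\nu(h)<1$; applying this to $h^*$ gives $\nu(h)>0$, and the same reasoning applies to $\check\nu$ (the stationary measure for the admissible measure $\check\mu$). Thus the event above has positive $\check\nu\otimes\nu$-measure, contradicting $\mathcal S_{\eta_-,\eta_+}=\varnothing$ almost surely. This completes the contradiction, so $\#\mathcal S_{\eta_-,\eta_+}=+\infty$ almost surely.

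I expect the main obstacle to be the middle step: checking measurability of the set-valued map $\Phi$ and putting it into the exact shape — target a separable metric space carrying an isometric $\Gamma$-action — needed to invoke Theorem~\ref{IsomErgodic}, and then making the iteration of double skewering fully rigorous so that it genuinely produces infinitely many distinct translates trapped inside the finite invariant family. The two ``degenerate'' cases ($F=\varnothing$, and pairs of $\mathcal S$ that fail to be properly nested) require the separate small arguments indicated above.
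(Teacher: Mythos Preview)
Your proof is correct. Note that the paper does not actually supply its own proof of this lemma: it simply cites \cite[Lemma~7.8]{Fernos}. Your argument is a self-contained implementation of the same circle of ideas --- isometric ergodicity of $B_-\times B$ to force a putative finite value of $\Phi$ to be a $\Gamma$-invariant finite set of half-space pairs, and then flipping/double-skewering to show that no such nonempty finite invariant set exists, together with the standard ``$0<\nu(h)<1$ for every half-space'' argument to rule out $F=\varnothing$.

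Two small points are worth tightening. First, in ``$h\subset g_0 h^*$, equivalently $g_0 h\subset h^*$'' you mean $g_0^{-1}h\subset h^*$; this is harmless, since you have already shown $\nu(gh)=1$ for \emph{every} $g\in\Gamma$, so in particular for $g=g_0^{-1}$. Second, the degenerate case where every pair in the constant set $F$ has $h=k$ is only alluded to, not written out. It is easy: if $\{h:(h,h)\in F\}$ is a finite $\Gamma$-invariant set of (essential) half-spaces, pick one such $h$; the Flipping Lemma gives $g$ with $gh^*\subsetneq h$, and then the Double Skewering Lemma applied to $gh^*\subsetneq h$ yields $g'$ with $g'h\subsetneq gh^*\subsetneq h$, so $(g')^n h$ is an infinite strictly descending chain of $\Gamma$-translates of $h$, contradicting finiteness. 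With these two cosmetic fixes your argument is complete.
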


\begin{proof}[Proof of Proposition \ref{asspecial}]
It suffices to treat the case of an irreducible complex, so we assume that $X$ is irreducible. Then we know that there exists some pair of super strongly separated
 half-spaces. Fix such a pair, and let $r$ be the distance between these two half-spaces. 

Let $\mathcal S$ be the set of pairs $\{h,k\}$ where $h\subset k$ are super strongly separated half-spaces at distance $r$. Then $\mathcal S$ is a non-empty, $\Gamma$-invariant collection of half-spaces. By Lemma \ref{lem:Sinfinite}, for $\check{\nu}\otimes \nu$-almost every $(\eta',\eta)$,  the set of pairs $(h,k)\in \mathcal S$ such that $h$ and $k$ contain $\eta$ but not $\eta'$ is infinite. Let $\mathcal S(\eta,\eta')$ be the set of all such $(h,k)$. 

We claim that for every $(h,k)\in\mathcal S(\eta,\eta')$, there exists $(h',k')\in\mathcal S(\eta,\eta')$ such that $k'\subset h$ (so that we have $h'\subset k'\subset h\subset k$). Indeed, if it were not the case, then this would mean that there is a collection of half-spaces $h_i$ such that there is some $k_i$ with $(h_i,k_i)\in \mathcal S(\eta,\eta')$ and which are minimal (for inclusion) with this property. By minimality, the half-spaces $h_i$ are all transverse, so there can only be at most $N$ of them, where $N$ is the dimension of $X$. This means that we get a map from $B_-\times B$ to the countable set $\frakH^N$. By isometric ergodicity (Theorem \ref{IsomErgodic}), this map must be essentially constant. Hence there is a finite family of walls which is $\Gamma$-invariant, contradicting the assumptions on the action.

Now fix a generic pair $(\eta,\eta')$ as above. Fix $(h_0,k_0)\in \mathcal S(\eta,\eta')$ and extend as above to a decreasing sequence $h_{n+1}\subset k_{n+1}\subset h_{n}\subset k_{n}$. Letting $x\in I(\eta,\eta')\cap X$ such that $x\in h_0^*\cap k_0^*$ shows that $\eta$ is squeezing.

\end{proof}

\begin{proof}[Proof of Theorem \ref{thm:cvgxi}]
We know by Proposition \ref{asspecial} that $Z_no$ converges to a squeezing point of the Roller boundary. So by Lemma \ref{lem:specialvisual} we deduce that there is some $\xi\in\bd X$ such that $Z_n o$ converges to $\xi$. 
\end{proof}

If $X=X_1\times\dots\times X_n$ is reducible, the situation is  different. In that case there is no point $\eta\in\partial X$ such that the set $Q(\eta)$ is reduced to a singleton. Indeed, if $\eta_i\in\bdr X_i$ then $Q((\eta_1,\dots,\eta_n))$ is a sector in the sphere $S^{n-1}$. A point in this sector can be represented by a half-line generated by some vector $(\lambda_1,\dots,\lambda_n)$, where $\lambda_i>0$.

\begin{theorem}\label{cvg:reducible}
Let $\G$ act on $X=X_1\times\dots\times X_n$ non-elementarily, essentially, and preserving each irreducible factor $X_i$. Assume also that $\mu$ has finite first moment. Let $\lambda_i$ be the drift for the action of $\G$ on $X_i$, and let $\eta_i$ be the limit of $Z_n o$ in the factor $X_i$.

Then almost surely the limit of $Z_n o$ in $X$ is the point of $Q(\eta_1,\dots,\eta_n)$ corresponding to the vector $(\lambda_1,\dots,\lambda_n)$.
\end{theorem}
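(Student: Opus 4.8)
The plan is to reduce the reducible case to the irreducible one factor by factor, using the already-established convergence in each $X_i$, and then identify the limiting direction in the Euclidean sector $Q(\eta_1,\dots,\eta_n)$ via the drifts. Concretely, write $Z_n o = (Z_n^{(1)} o_1, \dots, Z_n^{(n)} o_n)$ under the product decomposition $\~X = \~X_1 \times \cdots \times \~X_n$, where $o = (o_1,\dots,o_n)$. By Theorem \ref{thm:cvgxi} applied to each irreducible factor (using that the restricted action on each $X_i$ is still non-elementary and essential by the lemma following the Double Skewering Lemma), the sequence $Z_n^{(i)} o_i$ converges in the Roller boundary $\~X_i$ to a squeezing point $\eta_i$, hence converges in the visual boundary $\partial_\eye X_i$ to the point $\xi_i$ with $Q(\eta_i) = \{\xi_i\}$. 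Moreover, by Corollary \ref{LambdaLimit} applied in $X_i$, the quantity $\frac1n d_i(Z_n^{(i)} o_i, o_i)$ converges almost surely to $\lambda_i$, where $d_i$ is the combinatorial (or, up to quasi-isometry, the CAT(0)) metric on $X_i$.

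Next I would pass to CAT(0) geodesics. Since $X$ is CAT(0) with the $\ell^2$-metric $d'$, and $d'$ on a product splits as $d'(x,y)^2 = \sum_i d_i'(x_i,y_i)^2$, the geodesic ray in $X$ from $o$ toward a visual boundary point of $Q(\eta_1,\dots,\eta_n)$ is, by the product structure of CAT(0) geodesics, of the form $t \mapsto (\gamma_1(c_1 t), \dots, \gamma_n(c_n t))$ where $\gamma_i$ is the unit-speed CAT(0) geodesic ray in $X_i$ from $o_i$ to $\xi_i$ and $(c_1,\dots,c_n)$ is a unit vector with $c_i \geq 0$; the direction of this ray in the sphere $S^{n-1}$ is exactly the class of $(c_1,\dots,c_n)$. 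So the claim amounts to showing that $Z_n o$ is asymptotic to the ray with direction proportional to $(\lambda_1,\dots,\lambda_n)$. The key computation is that the visual limit of a sequence $p_n = (p_n^{(1)}, \dots, p_n^{(n)})$ with $p_n^{(i)} \to \xi_i$ visually in $X_i$ and $d_i'(o_i, p_n^{(i)})/d'(o,p_n) \to \mu_i$ for some unit vector $(\mu_i)$ is precisely the point of the sector $Q(\eta_1,\dots,\eta_n)$ with direction $(\mu_1,\dots,\mu_n)$; this follows from the first-variation / angle formula in CAT(0) products, comparing $p_n$ to points on the candidate ray and checking that the comparison angles at $o$ tend to zero.

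It remains to compute the ratios $d_i'(o_i, Z_n^{(i)} o_i)/\big(\sum_j d_j'(o_j, Z_n^{(j)} o_j)^2\big)^{1/2}$. Dividing numerator and denominator by $n$ and using the almost-sure limits $\frac1n d_i'(o_i, Z_n^{(i)} o_i) \to \lambda_i$ (valid once $\mu$ has finite first moment, by Theorem \ref{theorem pos drift} and the remark that finite first moment holds for $d$ iff for $d'$), this ratio converges almost surely to $\lambda_i / \big(\sum_j \lambda_j^2\big)^{1/2}$, i.e.\ the direction $(\lambda_1,\dots,\lambda_n)$ normalized. Feeding this into the previous paragraph gives that $Z_n o$ converges visually to the point of $Q(\eta_1,\dots,\eta_n)$ corresponding to $(\lambda_1,\dots,\lambda_n)$, as desired.

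I expect the main obstacle to be the second step: making precise and rigorous the claim that in a CAT(0) product, a sequence whose factor-projections converge visually with prescribed distance-ratios converges to the point of the corresponding boundary sector in the prescribed direction. One must be careful that the $\lambda_i$ could be zero for some factors (if the drift vanishes in that factor) — but Theorem \ref{theorem pos drift} guarantees $\lambda_i > 0$ for each $i$ since the action on each $X_i$ is essential and non-elementary, so the direction vector has all positive coordinates and genuinely lies in the interior of the sector $Q(\eta_1,\dots,\eta_n)$, which is exactly where the parametrization by $(\lambda_1,\dots,\lambda_n)$ makes sense. The remaining work — the CAT(0) comparison-angle estimates — is routine but must be set up carefully so that the error terms are controlled uniformly, using convexity of the metric and the fact that each $I(o_i,\eta_i)\cap X_i$ contains the relevant geodesic ray (Lemma \ref{lem:geodsector}).
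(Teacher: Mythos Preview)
Your approach is correct and reaches the right conclusion, but it differs from the paper's. The paper invokes the Karlsson--Margulis sublinear tracking theorem directly: since the drift on each factor is positive (Theorem~\ref{theorem pos drift}), \cite{KarlssonMargulis} yields in each $X_i$ a CAT(0) geodesic ray $g_i$ with $\tfrac1n\, d_i'(g_i(\lambda_i n), Z_n o_i)\to 0$. One then simply observes that $g(t)=(g_1(\lambda_1 t),\dots,g_n(\lambda_n t))$ is a CAT(0) geodesic ray in the product, and the Pythagorean relation $d'^2=\sum_i d_i'^2$ immediately gives $\tfrac1n\, d'(g(n),Z_n o)\to 0$; sublinear tracking along $g$ forces visual convergence to its endpoint. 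Your route avoids Karlsson--Margulis entirely, using instead the paper's own Theorem~\ref{thm:cvgxi} for visual convergence in each factor together with the drift ratios and the explicit product form of CAT(0) geodesics. The step you flag as the main obstacle is actually lighter than you suggest: the geodesic from $o$ to $p_n$ in a product is literally $t\mapsto(\gamma_1^{(n)}(c_1^{(n)}t),\dots,\gamma_n^{(n)}(c_n^{(n)}t))$ with $c_i^{(n)}=d_i'(o_i,p_n^{(i)})/d'(o,p_n)$, so uniform-on-compacts convergence of each $\gamma_i^{(n)}$ together with $c_i^{(n)}\to\mu_i$ gives uniform-on-compacts convergence of the product geodesic directly, with no comparison-angle estimates needed. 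Two small points to tighten: the almost-sure limit $\tfrac1n\, d_i'(o_i,Z_n o_i)\to\lambda_i$ comes from Kingman's subadditive theorem for the CAT(0) metric, not from Corollary~\ref{LambdaLimit}; and you must commit to the CAT(0) drifts throughout, since the combinatorial-to-CAT(0) quasi-isometry does not preserve the ratios $\lambda_i/(\sum_j\lambda_j^2)^{1/2}$. The paper's argument is shorter because sublinear tracking packages exactly what is needed to pass to the product in one line; yours is more self-contained in that it does not appeal to \cite{KarlssonMargulis}.
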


\begin{proof}
Let $d_i$ be the CAT(0) metric on the factor $X_i$. We have $d'=\sqrt{\sum\limits_{i=1}^n d_i^2}$. 
Note that the measure $\mu$ still has finite first moment for the action on each factor. Let $\Lambda=\sqrt{\sum\limits_{i=1}^n\lambda_i^2}$ be the drift on $X$.
 Using \cite{KarlssonMargulis}, we get ``sublinear tracking": almost surely, there exists a geodesic ray (for the CAT(0) metric) $g_i$ in $X_i$ such that $\frac{d(g(\lambda _i n),Z_n o_i)}n$ tends to $0$.

 Now consider the quadrant defined by the geodesic rays $g_1,\dots,g_n$. A point in this quadrant is of the form $(g_1(t_1),\dots,g_n(t_n))$ with $(t_1,\dots,t_n)\in(\R^+)^n$. Let $g(t)=(g_1(\lambda_1 t),\dots,g_n(\lambda_n t))$. Then we have  that
  \begin{align*}
 d'(g(t),g(s))&= \left(\sum_{i=1}^n d_i(g_i(\lambda_i t),g_i(\lambda_i s))^2\right)^{1/2}\\
 &=\left(\sum_{i=1}^n \lambda_i^2 (t-s)^2\right)^{1/2}\\
 &=\Lambda |t-s|
\end{align*} 

  In other words, 
$g$ is a geodesic ray, travelled at speed $\Lambda= \sum \lambda_i^2$. Its endpoint is exactly the point of $Q(\eta_1,\dots,\eta_n)$ corresponding to the vector $(\lambda_1,\dots,\lambda_n)$. Furthermore we see easily that 
$$\lim\limits_{n\to +\infty} \frac{d'
(g(t),Z_n o)}{n}=0.$$
It follows that $Z_n o $ converges to the point of $\bd X$ corresponding to $g$.
\end{proof}

\subsection{Uniqueness of the Stationary Measure}
We first note that, under our assumptions, the visual boundary is a Polish space. Indeed the visual boundary is obtained as an inductive limit of balls centered at a fixed origin $o$ \cite[II.8.5]{Bridson_Haefliger}. A complete metric can be described as follows: the distance between two fixed geodesic rays $\rho$ and $\rho'$ starting from $o$ is $\delta(\rho,\rho')=\sum_{n=1}^{+\infty}2^{-n}d(\rho(n),\rho'(n))$.  This allows us to use 
 Corollary \ref{cor:uniquestationmeas} to reduce the problem of uniqueness of the stationary measure on $\partial_\eye X$ to the uniqueness of a $\G$-equivariant measurable $B \to \Prob(\partial_\eye X)$. 
 

\begin{theorem}\label{thm:uniquestatbd}
Assume that $\G$ is a group with a non-elementary, essential action on an irreducible complex $X$. Then there is a unique stationary measure on $\partial_\eye X$. 
\end{theorem}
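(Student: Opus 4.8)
The plan is to apply Corollary~\ref{cor:uniquestationmeas}: since $\partial_\eye X$ is Polish (with the complete metric $\delta$ recalled above) and $\G$ acts on it continuously, it suffices to show that there is a unique $\G$-equivariant measurable map $\f\colon B\to\Prob(\partial_\eye X)$. The candidate comes from the Roller boundary. By Theorem~\ref{eta} there is a $\G$-equivariant map $\eta\colon B\to\partial X$ with $\eta(b)\in\partial_r X$ almost surely, and by Proposition~\ref{asspecial} the point $\eta(b)$ is in fact a squeezing point a.s.; by Lemma~\ref{lem:specialvisual} the set $Q(\eta(b))$ is then a single point $\xi(b)\in\partial_\eye X$, and $b\mapsto\xi(b)$ is $\G$-equivariant and measurable (it is the composition of $\eta$ with the map of Corollary~\ref{cor:Q(a)}). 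Then $\xi_*\nu$ is a stationary measure on $\partial_\eye X$, so existence is clear; it remains to show that every equivariant measurable $\f$ equals $b\mapsto\delta_{\xi(b)}$.

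For a half-space $h\in\frakH$ write $W(h)=\{\zeta\in\partial_\eye X:h\in T_\zeta\}$, the set of visual boundary points lying strictly beyond $h$. Directly from the definition of $T_\zeta$ one has $W(h)\cap W(h^*)=\varnothing$, $W(g h)=g\,W(h)$ for $g\in\Aut(X)$, and $W(h)\subseteq W(k)$ whenever $h\subseteq k$; moreover $\partial_\eye X=W(h)\sqcup W(h^*)\sqcup\partial\hat h$, where $\partial\hat h$ is the visual boundary of the hyperplane $\hat h$. Given $\f$, set $H(b)=\{h\in\frakH:\f(b)(W(h))>\f(b)(W(h^*))\}$. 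Using the three displayed properties one checks, exactly as in the proof of Proposition~\ref{UniqueDirac}, that $H(b)$ is a consistent total family of half-spaces (dealing with ``balanced'' half-spaces as there), that $b\mapsto H(b)$ is measurable and $\G$-equivariant, and hence that the associated equivariant map $B\to\overline X$ must be the unique such map, i.e.\ $H(b)=U_{\eta(b)}$ for a.e.\ $b$. Now use that $\eta(b)$ is squeezing: by Proposition~\ref{Rank1Char} and the bridge estimate (Proposition~\ref{BridgeGeod}) one gets a pairwise \emph{super} strongly separated descending chain $(h_i)_{i\in\N}\subset U_{\eta(b)}=H(b)$ with $\bigcap_i h_i=\{\eta(b)\}$, hence $\bigcap_i W(h_i)=\{\xi(b)\}$ by Lemma~\ref{lem:specialvisual}; and since super strongly separated hyperplanes have bounded CAT(0) bridge, the sets $\partial\hat h_i$ are pairwise disjoint, so $\f(b)(\partial\hat h_i)\to0$. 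Combining $\f(b)(W(h_i))>\f(b)(W(h_i^*))$, the identity $\f(b)(W(h_i))+\f(b)(W(h_i^*))=1-\f(b)(\partial\hat h_i)$, and $\f(b)(W(h_i))\downarrow\f(b)(\{\xi(b)\})$, one finds $\f(b)(\{\xi(b)\})\geq\tfrac12$. Finally the function $b\mapsto\f(b)(\{\xi(b)\})$ is $\G$-invariant, so a.e.\ equal to a constant $\alpha\in[\tfrac12,1]$; if $\alpha<1$ the normalized remainder $\f'(b)=(1-\alpha)^{-1}\bigl(\f(b)-\alpha\,\delta_{\xi(b)}\bigr)$ is again equivariant, and the argument just given applied to $\f'$ yields $\f'(b)(\{\xi(b)\})\geq\tfrac12>0$, a contradiction. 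Hence $\alpha=1$ and $\f(b)=\delta_{\xi(b)}$ a.s., which by Corollary~\ref{cor:uniquestationmeas} completes the proof.

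The step I expect to be the main obstacle is the passage $H(b)=U_{\eta(b)}$ together with the two inputs $\bigcap_i W(h_i)=\{\xi(b)\}$ and $\f(b)(\partial\hat h_i)\to0$. The first requires adapting to the present setting the handling of balanced half-spaces and of the potentially non-singleton intersection $\bigcap_{h\in H(b)}h$ from the proof of Proposition~\ref{UniqueDirac}, using here that $H(b)$ is defined by the comparison $\f(b)(W(h))>\f(b)(W(h^*))$ (which is what restores \emph{totality}, despite the three-way partition $W(h)\sqcup W(h^*)\sqcup\partial\hat h$). The second packages exactly the extra strength of the \emph{squeezing} hypothesis over mere regularity: one must know, on the one hand, that a CAT(0) geodesic ray which crosses every wall of a super strongly separated descending chain has a unique endpoint in $\partial_\eye X$ — this is the content extracted from Proposition~\ref{BridgeGeod} in the proof of Lemma~\ref{lem:specialvisual} — and, on the other hand, that super strongly separated hyperplanes cannot share a point of the visual boundary, which follows from the finiteness of their CAT(0) bridge. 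The remaining bookkeeping (measurability, consistency, the bootstrap) is routine and parallels Proposition~\ref{UniqueDirac}.
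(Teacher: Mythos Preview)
Your approach is genuinely different from the paper's, and the gap you flag as ``the main obstacle'' is real and, as written, not closed.

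\textbf{What the paper does.} The paper does not attempt to transport the heavy-half-space argument of Proposition~\ref{UniqueDirac} to the visual boundary. Instead, it brings in the second boundary $B_-$: given an equivariant $\omega_+\mapsto\nu_{\omega_+}\in\Prob(\bd X)$ that is not a.s.\ $\delta_{\xi_+(\omega_+)}$, one first strips the atom at $\xi_+(\omega_+)$ and observes $\nu_{\omega_+}(\{\xi_-(\omega_-)\})=0$ for a.e.\ $(\omega_-,\omega_+)$. Then Lemma~\ref{Visual Projection to X} supplies, for each pair of distinct squeezing points $\xi_-,\xi_+$, a map $\f_{\xi_-,\xi_+}\colon \bd X\setminus\{\xi_-,\xi_+\}\to X$ (built by minimising the $\ell^2$-horofunction $b^{(2)}_\eta$ on $\I(\xi_-,\xi_+)\cap X$). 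Pushing $\nu_{\omega_+}$ forward yields an equivariant map $B_-\times B_+\to\Prob(X)$, and double isometric ergodicity forces a $\G$-fixed probability on $X$, contradicting non-elementariness. No analogue of your map $b\mapsto H(b)$ appears.

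\textbf{Where your argument breaks.} The step ``the associated equivariant map $B\to\overline X$ must be $\eta$, hence $H(b)=U_{\eta(b)}$'' does not follow. Even granting that $H(b)$ is consistent and that one can canonically select a point of $\bigcap_{h\in H(b)}h$ equivariantly (already delicate), uniqueness only yields $H(b)\subset U_{\eta(b)}$, i.e.\ $\eta(b)$ lies in that intersection; it does \emph{not} give the reverse inclusion $U_{\eta(b)}\subset H(b)$, which is exactly what you use to place the chain $(h_i)$ inside $H(b)$. Your parenthetical claim that the comparison $\f(b)(W(h))>\f(b)(W(h^*))$ ``restores totality'' is not correct: because of the third piece $\partial\hat h$ in the partition of $\bd X$, one may well have $\f(b)(W(h))=\f(b)(W(h^*))$ (both strictly less than $\tfrac12$), so neither $h$ nor $h^*$ enters $H(b)$. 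Nothing you have written rules out, for instance, $\f(b)(W(h_i))=\f(b)(W(h_i^*))$ along the whole chain, in which case your lower bound $\f(b)(\{\xi(b)\})\ge\tfrac12$ is not established and the bootstrap never starts. In the Roller case of Proposition~\ref{UniqueDirac} the two-piece partition $h\sqcup h^*$ of $\overline X$ makes ``balanced'' an equality-of-measure condition that can be handled by the machinery of \cite{CFI,Fernos}; the visual three-piece partition is a genuinely different situation and is not covered by that argument.

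\textbf{What does survive.} Several of your auxiliary claims are correct and worth recording: the sets $\partial\hat h_i$ are pairwise disjoint for a pairwise strongly separated chain (hence $\f(b)(\partial\hat h_i)\to 0$), and for a squeezing $\eta(b)$ one does have $\bigcap_i W(h_i)=\{\xi(b)\}$ and $\bigcup_i W(h_i^*)=\bd X\setminus\{\xi(b)\}$. These facts, however, only give $\f(b)(W(h_i))\downarrow\f(b)(\{\xi(b)\})$ and $\f(b)(W(h_i^*))\uparrow 1-\f(b)(\{\xi(b)\})$; without an inequality between the two you cannot conclude anything about $\f(b)(\{\xi(b)\})$. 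The paper's route via Lemma~\ref{Visual Projection to X} sidesteps this entirely.
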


Theorem \ref{thm:uniquestatbd} then follows from Corollary \ref{cor:uniquestationmeas} together with Lemma \ref{lem:uniquemapbd}. 
 To this end, we will need:

\begin{lemma}\label{Visual Projection to X}
 Let $\xi_-, \xi_+\in \partial_\eye X$ be distinct squeezing points. Then there is a map $$\f_{\xi_-,\xi_+}:\partial_\eye X \setminus \{\xi_-, \xi_+\} \to 
 X,$$
 such that for every $g\in\Gamma$, we have $g\f_{\xi_-,\xi_+}(\xi_+)=\f_{g\xi_-,g\xi_+}(g\xi)$.
\end{lemma}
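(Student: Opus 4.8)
The plan is to use the interval structure on the Roller compactification and the fact that squeezing points in $\partial_\eye X$ correspond, via $Q^{-1}$, to squeezing points in $\partial X$. Write $\eta_\pm = Q^{-1}(\xi_\pm)\in\partial_r X$, which are distinct squeezing (hence regular) points of the Roller boundary by Lemma \ref{lem: visaula and roller squeezing bijection}. Given a third point $\xi\in\partial_\eye X\setminus\{\xi_-,\xi_+\}$, I would first produce a companion point in the Roller boundary: pick $\alpha\in \~X_\xi$ (using Lemma \ref{lemTxi}, $\~X_\xi\neq\varnothing$), noting that $\xi\in Q(\alpha)$ by Lemma \ref{ainQxi}. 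Then set
$$\f_{\xi_-,\xi_+}(\xi) := m(\eta_-,\eta_+,\alpha),$$
the median in $\~X$, which is well-defined by the formula $U_m = (U_{\eta_-}\cap U_{\eta_+})\cup(U_{\eta_+}\cap U_\alpha)\cup(U_\alpha\cap U_{\eta_-})$. The first task is to check this median lies in $X$: since $\eta_-$ and $\eta_+$ are regular (being squeezing) and, because $\xi\neq\xi_\pm$ forces $\alpha\neq\eta_\pm$ (here one uses that $Q$ separates squeezing points, Lemma \ref{lem:specialvisual}), Lemma \ref{medianinX} applies directly and gives $m(\eta_-,\eta_+,\alpha)\in X$.

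The second, more delicate task is \emph{well-definedness}: the choice of $\alpha\in\~X_\xi$ is not canonical, so I must show $m(\eta_-,\eta_+,\alpha)$ does not depend on it. The key point is that the part of $U_m$ that is not forced by $U_{\eta_-}\cap U_{\eta_+}$, namely $(U_{\eta_+}\cap U_\alpha)\cup(U_\alpha\cap U_{\eta_-})$, should be insensitive to replacing $\alpha$ by another $\alpha'\in\~X_\xi$. I would argue as follows: by Proposition \ref{biinfinitechain} there is a bi-infinite chain $(s_n)_{n\in\Z}$ of pairwise strongly separated half-spaces with $s_n\in U_{\eta_+}\setminus U_{\eta_-}$; any half-space $h$ that participates in $U_m$ as an element of $U_\alpha\setminus(U_{\eta_-}\cup U_{\eta_+})$ or of $U_{\eta_+}\cap U_\alpha$ etc.\ must be comparable with or transverse to this chain, and the strong separation plus Lemma \ref{lemma Annoying cases} pins down which half-spaces of $U_{\eta_-}$ and $U_{\eta_+}$ can survive in $U_m$ in terms of the ``position'' of $\alpha$ relative to the chain — and that position is determined by $\xi$ via $T_\xi$, since $\~X_\xi = \bigcap_{h\in T_\xi}h$ and $T_\xi$ depends only on $\xi$. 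More carefully, one shows the finite set $U_m\triangle(U_{\eta_-}\cap U_{\eta_+})$ — finite because $m\in X$ while $\eta_\pm$ are regular, so $I(\eta_-,\eta_+)$ meets $X$ and $m$ sits in it — is cut out by $T_\xi$ together with $U_{\eta_-},U_{\eta_+}$ alone, giving independence of $\alpha$. (Alternatively and perhaps more cleanly: $m(\eta_-,\eta_+,\alpha)$ is the nearest-point projection in $\I(\eta_-,\eta_+)$ of any point of $\~X_\xi$, and by Lemma \ref{lem:geodsector} the CAT(0) ray to $\xi$ from such a projection is forced, making the projection point independent of the choice.)

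Finally, equivariance is formal: for $g\in\Gamma$, $g\cdot\~X_\xi = \~X_{g\xi}$ because $T_{g\xi}=gT_\xi$, and $g$ commutes with the median and with $Q^{-1}$ (the latter by the $\Aut(X)$-equivariance in Corollary \ref{cor:Q(a)} and Lemma \ref{lem: visaula and roller squeezing bijection}); hence
$$g\,\f_{\xi_-,\xi_+}(\xi) = g\,m(\eta_-,\eta_+,\alpha) = m(g\eta_-,g\eta_+,g\alpha) = \f_{g\xi_-,g\xi_+}(g\xi).$$
I expect the main obstacle to be the well-definedness step: one must rule out that two different choices of preimage $\alpha\in\~X_\xi$ yield medians differing by finitely many half-spaces, and this requires carefully tracking how half-spaces in $U_{\eta_\pm}$ interact with the strongly separated chain separating $\eta_-$ from $\eta_+$ — the regularity of $\eta_\pm$ together with Lemmas \ref{lemma Annoying cases}, \ref{singleton} and Proposition \ref{biinfinitechain} should suffice, but it is the part that needs genuine care rather than bookkeeping.
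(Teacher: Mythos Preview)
Your approach is genuinely different from the paper's: you try to project via the Roller median $m(\eta_-,\eta_+,\alpha)$, whereas the paper never passes through a Roller representative of $\xi$ at all. Instead it takes the $\ell^2$-horofunction $b_\eta^{(2)}$ associated to $\eta\in\bd X$, restricts it to $\I(\xi_-,\xi_+)\cap X$, uses the bi-infinite strongly separated chain to show this restriction is proper (hence attains a minimum on a bounded convex sublevel set), and defines $\f_{\xi_-,\xi_+}(\eta)$ as the circumcenter of that set. The horofunction is canonical, so no choice issue arises.

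Your construction, by contrast, has a real gap at exactly the point you flag. The median $m(\eta_-,\eta_+,\alpha)$ depends on $U_\alpha$ only through those half-spaces $h$ that separate $\eta_-$ from $\eta_+$; but such an $h$ need not lie in $T_\xi\cup T_\xi^*$. Concretely, any $h$ separating $\eta_-,\eta_+$ is sandwiched $s_N\subset h\subset s_{-M}$ for some indices in the chain, yet its hyperplane $\hat h$ can still be unbounded and carry $\xi\in\bd\hat h$; in that case two legitimate choices $\alpha,\alpha'\in\~X_\xi$ can lie on opposite sides of $h$, producing distinct medians. Neither of your proposed fixes closes this: Lemmas~\ref{lemma Annoying cases}, \ref{singleton} and Proposition~\ref{biinfinitechain} control how half-spaces interact with the chain $(s_n)$, not how $T_\xi$ interacts with half-spaces separating $\eta_-,\eta_+$; and Lemma~\ref{lem:geodsector} tells you the ray to $\xi$ lies in $I(o,\alpha)$, which does not pin down the $\ell^1$-projection of $\alpha$ to $\I(\eta_-,\eta_+)$. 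To salvage the median route you would need either a canonical section $\bd X\to\~X$ on the relevant set, or an argument that the (possibly several) medians form a bounded set with a canonical center --- at which point you are essentially reinventing the paper's horofunction-plus-circumcenter argument.
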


\begin{proof}
 Fix distinct squeezing points $\xi_-, \xi_+\in \partial_\eye X$. By Lemmas \ref{lem:specialvisual} and \ref{lem: visaula and roller squeezing bijection}, there is a bi-infinite decreasing sequence of pairwise strongly separated half-spaces such  that $\bigcap_{n\in\Z} \bd s_n=\{\xi_-\}$ and $\bigcap_{n\in\Z} \bd s_n^*=\{\xi_+\}$.  Let $\eta\in \partial_\eye X\setminus \{\xi_-, \xi_+\}$. It follows that there exists some $n$ such that $\eta\in \bd s_n^*\cap \bd s_{-n}$. Up to deleting finitely many elements of the sequence, we may and shall assume that $\eta$ is in  $\bd s_0^*\cap \bd s_{-1}$. Let us fix a base vertex $o\in s_0^*\cap s_{-1} \cap \I(\xi_-, \xi_+)\cap X$ and vertices $x_n\in s_0^*\cap s_{-1}\cap X$  so that the $\ell^2$-geodesics between $o$ and $x_n$ converge  to $\eta$.

  Recall that one can associate to $\eta\in \partial_\eye X$  a horofunction relative to the $\ell^2$ metric, which we denote by $b_\eta^{(2)}$. Our goal is to show that $b_\eta^{(2)}|_{\I(\xi_-, \xi_+)\cap X}$ attains a minimum, and the set of points on which this function is minimal is a bounded convex set. The image $\f_{\xi_-,\xi_+}(\eta)$ is then defined to be the center of this set.
 
Let $y_{k}  \in s_{k}^*\cap s_{k-1}\cap  \I(\xi_-, \xi_+)\cap X$. We claim that if $D$ is the dimension of $X$ then 
  $$b_\eta^{(2)}(y_k) \geq \sqrt{D}\cdot  |k|.$$
 
Assume $k>0$. Observing that $o = m(y_k,o,x_n)$ we see:

$$b_{x_n}(y_k) := d(y_k, x_n) - d(o, x_n) = d(y_k,o)\geq k.$$

Recalling the fact that $d'\leq d\leq \sqrt{D} d'$ (where $d$ and $d'$ are the $\ell^1$ and $\ell^2$-metrics respectively) we deduce that $b_{x_n}^{(2)}(y_k) \geq\sqrt{D}\cdot k  $. Taking the limit as $n\to\8$ we get that 
$$b_\eta^{(2)}(y_k) \geq\sqrt{D}\cdot k >0.$$

Observing that $b_\eta^{(2)}(o)=0$ this  shows that the inverse image  of $(-\8,0]$ by the function $b_\eta^{(2)}|_{\I(\xi_-,\xi_+)}$ is non-empty and contained in the bounded convex set 
$s_1^*\cap s_{-1}\cap \I(\xi_-, \xi_+)\cap X$. Hence it has a unique center.  

Finally, the $\G$-equivariance of these projections follows from the equivariance of the horofunctions and the construction of the center.
\end{proof}

Recall that Theorem \ref{thm:cvgxi} gives the existence of a measurable $\G$-equivariant map $\xi_\pm: B_\pm\to \partial_\eye X$.

\begin{lemma}\label{lem:uniquemapbd}

Assume that $\G$ is a group with a non-elementary, essential action on an irreducible complex $X$. There is a unique $\G$-equivariant map $B_+\to \Prob(\bd X)$, which is the map $\omega\mapsto \delta_{\xi_+(\omega)}$.
\end{lemma}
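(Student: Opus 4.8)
The plan is to apply Corollary \ref{cor:uniquestationmeas} and the isometric ergodicity of the Furstenberg-Poisson boundary (Theorem \ref{IsomErgodic}) to the Polish space $\partial_\eye X$. Existence of a $\G$-equivariant map $B_+\to\Prob(\bd X)$ is already known: Theorem \ref{thm:cvgxi} gives the measurable $\G$-equivariant map $\xi_+:B_+\to\partial_\eye X$, so $\omega\mapsto\delta_{\xi_+(\omega)}$ is one such map. The content is uniqueness, and for this I would first reduce to uniqueness of a $\G$-equivariant \emph{point} map $B_+\to\partial_\eye X$. Indeed, suppose $\psi:B_+\to\Prob(\bd X)$ is any $\G$-equivariant map. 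Composing with $\xi_-$ (the analogous map for $\check\mu$ on $B_-$) and using the product $B_-\times B_+$, the key is to exploit the strong contracting properties of squeezing points together with the projection map $\f_{\xi_-,\xi_+}$ constructed in Lemma \ref{Visual Projection to X}.

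The main step is as follows. By Theorem \ref{thm:cvgxi} applied to both $\mu$ and $\check\mu$, the maps $\xi_+:B_+\to\partial_\eye X$ and $\xi_-:B_-\to\partial_\eye X$ have images consisting almost surely of squeezing points, and (as in the proof of Proposition \ref{UniqueDirac}, using that $\eta_-(b_-)\neq\eta(b)$ almost surely for the Roller boundary maps and the bijection of Lemma \ref{lem: visaula and roller squeezing bijection}) we have $\xi_-(b_-)\neq\xi_+(b_+)$ for almost every $(b_-,b_+)\in B_-\times B_+$. Now given a second $\G$-equivariant map $\psi:B_+\to\Prob(\bd X)$, I would first argue that $\psi(b_+)$ is almost surely a Dirac mass at a point $\xi'(b_+)$. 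For this, push $\psi(b_+)$ forward under $\f_{\xi_-(b_-),\xi_+(b_+)}$: this is only defined off the two points $\xi_\pm$, so one must first show $\psi(b_+)$ gives no mass to $\{\xi_-(b_-),\xi_+(b_+)\}$ — the mass on $\xi_+(b_+)$ is a $\G$-invariant $[0,1]$-valued function hence constant by Theorem \ref{IsomErgodic}, and if this constant were positive we would get a $\G$-equivariant map $B_+\to\partial_\eye X$, namely $b_+\mapsto\xi_+(b_+)$, which is fine, but then the residual measure is still $\G$-equivariant and has strictly less mass at $\xi_+$, giving a contradiction unless the constant is $1$; the mass on $\xi_-(b_-)$ is handled symmetrically after noting the residual is $B_-\times B_+$-equivariant. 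Once $\psi(b_+)$ puts full mass off $\{\xi_\pm\}$, the map $(b_-,b_+)\mapsto (\f_{\xi_-(b_-),\xi_+(b_+)})_*\psi(b_+)\in\Prob(X)$ is $\G$-equivariant and measurable into the countable discrete set $X$, hence essentially constant by isometric ergodicity; but $\G$ acting on $X$ has no finite orbit, so the only way a single measure in $\Prob(X)$ can be $\G$-invariant is if it is supported on a $\G$-invariant finite set — impossible. This forces $\psi(b_+)$ to be a Dirac mass at a single point $\xi'(b_+)$.

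Having reduced to point maps, I would finish by showing there is a unique $\G$-equivariant measurable map $B_+\to\partial_\eye X$. Given two such maps $\xi_+$ and $\xi'$, consider the map $(b_-,b_+)\mapsto \f_{\xi_-(b_-),\xi_+(b_+)}(\xi'(b_+))\in X$, which is well-defined on a conull set once we check $\xi'(b_+)\notin\{\xi_-(b_-),\xi_+(b_+)\}$ almost surely (the first inequality follows since $\xi'$ is a $B_+$-map while $\xi_-$ is a $B_-$-map, so equality would by ergodicity force a constant hence a fixed point in $\partial_\eye X$, contradicting non-elementarity; the second is the statement we are trying to rule out, so assume toward a contradiction it fails on a non-null set, restrict there, and proceed). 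This map is $\G$-equivariant and measurable into the discrete countable set $X$, hence essentially constant, again contradicting the absence of finite $\G$-orbits in $X$. Therefore $\xi'=\xi_+$ almost everywhere, and combined with the Dirac reduction above, every $\G$-equivariant map $B_+\to\Prob(\bd X)$ equals $\omega\mapsto\delta_{\xi_+(\omega)}$. The main obstacle I anticipate is the bookkeeping needed to guarantee that the various points at which the projection $\f_{\xi_-,\xi_+}$ is undefined are avoided almost surely — i.e. systematically ruling out coincidences among $\xi_-(b_-)$, $\xi_+(b_+)$, and the candidate point — all of which reduce to isometric ergodicity plus non-elementarity, but need to be invoked in the right order.
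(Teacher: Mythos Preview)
Your approach is essentially the paper's: reduce to the case where $\psi(b_+)$ gives no mass to $\xi_+(b_+)$ by subtracting off that atom, show it also gives no mass to $\xi_-(b_-)$, push forward via the projection $\f_{\xi_-,\xi_+}$ of Lemma~\ref{Visual Projection to X} to obtain a $\G$-equivariant map $B_-\times B_+\to\Prob(X)$, and invoke double isometric ergodicity together with non-elementarity to reach a contradiction.

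There is, however, a logical mis-tracing. Your ``Step 1'' already proves the entire lemma: the contradiction you obtain rules out \emph{every} equivariant $\psi\neq\delta_{\xi_+}$, including Dirac masses at other points. Your stated conclusion of Step~1 --- ``this forces $\psi(b_+)$ to be a Dirac mass at a single point $\xi'(b_+)$'' --- is not what the argument yields; the argument yields $\psi=\delta_{\xi_+}$ directly. Consequently your ``Step 2'' (uniqueness of point maps) is redundant: it reruns the same projection-plus-ergodicity argument on a Dirac. Two smaller points: in Step~1 the pushforward lands in $\Prob(X)$, not in ``the countable discrete set $X$''; isometric ergodicity still applies because $\Prob(X)$ carries a $\G$-invariant metric (the paper uses Prokhorov). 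And your ``handled symmetrically'' for the mass at $\xi_-(b_-)$ is vague; the paper's argument is cleaner: for fixed $b_+$ the measure $\psi(b_+)$ has countably many atoms, and since $(\xi_-)_*\nu_-$ is non-atomic (else $\G$ would have a finite orbit in $\bd X$), Fubini gives $\psi(b_+)(\{\xi_-(b_-)\})=0$ almost surely.
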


\begin{proof}
Consider the $\xi_-$ and $\xi_+$-pushforward of the measures on $B_-$ and $B_+$ to $\partial_\eye X$. We will call them $\nu_-$ and $\nu_+$, respectively. Recall that they are  $\check{\mu}$ and $\mu$-stationary respectively.

Let $\omega_+\mapsto \nu_{\omega_+}$ be some $\G$-equivariant map from $B_+$ to $\Prob(\bd X)$. By ergodicity, if $\nu_{\omega_+}\neq \delta_{\xi_+(\omega_+)}$ on a positive measure set, then this set has full measure. 

 So assume that we have almost surely $\nu_{\omega_+}\neq \delta_{\omega_+}$. The function $\omega_+\mapsto \nu_{\omega_+}(\{\xi_+(\omega_+)\})$ is $\G$-invariant and hence constant. If $\nu_{\omega_+}(\{\xi_+(\omega_+)\})=\a>0$ then we can define $\nu'=\nu-\a\nu_+$. After renormalization this is a new stationary probability measure such that $\nu'_{\omega_+}(\{\xi_+(\omega_+)\})=0$. So we may and shall assume that $\nu_{\omega_+}(\{\xi_+(\omega_+)\})=0$ for almost every $\omega_+\in B_+$.
 
 We claim next that $\nu_{\omega_+}(\{\xi_-(\omega_-)\})=0$ for almost every $(\omega_-, \omega_+)\in B_+\times B_-$. Indeed, for a fixed $\omega_+$, the measure 
 $\nu_{\omega_+}$ has countably many atoms, so that for $\nu_-$-a.e. $\omega_-\in B_-$  we have that $\nu_{\omega_+}(\xi_-(\omega_-)) = 0$. By Fubini it follows that $\nu_{\omega_+}(\{\xi_-(\omega_-)\})=0$ almost surely.

Theorem \ref{thm:cvgxi} assures us that $\xi_+$ and $\xi_-$ are squeezing points  almost surely. 
Now, apply the projection from Lemma \ref{Visual Projection to X} to obtain for almost every $(\omega_-,\omega_+)$ a measurable map $\f_{\xi_-,\xi_+}:\bd X\to X$ (defined $\nu_{\omega_+}$-everywhere). Hence we can pushforward the measure $\nu_{\omega_+}$ by $\f_{\xi_-,\xi_+}$ to get a map $B_-\times B_+\to \Prob(X)$. Now $\Prob(X)$ has a $\Gamma$-invariant metric (for example the Prokhorov metric). Hence by Double Isometric Ergodicity we get that $\Gamma$ fixes a probability measure on $X$. By countability of $X$, this implies that there is a finite set in $X$ which is $\Gamma$-invariant (the set of points with maximal measure), contradicting the assumption that the action is non-elementary.
\end{proof}

 \section{Regular Elements}

Regular elements are hyperbolic elements with strong contracting properties. In the irreducible case, they are exactly contracting isometries in the sense of \cite{BestvinaFujiwara}, and their existence is the main theorem of \cite{CapraceSageev}. For products, Caprace and Sageev \cite{CapraceSageev} show that such  elements provided the group is a lattice. In this section, we prove that such elements always exist for non-elementary actions, and moreover have some genericity property.

 We first recall the definition of contracting and regular isometries. 

\begin{definition}

\begin{itemize}
\item A geodesic line $\ell$ is called \emph{contracting} if there is $C>0$ such that any ball $B$ disjoint from $\ell$ projects to $\ell$ to a set of diameter less than $C$. 
\item If $X$ is irreducible, an element $g\in\Aut(X)$ is said \emph{contracting} if it is hyperbolic and one of its axis is contracting.
\item If $X$ is a product, an element $g\in\Aut(X)$ is said \emph{regular}  if it preserves each factor and if it acts as a contracting element on each irreducible factor.
\end{itemize}
\end{definition}

Our main tool in order to find regular elements is the following lemma of Caprace and Sageev \cite[Lemma 6.2]{CapraceSageev}:

\begin{lemma}\label{skewer -> contracting}
Assume that $g\in \Aut(X)$ is such that  $g.h \subsetneq h'$ for some  pair of strongly separated half-spaces $h\subset h'$. Then $g$ is a contracting isometry.
\end{lemma}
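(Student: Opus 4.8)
This statement is \cite[Lemma~6.2]{CapraceSageev}, and I would reproduce their argument, which has two halves: produce a $g$--axis out of the skewering, then use strong separation to see that axis is contracting. The hypothesis amounts to a strict skewering of a wall: naming the strongly separated nested pair so that $gh\subsetneq h'\subsetneq h$, we have $gh\subsetneq h$, and iterating $g$ gives a bi-infinite, strictly descending chain
$$\cdots\subsetneq g^{2}h\subsetneq gh\subsetneq h\subsetneq g^{-1}h\subsetneq g^{-2}h\subsetneq\cdots$$
which $g$ permutes by $g^{n}h\mapsto g^{n+1}h$. The first point to check is that this whole family is \emph{pairwise} strongly separated: for $m<n$, applying $g^{-m}$ reduces the claim to strong separation of $\hat h$ and $\widehat{g^{n-m}h}$; but $g^{n-m}h\subsetneq h'\subsetneq h$, so any wall transverse to both $\hat h$ and $\widehat{g^{n-m}h}$ meets both sides of $h'$ and both sides of $h$, hence is transverse to both $\hat h$ and $\hat h'$, contradicting the hypothesis. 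In particular any two members at chain--distance $\geq 2$ are super strongly separated, which is what makes Proposition~\ref{BridgeGeod} available.

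Next I would extract the endpoints and the axis. By Lemma~\ref{singleton}, $\xi_{+}:=\bigcap_{n\in\Z}g^{n}h$ and $\xi_{-}:=\bigcap_{n\in\Z}g^{n}h^{*}$ are single points of $\partial X$, fixed by $g$, with $\xi_{-}\neq\xi_{+}$; each carries a bi-infinite descending chain of pairwise strongly separated half--spaces, so they are regular — in fact ``contracting points'' in the sense of the Remark after Lemma~\ref{lem:specialvisual} (and note the hypothesis forces $X$, or the factor containing $h$, to be irreducible). The interval $\I(\xi_{-},\xi_{+})\cap X$ is nonempty by Lemma~\ref{IinterX} and $g$--invariant; taking a combinatorial geodesic $\ell$ in it from $\xi_{-}$ to $\xi_{+}$, one checks that the walls separating a vertex $p\in\ell$ from $gp$ are a $g$--translate of the finitely many walls strictly between $\widehat{gh}$ and $\hat h$, and that they form a fundamental domain for the $g$--action on the set of walls crossed by $\ell$; hence $d(p,g^{n}p)=|n|\,r$ with $r:=d(p,gp)>0$, so $g$ is a hyperbolic isometry with axis $\ell$. (One may instead simply quote that an isometry skewering a half--space is hyperbolic.)

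It remains to see $\ell$ is contracting. It crosses the bi-infinite family $(\widehat{g^{n}h})_{n}$ of pairwise strongly separated walls, all consecutive gaps being equal to $r$; as in Behrstock--Charney \cite{Behrstock_Charney} and \cite{CapraceSageev}, such a geodesic is contracting. Concretely: if a ball $B$ disjoint from $\ell$ has nearest--point projection to $\ell$ of diameter $L$, then, projecting onto the convex set $\ell$, $B$ must cross every wall separating the two extreme points of that projection, so for $L$ large it crosses some super strongly separated pair $\widehat{g^{a}h}\subsetneq\widehat{g^{a+2}h}$; Proposition~\ref{BridgeGeod} then confines the corresponding sub-interval of $B$ to a bounded neighbourhood of the finite bridge $b(g^{a+2}h,g^{a}h)$, and feeding this back bounds $L$ in terms of $r$ and $\dim X$. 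Thus some axis of $g$ is contracting, i.e. $g$ is a contracting isometry.

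The first two steps are essentially formal: propagation of strong separation along the $g$--orbit, together with the singleton and interval lemmas already on hand. I expect the real difficulty to be in the last step — converting ``$\ell$ crosses a staircase of pairwise strongly separated walls with bounded gaps'' into a \emph{uniform} contraction constant — since this genuinely uses the $\ell^{1}$ bridge geometry (Proposition~\ref{BridgeGeod}) and cannot be carried out by manipulating half--space inclusions alone.
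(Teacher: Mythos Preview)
The paper does not prove this lemma at all; it simply quotes it as \cite[Lemma~6.2]{CapraceSageev} and moves on. Your proposal goes further by reconstructing the Caprace--Sageev argument, and your outline follows their strategy correctly: propagate strong separation along the $g$-orbit of $\hat h$, deduce hyperbolicity from skewering, and show the axis is contracting via the bridge estimate.

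Two small technical caveats in your reconstruction. First, ``contracting isometry'' refers to a CAT(0) axis, not a combinatorial one, and hyperbolic isometries of cube complexes need not have combinatorial axes in general; you should either produce the CAT(0) axis directly or remark that the two metrics are quasi-isometric and the contracting property passes between them. Second, the claim that ``the walls separating $p\in\ell$ from $gp$ are a $g$-translate of the finitely many walls strictly between $\widehat{gh}$ and $\hat h$'' is not literally true for an arbitrary combinatorial geodesic $\ell$ in the interval; your parenthetical (quote that skewering implies hyperbolicity) is the clean route and is what Caprace--Sageev do. With those adjustments your sketch is a faithful expansion of the cited reference.
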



\begin{lemma}\label{skewerorflip}
Assume that $X$ is irreducible with a non-elementary and essential $\G$-action. Let $Z_n$ be a generic sequence for the random walk, $\xi\in \partial X$ be the limit of $Z_no$, and $s$ be a half-space containing $\xi$.

Then there exists an $N$ and $ s_2\subset s_1 \subset s$ pairwise strongly separated such that for every $n>N$ 
\begin{itemize}
\item either $Z_n s \subset s_2$,
\item or  $Z_n s\supset s_2^*$.
\end{itemize}
\end{lemma}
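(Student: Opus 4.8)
The plan is to exploit the convergence of the random walk to a regular (indeed squeezing) boundary point, together with the characterization of regular points by descending chains of pairwise strongly separated half-spaces. Since $X$ is irreducible with a non-elementary essential action, Theorem \ref{cvg} tells us that for a generic sequence $Z_n$, the point $\xi = \lim Z_n o$ lies in $\partial_r X$. By Proposition \ref{Rank1Char} applied to the half-space $s\in U_\xi$, there is an infinite descending chain of pairwise strongly separated half-spaces in $U_\xi$ whose intersection is $\{\xi\}$; discarding finitely many of its members and relabelling, we obtain $s_2 \subset s_1 \subset s$ pairwise strongly separated with $\xi \in s_2$. This fixes the candidate $s_1, s_2$ once and for all.

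Next I would use the fact that $Z_n o \to \xi$ in the Roller boundary to control the position of $Z_n s$. Pick a base vertex $o$; for $n$ large, $Z_n o \in s_2$ (since $s_2$ is a half-space containing $\xi$ and the $U_{Z_n o}$ converge pointwise to $U_\xi$). Now consider the half-space $Z_n s$ and the strongly separated pair $(s_1, s_2)$. The key dichotomy comes from the interaction of a single half-space with a strongly separated nested pair: if a half-space $k$ meets both $s_1^*$ and has $Z_n o$ (hence a ``deep'' vertex) inside $s_2$, then either $k \supset s_2^*$, or $k$ is nested inside $s_2$, or $k$ is transverse to one of $s_1, s_2$ — and transversality to both is impossible by strong separation. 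More precisely I would argue: set $k = Z_n s$. Since $\xi = Z_n(Z_n^{-1}\xi)$ and $Z_n^{-1}\xi$ ranges over boundary points, what we really know is $Z_n o \in s$ fails to be the controlled quantity; instead the relevant input is that $Z_n o$ lies deep in $s_2$. Apply Lemma \ref{lemma Annoying cases}-style reasoning (or directly the trichotomy $k \subset h$, $k^* \subset h$, $k \pitchfork h$ for $h = s_1$ and $h = s_2$): transversality of $k=Z_ns$ to both $s_1$ and $s_2$ contradicts strong separation, so for $n$ large either $Z_n s \subset s_2$, or $Z_n s \supset s_2^*$, or $Z_n s \supset s_1$ (the last forcing, together with $Z_n s \subsetneq$ nothing, a further case analysis). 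The cases where $Z_n s$ contains $s_1$ but not all of $s_2^*$ get absorbed: if $Z_n s \supset s_1 \supset s_2$, then in particular $Z_n s \not\subset s_2$, and I claim $Z_n s \supset s_2^*$ must then hold, because otherwise $s_2^* \cap Z_n s \neq \varnothing$ and $s_2 \cap Z_n s \neq \varnothing$ would make $Z_n s$ transverse to $s_2$ unless $Z_n s \supset s_2$ — but $Z_n s \supset s_2$ combined with $Z_n s \supset s_2^*$ is the conclusion anyway, while $Z_n s$ strictly between is excluded by the half-space structure.

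The main obstacle I anticipate is bookkeeping the trichotomy cleanly: one must verify that the ``bad'' intermediate configurations (e.g. $Z_n s$ transverse to $s$, or $Z_n s$ sitting strictly between $s_2$ and $s_2^*$ in an unexpected way) genuinely cannot occur for $n$ large, and this requires using \emph{both} that $Z_n o$ is eventually deep inside $s_2$ \emph{and} that the $Z_n$'s image half-spaces $Z_n s$ cannot oscillate — which in turn follows because $(s_1, s_2)$ is a \emph{fixed} pair and there are only finitely many half-spaces between any two nested ones. A convenient way to finish is: since $Z_n o \to \xi \in s_2$ and $\xi \in Z_n^{-1}(Z_n\xi)$, note $Z_n^{-1}\xi \to \xi_- $ is not what we have — instead, simply observe that $Z_n s$ contains $Z_n\xi$... but $Z_n\xi$ is not controlled. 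The cleanest route, which I would adopt, is to instead track the \emph{preimage}: $o \in s$ need not hold, but after replacing $o$ by a point of $s$ if necessary (harmless for the asymptotics), $Z_n o \in Z_n s$ always, and $Z_n o \to \xi$; so $\xi$ is in the closure of $Z_n s$. Combined with $\xi \in s_2$ and strong separation of $s_1, s_2$, this pins down $Z_n s$ relative to the pair $(s_1,s_2)$ via Lemma \ref{lemma Annoying cases}, giving exactly the two stated alternatives for all $n > N$. This last paragraph's argument is routine once the trichotomy is organized, so the real work is purely the careful case analysis.
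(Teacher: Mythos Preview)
Your proposal has a genuine gap. You correctly produce the strongly separated chain $s_2\subset s_1\subset s$ with $\xi\in s_2$, but the case analysis on $k_n:=Z_n s$ does not close. Taking $o\in s$ gives only $Z_n o\in k_n$ with $Z_n o\to\xi$, hence $k_n\cap s_m\neq\varnothing$ for large $n$; this still leaves open the configurations $k_n\pitchfork s_2$ and $k_n\supsetneq s_2$, and your attempt to absorb the case $k_n\supset s_1$ is incorrect (if $k_n\supset s_2$ and also $k_n\supset s_2^*$ then $k_n=\overline X$, which is impossible). Invoking Lemma~\ref{lemma Annoying cases} does not help either: that lemma treats one fixed half-space against an infinite descending chain, whereas here you have varying $k_n$ against the fixed pair $(s_1,s_2)$; and if you instead apply it with $k=k_n$ against the whole chain $(s_m)$, the conclusion you get is $k_n\supset s_m$ for $m$ large (depending on $n$), which is useless for the lemma's statement.

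The missing idea, supplied in the paper, is to control the \emph{wall} $\hat k_n$ rather than just a point of $k_n$. Pick $x\in s$ adjacent to $\hat s$, so that its neighbour $x'$ lies in $s^*$. Both $Z_n x$ and $Z_n x'$ converge to $\xi$, so for $n$ large both lie in any fixed $s_m$; since $\hat k_n$ separates them, the wall $\hat k_n$ itself meets $s_m$. Hence for each $m$ either $\hat k_n\subset s_m$ or $\hat k_n\pitchfork\hat s_m$. Strong separation of the $(s_m)$ forbids $\hat k_n$ from being transverse to more than one $\hat s_m$; since $\hat k_n$ also meets $s_{m+1}\subset s_m$, transversality to $\hat s_m$ is ruled out for each $m\geq 1$ and $n$ large, giving $\hat k_n\subset s_m$, i.e.\ $k_n\subset s_m$ or $k_n\supset s_m^*$. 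Your one-sided information (tracking only $Z_n o\in k_n$) cannot reach this step.
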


\begin{proof}

For notational simplicity, let $k_n=Z_n s$. Fix  $x\in s$ that is adjacent to the wall of $s$. Let $(s_m)_{m\geq 0}$ be an infinite descending chain of strongly separated half-spaces containing $\xi$, with $s=s_0$.  For each $m$ and $n$ large enough we have $Z_n x\in s_m$. Hence $k_n\cap s_m\neq\varnothing$. Furthermore, since $x$ is adjacent to the wall of $s$, we see that for $n$ large enough $\hat k_n\cap  s_m\neq \varnothing$. By strong separation, if we fix $m\geq 1$, we have that $\hat k_n \subset s_m$ for any $n$ large enough.

 This means that we either have $k_n\subset s_m$ or $k_n\supset s_m^*$. If $m\geq 2$, this gives the conclusion of the lemma.

\end{proof}

In the first case, we say that $Z_n$ is \emph{$s$-skewering}, in the second case that it is \emph{$s$-flipping}. By Lemma \ref{skewer -> contracting}, if $Z_n$ is $s$-skewering, then it is a contracting isometry.

In the following lemma, we use the stationary measure $\ch \nu$ for the random walk $\ch \mu$.

\begin{lemma}\label{lem:prop regular}
Let $s$ be a half-space. Then on a full measure set of $\omega\in \Omega$ we have that $Z_n(\omega) o $ converges to $\eta(\omega)$ and if $s\in U_{\eta(\omega)}$ then

 \begin{equation}\label{eq-proportion regular}
 \liminf_n\frac{1}{n} \vert \{k \leq n\mid Z_k \textrm{ is } s\textrm{-skewering } \}\vert\geq \ch\nu(s^*).
 \end{equation}

\end{lemma}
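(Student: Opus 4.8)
The plan is to count the complementary event — the frequency of $s$-flipping times — and to show it is at most $\check\nu(s)$, using that $\check\nu(s)+\check\nu(s^*)=1$ since the Roller compactification is partitioned as $\overline X=s\sqcup s^*$. The statement is vacuous when $\check\nu(s^*)=0$, so I would assume $\check\nu(s^*)>0$. I then work on the full-measure set where $Z_n o\to\eta(\omega)$ (Theorem \ref{cvg}); on the event $\{\eta(\omega)\notin s\}$ there is nothing to prove, so I fix $\omega$ with $s\in U_{\eta(\omega)}$ and invoke Lemma \ref{skewerorflip}, obtaining $N=N(\omega)$ and pairwise strongly separated $s_2\subset s_1\subset s$ such that for every $n>N$ the element $Z_n=Z_n(\omega)$ is $s$-skewering (then $Z_n s\subset s_2$) or $s$-flipping (then $Z_n s\supset s_2^*$), and exactly one of these holds.

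The key elementary observation is that $s_2^*\supset s^*$, so being $s$-flipping forces $Z_n s\supset s^*$, equivalently $Z_n^{-1}s^*\subset s$ as half-spaces; since half-spaces partition $\overline X$ and the $\G$-action respects this, it follows that $Z_n^{-1}x\in s$ for every $x$ in the half-space $s^*\subset\overline X$. Hence, for each fixed $x\in s^*$,
$$\{\,n>N:\ Z_n\text{ is }s\text{-flipping}\,\}\ \subseteq\ \{\,n:\ Z_n^{-1}x\in s\,\}.$$
To estimate the right-hand side I would use the skew product $T(\omega,x)=(S\omega,g_1(\omega)^{-1}x)$, for which $T^k(\omega,x)=(S^k\omega,Z_k(\omega)^{-1}x)$ and which preserves the ergodic probability measure $\Ps\times\check\nu$ by Lemma \ref{T ergodic}. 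Birkhoff's pointwise ergodic theorem applied to $\1_{\Omega\times s}$ then yields, for $\Ps\times\check\nu$-almost every $(\omega,x)$,
$$\frac1n\,\bigl|\{\,k\le n:\ Z_k(\omega)^{-1}x\in s\,\}\bigr|\ \longrightarrow\ \check\nu(s).$$
Let $G$ be the full-measure set where this convergence holds. By Fubini, for almost every $\omega$ the slice $\{x:(\omega,x)\in G\}$ is $\check\nu$-conull, hence — because $\check\nu(s^*)>0$ — meets $s^*$; picking $x_\omega\in s^*$ in this slice and combining with the displayed inclusion gives, for almost every such $\omega$,
$$\limsup_{n\to\infty}\frac1n\,\bigl|\{\,k\le n:\ Z_k\text{ is }s\text{-flipping}\,\}\bigr|\ \le\ \check\nu(s).$$

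Finally, since every $k>N(\omega)$ is $s$-skewering or $s$-flipping, and since finitely many indices do not affect Cesàro limits, I conclude
$$\liminf_{n\to\infty}\frac1n\,\bigl|\{\,k\le n:\ Z_k\text{ is }s\text{-skewering}\,\}\bigr|\ \ge\ 1-\check\nu(s)\ =\ \check\nu(s^*),$$
which is the desired bound. The hard part is the transfer from Birkhoff's theorem — which only controls $\Ps\times\check\nu$-almost every pair $(\omega,x)$ — to the statement needed to detect flipping; this is precisely what the Fubini argument accomplishes, and it is the reason the bound naturally involves $\check\mu$'s stationary measure $\check\nu$ rather than $\nu$. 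A minor point to handle carefully is that $Z_n$ is only guaranteed to satisfy the skewering/flipping dichotomy for $n>N(\omega)$, and that the inclusion $Z_n^{-1}s^*\subset s$ of combinatorial half-spaces must be transported correctly to subsets of $\overline X$ before it is applied to the point $x$.
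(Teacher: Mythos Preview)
Your argument is correct and follows essentially the same line as the paper's: both rely on Lemma \ref{skewerorflip} for the skewering/flipping dichotomy, and both hinge on the observation that if $Z_k$ is $s$-flipping then $Z_k^{-1}x\in s$ for any $x\in s^*$, combined with an ergodic average controlling the frequency of $\{k:Z_k^{-1}x\in s^*\}$.

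The one noteworthy difference is in the ergodic tool. The paper invokes a Breiman-type law of large numbers (cited from Benoist--Quint), which, thanks to uniqueness of the stationary measure, gives $\frac{1}{n}\sum_{k=1}^n\varphi(Z_k^{-1}x)\to\check\nu(\varphi)$ almost surely for \emph{every} fixed $x$ and every continuous $\varphi$; they can therefore choose a single $x\in s^*$ once and for all. You instead apply Birkhoff to the skew product of Lemma \ref{T ergodic}, obtain convergence only for $\check\nu$-almost every $x$, and then use Fubini (plus the harmless hypothesis $\check\nu(s^*)>0$) to select an $\omega$-dependent $x_\omega\in s^*$. Your route is slightly longer but has the advantage of being entirely self-contained within the paper, avoiding the external citation.
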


\begin{proof}
 We begin by observing that if $Z_n=g_1g_2\dots g_n$, then $Z_n^{-1}=g_n^{-1}\dots g_1^{-1}$, where $g_i$ follows the law $\mu$ and all of them are independent. 
 
 Recall that by Corollary \ref{stationary}, there is a unique stationary measure on $\~X$. This allows us to apply Corollary 2.7 of \cite{BenoistQuintBook} and therefore, for every continuous function $\f$ on $\overline X$, we have almost surely
$$\lim_{n\to+\infty}\frac 1n \sum_{k=1}^n \f(Z_k^{-1}x)=\ch\nu(\f).$$

 Let us fix a half-space $s$ and define $\f$ as the characteristic function of $s^*$. 
Observe that it is continuous  on $\overline X$. Now, fix $Z_n$ a generic sequence for $\f$.
By Lemma \ref{skewerorflip}, there is an $N$ such that for every $n>N$ either $Z_n$ is $s$-skewering or $s$-flipping. 
Fix $k$ with $N<k\leq n$. If $Z_k$ is $s$-flipping, then for $x\in s^*$, we have $x\in Z_k s$ (because $s^*\subset Z_k s$), hence $Z_k^{-1}x\in s$. It follows that $\f(Z_k^{-1}x)=0$. Therefore
$$ \sum_{k=1}^n \f(Z_k^{-1}x)\leq \vert \{k \leq n\mid Z_k \textrm{ is } s\textrm{-skewering } \}\vert+N,$$
and Equation \eqref{eq-proportion regular} follows.

Finally, the fact Equation \eqref{eq-proportion regular} holds almost surely follows by recalling that there are countably many half-spaces, and each of the corresponding sets has full measure.

\end{proof}

\begin{theorem}\label{reg elements irred}
Let $X$ be irreducible, and $\Gamma$ act on $X$ essentially and non-elementarily. Then almost surely 
$$\lim_{n\to +\infty}\frac{1}{n} \vert \{k \leq n\mid Z_k \textrm{ is contracting }  \}\vert=1.$$
\end{theorem}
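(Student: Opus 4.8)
The plan is to combine Lemma~\ref{lem:prop regular} with the regularity of the limiting point $\eta(\omega)$ and the non-atomicity of the stationary measure $\check\nu$. First I would fix once and for all a single full-measure set $\Omega_0\subseteq\Omega$: take the intersection of the full-measure set furnished by Lemma~\ref{lem:prop regular} --- on which $Z_n(\omega)o\to\eta(\omega)$ and, for \emph{every} half-space $s\in U_{\eta(\omega)}$, the inequality \eqref{eq-proportion regular} holds (this simultaneity over all half-spaces uses countability of $\frakH$, exactly as in the proof of that lemma) --- with the full-measure set on which $\eta(\omega)\in\partial_r X$, provided by Theorem~\ref{eta}; here Theorem~\ref{cvg} applies because an irreducible complex is its own unique irreducible factor. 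I would also recall, as in the proof of Theorem~\ref{cvg} applied to the admissible measure $\check\mu$, that the unique $\check\mu$-stationary probability measure $\check\nu$ on $\overline X$ (Corollary~\ref{stationary}) is non-atomic, so that $\check\nu(\{p\})=0$ for every $p\in\overline X$.

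Now fix $\omega\in\Omega_0$ and write $\eta=\eta(\omega)$. Since $\eta\in\partial_r X$, Proposition~\ref{Rank1Char} supplies an infinite descending chain $(h_m)_{m\in\N}$ of pairwise strongly separated half-spaces with $\eta\in h_m$ for every $m$, and Lemma~\ref{singleton} gives $\bigcap_{m}h_m=\{\eta\}$. Hence the half-spaces $h_m^*$ form an increasing sequence whose union is $\overline X\setminus\{\eta\}$, so $\check\nu(h_m^*)\to 1-\check\nu(\{\eta\})=1$. For each fixed $m$ we have $h_m\in U_{\eta}$, so \eqref{eq-proportion regular} applied to $s=h_m$ reads
\begin{equation*}
\liminf_{n\to\infty}\frac1n\,\bigl|\{k\le n\mid Z_k\text{ is }h_m\text{-skewering}\}\bigr|\ \geq\ \check\nu(h_m^*).
\end{equation*}
By Lemma~\ref{skewerorflip} together with Lemma~\ref{skewer -> contracting}, every $h_m$-skewering element of the random walk is a contracting isometry, so $\{k\le n\mid Z_k\text{ is }h_m\text{-skewering}\}\subseteq\{k\le n\mid Z_k\text{ is contracting}\}$, whence
\begin{equation*}
\liminf_{n\to\infty}\frac1n\,\bigl|\{k\le n\mid Z_k\text{ is contracting}\}\bigr|\ \geq\ \check\nu(h_m^*)\qquad\text{for all }m.
\end{equation*}

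Letting $m\to\infty$ forces this $\liminf$ to be $\geq 1$; since the proportion is trivially $\leq 1$, the limit exists and equals $1$, which is the assertion. I do not expect a substantive obstacle here: all the hard work is already done in Lemma~\ref{lem:prop regular} and in the structure theory of regular points. The only points requiring care are bookkeeping ones --- using one null set that works for all half-spaces simultaneously, and interchanging the ``$\liminf$ over $n$'' with the ``$m\to\infty$'' limit, which is harmless since for each fixed $m$ we have an honest inequality and we merely pass to the supremum over $m$ at the very end.
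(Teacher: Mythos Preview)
Your proof is correct and follows essentially the same approach as the paper's own proof: fix a generic $\omega$, use regularity of $\eta(\omega)$ to obtain a descending chain $(h_m)$ of strongly separated half-spaces with $\bigcap_m h_m=\{\eta\}$, apply Lemma~\ref{lem:prop regular} at each level, and let $m\to\infty$ using $\check\nu(h_m^*)\to 1$. You are in fact slightly more explicit than the paper in two places --- you spell out why $\check\nu(\{\eta\})=0$ (non-atomicity of the $\check\mu$-stationary measure, as in the proof of Theorem~\ref{cvg}), and you cite Proposition~\ref{Rank1Char} and Lemma~\ref{singleton} by name --- but there is no substantive difference in strategy.
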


\begin{proof}

Fix a generic sequence $(Z_n)$ as provided by Lemma \ref{lem:prop regular}, with limit $Z_n o \to\eta$. Then since $X$ is irreducible and $\eta$ is regular, we have that $\{\eta\}=\bigcap_{m\geq 0} s_m$, for some descending chain $(s_m)$. It follows that $\~X\setminus \{\eta\}=\bigcup_{m\geq 0} s_m^*$, and since $(s_m^*)$ is ascending, we have that $$\sup_m\ch\nu(s_m^*)=1.$$

Now, Lemma \ref{lem:prop regular} assures us that for every $m$
$$\liminf_n   \frac{1}{n} \vert \{k \leq n\mid Z_k \textrm{ is contracting } \}\vert\geq \ch\nu(s_m^*)$$
for every $m$. Since this proportion is at most 1, the sequence is in fact convergent and we get the result.
\end{proof}

\begin{remark}
The proof above gives slightly more: namely, for every half-space $s$, there is a positive measure set of $(Z_n)$ such that  $Z_n$ is $s$-skewering with frequency at least $\ch\nu(s)$. Indeed, the probability that this occurs is at least $\nu(s)$.
\end{remark}

\begin{theorem}\label{reg elements!!}
Assume that the action of $\Gamma$ is non-elementary, essential and stabilizes each irreducible factor of $X$. Then almost surely

$$\lim_{n\to +\infty}\frac{1}{n} \vert \{k \leq n\mid Z_k \textrm{ is regular }  \}\vert=1.$$

\end{theorem}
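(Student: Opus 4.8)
The plan is to deduce this from the irreducible case, Theorem~\ref{reg elements irred}, applied separately to each factor, together with the elementary observation that a finite intersection of subsets of $\{1,\dots,n\}$, each of asymptotic density one, is again of density one.

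First I would write $X=X_1\times\cdots\times X_m$ for the decomposition into irreducible factors. Since $\G$ stabilizes each $X_i$, it acts on each $X_i$, and by \cite[Lemma~2.28]{CFI} (with $\G_0=\G$) this action is again non-elementary and essential. Hence Theorem~\ref{reg elements irred} applies to the $\G$-action on $X_i$, with the same admissible measure $\mu$ and the same random walk $(Z_n)$: almost surely
$$\lim_{n\to+\infty}\frac1n\bigl|\{k\le n\mid Z_k\text{ is contracting on }X_i\}\bigr|=1 .$$
Intersecting the $m$ full-measure events on which this holds, one for each $i$, yields a single full-measure event $\Omega_0\subset\Omega$ on which all $m$ limits hold simultaneously.

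Now fix $\omega\in\Omega_0$ and set $A_i^n=\{k\le n\mid Z_k\text{ is contracting on }X_i\}$. Every $Z_k$ lies in $\G$, hence preserves the factorization of $X$; so by definition of a regular element, $Z_k$ is regular if and only if it is contracting on each $X_i$. Therefore the set of $k\le n$ with $Z_k$ regular equals $\bigcap_{i=1}^m A_i^n$, and taking complements in $\{1,\dots,n\}$,
$$n-\Bigl|\,\bigcap_{i=1}^m A_i^n\Bigr|\ \le\ \sum_{i=1}^m\bigl(n-|A_i^n|\bigr) .$$
Dividing by $n$ and letting $n\to\infty$ gives $\tfrac1n\bigl|\bigcap_i A_i^n\bigr|\to1$, which is exactly the assertion.

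I do not anticipate a genuine obstacle: the two points that need a word are (a) that the induced $\G$-action on each irreducible factor remains non-elementary and essential, so that Theorem~\ref{reg elements irred} is applicable, and (b) that, for automorphisms that already preserve the factors, ``regular'' is literally ``contracting in every factor''. Both are immediate from the definitions and from \cite[Lemma~2.28]{CFI}. All the real content sits in Theorem~\ref{reg elements irred}; this statement is merely its packaging for reducible complexes.
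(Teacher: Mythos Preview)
Your argument is correct and follows essentially the same approach as the paper: apply Theorem~\ref{reg elements irred} to each irreducible factor and use the union bound $n-|\bigcap_i A_i^n|\le\sum_i(n-|A_i^n|)$ to conclude. Your version is in fact slightly more careful in spelling out why the factor actions remain essential and non-elementary and why ``regular'' equals ``contracting on each factor'' here.
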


\begin{proof}
Let $X=X_1\times\dots\times X_d$ be the decomposition of $X$ into irreducible factors. Applying Theorem \ref{reg elements irred} to the action of $\Gamma$ on each factor, we find $N$ such that for $n>N$ we have for every factor $X_i$ of $X$, the set of $k\leq n$ such that $Z_k$ is contracting on $X_i$ is of cardinality at least $n(1-\eps)$. It follows that there are at least $n(1-d\eps)$ elements $Z_k$ which are contracting simultaneously on each factor.  
\end{proof}

In terms of the probability that a given element is regular, we deduce the following:

\begin{corollary}\label{RegularEverywhere!}
Under the same assumptions as Theorem \ref{reg elements!!}, we have
$$\frac{1}{n}\sum_{k=1}^n \Ps(Z_k \textrm{ is regular})=1.$$
\end{corollary}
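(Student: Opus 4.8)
The plan is to recognize that the Ces\`aro average $\frac1n\sum_{k=1}^n\Ps(Z_k\textrm{ is regular})$ is precisely the expectation of the empirical frequency appearing in Theorem \ref{reg elements!!}, and then to upgrade the almost sure statement of that theorem to a statement about expectations by a boundedness argument. Concretely, set
$$Y_n(\omega)=\frac1n\,\bigl|\{k\le n\mid Z_k(\omega)\textrm{ is regular}\}\bigr|=\frac1n\sum_{k=1}^n\1_{\{Z_k\textrm{ is regular}\}}(\omega).$$
Since $\G$ is countable, every subset of $\G$ is Borel, so $\{Z_k\textrm{ is regular}\}$ is a measurable event and each $Y_n$ is a genuine random variable taking values in $[0,1]$. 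Linearity of expectation gives $\EE[Y_n]=\frac1n\sum_{k=1}^n\Ps(Z_k\textrm{ is regular})$.

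First I would invoke Theorem \ref{reg elements!!}, which asserts exactly that $Y_n\to 1$ for $\Ps$-almost every $\omega$. Then, since the $Y_n$ are uniformly bounded by the constant function $1$, which lies in $L^1(\Omega,\Ps)$, the Dominated Convergence Theorem (equivalently, bounded convergence on the probability space $(\Omega,\Ps)$) yields $\EE[Y_n]\to 1$. Unwinding the identification of $\EE[Y_n]$ above, this is
$$\lim_{n\to\infty}\frac1n\sum_{k=1}^n\Ps(Z_k\textrm{ is regular})=1,$$
which is the content of Corollary \ref{RegularEverywhere!}.

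I do not expect any genuine obstacle here: all of the substance is carried by Theorem \ref{reg elements!!}, and what remains is the routine passage from almost sure convergence to convergence in mean for a uniformly bounded sequence. The only point worth flagging is a matter of reading: the equality displayed in the corollary should be understood as this Ces\`aro limit as $n\to\infty$, not as an identity valid for each fixed $n$ (for instance $Z_1=g_1$ need not be regular with probability one, so the partial averages are genuinely only asymptotically equal to $1$).
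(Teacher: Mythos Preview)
Your proof is correct and follows essentially the same approach as the paper: define the characteristic function of regular elements, invoke Theorem \ref{reg elements!!} for almost sure convergence of the empirical average to $1$, and take expectations (the paper is slightly terser, simply writing ``taking the expectation'' where you explicitly invoke dominated/bounded convergence). Your observation that the displayed equality must be read as a limit in $n$ rather than an identity for each $n$ is also well taken.
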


\begin{proof}
Let $f:\Gamma\to \{0,1\}$ be the characteristic function of the set of regular elements. By Theorem \ref{reg elements!!}, we have $\frac{1}{n} \sum\limits_{k=1}^n f(Z_k)\to 1$ almost surely.

Taking the expectation, we get
$$\lim_{n\to +\infty}\frac{1}{n} \sum_{k=1}^n \mathbb E(f(Z_k))= 1,$$
which is the desired result since $\mathbb E(f(Z_k))=\Ps(Z_k \textrm{ is regular})$.
\end{proof}

When the group does not stabilize each factor, the limit might be smaller, due to the fact that there is a positive proportion of elements which do not stabilize each factor, hence cannot be regular. However, we can say the following.

\begin{corollary}\label{reg elements permuting}

We have almost surely

$$\liminf_{n\to +\infty}\frac{1}{n} \vert \{k \leq n\mid Z_k \textrm{ is regular }  \}\vert>0.$$
\end{corollary}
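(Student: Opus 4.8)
The plan is to deduce the statement from Theorem~\ref{reg elements!!} by passing to the finite-index \emph{normal} subgroup $\Gamma_0\lhd\Gamma$ consisting of those elements that stabilize each irreducible factor of $X$, i.e. the kernel of the homomorphism from $\Gamma$ to the finite symmetric group permuting the irreducible factors of the canonical product decomposition. By the lemma recalled earlier (see \cite[Lemma~2.28]{CFI}), the $\Gamma_0$-action on each irreducible factor of $X$ — and hence, since essentiality and non-elementarity pass to finite products, the $\Gamma_0$-action on $X$ itself — is still essential and non-elementary, and $\Gamma_0$ stabilizes each factor by construction.

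First I would pass to the subsequence $(Z_{\f(n)})_n$ of the random walk formed by those $Z_k$ lying in $\Gamma_0$. Exactly as in the last paragraph of the proof of Theorem~\ref{theorem pos drift}, $(Z_{\f(n)})_n$ is a random walk on $\Gamma_0$ driven by an admissible measure $\mu_0\in\Prob(\Gamma_0)$ — the law of the first return of the walk to $\Gamma_0$ — and by Lemma~\ref{phi(n)/n} there is a constant $C>0$ such that $\f(n)/n\to C$ almost surely. Since whether an element of $\Gamma_0$ acts on $X$ as a regular isometry is an intrinsic property of that isometry (it does not matter whether we view the element inside $\Gamma$ or inside $\Gamma_0$), and since Theorem~\ref{reg elements!!} requires no moment hypothesis, applying it to the action of $(\Gamma_0,\mu_0)$ on $X$ (or, equivalently, applying Theorem~\ref{reg elements irred} to each irreducible factor and intersecting over the finitely many factors) gives that, almost surely,
$$\frac1m\,\bigl|\{\,j\leq m\mid Z_{\f(j)}\text{ is regular}\,\}\bigr|\;\longrightarrow\;1\qquad(m\to\infty).$$

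Finally I would combine the two limits. Fix $\omega$ in the intersection of the two full-measure events above and set $m_n:=|\{k\leq n\mid Z_k\in\Gamma_0\}|$, so that $\f(m_n)\leq n<\f(m_n+1)$, whence $m_n/n\to 1/C$. Since each of the regular elements among $Z_{\f(1)},\dots,Z_{\f(m_n)}$ already occurs among $Z_1,\dots,Z_n$, we have
$$\frac1n\,\bigl|\{\,k\leq n\mid Z_k\text{ is regular}\,\}\bigr|\;\geq\;\frac{m_n}{n}\cdot\frac1{m_n}\,\bigl|\{\,j\leq m_n\mid Z_{\f(j)}\text{ is regular}\,\}\bigr|,$$
and the right-hand side tends to $\tfrac1C\cdot1=\tfrac1C>0$. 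Hence $\liminf_n\frac1n\bigl|\{k\leq n\mid Z_k\text{ is regular}\}\bigr|\geq 1/C>0$ almost surely, which is the assertion. The only points needing verification are the standard facts that $\mu_0$ is admissible on $\Gamma_0$ and that $(Z_{\f(n)})_n$ is a genuine $\mu_0$-random walk — both already invoked in the proof of Theorem~\ref{theorem pos drift} — together with the routine transfer of the hypotheses of Theorem~\ref{reg elements!!} to $(\Gamma_0,\mu_0)$; I do not expect any genuine obstacle here, so the argument amounts to bookkeeping the two asymptotic frequencies.
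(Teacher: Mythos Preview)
Your proposal is correct and follows essentially the same route as the paper: pass to the finite-index normal subgroup $\Gamma_0$ stabilizing each factor, apply Theorem~\ref{reg elements!!} to the induced random walk $(Z_{\f(n)})$ on $\Gamma_0$, and then use Lemma~\ref{phi(n)/n} to convert the full density along the subsequence into a positive lower density for the original walk. Your write-up is in fact more careful than the paper's, which sketches the last step rather than spelling out the inequality via $m_n/n\to 1/C$ as you do.
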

\begin{proof}
Let $\Gamma_0\lhd \Gamma$ be the finite index normal subgroup which stabilizes each factor. Let $\f(n)$ be the subsequences formed by indices such that $Z_{\f(n)}$ belong to $\Gamma_0$. Then $Z_{\f(n)}$ is a random walk on $\Gamma_0$ of law $\mu_0$ (the first return probability), so that Theorem \ref{reg elements!!} apply and proves that almost surely 

$$\lim_{n\to +\infty}\frac{1}{\f(n)} \vert \{k \leq n\mid Z_{\f(k)} \textrm{ is regular }  \}\vert=1.$$

By Lemma \ref{phi(n)/n}, we know that $\f(n)/n$ almost surely has a positive limit. The result follows.

\end{proof}

\bibliographystyle{alpha}
\bibliography{biblio}

\newcommand{\etalchar}[1]{$^{#1}$}
\begin{thebibliography}{BCG{\etalchar{+}}09}

\bibitem[Ago13]{Agol}
Ian Agol.
\newblock The virtual {H}aken conjecture.
\newblock {\em Doc. Math.}, 18:1045--1087, 2013.
\newblock With an appendix by Agol, Daniel Groves, and Jason Manning.

\bibitem[Atk76]{Atk76}
Giles Atkinson.
\newblock Recurrence of co-cycles and random walks.
\newblock {\em J. London Math. Soc. (2)}, 13(3):486--488, 1976.

\bibitem[Bal89]{Ballmann89}
Werner Ballmann.
\newblock On the {D}irichlet problem at infinity for manifolds of nonpositive
  curvature.
\newblock {\em Forum Math.}, 1(2):201--213, 1989.

\bibitem[BC12]{Behrstock_Charney}
Jason Behrstock and Ruth Charney.
\newblock Divergence and quasimorphisms of right-angled {A}rtin groups.
\newblock {\em Math. Ann.}, 352(2):339--356, 2012.

\bibitem[BCG{\etalchar{+}}09]{BCGNW}
J.~Brodzki, S.~J. Campbell, E.~Guentner, G.~A. Niblo, and N.~J. Wright.
\newblock Property {A} and {$\rm CAT(0)$} cube complexes.
\newblock {\em J. Funct. Anal.}, 256(5):1408--1431, 2009.

\bibitem[BF09]{BestvinaFujiwara}
Mladen Bestvina and Koji Fujiwara.
\newblock A characterization of higher rank symmetric spaces via bounded
  cohomology.
\newblock {\em Geom. Funct. Anal.}, 19(1):11--40, 2009.

\bibitem[BF14]{BFICM}
Uri Bader and Alex Furman.
\newblock Boundaries, rigidity of representations, and lyapunov exponents.
\newblock {\em Proceedings of ICM 2014}, pages 71--96, 2014.

\bibitem[BH99]{Bridson_Haefliger}
Martin~R. Bridson and Andr{\'e} Haefliger.
\newblock {\em Metric spaces of non-positive curvature}, volume 319 of {\em
  Grundlehren der Mathematischen Wissenschaften [Fundamental Principles of
  Mathematical Sciences]}.
\newblock Springer-Verlag, Berlin, 1999.

\bibitem[BQ]{BenoistQuintBook}
Y.~Benoist and J.-F. Quint.
\newblock {\em Random Walks on Reductive Groups}.

\bibitem[BQ11]{BenoistQuintStationnaire}
Yves Benoist and Jean-Fran{\c{c}}ois Quint.
\newblock Mesures stationnaires et ferm\'es invariants des espaces homog\`enes.
\newblock {\em Ann. of Math. (2)}, 174(2):1111--1162, 2011.

\bibitem[BQ16]{BenoistQuintHyp}
Y.~Benoist and J.-F. Quint.
\newblock Central limit theorem on hyperbolic groups.
\newblock {\em Izv. Ross. Akad. Nauk Ser. Mat.}, 80(1):5--26, 2016.

\bibitem[BS06]{BaderShalom}
Uri Bader and Yehuda Shalom.
\newblock Factor and normal subgroup theorems for lattices in products of
  groups.
\newblock {\em Invent. Math.}, 163(2):415--454, 2006.

\bibitem[CFI12]{CFI}
I.~{Chatterji}, T.~{Fern{\'o}s}, and A.~{Iozzi}.
\newblock {The Median Class and Superrigidity of Actions on CAT(0) Cube
  Complexes}.
\newblock {\em ArXiv e-prints}, December 2012.

\bibitem[CL10]{CapraceLytchak}
Pierre-Emmanuel Caprace and Alexander Lytchak.
\newblock At infinity of finite-dimensional {CAT}(0) spaces.
\newblock {\em Math. Ann.}, 346(1):1--21, 2010.

\bibitem[CN05]{Chatterji_Niblo}
Indira Chatterji and Graham Niblo.
\newblock From wall spaces to {$\rm CAT(0)$} cube complexes.
\newblock {\em Internat. J. Algebra Comput.}, 15(5-6):875--885, 2005.

\bibitem[CS11]{CapraceSageev}
Pierre-Emmanuel Caprace and Michah Sageev.
\newblock Rank rigidity for {CAT}(0) cube complexes.
\newblock {\em Geom. Funct. Anal.}, 21(4):851--891, 2011.

\bibitem[CZ13]{CapraceZadnik}
Pierre-Emmanuel Caprace and Ga{\v{s}}per Zadnik.
\newblock Regular elements in {${\rm CAT}(0)$} groups.
\newblock {\em Groups Geom. Dyn.}, 7(3):535--541, 2013.

\bibitem[Far03]{Farley}
Daniel~S. Farley.
\newblock Finiteness and {$\rm CAT(0)$} properties of diagram groups.
\newblock {\em Topology}, 42(5):1065--1082, 2003.

\bibitem[{Fer}15]{Fernos}
T.~{Fern{\'o}s}.
\newblock {The Furstenberg Poisson Boundary and CAT(0) Cube Complexes}.
\newblock {\em accepted to Ergodic Theory and Dynamical Systems, available on
  ArXiv e-prints}, July 2015.

\bibitem[Fur63]{Furstenberg1963}
Harry Furstenberg.
\newblock Noncommuting random products.
\newblock {\em Trans. Amer. Math. Soc.}, 108:377--428, 1963.

\bibitem[Fur71]{Furstenberg}
Harry Furstenberg.
\newblock Boundaries of {L}ie groups and discrete subgroups.
\newblock In {\em Actes du {C}ongr\`es {I}nternational des {M}ath\'ematiciens
  ({N}ice, 1970), {T}ome 2}, pages 301--306. Gauthier-Villars, Paris, 1971.

\bibitem[Fur02]{Furman}
Alex Furman.
\newblock Random walks on groups and random transformations.
\newblock In {\em Handbook of dynamical systems, {V}ol.\ 1{A}}, pages
  931--1014. North-Holland, Amsterdam, 2002.

\bibitem[GK15]{KarlssonGouezel}
S.~{Gou{\"e}zel} and A.~{Karlsson}.
\newblock {Subadditive and Multiplicative Ergodic Theorems}.
\newblock {\em ArXiv e-prints}, September 2015.

\bibitem[GM12]{GauteroMatheus}
Fran{\c{c}}ois Gautero and Fr{\'e}d{\'e}ric Math{\'e}us.
\newblock Poisson boundary of groups acting on {$\Bbb R$}-trees.
\newblock {\em Israel J. Math.}, 191(2):585--646, 2012.

\bibitem[GR85]{GR85}
Y.~Guivarc'h and A.~Raugi.
\newblock Fronti\`ere de {F}urstenberg, propri\'et\'es de contraction et
  th\'eor\`emes de convergence.
\newblock {\em Z. Wahrsch. Verw. Gebiete}, 69(2):187--242, 1985.

\bibitem[Gui80]{Guivarc'h1980}
Y.~Guivarc'h.
\newblock Sur la loi des grands nombres et le rayon spectral d'une marche
  al\'eatoire.
\newblock In {\em Conference on {R}andom {W}alks ({K}leebach, 1979)
  ({F}rench)}, volume~74 of {\em Ast\'erisque}, pages 47--98, 3. Soc. Math.
  France, Paris, 1980.

\bibitem[Gur07]{Guralnik}
Dan~P. Guralnik.
\newblock Coarse decompositions for boundaries of cat(0) groups.
\newblock 2007.
\newblock Preprint.

\bibitem[Ka{\u\i}87]{KaimanovichOseledts}
V.~A. Ka{\u\i}manovich.
\newblock Lyapunov exponents, symmetric spaces and a multiplicative ergodic
  theorem for semisimple {L}ie groups.
\newblock {\em Zap. Nauchn. Sem. Leningrad. Otdel. Mat. Inst. Steklov. (LOMI)},
  164(Differentsialnaya Geom. Gruppy Li i Mekh. IX):29--46, 196--197, 1987.

\bibitem[Kai91]{Kaima91}
Vadim~A. Kaimanovich.
\newblock Poisson boundaries of random walks on discrete solvable groups.
\newblock In {\em Probability measures on groups, {X} ({O}berwolfach, 1990)},
  pages 205--238. Plenum, New York, 1991.

\bibitem[Kai00]{Kaimanovich}
Vadim~A. Kaimanovich.
\newblock The {P}oisson formula for groups with hyperbolic properties.
\newblock {\em Ann. of Math. (2)}, 152(3):659--692, 2000.

\bibitem[Kai03]{KaimanovichErgodic}
V.~A. Kaimanovich.
\newblock Double ergodicity of the {P}oisson boundary and applications to
  bounded cohomology.
\newblock {\em Geom. Funct. Anal.}, 13(4):852--861, 2003.

\bibitem[KM96]{KaimanovichMasur}
Vadim~A. Kaimanovich and Howard Masur.
\newblock The {P}oisson boundary of the mapping class group.
\newblock {\em Invent. Math.}, 125(2):221--264, 1996.

\bibitem[KM99]{KarlssonMargulis}
Anders Karlsson and Gregory~A. Margulis.
\newblock A multiplicative ergodic theorem and nonpositively curved spaces.
\newblock {\em Comm. Math. Phys.}, 208(1):107--123, 1999.

\bibitem[KS15]{KarSageev}
A.~{Kar} and M.~{Sageev}.
\newblock {Ping pong on CAT(0) cube complexes}.
\newblock {\em ArXiv e-prints}, July 2015.

\bibitem[Led01]{Led01}
Fran{\c{c}}ois Ledrappier.
\newblock Some asymptotic properties of random walks on free groups.
\newblock In {\em Topics in probability and {L}ie groups: boundary theory},
  volume~28 of {\em CRM Proc. Lecture Notes}, pages 117--152. Amer. Math. Soc.,
  Providence, RI, 2001.

\bibitem[Lin10]{Link}
Gabriele Link.
\newblock Asymptotic geometry in products of {H}adamard spaces with rank one
  isometries.
\newblock {\em Geom. Topol.}, 14(2):1063--1094, 2010.

\bibitem[{Mar}15]{Martin}
A.~{Martin}.
\newblock {On the cubical geometry of Higman's group}.
\newblock {\em ArXiv e-prints}, June 2015.

\bibitem[MT14]{MaherTiozzo}
J.~{Maher} and G.~{Tiozzo}.
\newblock {Random walks on weakly hyperbolic groups}.
\newblock {\em ArXiv e-prints}, October 2014.

\bibitem[Nic04]{Nica}
Bogdan Nica.
\newblock Cubulating spaces with walls.
\newblock {\em Algebr. Geom. Topol.}, 4:297--309, 2004.

\bibitem[NS13]{NevoSageev}
Amos Nevo and Michah Sageev.
\newblock The {P}oisson boundary of {${\rm CAT}(0)$} cube complex groups.
\newblock {\em Groups Geom. Dyn.}, 7(3):653--695, 2013.

\bibitem[Ose68]{Oseledec}
V.~I. Oseledec.
\newblock A multiplicative ergodic theorem. {C}haracteristic {L}japunov,
  exponents of dynamical systems.
\newblock {\em Trudy Moskov. Mat. Ob\v s\v c.}, 19:179--210, 1968.

\bibitem[Wis09]{WiseMalnormal}
Daniel~T. Wise.
\newblock Research announcement: the structure of groups with a quasiconvex
  hierarchy.
\newblock {\em Electron. Res. Announc. Math. Sci.}, 16:44--55, 2009.

\end{thebibliography}

\end{document}